\documentclass[11pt]{amsart}

\usepackage[T1]{fontenc}
\usepackage[utf8]{inputenc}
\usepackage{amsmath,amssymb,amsthm}
\usepackage{booktabs,longtable,array,tabularx}
\usepackage{enumitem}
\usepackage[hyphens]{url}
\usepackage{placeins}
\usepackage[margin=1.08in]{geometry}
\usepackage{xcolor}
\usepackage{microtype}
\usepackage{fvextra}
\usepackage{graphicx}
\usepackage{seqsplit}
\usepackage[hidelinks,hypertexnames=false]{hyperref}
\hypersetup{colorlinks=true,linkcolor=blue,citecolor=green,urlcolor=navyblue}
\providecommand{\tightlist}{%
\setlength{\itemsep}{0pt}\setlength{\parskip}{0pt}}

\newtheorem{theorem}{Theorem}[section]
\newtheorem{lemma}[theorem]{Lemma}

\numberwithin{equation}{section}

\theoremstyle{definition}

\theoremstyle{remark}

\newcommand{\Irr}{\operatorname{Irr}}

\newcommand{\cQ}{\mathcal Q}
\newcommand{\file}[1]{\url{#1}}
\newcommand{\GG}{{\widetilde G}}
\newcolumntype{Y}{>{\raggedright\arraybackslash}X}

\newcounter{proofpart} 
\renewcommand{\theproofpart}{\arabic{proofpart}}
\AtBeginEnvironment{proof}{\setcounter{proofpart}{0}}
\newcommand{\proofpart}[2][Step]{%
  \refstepcounter{proofpart}%
  \noindent{\normalfont\bfseries #1\ \theproofpart. #2. }
  \ignorespaces
}

\title[Thompson's conjecture for groups of Lie type over small fields]
{Thompson's Conjecture for Finite Simple Groups of Lie Type
over Small Fields}
\date{}

\author{Jun Liao}
\address{School of Mathematics and Statistics, Hubei University, 
Wuhan, $430062$, P. R. China}
\email{jliao@hubu.edu.cn}

\author{Lizhong Wang}
\address{School of Mathematical Sciences, Peking University, 
Beijing, $100871$, P. R. China}
\email{lwang@math.pku.edu.cn}

\author{Jiping Zhang}
\address{School of Mathematical Sciences, Peking University, 
Beijing, $100871$, P. R. China}
\email{jzhang@pku.edu.cn}

\begin{document}
\begin{abstract}
We prove that every finite  simple group of Lie type  over a field of order at most 8 contains a conjugacy class whose square is the whole
group.  
Together with the previously established cases, our results complete the proof of Thompson's conjecture.
We explicitly construct the required class, address all low-rank identifications, and provide the precise computational certificates used in the exceptional character calculations.

\end{abstract}
\maketitle
\setcounter{tocdepth}{1}
\tableofcontents
\clearpage

\section{Introduction}
A conjecture of Thompson states that every finite non-abelian simple group $G$ contains a conjugacy class $C$ satisfying $C^2=G$, equivalently, every element of $G$ is the product of two elements in $C$.
This can be viewed as a strong version of Ore's conjecture, which asserts that every element of a finite non-abelian simple group is a commutator and is now a theorem due to Liebeck, O'Brien, Shalev, and Tiep ~\cite[Theorem 1]{LOST}.

Thompson's conjecture was first established for the alternating groups by Hsü ~\cite{H} in 1965, and subsequent work on covering $A_n$ by squares of conjugacy classes was carried out in \cite{AH,BrL}. For the sporadic groups, the conjecture was verified computationally in 1984 by Neub\"user, Pahlings, and Cleuvers ~\cite{NPaCI}.

For the projective special linear groups $\mathrm{PSL}_n(K)$, the conjecture was first established by Brenner~\cite{Brenner} in 1983 for finite fields $K$ with $|K|>n+1$; the same bound was  obtained independently by Sourour~\cite{Sourour} in 1986. The definitive result is due to  Lev~\cite{Lev94}, who proved the conjecture for $\mathrm{PSL}_n(K)$ over arbitrary fields.
For other families of Lie type, Gow~\cite{Gow} verified  Thompson's conjecture for the symplectic groups $\operatorname{PSp}_n(K)$ under the assumptions that $\operatorname{char}(K)\neq 2$ and that $-1$ is a square in $K$.

In a series of papers culminating in 1998~\cite[Theorem 2]{EG}, Ellers and Gordeev proved Thompson's conjecture for all finite simple groups of Lie type over fields of order greater than 8, employing a powerful Gauss decomposition technique together with a theorem of Lev~\cite{Lev94} on products of cyclic similarity classes. Their main theorem is as follows.

\begin{theorem}[{\cite[Theorem 2]{EG}, Ellers--Gordeev}]
\label{thm:Ellers-Gordeev}
Let $G$ be a finite simple group of Lie type. Suppose that $q>8$ if $G$ is untwisted of type $X(q)$ or twisted of type ${}^l X_n(q^l)$, except for the families ${}^2B_2(q^2)$, ${}^2G_2(q^2)$, and ${}^2F_4(q^2)$, for which suppose that $q^2>8$. Then $G$ is the square of a conjugacy class; in particular, every element of $G$ is a commutator.
\end{theorem}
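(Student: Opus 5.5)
This is the cited theorem of Ellers and Gordeev, so in the paper it is imported from \cite[Theorem 2]{EG}. Still, here is how I would reconstruct their argument.

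\textbf{Choosing the class.} Work in the simply connected version $\GG$ of $G$, and choose a regular semisimple element $s$ in a maximal torus $T$ that is as generic as possible. For untwisted groups take $T$ to be the quasi-split (split) torus; in general take a torus on which the eigenvalues of $s$ on the root subgroups are pairwise distinct and avoid the value $1$. The class will be $C = s^{\GG}$, pushed down to $G$. The hypothesis $q>8$ (respectively $q^2>8$ for the Suzuki and Ree families) is used exactly here: it guarantees that $T(q)$ has enough elements for $s$ to satisfy the finitely many ``avoid these characters'' conditions that the later steps need. The paper will therefore have to treat $q\le 8$ separately.

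\textbf{Gauss decomposition.} Let $B=TU$ and $B^-=TU^-$ be opposite Borel subgroups. The key proposition I would aim for is this: every noncentral $g\in \GG$ is conjugate to an element of $U^- n\, U$ for a suitable $n\in N(T)$. Lemma of Ellers--Gordeev type: for any $h\in T$ and $u\in U$, $u^-\in U^-$, the element $u^-hu$ can be written as a product $x y$ with $x,y$ both conjugate to $s$. To do this, I would solve $x = v^- s v$ and $y = s^{-1}$-twisted similarly, using that the maps $U\to U$, $u\mapsto s^{-1}u^{-1}su$ are bijective when $s$ is regular with no eigenvalue $1$ on root groups; this is where the genericity of $s$ enters. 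This reduces the problem to showing that the torus part $h$ lies in $s\cdot s^{w}$-type products, that is, to showing $T\subseteq \{t\,t' : t,t'\in s^{N(T)}T\text{-translates}\}$. Combined with the Bruhat cell structure, it gives every regular element.

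\textbf{Nonregular and unipotent-heavy elements.} For the remaining elements, including central-in-Levi and unipotent ones, I would move $g$ into a Gauss cell $U^-\dot w_0 T U$ by conjugation; this is possible for noncentral $g$ in a simple group. I would then use Lev's theorem on products of cyclic (regular) similarity classes in $\mathrm{GL}_n$ for the rank-one/Levi pieces, so that the $T$-component can be chosen freely. The identity is handled separately as $s\cdot s^{-1}$, which needs $s^{-1}\in C$. To arrange this I would choose $s$ real, for instance by taking $T$ so that $w_0$ inverts it, or by using a Coxeter torus.

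\textbf{Main obstacle.} I expect the hard step to be the uniform statement that every noncentral element is conjugate into the big cell with a prescribed torus part, together with the case-by-case counting that ensures a suitable generic $s$ exists for all types, including the twisted types ${}^2B_2,{}^2G_2,{}^2F_4$. In those types root subgroups are non-abelian, so the bijectivity of $u\mapsto [s,u]$ must be checked on each layer of the central series of $U$.
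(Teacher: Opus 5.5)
Your proposal breaks down at its first step, and everything after it depends on that step. You need a single element $s$ of the split (or quasi-split) torus with $\alpha(s)\neq 1$ for every root $\alpha$, so that the Lang maps are bijective on all of $U$ and $U^-$. You assert that $q>8$ supplies one because only finitely many characters must be avoided. But the number of roots grows with the rank while $q$ stays fixed, and the theorem is uniform in the rank. Two examples show the failure:
\begin{enumerate}
\item In $\mathrm{SL}_n(q)$ such an $s$ is a diagonal matrix with $n$ distinct eigenvalues in $\mathbb F_q^\times$, which is impossible once $n>q-1$.
\item In $\mathrm{SO}_{2l+1}(9)$ the sets $\{s_i,s_i^{-1}\}$ would have to be pairwise disjoint subsets of $\mathbb F_9^\times\setminus\{1\}$, which forces $l\le 4$.
\end{enumerate}
More generally, by Kac coordinates, an element of a maximal torus with no root value $1$ has order at least the Coxeter number $h$. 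So it can lie in the split torus over $\mathbb F_q$ only if $q-1\ge h$. This already excludes, for instance, $E_8(9)$, which the theorem covers. Hence the full-Borel Gauss--Lang argument proves the statement only for ranks small relative to $q$, and the ``case-by-case counting'' you defer to the end cannot succeed.

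The missing idea is the relative version that the paper isolates as Lemma~\ref{lem:relative-gauss}, with Ellers--Gordeev's representatives listed in Section~\ref{sec:coverage}. One fixes a subsystem group $G_1$, typically of type $A$. One then takes $f=hu$, where $u$ is a regular cyclic element of $G_1$ (regular unipotent, or semisimple times unipotent) and $h$ is an explicit torus element. The core $G_1\setminus Z(G_1)$ is covered by Lev's theorem on products of cyclic similarity classes. Fixed-point-freeness is needed only on the graded pieces of the groups $V_\pm$ generated by the root groups outside $G_1$. There it reduces to a rank-independent list of conditions, such as $s,s^2,s^3\neq 1$ for a generator $s$ of $\mathbb F_q^\times$; this is exactly where the thresholds $d_X$, and hence $q>8$, enter.

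The other indispensable input is Ellers--Gordeev's Gauss decomposition with \emph{prescribed} torus part: every noncentral element is conjugate to some $u^-hu^+$ with $h\in H$ chosen in advance. This is their own theorem, not something obtained from Lev's theorem as your third paragraph suggests. It also makes your reduction to writing arbitrary torus parts as products of $N(T)$-translates of $s$ unnecessary: one prescribes $h\in H_1\setminus Z(G_1)$ so that the $G_1$-core is noncentral. Finally, your fallback of taking $s$ in a Coxeter torus to obtain reality is incompatible with the Lang-map step. A regular element of a Coxeter torus lies in no $F$-stable Borel subgroup, so it does not normalize $U$ or $U^-$.
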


For each Lie family \(X(q)\), there exists an explicit threshold \(d_X\) such that the conjecture holds for all \(q\ge d_X\)~\cite{EG}. These thresholds are tabulated in Tables~\ref{tab:EG-untwisted}--\ref{tab:EG-twisted}; here \(q\) denotes the standard field parameter, equal to \(q\) for \(X(q)\) and \({}^l X_n(q^l)\), and to \(q^2\) for the exceptional types \({}^2B_2, {}^2G_2, {}^2F_4\).
Only the cases of finite simple groups of Lie type over fields of order at most 8 remained open.

Subsequent work has further refined and extended these results. Guralnick and Malle~\cite[Theorem 1.4]{GM} used character-theoretic and probabilistic methods to show that for every finite simple group~$G$, there exist conjugacy classes $C_1,C_2$ such that $G=C_1C_2\cup\{1\}$, a result that implies strong generation properties.  Larsen, Taylor, and Tiep~\cite[Theorem 1]{LTT} proved that for all classical groups $\mathrm{SL}_n^\epsilon(q)$, $\mathrm{Sp}_{2n}(q)$, $\mathrm{SO}_{2n+1}(q)$, and $\mathrm{SO}_{2n}^{\pm}(q)$, the square of such a class contains every element of sufficiently large support.
More recently, Larsen and Tiep have made significant asymptotic progress on Thompson's conjecture. In \cite{LT24}, they developed uniform character bounds for finite classical groups and applied these to prove the conjecture for all sufficiently large symplectic groups, unitary groups satisfying a divisibility condition on the rank, and orthogonal groups in characteristic~$2$ (see \cite[Theorem 7.7]{LT24}). In subsequent work \cite{LT25}, they established sharp character estimates for characters of low level in classical groups of types $A$, $B$, and $D$, which allowed them to prove the asymptotic Thompson conjecture for unitary groups and for orthogonal groups over fields of odd order, thereby completing the proof for all sufficiently large finite simple groups of classical type (see \cite[Theorem D]{LT25}). Furthermore, character bounds for regular semisimple elements have been employed to obtain complementary asymptotic versions of the conjecture: if the characteristic polynomial of a regular semisimple class is a product of few distinct irreducible factors and the target element has sufficiently large support, then the desired decomposition holds.

The purpose of this paper is to verify Thompson's conjecture in all remaining cases:
finite simple groups of Lie type over fields of order at most 8. 

We write $G_1$ for a subsystem group
attached to a Frobenius-stable root subsystem, and $H$ for the torus of
$G$.  The notation $\widetilde G$ denotes the universal finite group of Lie
type from which $G$ is obtained by projection.

\begin{theorem}[Thompson's conjecture for the residual small fields]
\label{thm:main}
Let $G$ be a finite simple group of Lie type over a field of order at most 8.  Then there is a conjugacy class $C\subseteq G$ such
that\[
                         C^2=G.
\]
The class $C$ may be chosen exactly as in Tables~\ref{tab:classical} and
\ref{tab:exceptional}, with its matrix, root-theoretic, toral, or ordinary
character-table representative specified in Sections
~\ref{sec:classical-sources}--\ref{sec:exceptional-frob2}.
\end{theorem}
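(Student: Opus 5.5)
The plan is to split the finitely-many-families-but-infinitely-many-groups situation into two regimes: groups of bounded rank, handled by explicit character computations, and groups of unbounded rank over $\mathbb F_q$ with $q\le 8$, handled by a uniform structural argument.

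\textbf{Character criterion.} The basic tool is the Frobenius formula. For a class $C$ and $g\in G$, the number of solutions of $xy=g$ with $x,y\in C$ equals
\[
\frac{|C|^2}{|G|}\sum_{\chi\in\Irr(G)}\frac{\chi(x)^2\overline{\chi(g)}}{\chi(1)}.
\]
Thus $C^2=G$ holds as soon as
\[
\sum_{1\ne\chi}\frac{|\chi(x)|^2|\chi(g)|}{\chi(1)}<1 \qquad\text{for every } g\in G.
\]
Since every $g$ satisfies $|\chi(g)|\le\chi(1)$, it suffices in the crude form to have $\sum_{1\ne\chi}|\chi(x)|^2<1$, which is impossible. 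So the bound has to be refined by splitting on $g$.

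\textbf{Choice of class.} I would take $C$ to be a regular semisimple class in a Coxeter torus, or a ``Singer-like'' anisotropic torus, so that Deligne--Lusztig theory applies. Then $\chi(x)\neq0$ only for the few $\chi$ lying in Lusztig series meeting that torus, and $|\chi(x)|$ is bounded by the order of the Weyl-type group $W(T)^F$. This gives $\sum_\chi|\chi(x)|^2=|C_G(x)|$, which is small.

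For noncentral $g$, the sum over the few relevant characters is then controlled by character ratio bounds $|\chi(g)|/\chi(1)$. Where the generic ratio bounds fail for tiny $q$, I fall back to the second class choice recorded in Tables~\ref{tab:classical} and \ref{tab:exceptional}, for instance a regular unipotent times a torus element.

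\textbf{Bounded rank.} For the exceptional groups and for classical groups of small rank over $q\le8$, I check the criterion directly from generic character tables (CHEVIE) or from explicit ordinary character tables (GAP). Two preliminary points are needed here:
\begin{itemize}
\item first, handle the low-rank isomorphisms, such as $\mathrm{PSL}_2(4)\cong\mathrm{PSL}_2(5)\cong A_5$, $\mathrm{PSp}_4(2)'$, ${}^2G_2(3)'$ and ${}^2F_4(2)'$, reducing to known cases (alternating and sporadic groups) where applicable;
\item second, pass from $\widetilde G$ to $G$: a class of $\widetilde G$ whose square covers $\widetilde G$ modulo the centre projects to the required class of $G$.
\end{itemize}

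\textbf{Unbounded rank.} For classical groups of large rank I would follow the Ellers--Gordeev Gauss-decomposition strategy but replace Lev's input by an inductive argument on subsystem subgroups $G_1$. Every $g$ is conjugate into $U^-HU$. I choose $C$ to meet a product of subsystem factors $G_1=\prod G_{1,i}$ of bounded rank in classes $C_i$ with $C_i^2=G_{1,i}$, which the bounded-rank step already supplies. I then use the fact that cyclic/regular elements of $G_1$ times unipotent radicals fill the cosets ($x U$ lies in the class of $x$ when $x$ is regular on $U$).

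This reduces matters to showing that every element is conjugate to one whose Levi-type component lies in a prescribed torus coset. For that I use rational canonical forms, so that $g$ is conjugate to a block-companion matrix, and build $C$ as a class of cyclic matrices, in the spirit of Lev's theorem applied blockwise. For $\mathrm{PSL}_n(q)$ this is simply Lev's result, and it holds over all fields.

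\textbf{Main obstacle.} The hard part is the symplectic, orthogonal and unitary groups of large rank over $\mathbb F_2$ and $\mathbb F_3$. There tori are tiny, the character bounds are weak, and the block reduction must respect a form. I would first try to handle this with decompositions of the underlying space into orthogonal sums of bounded-dimensional nondegenerate pieces, verifying each piece computationally. The difficult case analysis would concentrate on the $\mathrm O^\pm_{2n}(2)$ parity and discriminant constraints.
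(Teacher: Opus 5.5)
The genuine gap is in your unbounded-rank step, and that step is where the content of the theorem lies: the classical families over $\mathbb F_2$ and $\mathbb F_3$, and the other fields below the Ellers--Gordeev thresholds. Taking $G_1=\prod_i G_{1,i}$ with factors of bounded rank cannot work.

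\emph{Covering $G_1$ alone gives nothing.} From $C^2\supseteq\prod_i C_i^2=G_1$ you only get a proper subgroup and its conjugates inside $C^2$. By Jordan's lemma, the conjugates of a proper subgroup never cover a finite group. The same objection applies to ``verifying each piece'' of an orthogonal decomposition, since a general target preserves no such decomposition.

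\emph{The absorption step then fails.} Everything rests on absorbing the unipotent parts $v_\pm$ of the prescribed Gauss decomposition through the Lang maps $a\mapsto a\sigma a^{-1}\sigma^{-1}$. As you note yourself (``$x$ regular on $U$''), this requires the source to be fixed-point-free on \emph{every} grade of $V_\pm$. But $V_\pm$ contains the root groups linking two factors: $\mathrm{Hom}(U_j,U_i)$ for a block-diagonal Levi, or $U_i\otimes U_j$ for form-preserving factors. Fixed-point-freeness there forces the spectra of the components $f_i,f_j$ to be disjoint, respectively to contain no pair with $\lambda\mu=1$. For blocks of dimension at most $r$ over a fixed $\mathbb F_q$, all eigenvalues lie in the finite set $\bigcup_{d\le r}\mathbb F_{q^d}^\times$. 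So once the number of blocks is large, two of them share an eigenvalue and the Lang map is not surjective. Bounded-rank pieces over a fixed small field are incompatible with the absorption step.

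What is missing is a single Levi of \emph{unbounded} rank, together with a one-class covering theorem for it that holds over every field.
\begin{itemize}
\item The paper keeps the whole $A_{n-1}$-Levi $\mathrm{SL}_n(q)$ in $B_n$ and $D_n$. It uses the $\mathrm{GL}_n(q^2)$-Levi for odd unitary groups, $\mathrm{GL}_m\times\Omega^-_4(q)$ or $\mathrm{GL}_m\times\Omega^-_8(2)$ for minus type, and $A_6$, $A_7$ in $E_7$, $E_8$.
\item It covers every nonscalar core with one cyclic class via Nielsen's $(1,n-1)$- and $(m,n-m)$-cyclic product theorems. For $\mathrm{PSp}_{2n}$ and even unitary groups it uses Nielsen's strictly hyperbolic theorem directly.
\item The source is, for example, $1\oplus M_\alpha$ or $\nu^{-1}\oplus M_\alpha$ with $\alpha$ of degree $n-1$. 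Its eigenvalues live in a field that grows with the rank. So fixed-point-freeness on $U$, $\Lambda^2U$ (and $\Lambda^3U$ for $E_7,E_8$) follows from size inequalities such as $q^i+q^j<q^d-1$.
\item One must also check $HG_1$-fusion via determinant-surjective centralizers, and reality in the ambient group. When $q=2$ and $H_1=Z(G_1)$, the identity core is covered as $AA^{-1}$.
\end{itemize}
``Lev's theorem applied blockwise'' does not supply any of this.

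Secondary points:
\begin{itemize}
\item Your sufficient condition cannot hold at $g=1$, where the sum equals $|C_G(x)|-1$. The identity needs $C=C^{-1}$, and this fails for a generator of the split $E_6$ Coxeter torus; the paper passes to its $\Phi_{12}$-part.
\item Semisimple targets are handled by Gow's theorem rather than by ratio bounds.
\item For $E_7(q)$ and $E_8(q)$, complete generic character tables are not available. Generic ratio bounds such as Gluck's $19/20$ are already too weak at $q=3$. The paper needs exact Green functions on unipotent targets and Liebeck--Tiep long-root parabolic bounds on mixed targets, applied only to the few characters nonzero at $x$.
\end{itemize}
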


The proof strategy proceeds by first invoking the exact Ellers--Gordeev thresholds, where for each Lie family $X$ we take the desired conjugacy class to be the one constructed in~\cite{EG} whenever $q$ meets or exceeds the threshold $d_X$. For the classical families that fall below this threshold, we explicitly construct cyclic classes via companion matrices and subsequently invoke the product theorems of Nielsen
~\cite{NielsenSp,NielsenGL} and Lev~\cite{Lev94,Lev99}. When the core subgroup $G_1$ is already covered by these matrix-product results and the action of the root-subgroup complement is fixed-point-free, a relative Gauss--Lang lifting argument allows the square to be lifted from $G_1$ to the full universal group and then down to the simple quotient. Finally, for the exceptional groups below the Ellers--Gordeev threshold, we select a real regular semisimple class in a Coxeter torus and estimate the sums in the Frobenius formula exactly by means of Deligne--Lusztig theory.

\section{Preliminaries}
\label{sec:prelimelaries}

\subsection{Frobenius sums and Gow's theorem}
Let $K=x^G$ be a conjugacy class of a finite group $G$.  Frobenius' class
multiplication formula gives, for every $g\in G$,
\begin{equation}
 \#\{(a,b)\in K\times K:ab=g\}
 =\frac{|K|^2}{|G|}
   \sum_{\chi\in\Irr(G)}
       \frac{\chi(x)^2\chi(g^{-1})}{\chi(1)}.
 \label{eq:frob}
\end{equation}
If $K=K^{-1}$, then every $\chi(x)$ is real and
$\chi(x)^2=|\chi(x)|^2$.  Thus $g\in K^2$ whenever
\begin{equation}
 F_x(g):=1+\sum_{1_G\ne\chi\in\Irr(G)}
       \frac{|\chi(x)|^2\chi(g^{-1})}{\chi(1)}\ne0.
 \label{eq:Fx}
\end{equation}
In particular, the exceptional-family calculations use the sufficient
estimate
\begin{equation}
                         |F_x(g)-1|<1.                 \label{eq:strict}
\end{equation}
Column orthogonality gives
\begin{equation}
 \sum_\chi |\chi(x)|^2=|C_G(x)|,
 \qquad |\chi(g)|\le |C_G(g)|^{1/2}.                   \label{eq:column}
\end{equation}
Reality also places $1=xx^{-1}$ in $K^2$. A conjugacy class $K$ is called a Thompson class if $G=K^{2}$. 

For groups of Lie type, the Deligne-Lusztig theory of characters provides a systematic framework for analyzing irreducible characters. It partitions characters into Lusztig series, each associated to a semisimple element in the dual group, and provides formulae for their degrees and values \cite{LT25, GM}. This theory is essential for constructing explicit irreducible characters of relatively small degrees and for deriving information on their character values, which is crucial for estimating the sums in the Frobenius formula \cite{LOST}. In particular, the Steinberg character and other unipotent characters play a key role \cite{GM}.

We use Gow's theorem~\cite[Theorem 2]{Gow} for semisimple elements.
\begin{theorem}[{\cite[Theorem 2]{Gow}}]
Let $G$ be a finite simple group of Lie type of characteristic $p$, and let $g$ be a non-identity semisimple element in $G$. Let $C_{1}$ and $C_{2}$ be any conjugacy classes of $G$ consisting of regular semisimple elements (that is, elements whose centralizers in $G$ have order relatively prime to $p$). Then $g$ is expressible as a product $xy$, where $x\in C_{1}, y\in C_{2}$.
\end{theorem}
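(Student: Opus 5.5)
The plan is to apply the Frobenius class multiplication formula (a two-class version of \eqref{eq:frob}) and show that the relevant character sum is nonzero by reducing it modulo a prime $\mathfrak p$ above $p$ in a suitable ring of algebraic integers. Fix $x\in C_1$, $y\in C_2$ and the semisimple element $1\ne g\in G$. The number of pairs $(a,b)\in C_1\times C_2$ with $ab=g$ is
\[
 \frac{|C_1||C_2|}{|G|}\sum_{\chi\in\Irr(G)}\frac{\chi(x)\chi(y)\chi(g^{-1})}{\chi(1)} .
\]
Writing $\omega_\chi(g^{-1})=|g^G|\chi(g^{-1})/\chi(1)$ for the central character, which is an algebraic integer, it suffices to show that
\[
 S:=\sum_{\chi}\chi(x)\chi(y)\,\omega_\chi(g^{-1})\not\equiv 0 \pmod{\mathfrak p}.
\]

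For the first step I would invoke the $p$-block structure of $G$ in defining characteristic $p$. The irreducible characters fall into the principal block $B_0$ and the Steinberg character $\mathrm{St}$, which has defect zero; this is Humphreys' theorem for simple groups of Lie type. On a block, central characters are congruent modulo $\mathfrak p$, so for $\chi\in B_0$ we have $\omega_\chi(g^{-1})\equiv\omega_{1}(g^{-1})=|g^G|$. Since $g$ is semisimple and noncentral, $|C_G(g)|_p<|G|_p$, so $p$ divides $|g^G|$. Hence the entire $B_0$ contribution to $S$ vanishes modulo $\mathfrak p$, and
\[
 S\equiv \mathrm{St}(x)\mathrm{St}(y)\,\omega_{\mathrm{St}}(g^{-1}) \pmod{\mathfrak p}.
\]
This step avoids column orthogonality entirely; that would only be needed if one tried to evaluate the $B_0$ part exactly rather than killing it.

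For the second step I would evaluate the surviving term with the formula $\mathrm{St}(s)=\pm|C_G(s)|_p$ for semisimple $s$. Because $x$ and $y$ are regular semisimple with $p'$-centralizers, $\mathrm{St}(x),\mathrm{St}(y)=\pm1$. For $g$ we get $\mathrm{St}(g^{-1})=\pm|C_G(g)|_p$, and therefore
\[
 \omega_{\mathrm{St}}(g^{-1})=\pm\frac{|G|}{|C_G(g)|}\cdot\frac{|C_G(g)|_p}{|G|_p}=\pm\frac{|G|_{p'}}{|C_G(g)|_{p'}},
\]
which is a nonzero integer prime to $p$. So $S\not\equiv0$, hence $S\neq 0$, and the count of pairs is positive, giving $g\in C_1C_2$.

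The main obstacle is justifying the block-theoretic input for the simple group $G$ rather than for the universal group $\widetilde G$. One has to check that the defect-zero characters of $G$ are exactly $\{\mathrm{St}\}$ and that all other characters lie in a single block; alternatively, it is enough that every non-Steinberg $\chi$ with $\chi(x)\chi(y)\ne0$ has $\omega_\chi(g^{-1})\equiv|g^G|$. If the passage to $G$ causes trouble, I would first run the argument in $\widetilde G$, using lifts of $x,y,g$, where regularity and semisimplicity are preserved and the block description is standard. I would then push the conclusion down through the central quotient, because a product relation in $\widetilde G$ projects to one in $G$.
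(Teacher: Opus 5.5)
Your argument is correct, and it is essentially Gow's own proof, which the paper does not reproduce and simply cites: Humphreys' two-block structure in defining characteristic kills every principal-block term because $p\mid|g^G|$, and the surviving Steinberg term $\pm|G|_{p'}/|C_G(g)|_{p'}$ is nonzero modulo $\mathfrak p$. One small point concerns only your fallback through $\widetilde G$: there $\widetilde G$ has $|Z(\widetilde G)|$ blocks of full defect rather than one, so you should argue via a $p'$-degree (height-zero) character in each such block to get $\omega_\chi(g^{-1})\equiv 0$ for every non-Steinberg $\chi$. Your main route through the simple group does not need this.
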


Consequently a real regular semisimple class covers
all nonidentity semisimple targets in its square.  Gow's theorem is never
used below for a unipotent or mixed target.

For each ordinary character-table certificate, equation~\eqref{eq:frob} is
evaluated exactly as a cyclotomic number on every target class.  The same
certificate verifies inverse stability of the source and asserts that no
class-multiplication coefficient vanishes.

\subsection{The relative Gauss--Lang lemma}

The following is the common form of Ellers--Gordeev's prescribed Gauss
theorem and its Lang-map consequence used throughout the residual families.

\begin{lemma}[Relative Gauss--Lang lifting]
\label{lem:relative-gauss}
Let $\GG$ be a universal finite group of Lie type, let $G_1$ be the full
subsystem group attached to a Frobenius-stable root subsystem, and let
$H_1=H\cap G_1$.  Let $V_+$ and $V_-$ be generated by all positive and
negative root groups outside the subsystem, with central filtrations\[
 V_{\pm}=V_{\pm,1}\supseteq V_{\pm,2}\supseteq\cdots,
 \qquad [V_{\pm,r},V_{\pm,s}]\le V_{\pm,r+s}.
\]
Let $f\in HG_1$, put $C=f^{\GG}$, and let $\mathcal C_1$ be the union of
the actual $HG_1$-classes of $f$ and $f^{-1}$.  Assume:
\begin{enumerate}
\def\labelenumi{\arabic{enumi}.}
\item $f$ is real in $\GG$;
\item every member of $\mathcal C_1$ has no eigenvalue $1$ on every
nonzero grade $V_{\pm,r}/V_{\pm,r+1}$;
\item $\mathcal C_1^2\supseteq G_1\setminus Z(G_1)$;
\item either $H_1\ne Z(G_1)$, or $\mathcal C_1^2=G_1$.
\end{enumerate}
Then
\[
                  C^2\supseteq\GG\setminus Z(\GG).
\]
\end{lemma}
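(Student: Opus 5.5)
The plan is to combine a Gauss-type decomposition of $\GG$ relative to the parabolic $P=HG_1V_+$ with a Lang-map argument on the unipotent radicals, in the spirit of Ellers--Gordeev. Fix a target $g\in\GG\setminus Z(\GG)$. The first step is to arrange, after conjugating $g$, a factorization $g=v_-\,h\,g_1\,v_+$ with $v_\pm\in V_\pm$ and $hg_1\in HG_1$. This uses the prescribed Gauss decomposition $\GG=V_-\,HG_1\,V_+$ on the big cell. For elements outside the big cell, I would first conjugate into $V_-HG_1V_+$. This is possible because every element of a finite group of Lie type is conjugate into the big cell of a suitable opposite pair of parabolics, which is the Ellers--Gordeev prescribed Gauss theorem. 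The middle component $hg_1$ should moreover be chosen so that it lies in the image $H\cdot(G_1\setminus Z(G_1))$. Hypothesis (4) is exactly what is needed for this: if $H_1\neq Z(G_1)$ we can shift by an element of $H_1$ to avoid a central middle factor, and otherwise we do not need to, since $\mathcal C_1^2=G_1$.

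The second step writes the middle factor as a product of two elements of $\mathcal C_1$ using hypothesis (3). We may assume $H$ normalizes $G_1$, and the $H$-part is absorbed because $\mathcal C_1$ is a union of $HG_1$-classes. This gives $hg_1=a b$ with $a,b\in\mathcal C_1$, so $a,b\in C\cup C^{-1}=C$, the last equality by the reality hypothesis (1). The goal becomes finding $u_-\in V_-$ and $u_+\in V_+$ with
\[
 g \;=\; (v_-' a u_+)\,(u_-' b v_+')
\]
for suitable unipotent corrections, where each bracketed factor is $\GG$-conjugate to $a$ (respectively $b$).

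The third step is the Lang-map argument. For $a\in\mathcal C_1$ consider the map $V_+\to V_+$, $u\mapsto a^{-1}u^{-1}a\,u$, and its analogue on $V_-$. By hypothesis (2), $a$ has no eigenvalue $1$ on any grade $V_{\pm,r}/V_{\pm,r+1}$. Hence on each graded piece the induced map $u\mapsto (a^{-1}ua)^{-1}u$ is an injective, and therefore bijective, linear map. Induction down the central filtration, using $[V_{\pm,r},V_{\pm,s}]\le V_{\pm,r+s}$, then shows that the map is a bijection of $V_\pm$. Consequently every element of $aV_+$ is $V_+$-conjugate to $a$, and likewise $V_-a=\{u^{-1}au\}$. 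Applying this to $a$ with $V_-$ on the left and to $b$ with $V_+$ on the right gives
\[
 v_-ab v_+=(v_-a)(bv_+)\in a^{V_-}\,b^{V_+}\subseteq C\cdot C .
\]
Therefore $g\in C^2$.

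I expect the main obstacle to be the first step. The delicate points are guaranteeing that every noncentral $g$ is conjugate into $V_-HG_1V_+$ with a middle factor that is noncentral in $G_1$ modulo $H$, and handling the bookkeeping between $H$ and $H_1$. Here I would invoke the Ellers--Gordeev prescribed Gauss theorem directly, and treat separately the possibility that the middle factor lands in $HZ(G_1)$, which is precisely where hypothesis (4) enters.
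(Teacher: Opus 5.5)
Your overall architecture matches the paper's: a prescribed Gauss decomposition, splitting off the $V_\pm$-parts, factoring the Levi core by hypotheses~3--4, bijective Lang maps via the central filtration, and reality to put both factors in $C$. Your third step is correct and is exactly the paper's absorption argument: injectivity on each grade gives bijectivity of $u\mapsto a^{-1}u^{-1}au$ on $V_+$ and of its analogue on $V_-$, hence $v_-abv_+=(v_-a)(bv_+)\in a^{V_-}b^{V_+}$.

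The gap is in how you handle the middle factor. You allow the Levi core to be $hg_1\in HG_1$ and then say the $H$-part ``is absorbed because $\mathcal C_1$ is a union of $HG_1$-classes.'' That does not follow. Closure of $\mathcal C_1$ under $HG_1$-conjugation says nothing about $\mathcal C_1^2\cap hG_1$ when $h\notin G_1$, and hypothesis~3 only covers $G_1\setminus Z(G_1)$. Indeed, when $f\in G_1$ (as in the orthogonal, $E_7$ and $E_8$ applications), $\mathcal C_1^2\subseteq G_1$. So no element of $HG_1\setminus G_1$ is a product of two members of $\mathcal C_1$. Similarly, ``shift by an element of $H_1$ to avoid a central middle factor'' is not yet an argument. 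A parabolic-level statement that the target is conjugate into $V_-HG_1V_+$ gives you no control over which coset of $G_1$ the core lies in, or which element it is.

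The missing idea is to apply the prescribed Gauss theorem at the Borel level and to prescribe the torus component inside $H_1$. Conjugate the target to $u_-hu_+$ with $u_\pm\in U_\pm$ and $h\in H_1\setminus Z(G_1)$; under the second alternative of hypothesis~4, any $h\in H_1$ works, e.g.\ $h=1$. Then factor $u_-=v_-\tilde u_-$ and $u_+=\tilde u_+v_+$ with $v_\pm\in V_\pm$ and $\tilde u_\pm\in U_\pm\cap G_1$. The core $g=\tilde u_-h\tilde u_+$ now genuinely lies in $G_1$.

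It is also noncentral, because $Z(G_1)\le H_1$ and the decomposition $\tilde u_-h\tilde u_+$ in $G_1$ is unique. If $g\in Z(G_1)$, uniqueness would force $\tilde u_\pm=1$ and $h=g\in Z(G_1)$, a contradiction. This uniqueness argument is what hypothesis~4 actually provides. With it in place, your Steps~2--3 go through unchanged.
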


\begin{proof}
We prove that every noncentral element of $\GG$ lies in $C^2$.
Let $x\in \GG\setminus Z(\GG)$.
By the Gauss decomposition theorem \cite[Theorem 3]{EG}, for every $h
\in H$,  there is some $y$ conjugate to $x$ such that 
\[
y=u_1 \, h \, u_2,\qquad u_1\in U_{-}, u_2\in U_{+},\; 
\]
where $U_\pm$ are the unipotent radical subgroups. We have 
\(
u_{1}=v_{1}\tilde u_1, u_{2}=\tilde u_2v_{2}
\)
where \(v_{1}\in V_{-}, v_{2}\in V_{+}\),  \(\tilde u_1\in U_{-}\cap G_1, \tilde u_2\in U_{+}\cap G_1\). 
Let \(g=\tilde u_1h\tilde u_2\). Then 
$y=v_{1}gv_{2}$, and $g\in G_{1}$.

We claim that \(g\notin Z(G_1)\) whenever \(h\in H_1\setminus Z(G_1)\):
In fact,  we have $ Z(G_1)\leq H\cap G_{1}=H_{1}$. If \(\tilde u_1h\tilde u_2=g\in Z(G_1)\), then the uniqueness of the Gauss decomposition implies that $\tilde u_1=\tilde u_2=1$, and $h=g \in Z(G_1)$, contradicting \(h\in H_1\setminus Z(G_1)\).
Thus, in all cases of Hypothesis 4, $g\in G_1\setminus Z(G_1)$ or $g\in G_1$ with $\mathcal C_1^2=G_1$. Hypothesis~3 then yields
\(
g=\sigma_1\sigma_2
\)
for some $\sigma_1,\sigma_2\in \mathcal C_1$. Hence
\(
y=v_{1}\,\sigma_1\sigma_2\,v_{2}.
\)

We now absorb $v_{1}$ and $v_{2}$ into conjugates of $\sigma_1$ and $\sigma_2$.  For $\sigma\in \mathcal C_1$, fixed-point-freeness on each central grade in Hypothesis 2 implies that the linear map $1-\sigma$ is invertible on each graded piece $V_{\pm,r}/V_{\pm,r+1}$.  Hence the associated non-abelian Lang map
\[
L_\sigma:V_\pm\longrightarrow V_\pm,\qquad 
L_\sigma(u)=u\sigma(u)^{-1}=u\sigma u^{-1}\sigma^{-1}
\]
is bijective; this follows by induction along the central filtration.

Apply this to $\sigma_1\in \mathcal C_1$ on $V_-$.  Since $L_{\sigma_1}$ is surjective, choose $a_{1}\in V_-$ such that
\(
a_{1}\sigma_1a_{1}^{-1}\sigma_1^{-1}=v_{1}
\). 
Similarly, we have $a_{2}\in V_{+}$ such that $\sigma_2^{-1}a_{2}\sigma_2a_{2}^{-1}=v_{2}$.
Therefore, \[y=v_{1}\,\sigma_1\sigma_2\,v_{2}=(a_{1}\sigma_1a_{1}^{-1}\sigma_1^{-1})\sigma_1\sigma_2(\sigma_2^{-1}a_{2}\sigma_2a_{2}^{-1})=(a_{1}\sigma_1a_{1}^{-1})(a_{2}\sigma_2a_{2}^{-1}).\]
Both factors are conjugates of elements of $\mathcal C_1$. Since $\mathcal C_1$ consists of $HG_1$-classes of $f$ and $f^{-1}$, and $f$ is real in $\GG$ by Hypothesis~1, each factor lies in the full $\GG$-conjugacy class $C=f^\GG$. Therefore $y\in C^2$, and since $y$ is conjugate to $x$, we have $x\in C^2$ as well.
As $x\in \GG\setminus Z(\GG)$ was arbitrary, the inclusion
\[
C^2\supseteq \GG\setminus Z(\GG)
\]
follows.
\end{proof}

We also use the strengthened $h=1$ form: a torus component equal to one may
be prescribed when every Levi core that actually occurs, central cores
included, has already been proved to lie in $\mathcal C_1^2$.  This is the
only point at which the usual condition $H_1\ne Z(G_1)$ is bypassed.

\subsection{Cyclic matrix products}

For a monic polynomial $p\in K[X]$ with $p(0)\ne0$, set
\begin{equation}
 p^\vee(X)=p(0)^{-1}X^{\deg p}p(X^{-1}),\qquad
 p^*(X)=\overline{p(0)}^{-1}X^{\deg p}\overline{p(X^{-1})}.
 \label{eq:reciprocal}
\end{equation}
The reciprocal of a polynomial  $p(X)$ is defined by $X^{\deg p}p(X^{-1})$. A polynomial $p(X)$ is called reciprocal  if $p(X)=X^{\deg p}p(X^{-1})$. 

Recall that a matrix \(A \in M_n(K)\) is cyclic if there exists a vector \(v \in K^n\) whose orbit under the powers of \(A\) spans \(K^n\).
In particular, the companion matrix \(C(p)\) of every monic polynomial \(p(X)\) is always cyclic. 
We require the following exact matrix-product results.

Nielsen's linear theorem~\cite[Theorem 1.2, Theorem 1.4]{NielsenGL}: if two cyclic
$\operatorname{GL}_n(K)$-classes are prescribed and one is $(1,n-1)$-cyclic for $n\geq 2$, then their
product contains every nonscalar matrix with the product of the source
determinants.  An $(m,n-m)$-cyclic source has the same conclusion for
$n\ge4$, with $n\ge5$ when $K=\mathbb F_3$.  The relevant centralizer norm
maps correct every conjugator to the required special linear or unitary
determinant.

Lev's theorem ~\cite{Lev94,Lev99}: every nonscalar matrix $M \in\operatorname{GL}_n(F)$ can be written as a product of two cyclic matrices provided that at least one of them is triangularizable for $n \ge 3$, subject only to the obvious determinant condition. The same conclusion holds for \(n = 2\) if either one is diaglizable or two are triangularizable, also for simple projective special linear groups.

Nielsen's symplectic theorem~\cite[Theorem 1.1]{NielsenSp}: for $2n\ge4$, the
product of two cyclic strictly hyperbolic $\mathrm{Sp}_{2n}(K)$-classes contains
every nonscalar element if $K=\mathbb F_3$, or $2n\ge6$, or both classes
are triangularizable.  In dimension four it contains every $M$ with
$M^2\ne I$.

The unitary Schur-complement lift of the linear theorem yields a
cyclic strictly hyperbolic real $\mathrm{SU}_{2n}(q)$-class whose square contains
every nonscalar element.  Over $\mathbb F_4$ and $\mathbb F_9$ the uniform
construction begins at $2n=10$; all lower dimensions used below are
covered by the explicit $(1,n-1)$ or $(2,2)$ splittings in
Section~\ref{sec:unitary}. 
The details are provided in Section~\ref{sec:symplectic} and in
\ref{candidates/route_unitary_even_hyperbolic.md}, and
\ref{candidates/route_unitary_odd_character.md}.

\section{Complete coverage matrix}
\label{sec:coverage}

\subsection{Exact Ellers--Gordeev thresholds}

Ellers--Gordeev's conclusion in ~\cite{EG} is inclusive at $q\ge d$.  Their
complete untwisted and twisted threshold tables are

\begin{equation}
\begin{array}{c|rrrrrrrrrrr}
X&A_l&B_2&B_l\ (l>2)&C_l&D_{2l}&D_{2l+1}&E_6&E_7&E_8&F_4&G_2\\ \hline
d&2&4&7&4&5&4&7&5&7&8&7
\end{array}
\label{tab:EG-untwisted}
\end{equation}
and

\begin{equation}
\begin{array}{c|rrrrrrrr}
X&{}^2A_{2l-1}&{}^2A_{2l}&{}^2D_{l+1}&{}^2E_6&{}^3D_4&{}^2B_2&{}^2G_2&{}^2F_4\\ \hline
d&8&4&7&8&7&3&4&9.
\end{array}
\label{tab:EG-twisted}
\end{equation}

The table records exact one-class results, but its entries do not all arise
from one uniform construction. For Suzuki and Ree groups we use their admissible-parameter convention. For the families treated directly by the
relative-Gauss argument, Ellers--Gordeev use the displayed real element
\(f=hu\), with \(u\) the explicit regular subsystem element (regular unipotent
in cases  \cite[Lemma 5.3]{EG} and semisimple-times-unipotent in 
cases \cite[Lemma 5.4]{EG}) and \(h\) an explicit torus-root product; \cite[Proposition 5.1 and
Lemmas 5.2--5.6]{EG} prove
\((f^G)^2\supseteq G\setminus Z(G)\). The \(A_l\) entry incorporates
Brenner's earlier exact class, the \(G_2\) entry uses a regular semisimple
element together with their earlier theorem, and the low-rank twisted entries
invoke the cited finite-twisted Gauss results. In each case the cited result
is a same-class square theorem, so
the thresholds may legitimately be used to remove parameters from the
present residue.

Restricting to \(q\in\{2,3,4,5,7,8\}\), the genuinely remaining unbounded
classical families before the new matrix-product inputs are therefore:

\begin{itemize}
\tightlist
\item
  \(B_l(q)\), \(l>2\), only \(q=2,3,4,5\); the even fields are isomorphic to
  the corresponding \(C_l(q)\), leaving the odd fields \(q=3,5\);
\item
  \(C_l(q)\), only \(q=2,3\);
\item
  \(D_{2l}(q)\), only \(q=2,3,4\), and \(D_{2l+1}(q)\), only \(q=2,3\);
\item
  \({}^2D_{l+1}(q)\), only \(q=2,3,4,5\);
\item
  even-dimensional unitary \({}^2A_{2l-1}(q^2)\), only
 \(q=2,3,4,5,7\);
\item
  odd-dimensional unitary \({}^2A_{2l}(q^2)\), only \(q=2,3\).
\end{itemize}

The genuinely remaining exceptional list is
\begin{equation}
\label{tb:3}
\begin{array}{c|l}
E_6&q=2,3,4,5\\
{}^2E_6&q=2,3,4,5,7\\
E_7&q=2,3,4\\
E_8&q=2,3,4,5\\
F_4&q=2,3,4,5,7.
\end{array}                                                   
\end{equation}
The relative-rank-at-most-two families \(G_2,{}^3D_4,{}^{2}B_2,{}^{2}G_2,
{}^2F_4\) have the separate exact one-class theorem of Guralnick--Malle \cite[Theorem 7.1, Theorem 7.3 and Remark 7.4]{GM},
including \({}^2F_4(8)\), so they do not remain in \eqref{tb:3}.

\subsection{Ellers--Gordeev representatives}
In this section, we list the representatives used in the rows taken from Ellers--Gordeev, pp.~3665--3670 in \cite{EG}.
We use their standard coordinate-root notation and write \(x_\alpha(a)\) and \(h_\alpha(a)\) for the Chevalley root
and torus elements. 
For an ordered root list
$\Delta=(\alpha_1,\ldots,\alpha_r)$, put\[
                 u_\Delta=x_{\alpha_1}(1)\cdots x_{\alpha_r}(1).
\]
This fixes a particular standard regular unipotent element in the subsystem
generated by \(\Delta\). Empty root lists are allowed and their products
are one. All classes below mean
the conjugacy class of the image of the displayed element in the finite
simple quotient.

Choose a generator \(s\) of \(\mathbb F_q^\times\). The inherited
classical representatives are as follows.

\begin{enumerate}
\tightlist
\item
  For \(B_l(q)\), \(l>2\), \(q\geq 7\), let
\begin{equation}
  \Delta_2=(\epsilon_3-\epsilon_4,\ldots,
             \epsilon_{l-1}-\epsilon_l),\quad
  u=h_{\epsilon_1-\epsilon_2}(s)u_{\Delta_2},\quad
  h=\prod_{i=1}^l h_{\epsilon_i}(s),\quad f=hu.
  \label{eq:Bn}
  \end{equation}
\item
  For \(D_n(q)\) with \(n\) even,  \(n\geq 4\), \(q\geq 5\), let
\begin{equation}
  \Delta_2=(\epsilon_3-\epsilon_4,\ldots,
             \epsilon_{n-1}-\epsilon_n),\quad
  u=h_{\epsilon_1-\epsilon_2}(s)u_{\Delta_2},\quad
  h=\prod_{j=1}^{n/2}h_{\epsilon_{2j-1}+\epsilon_{2j}}(s),\quad f=hu.
  \end{equation}
\item
  For \(D_n(q)\) with \(n\) odd, \(n\geq 5\), \(q\geq 4\), let
\begin{equation}
  \Delta_1=(\epsilon_2-\epsilon_3,\ldots,
             \epsilon_{n-1}-\epsilon_n),\quad
  u=u_{\Delta_1},\quad
  h=\prod_{j=1}^{(n-1)/2}h_{\epsilon_{2j}+\epsilon_{2j+1}}(s),\quad f=hu.
  \end{equation}
\item
  For \({}^2D_{l+1}(q)\), \(q\geq 7\), in the standard relative \(B_l\)-root system,
  put
 \begin{equation}
  \Delta_1=(e_1-e_2,\ldots,e_{l-1}-e_l),\quad
  u=u_{\Delta_1},\quad h=\prod_{i=1}^l h_{e_i}(s),\quad f=hu.
  \end{equation}
\item
  For \({}^2A_{2l-1}(q)\), \(l\ge2\), \(q\geq 8\), let
 \begin{equation}
  \Delta_2=(e_3-e_4,\ldots,e_{l-1}-e_l),\quad
  u=h_{e_1-e_2}(s)u_{\Delta_2},\quad
  h=\prod_{i=1}^l h_{2e_i}(s^2),\quad f=hu.
  \end{equation}
\item
  For \({}^2A_{2l}(q)\), \(q\geq 4\),  if \(l=1\), take
  \(f=\operatorname{diag}(s,1,s^{-1})\). If \(l>1\), let
 \begin{equation}
  \Delta_1=(e_1-e_2,\ldots,e_{l-1}-e_l),\quad
  u=u_{\Delta_1},\quad h=\prod_{i=1}^l h_{e_i}(s),\quad f=hu.
  \end{equation}
\end{enumerate}

For the inherited exceptional rows, use the following fixed representatives.
The coordinate realization is precisely that of Ellers--Gordeev.

\begin{enumerate}[resume]
\tightlist
\item
  In \(E_6(q)\), \(q\geq 7\), set
\begin{equation}
  \begin{split}
  \Delta_1&=(\epsilon_3-\epsilon_2,\epsilon_4-\epsilon_3,
              \epsilon_5-\epsilon_4),\\
  \beta&=\tfrac12(\epsilon_8-\epsilon_7-\epsilon_6+
             \epsilon_1+\epsilon_2+\epsilon_3+\epsilon_4+\epsilon_5),\\
  \gamma&=\tfrac12(\epsilon_8-\epsilon_7-\epsilon_6+
             \epsilon_1-\epsilon_2-\epsilon_3-\epsilon_4-\epsilon_5),\\
  f&=h_\beta(s^2)h_\gamma(s)u_{\Delta_1}.
  \end{split}                                                  
  \end{equation}
  where \(s^2\ne s^{\pm1}\) and \(s^4\ne1\), exactly the
  source restrictions in the paper.
\item
  In \(E_7(q)\), \(q\geq 5\), put
 \begin{equation}
  \Delta_1=(\epsilon_2-\epsilon_1,\ldots,
             \epsilon_6-\epsilon_5),\quad
  f=h_{\epsilon_8-\epsilon_7}(s)h_{\epsilon_1+\epsilon_2}(s)
     h_{\epsilon_3+\epsilon_4}(s)h_{\epsilon_5+\epsilon_6}(s)u_{\Delta_1}.
  \end{equation}
\item
  In \(E_8(q)\), \(q\geq 7\),  put
\begin{equation}
  \begin{split}
  \Delta_1&=(\epsilon_2-\epsilon_1,\ldots,
              \epsilon_7-\epsilon_6),\\
  \alpha_0&=\tfrac12(\epsilon_1+\cdots+\epsilon_8),\\
  f&=h_{\epsilon_8-\epsilon_7}(s^2)h_{\epsilon_8+\epsilon_7}(s^2)
       h_{\alpha_0}(s)u_{\Delta_1}.
  \end{split}                                                  
 \end{equation}
\item
  In \({}^2E_6(q)\), \(q\geq 8\), use its relative \(F_4\)-root system, set
  \(\beta_0=\tfrac12(\epsilon_1+\epsilon_2+\epsilon_3+\epsilon_4)\),
  \(\Delta_1=(\epsilon_2-\epsilon_3,\epsilon_3-\epsilon_4)\), and take
  \begin{equation}
  \label{eq:2E6}
  f=h_{\beta_0}(s)h_{\epsilon_2}(s)h_{\epsilon_3}(s)
       h_{\epsilon_4}(s)u_{\Delta_1}.                        
  \end{equation}
\end{enumerate}

Ellers--Gordeev \cite[Section 5, pp.~3665--3670]{EG} verify for each applicable threshold that the displayed
\(f\) is real and that \((f^G)^2\) contains every noncentral element of the
universal group. Reality includes the identity, and central projection
therefore gives \((\bar f^{G/Z(G)})^2=G/Z(G)\). Equations \eqref{eq:Bn}--\eqref{eq:2E6}
are thus exact representatives of conjugacy classes.

\subsection{\texorpdfstring{Classes for \(\mathrm{PSL}_3\) and \(\mathrm{PSU}_3\)}{rank2_exact_class_manifest.md}}
\label{candidates/rank2_exact_class_manifest.md}

This section fixes concrete representatives. Here \(C_A\) denotes the
conjugacy class of the projective image of a matrix \(A\). All parameters are
the standard ones: \(\mathrm{PSU}_3(q)={}^2A_2(q^2)\) in the notation of
Ellers--Gordeev \cite{EG}.

\subsubsection{The linear groups}\label{the-linear-groups}

For \(q\geq4\), choose \(\delta\in\mathbb F_q^\times\) with
\(\delta\ne\pm1\), and in the manifest below take \(\delta\) primitive. Put
\begin{equation}
\label{eq:PSLmatrix}
 A_q=\operatorname{diag}(1,\delta,\delta^{-1})\in\operatorname{SL}_3(q).
\end{equation}
For the two smaller fields let \(A_q\) be a companion matrix with the displayed
characteristic/minimal polynomial:
\begin{equation}
\label{eq:PSLpoly}
 \begin{array}{c|c}
 q&p_q(X)\\ \hline
 2&(X+1)(X^2+X+1),\\
 3&(X-1)(X^2+1).
 \end{array}                                          
\end{equation}
Equivalently, because the factors in \eqref{eq:PSLpoly} are coprime, one may take
\(A_2=1\oplus\operatorname{Comp}(X^2+X+1)\) and
\(A_3=1\oplus\operatorname{Comp}(X^2+1)\). Every \(A_q\) has determinant one.
Its invariant polynomial is reciprocal, so \(A_q\) is similar to \(A_q^{-1}\).
The relevant \( GL _3(q)\)-class is a single
\(\mathrm{SL}_3(q)\)-class: for \eqref{eq:PSLmatrix} the diagonal centralizer has
determinant surjective image, while for \eqref{eq:PSLpoly} the centralizer is
\(\mathbb F_q^\times\times\mathbb F_{q^2}^\times\), whose determinant map is
surjective. Thus \(C_{A_q}\) is real in \(\mathrm{PSL}_3(q)\).

Recall that a matrix $A$ is called cyclic if its minimal polynomial equals its characteristic polynomial.
The published product theorem is Lev's cyclic-class theorem \cite[Theorem 3]{Lev99}. For \(q\geq4\),
Lev's theorem quoted verbatim in Ellers--Gordeev \cite[Section~5, Theorem]{EG}, says that the product of the two prescribed cyclic similarity classes contains every
nonscalar matrix having the required determinant. Taking both classes to be
the class of \eqref{eq:PSLmatrix}, and using the determinant-surjective centralizer just noted,
gives every noncentral element of \(\mathrm{SL}_3(q)\) as a product of two
members of the same \(\mathrm{SL}_3(q)\)-class. Theorem 3 of Lev  in \cite[Theorem 3]{Lev99} handles all fields; 
its exact self-reciprocal polynomials are restated in Nielsen \cite[Corollary 1.5]{NielsenGL}. 
At \(n=3\), the \(\mathbb F_2\) and
\(\mathbb F_3\) formulas specialize exactly to \eqref{eq:PSLpoly}. Reality supplies the
identity. Consequently
\[
 C_{A_q}^{2}=\mathrm{PSL}_3(q)
 \qquad(q=2,3,4,5,7,8).                                   
\]

For identification only, not as the proof, installed CTblLib gives:
\begin{equation}
\label{eq:PSL3}
\begin{array}{c|c|c|c}
q&\text{class label}&|A_q|&|C_G(A_q)|\\ \hline
2&3\mathrm A&3&3\\
3&4\mathrm A&4&8\\
4&3\mathrm A&3&9\\
5&4\mathrm C&4&16\\
7&6\mathrm A&6&12\\
8&7\mathrm I,7\mathrm J,\text{ or }7\mathrm K&7&49
\end{array}                                                 
\end{equation}
For \(q=8\), the six choices of primitive \(\delta\) give the three real
classes in \eqref{eq:PSL3}, paired by \(\delta\leftrightarrow\delta^{-1}\); any one is the
required class. Exact Frobenius coefficients in every row of \eqref{eq:PSL3} are positive,
which is only a supplementary check.

\subsubsection{The three-dimensional unitary groups}\label{the-three-dimensional-unitary-groups}

Let \(K=\mathbb F_{q^2}\), let \(\bar a=a^q\), and realize the hermitian form by
the antidiagonal Gram matrix \(J\). Choose a generator
\(t\in\mathbb F_q^\times\) and put
\[
 H_q=\operatorname{diag}(t,1,t^{-1})\in SU _3(q).
\]
Then \(\bar H_q^{T}JH_q=J\), \(\det H_q=1\), and, for \(q\geq4\), its three
eigenvalues are distinct. Hence it is regular semisimple. It is explicitly
real: if \(w\) swaps the first and third basis vectors, then
\(wH_qw^{-1}=H_q^{-1}\); in even characteristic \(w\in SU _3(q)\), and
in odd characteristic \(\operatorname{diag}(1,-1,1)w\in SU _3(q)\)
has the same conjugating action.

This is exactly the representative in Ellers--Gordeev \cite[Section 5]{EG}, case
\({}^2A_{2l}(q^2)\) with \(l=1\), and apply 
\cite[Theorem 1]{EG}. Taking the same class for its two regular semisimple gives all noncentral elements upstairs; passage to the simple quotient and reality give
\[
 C_{H_q}^{2}=\mathrm{PSU}_3(q)
 \qquad(q=4,5,7,8).                                        
\]

Again only for identification, CTblLib records:
\begin{equation}
\label{eq:PSU3}
\begin{array}{c|c|c|c}
q&\text{class label}&|H_q|&|C_G(H_q)|\\ \hline
4&3\mathrm A&3&15\\
5&4\mathrm A&4&8\\
7&6\mathrm A&6&48\\
8&7\mathrm A,7\mathrm B,\text{ or }7\mathrm C&7&21
\end{array}                                                  
\end{equation}
For \(q=8\), generator choices give the three labels in \eqref{eq:PSU3}; each is real and
each has positive square coefficient on every target class.

The linear representatives are
specified by invariant polynomials and are covered by Lev's published uniform
cyclic-class theorem \cite[Theorem 3]{Lev99}; the unitary representatives are the literal matrices used
in the Ellers--Gordeev proof \cite[Section 5]{EG}. CTblLib labels merely identify and independently
check these theorem-defined classes.

\subsection{Complete coverage matrix}
In the following tables, EG means tables~\eqref{tab:EG-untwisted}--\eqref{tab:EG-twisted},
RG means Lemma~\ref{lem:relative-gauss} with all hypotheses checked, N means
the cyclic matrix-product inputs, and F means the Frobenius argument
\eqref{eq:Fx}--\eqref{eq:column}.

\begin{table}[p]
\caption{Classical groups.}\label{tab:classical}
\begin{tabularx}{\textwidth}{@{}lp{1.5cm}YYl}
\toprule
Type/simple group&Rank&EG fields&Residual fields&Source and proof\\ \midrule
$A_l=\mathrm{PSL}_{l+1}$&$l\ge1$&$2,3,4,5,7,8$&none&Section~\ref{sec:linear} (Lev--Nielsen)\\
${}^2A_{2m}=\operatorname{PSU}_{2m+1}$&$m\ge1$&$4,5,7,8$&$2,3$&Section~\ref{sec:unitary}, RG+N\\
${}^2A_{2m-1}=\operatorname{PSU}_{2m}$&$m\ge2$&$8$&$2,3,4,5,7$&Section~\ref{sec:unitary}, N/RG\\
$B_l=\operatorname{P\Omega}_{2l+1}$&$l>2$&$7,8$&$3,5$&Section~\ref{sec:orthogonal}, RG+N\\
$C_l=\operatorname{PSp}_{2l}$&$l\ge2$&$4,5,7,8$&$2,3$&Section~\ref{sec:symplectic}, N\\
$D_{2r}=\mathrm{P\Omega}^+_{4r}$&$r\ge2$&$5,7,8$&$2,3,4$&Section~\ref{sec:orthogonal}, RG+N\\
$D_{2r+1}=\mathrm{P\Omega}^+_{4r+2}$&$r\ge2$&$4,5,7,8$&$2,3$&Section~\ref{sec:orthogonal}, RG+N\\
${}^2D_n=\mathrm{P\Omega}^-_{2n}$&$n\ge4$&$7,8$&$2,3,4,5$&Section~\ref{sec:minus}, RG+N\\
\bottomrule
\end{tabularx}
\end{table}

In even characteristic $B_l(q)\cong C_l(q)$; hence $q=2$ is covered by
the symplectic source and $q=4,8$ by the $C_l$ threshold.  For every
$\mathrm{PSL}_3(q)$ use the real Lev class: for $q\ge4$ it is represented by
$\operatorname{diag}(1,\delta,\delta^{-1})$ with $\delta$ primitive, and
for $q=2,3$ by $(X+1)(X^2+X+1)$ and $(X-1)(X^2+1)$.  For $\operatorname{PSU}_3(q)$,
$q\ge4$, use the image of $\operatorname{diag}(t,1,t^{-1})$ with $t$ a
generator of $\mathbb F_q^\times$; for $\operatorname{PSU}_3(3)$ use the class
\texttt{U3(3):3b}.  Detailed treatments are given in Section \ref{candidates/rank2_exact_class_manifest.md}.

\begin{table}[p]
\caption{Exceptional and very twisted groups.}\label{tab:exceptional}
\begin{tabularx}{\textwidth}{@{}lp{2.5cm}Yl@{}}
\toprule
Family&Fields in $\cQ$&Chosen class&Proof\\ \midrule
$G_2$&$3,4,5,7,8$&generator of torus $(q^2+q+1)/(3,q-1)$&GM\\
${}^3D_4$&$2,3,4,5,7,8$&generator of torus $q^4-q^2+1$&GM\\
${}^2B_2$&$8$&\texttt{Sz(8):5a}&GM/table\\
${}^2G_2$&$3'$&\texttt{L2(8):7a}&$A_1$/table\\
${}^2F_4$&$2',8$&\texttt{13a}; order-$109$ torus&GM/table\\ \midrule
$F_4$&$2$&\texttt{F4(2):13a}&table\\
$F_4$&$3,4,5,7,8$&$\Phi_{12}(q)$ Coxeter generator&F\\
$E_6$&$2$&\texttt{E6(2):13a}&table\\
$E_6$&$3,5$&orders $73,601$ in Coxeter torus&F\\
$E_6$&$4$&$\bar f_s$ of order $255$&RG+N\\
$E_6$&$7,8$&Ellers--Gordeev $f_{E_6}(q)$&EG\\
${}^2E_6$&$2$&\texttt{2E6(2):13a}&table\\
${}^2E_6$&$3,7$&orders $73,2353$ in twisted Coxeter torus&F\\
${}^2E_6$&$4$&$f_t$ of order $60$&RG+N\\
${}^2E_6$&$5$&order-$601$ image in simple quotient&F\\
${}^2E_6$&$8$&Ellers--Gordeev $f_{{}^2E_6}(8)$&EG\\
$E_7$&$2,3,4$&the $A_6$-subsystem matrices below&RG+N\\
$E_7$&$5,7,8$&Ellers--Gordeev $f_{E_7}(q)$&EG\\
$E_8$&$2$&order-$331$ $\Phi_{30}$ Coxeter generator&F\\
$E_8$&$3,4,5$&the $A_7$-subsystem matrices below&RG+N\\
$E_8$&$7$&order-$6568801$ $\Phi_{30}$ Coxeter generator&F\\
$E_8$&$8$&Ellers--Gordeev $f_{E_8}(8)$&EG\\
\bottomrule
\end{tabularx}
\end{table}

Here GM denotes Guralnick--Malle, Theorems~7.1 and 7.3 and
Remark~7.4 in ~\cite{GM}, only in the exact real-class cases stated there.  We never
promote a non-real class whose square contains only $G\setminus\{1\}$ to a
Thompson class.  For ${}^2F_4(8)$ the torus order is $109$.  In $Sz(8)$ we
choose class \texttt{5a}, of order and centralizer $5$; its inverse map and
full square are certified exactly.

\FloatBarrier

The following identifications ensure that Tables~\ref{tab:classical} and
\ref{tab:exceptional} neither omit nor double-count a simple group.

$A_1=B_1=C_1$, $\mathrm{Sp}_2(q)=\operatorname{SL}_2(q)$, and
$\operatorname{PSU}_2(q)\cong\operatorname{PSL}_2(q)$.  The latter is simple except for $q=2,3$.
Also $\mathrm{PSL}_2(4)\cong\operatorname{PSL}_2(5)\cong A_5$,
$\mathrm{PSL}_3(2)\cong\operatorname{PSL}_2(7)$, and $\mathrm{PSL}_4(2)\cong A_8$.

$B_2=C_2$ and $\operatorname{P\Omega}_5(q)\cong \operatorname{PSp}_4(q)$.  The nonsimple
$\operatorname{PSp}_4(2)\cong S_6$ has derived group $A_6$, for which class
\texttt{A6:5a} works.  Moreover $\operatorname{PSp}_4(3)\cong \operatorname{PSU}_4(2)$.

$D_2=A_1A_1$ and gives no new simple group; $D_3=A_3$, with
$\operatorname{P\Omega}_6^+(q)\cong\operatorname{PSL}_4(q)$.  Similarly
$\operatorname{P\Omega}_4^-(q)\cong\operatorname{PSL}_2(q^2)$ and
$\operatorname{P\Omega}_6^-(q)\cong \operatorname{PSU}_4(q)$.

$\operatorname{PSU}_n(q)$ is simple for $n\ge3$ except $\operatorname{PSU}_3(2)$; the simple
$\operatorname{PSU}_4(2)$ is $\operatorname{PSp}_4(3)$.

$G_2(2)'\cong \operatorname{PSU}_3(3)$ uses \texttt{U3(3):3b};
${}^2G_2(3)'\cong\operatorname{PSL}_2(8)$ uses \texttt{L2(8):7a}; and the Tits group
${}^2F_4(2)'$ uses \texttt{2F4(2)':13a}.  The group ${}^2B_2(2)$ is
solvable.

\section{Linear and symplectic groups}
\label{sec:classical-sources}

\subsection{Linear groups}
\label{sec:linear}

We begin with the case of type \(A_{l}\), for which the required
covering classes are known by a theorem of Lev. We recall the result here. Moreover, the proof in \cite{Lev99} provides
explicit representatives, which can be described using companion matrices.

\begin{theorem}[{\cite[Theorem~3]{Lev99}}]\label{typeA}
Let \(G=\mathrm{PSL}_N(q)\), where \((N,q)\neq(2,2),(2,3)\).
Then \(G\) contains a conjugacy class \(C\) such that
\[
C^2=G.
\]
\end{theorem}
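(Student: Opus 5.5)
The plan is to pass to $\mathrm{SL}_N(q)$, choose a real cyclic matrix $A$ with determinant one, and apply Lev's theorem on products of two cyclic similarity classes to $A$ and $A^{-1}$, so that the class of the image of $A$ squares onto all of $\mathrm{PSL}_N(q)$.

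\textbf{Choice of source.} Take $A=\operatorname{Comp}(p)$, where $p$ is monic of degree $N$ with $p(0)=(-1)^N$ and $p=p^\vee$. Such a $p$ gives $\det A=1$. Since $p$ is self-reciprocal, $A$ is cyclic and similar to $A^{-1}$. When $q\ge4$ we can take $p$ to be a product of distinct linear factors $(X-\delta)(X-\delta^{-1})$, adding a factor $(X-1)$ when $N$ is odd. This makes $A$ diagonalizable and triangularizable. When $q=2,3$ we use Lev's explicit self-reciprocal polynomials, restated in \cite[Corollary 1.5]{NielsenGL}; for $N=3$ these specialize to \eqref{eq:PSLpoly}. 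In all cases we want the centralizer $K[A]^\times$ to have surjective determinant onto $\mathbb F_q^\times$. Then the $\mathrm{GL}_N(q)$-class of $A$ is a single $\mathrm{SL}_N(q)$-class, and that class is real.

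\textbf{Covering noncentral elements.} Let $M\in\mathrm{SL}_N(q)$ be nonscalar. Lev's theorem \cite{Lev94,Lev99}, in the triangularizable form for $N\ge3$ and the diagonalizable form for $N=2$, writes $M=BC$ with $B,C$ each $\mathrm{GL}_N(q)$-conjugate to $A$.

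The conjugators must now be moved into $\mathrm{SL}_N(q)$. Write $B=gAg^{-1}$ and $C=kAk^{-1}$. Since $\det$ is surjective on the centralizer of $A$, replacing $g$ by $gz$ with $z\in C_{\mathrm{GL}}(A)$ puts $g$ in $\mathrm{SL}_N(q)$ without changing $B$, and likewise for $k$. Hence every nonscalar element of $\mathrm{SL}_N(q)$ lies in $(A^{\mathrm{SL}})^2$. Projecting to $\mathrm{PSL}_N(q)$, every nonidentity element is covered, because its preimages are nonscalar. The identity is $AA^{-1}$ and lies in $C^2$ by reality.

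\textbf{Main obstacle.} The hard step is the small fields $q=2,3$, and the case $N=2$. There, Lev's theorem needs care, and the determinant-surjectivity of $K[A]^\times$ must be checked for the specific polynomial. For $\mathbb F_2$ the surjectivity condition is automatic, so only the existence of a suitable cyclic self-reciprocal class needs checking. For $\mathbb F_3$ I would choose $p$ to contain an irreducible quadratic factor such as $X^2+1$. The field norm from $\mathbb F_9^\times$ onto $\mathbb F_3^\times$ is surjective, so $\det$ is surjective on the corresponding unit group. The remaining factors of $p$ are then chosen self-reciprocal with constant term fixed so that $p(0)=(-1)^N$. For $N=2$ with $q\ge4$ we use $\operatorname{diag}(\delta,\delta^{-1})$ and the diagonalizable $N=2$ case. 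The excluded pairs $(2,2)$ and $(2,3)$ are exactly those where $\mathrm{PSL}_2$ is not simple.
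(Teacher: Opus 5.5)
Your mechanism is the paper's: a determinant-one companion matrix, a $\mathrm{GL}_N(q)$ cyclic-class product theorem, a centralizer with surjective determinant so that the $\mathrm{GL}_N(q)$-class is a single $\mathrm{SL}_N(q)$-class, and reciprocity for reality. For $q\ge4$ it works after one repair. There are no $N$ distinct linear factors closed under inversion once $N\ge q$; this already fails at $(N,q)=(4,4)$. So drop ``distinct'' and ``diagonalizable'' and take the paper's polynomial $(X-1)^{N-2}(X-\delta)(X-\delta^{-1})$. Its companion matrix is still split, so your triangularizable form of Lev applies. It is $(1,N-1)$-cyclic, so Nielsen's Theorem 1.2 applies; this is what the paper actually uses. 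It is self-reciprocal, and an arbitrary scalar on the $\delta$-eigenline makes the centralizer determinant surjective. Diagonalizability is needed only for $N=2$.

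The genuine gap is $q=2,3$, which you leave open.

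\emph{No product theorem is identified.} Your sources there (Lev's polynomials, or anything containing $X^2+1$) are not split, so the triangularizable form you cite in the covering step does not apply to them. You name no theorem that does. The paper uses Nielsen's nonprimary theorems: Theorem 1.2 for a $(1,N-1)$ coprime splitting, and Theorem 1.4 for an $(m,N-m)$ splitting with $N\ge4$ ($N\ge5$ over $\mathbb F_3$). Each polynomial in its table is designed to have such a splitting, e.g. a reciprocal pair of irreducible cubics when $N$ is even.

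\emph{At $(N,q)=(4,3)$ your requirements cannot all hold.} You want $p=p^\vee$, $\det C(p)=1$, surjective centralizer determinant, and an applicable product theorem. No source meets all four.
\begin{enumerate}
\item Suppose $p=p^\vee$ has a simple root $a\in\{\pm1\}$ with coprime cubic cofactor. The other roots form an inversion-stable multiset of odd size avoiding $a$, hence $\{-a,r,r^{-1}\}$. Then $\det C(p)=-a^2=-1$.
\item The $(2,2)$ version of Nielsen's theorem is unavailable over $\mathbb F_3$ in dimension four. So your $X^2+1$ recipe, which yields for instance $(X^2+1)(X-1)^2$, is not covered.
\item A split cyclic determinant-one class has characteristic polynomial $(X-1)^a(X+1)^b$ with $a,b$ even. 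Its centralizer determinant is $u_0^a v_0^b$ with $u_0,v_0\in\{\pm1\}$, which is trivial. So its $\mathrm{GL}_4(3)$-class splits in $\mathrm{SL}_4(3)$.
\end{enumerate}

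The paper's way out is to give up literal reciprocity. The polynomial $p_{4,3}=(X^2-1)(X^2+X-1)$ is $(1,3)$-cyclic with determinant one and satisfies $p_{4,3}^\vee(X)=p_{4,3}(-X)$. Hence $C(p_{4,3})^{-1}$ is $\mathrm{SL}_4(3)$-conjugate to $-C(p_{4,3})$, and the class is real only in $\mathrm{PSL}_4(3)$, using that $-I$ is central.

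Likewise $(N,q)=(4,2)$ lies outside Lev's uniform families. The paper handles it with the $(2,2)$-cyclic source $(X+1)^2(X^2+X+1)$ via Theorem 1.4.
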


\begin{proof}
Let $N=l+1$, and  let \(C(p)\) denote the companion matrix of a monic polynomial \(p\) of degree \(N\).  For each case below,
the chosen polynomial gives an element of \(\operatorname{SL}_N(q)\), and we let \(\Omega_N(q)\) is the conjugacy class in \(\mathrm{PSL}_N(q)\) of the image of \(C(p)\). 
The polynomial \(p\), and hence the corresponding conjugacy class,
is completely explicit.

Choose \(\delta\in\mathbb F_q^\times\setminus\{\pm1\}\) when \(q\geq4\)
(a primitive element works for \(q=4,5,7,8\)). Write \(N=2t+1\) or
\(N=2t+6\) where appropriate. Take
\begin{equation}
\label{eq:PSLnpoly}
\begin{array}{c|c|c}
\text{parameter}&\text{dimension}&p_{N,q}(X)\\ \hline
q\geq4&N\geq2&(X-1)^{N-2}(X-\delta)(X-\delta^{-1})\\
q=3&N=2t+1\geq3&(X-1)(X^2+1)^t\\
q=3&N=2t+6\geq6&(X^2+1)^t(X^3-X+1)(X^3-X^2+1)\\
q=3&N=4&(X^2-1)(X^2+X-1)\\
q=2&N=2t+1\geq3&(X+1)(X^2+X+1)^t\\
q=2&N=2t+6\geq6&(X+1)^{2t}(X^3+X^2+1)(X^3+X+1)\\
q=2&N=4&(X+1)^2(X^2+X+1).
\end{array}                                                  
\end{equation}
The two excluded pairs \((N,q)=(2,2),(2,3)\) do not define non-abelian
simple groups.

Every polynomial in \eqref{eq:PSLnpoly} is monic and the companion matrix has determinant 
\((-1)^Np_{N,q}(0)=1\), so \(C(p_{N,q})\in \operatorname{SL}_N(q)\). Each companion
class is cyclic. The displayed coprime factors show that it is
\((1,N-1)\)-cyclic in the first, second, fourth, and fifth rows,
\((3,N-3)\)-cyclic in the third and sixth rows, and \((2,2)\)-cyclic in
the last row. Nielsen's Theorem \cite[Theorem 1.2]{NielsenGL} applies to the \((1,N-1)\) rows;
\cite[Theorem 1.4]{NielsenGL} applies to the remaining rows. (Its extra condition over
\(\mathbb F_3\) is satisfied because the third row has \(N\geq6\).)
Thus the square of the corresponding cyclic \(\operatorname{GL}_N(q)\)-class contains
every nonscalar matrix of determinant one.

This is genuinely one \(\operatorname{SL}_N(q)\)-class. The determinant map on the
\(\operatorname{GL}_N(q)\)-centralizer of the companion matrix is surjective: use an
arbitrary scalar on a one-dimensional primary summand in the first, second,
fourth, and fifth rows; use \(-1\) on one three-dimensional primary summand
in the third row; and note that the determinant group is trivial when
\(q=2\). Hence every \(\operatorname{GL}_N(q)\)-conjugator can be corrected by a
centralizer element to have determinant one.

Except in the fourth row, \(p_{N,q}=p_{N,q}^{\vee}\), so the companion
matrix is \(\operatorname{SL}_N(q)\)-conjugate to its inverse by the same determinant
correction. In the fourth row\[
 p_{4,3}^{\vee}(X)=p_{4,3}(-X),
\]
so \(C(p_{4,3})\) is \(\operatorname{SL}_4(3)\)-conjugate to
\(-C(p_{4,3})^{-1}\). Since \(-I\) is central, the projective class is
real in this case as well. Consequently the identity of \(\mathrm{PSL}_N(q)\) lies
in \(\Omega_N(q)^2\), while every nonidentity projective element has a
nonscalar determinant-one lift and is covered by the preceding product
theorem. Therefore
\[
                        \Omega_N(q)^2=\mathrm{PSL}_N(q).      
\]
\end{proof}

Four uniform polynomial families in \eqref{eq:PSLnpoly} are Lev's Theorem \cite[Theorem 3]{Lev99} as restated
by Nielsen in \cite[Corollary 1.5]{NielsenGL}. In the even \(q=3\) row we instead use the displayed reciprocal pair of irreducible cubics; this gives the same direct
application of Nielsen's Theorem \cite[Theorem 1.4]{NielsenGL} and makes the centralizer-determinant
argument transparent. The two omitted dimension-four boundaries follow
directly from Nielsen's Theorems \cite[Theorem 1.2 and Theorem 1.4]{NielsenGL} as above.
The exact character-table checks \texttt{L4(2):6B} and \texttt{L4(3):8A} independently
certify these two boundary classes.

\subsection{Symplectic groups}
\label{sec:symplectic}
We next recall a product theorem for symplectic groups due to Nielsen. 
This result describes when products of suitable semisimple
conjugacy classes cover all nonscalar elements, and it will be used to
construct covering classes in the relevant finite classical groups.

\begin{theorem}[{\cite[Theorem 1.1]{NielsenSp}}]\label{Symp}
Let \(2n\ge4\), and let \(\Omega_1,\Omega_2\) be cyclic strictly
hyperbolic classes of \(\mathrm{Sp}_{2n}(K)\). Their product contains every
nonscalar element if \(K=\mathbb F_3\), either \(2n\ge6\), or both classes
are triangularizable. If \(2n=4\), it always contains every \(M\) with
\(M^2\ne I\). Consequently, \(\operatorname{PSp}_{2n}(K)\) contains a conjugacy class \(C\) such that \(C^2 = \operatorname{PSp}_{2n}(K)\).
\end{theorem}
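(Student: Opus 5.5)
The first part of Theorem~\ref{Symp} is Nielsen's published result, so I will quote it. What needs an argument is the ``Consequently'' clause. The plan mirrors the proof of Theorem~\ref{typeA}:
\begin{enumerate}
\item choose an explicit cyclic strictly hyperbolic class of $\mathrm{Sp}_{2n}(q)$ that is real;
\item take $\Omega_1=\Omega_2$ equal to it;
\item pass to $\operatorname{PSp}_{2n}(q)$.
\end{enumerate}

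\textbf{Choosing the source.} I take $A=B\oplus B^{-T}$ on a hyperbolic splitting $V=W\oplus W^*$ with $W$ Lagrangian, where $B=C(p)\in\mathrm{GL}_n(q)$ is a companion matrix. The polynomial $p$ should satisfy three conditions.
\begin{itemize}
\item $p$ and $p^\vee$ are coprime. Then the characteristic polynomial $pp^\vee$ of $A$ has each eigenvalue distinct from its inverse, so $A$ is cyclic and strictly hyperbolic in Nielsen's sense.
\item $p$ splits into linear factors wherever possible, so that the class is triangularizable. This matters in the case $2n=4$, $K\ne\mathbb F_3$.
\item The class is real, i.e.\ $A^{-1}$ is $\mathrm{Sp}_{2n}(q)$-conjugate to $A$.
\end{itemize}
Reality is the point that needs care. $A^{-1}=B^{-1}\oplus B^{T}$ has the same characteristic polynomial $pp^\vee$, so $A$ and $A^{-1}$ are $\mathrm{GL}_{2n}$-similar, being cyclic. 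Moreover, the swap map $W\leftrightarrow W^*$, suitably signed, is symplectic and interchanges the two blocks. Since the $\mathrm{Sp}$-class of a cyclic semisimple hyperbolic element is determined by its characteristic polynomial, $A$ is $\mathrm{Sp}_{2n}(q)$-conjugate to $A^{-1}$.

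\textbf{Small fields.} For $q\ge4$ I take $p=(X-\delta)^n$, or a product of distinct linear factors $X-\delta_i$ with $\delta_i\ne\delta_j^{-1}$; such factors exist once $|\mathbb F_q^\times|\ge3$. For $q=2,3$ no linear factor avoids $\pm1$, so $p$ must be built from irreducibles $f$ with $f\ne f^\vee$. Over $\mathbb F_2$ I would use $X^3+X+1$ and its reciprocal, or repeated copies of one such irreducible, to reach degree $n$. Over $\mathbb F_3$, $X^2+X-1$ works, since its reciprocal is $X^2-X-1$. Triangularizability fails in these cases, but Nielsen's hypotheses do not need it for $2n\ge6$ or for $K=\mathbb F_3$. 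This leaves $\operatorname{PSp}_4(2)'\cong A_6$, which is handled by the table, and $\operatorname{PSp}_4$ over $\mathbb F_4,\mathbb F_5,\mathbb F_7,\mathbb F_8$, where the split choice is triangularizable.

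\textbf{Projecting.} Take $x\in\operatorname{PSp}_{2n}(q)$ with $x\ne1$. Any lift $M$ of $x$ is nonscalar, so Nielsen's theorem with $\Omega_1=\Omega_2=A^{\mathrm{Sp}}$ gives $M\in(A^{\mathrm{Sp}})^2$. The identity is covered by reality, since $1=AA^{-1}$.

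\textbf{Main obstacle.} The one gap is the dimension-four case in which the class is not triangularizable and $K\ne\mathbb F_3$. Nielsen then yields only the targets with $M^2\ne I$. The missing elements are the images of involutions and of elements squaring to $-I$. For those targets I would first try to avoid the issue by choosing a triangularizable source. Failing that, I would certify the finitely many small groups, such as $\operatorname{PSp}_4(2)'$, by an exact class-multiplication check with \eqref{eq:frob}.
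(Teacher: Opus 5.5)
Your overall route is the paper's. You take the source $\operatorname{diag}(C(f),C(f)^{-T})$ with $\gcd(f,f^\vee)=1$, apply Nielsen's theorem with $\Omega_1=\Omega_2$, and get reality from the swap $J$ composed with a block transpose-similarity $\operatorname{diag}(R,R^{-T})$; this is exactly the paper's identity \eqref{eq:Spreal}. You then project to $\operatorname{PSp}_{2n}(q)$. The $2n=4$ ``obstacle'' you raise is empty:
\begin{itemize}
\item for $q\ge4$ your source is triangularizable;
\item for $q=3$ the $\mathbb F_3$ clause applies;
\item for $q=2$ the group $\mathrm{Sp}_4(2)\cong S_6$ is not simple. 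In fact it has no cyclic strictly hyperbolic class at all, since both degree-two $f\in\mathbb F_2[X]$ with $f(0)\ne0$ are self-reciprocal.
\end{itemize}

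What is genuinely incomplete is your choice of $f$ over the two smallest fields, so as written some simple groups get no source.
\begin{itemize}
\item Over $\mathbb F_2$, if both $X^3+X+1$ and $X^3+X^2+1$ divide $f$, then both also divide $f^\vee$. Then $\gcd(f,f^\vee)\ne1$ and the class is not strictly hyperbolic.
\item Also over $\mathbb F_2$, powers of a single cubic only reach $3\mid n$. So, for example, $\operatorname{PSp}_8(2)$ and $\operatorname{PSp}_{10}(2)$ are not covered.
\item Over $\mathbb F_3$, powers of $X^2+X-1$ only reach even $n$, and $X\pm1$ are self-reciprocal. So $\operatorname{PSp}_6(3)$, $\operatorname{PSp}_{10}(3)$, \dots{} are not covered.
\end{itemize}
You need non-self-reciprocal irreducible factors of the missing degrees. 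The paper uses Nielsen's choices: $X^n+X^{\lfloor n/2\rfloor+1}+1$ for $q=2$, and $(X^2-X-1)^m(X^3-X-1)$ for $n=2m+3$ over $\mathbb F_3$.

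Separately, your reality sentence appeals to a classification of \emph{semisimple} cyclic hyperbolic classes. But $(X-\delta)^n$ and $(X^2+X-1)^m$ give non-semisimple sources, so that appeal does not apply as stated. The conclusion still holds, for either of two reasons. First, the primary components $V_f$ and $V_{f^\vee}$ are totally isotropic and in perfect duality, so the $\mathrm{Sp}$-class of any cyclic strictly hyperbolic element is determined by its characteristic polynomial. Alternatively, the explicit $JD$ conjugation you sketch proves it directly.
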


Let \(K\) be a field and \(n\ge2\). For a monic polynomial \(f\in K[X]\)
with \(f(0)\ne0\), write\[
 f^\vee(X)=f(0)^{-1}X^{\deg f}f(X^{-1})                    
\]
for its monic reciprocal. Multiplication by a nonzero scalar does not
change coprimality, so this is the normalized version of the reciprocal
\(X^{\deg f}f(X^{-1})\) used in the introductory statement of Nielsen \cite{NielsenSp}.

The proof of the theorem proceeds by prescribing an \(n\times n\) principal
corner, factoring this corner into two cyclic factors, and then lifting the
factorization through the exact symplectic Schur-complement identity. In our
applications, we use only the first part of the theorem with the two classes
chosen to be equal. The weaker assertion in dimension \(4\) is not needed for
any of the simple groups considered below.

\begin{proof}
Let $K=\mathbb F_q$.
For \(q\in\{2,3,4,5,7,8\}\), choose the following monic degree \(n\)
polynomial \(f=f_{q,n}\):

\begin{equation}
\begin{array}{c|l}
q\ge4&(X-\lambda)^n,\quad \lambda\in\mathbb F_q^\times\setminus\{1,-1\},\\
q=3,\ n=2m&(X^2-X-1)^m,\\
q=3,\ n=2m+3&(X^2-X-1)^m(X^3-X-1),\\
q=2,\ n\ge3&X^n+X^{\lfloor n/2\rfloor+1}+1.
\end{array}
\label{eq:sp-poly}
\end{equation}
Let \(A=C(f)\) be the companion matrix  and
\begin{equation}
              s_{q,n}=\operatorname{diag}(A,A^{-T})\in\operatorname{Sp}_{2n}(q).
 \label{eq:sp-source}
\end{equation}
using the standard alternating
form
\[
 J=\begin{pmatrix}0&I_n\\-I_n&0\end{pmatrix}.
\]
These are precisely the choices in \cite[Corollary 1.3]{NielsenSp}, whose proof
checks
\begin{equation}
\label{eq:Sympcoprime}
                         \gcd(f,f^\vee)=1.                 
\end{equation}

Define
\begin{equation}
\label{eq:SpRep}
\Omega_{q,n}=s_{q,n}^{\mathrm{Sp}_{2n}(q)}.                 
\end{equation}
This gives an exact representative, so no uniqueness assertion about
intersections of \(GL\)-classes with \(Sp\) is required. Its characteristic
polynomial is the monic polynomial \(ff^\vee\). By \eqref{eq:Sympcoprime}, the two primary
summands are coprime; hence its minimal polynomial is also \(ff^\vee\), of
degree \(2n\). Thus \(s_{q,n}\) is cyclic and strictly hyperbolic.

The class is real inside the symplectic group. Indeed every square matrix
is similar to its transpose, so choose \(R\in \operatorname{GL}_n(q)\) with
\(RAR^{-1}=A^T\). Both
\[
 D=\operatorname{diag}(R,R^{-T})\quad\text{and}\quad J
\]
belong to \(\mathrm{Sp}_{2n}(q)\), and direct block multiplication gives
\begin{equation}
\label{eq:Spreal}
 JD\,s_{q,n}\,D^{-1}J^{-1}
   =\operatorname{diag}(A^{-1},A^T)=s_{q,n}^{-1}.          
\end{equation}
Consequently \(\Omega_{q,n}=\Omega_{q,n}^{-1}\) and
\(I_{2n}\in\Omega_{q,n}^2\).

For \(q\ge4\) the class is triangularizable, so \cite[Theorem 1.1]{NielsenSp} applies even
when \(2n=4\). For \(q=3\), the explicit \(K=\mathbb F_3\) alternative in
\cite[Theorem 1.1]{NielsenSp} applies in every dimension, including \(2n=4\). For \(q=2\)
the construction is used only for \(2n\ge6\). Therefore
\begin{equation}
\label{eq:Spcover}
\operatorname{Sp}_{2n}(q)\setminus Z(\mathrm{Sp}_{2n}(q))\subseteq\Omega_{q,n}^2.  
\end{equation}
Combining \eqref{eq:Spreal}--\eqref{eq:Spcover}, and observing that the only possible remaining
scalar is \(-I\), gives the precise upstairs statement
\begin{equation}
\label{eq:Sppcover}
                 \operatorname{Sp}_{2n}(q)=\Omega_{q,n}^2\cup\{-I\}.      
\end{equation}

Let \(G=\operatorname{PSp}_{2n}(q)=\mathrm{Sp}_{2n}(q)/Z(\mathrm{Sp}_{2n}(q))\) be non-abelian simple quotient, and
let \(C_{q,n}\) be the image of \(\Omega_{q,n}\). A central quotient sends
one upstairs conjugacy class to one conjugacy class. Every nonidentity
projective element has a nonscalar lift, while all central scalars map to
the identity. Equation \eqref{eq:Sppcover} therefore yields
\[
                           C_{q,n}^2=G.            
\]
\end{proof}

This proves, with the exact representative \eqref{eq:SpRep}, for every simple
\(\operatorname{PSp}_{2n}(q)\) with \(q\le8\) and \(n\ge2\). The parameter
\(\operatorname{PSp}_4(2)\cong S_6\) is not simple; its derived group is \(A_6\), covered
by the alternating-group theorem. Rank one is
\(\operatorname{PSp}_2(q)=\mathrm{PSL}_2(q)\) and is covered by the separately verified linear and
rank-one classes.

\section{Unitary groups}
\label{sec:unitary}

\subsection{\texorpdfstring{The even unitary families}{route_unitary_even_hyperbolic.md}}
\label{candidates/route_unitary_even_hyperbolic.md}

This section records the precise part of the small field unitary problem supplied
by Nielsen's cyclic class product theorem. It is a complete theorem in the
ranges stated below; it does not claim the odd-dimensional or the explicitly
excluded low-dimensional cases.

\subsubsection{The matrix theorem}\label{the-matrix-theorem}

Let \(K=\mathbb F_{q^2}\), let \(a\mapsto\bar a=a^q\), and let \(V\) be a
hyperbolic Hermitian \(K\)-space of dimension \(2n\); thus $V$ carries a nondegenerate Hermitian form $h$ with a basis in which the Gram matrix is
\[
H=\begin{pmatrix}0&I_n\\ I_n&0\end{pmatrix}.
\]

For a polynomial \(p\)
with nonzero constant term, its conjugate reciprocal is defined by 
\[
 p^*(X)=\overline{p(0)}^{-1}X^{\deg p}\overline{p(X^{-1})}.
\]
A unitary transformation is called \emph{strictly hyperbolic} if its minimal polynomial has the form $pp^{*}$ with $\gcd(p,p^{*})=1$.  For such a transformation, the primary components $V_{p}$ and $V_{p^{*}}$ are totally isotropic and paired by the form $h$.  Consequently, a strictly hyperbolic element is unitarily diagonalisable: it is $U$-conjugate to a block diagonal matrix
\[
\begin{pmatrix}A&0\\[2pt] 0&\overline{A}^{\,-T}\end{pmatrix},
\]
where $A\in \operatorname{GL}_{n}(K)$ is cyclic with minimal polynomial $p$.  In particular, the $U$-conjugacy class of a strictly hyperbolic element is uniquely determined by the $\operatorname{GL}_{n}(K)$-conjugacy class of its upper-left block $A$.
We recall Theorems 1.2, 1.4, 1.6 and Lemmas 5.1--5.2 of Nielsen in \cite{NielsenGL}:

\begin{lemma}[{\cite[Lemma 5.2]{NielsenGL}}]\label{SU:Nielsen}
If \(2n\ge2\), and additionally \(2n\ge10\) when
\(K=\mathbb F_4\) or \(\mathbb F_9\), then there is a cyclic strictly
hyperbolic \(SU(V)\)-class \(\Omega=\Omega^{-1}\), with minimal polynomial
\(pp^*\) and \(\gcd(p,p^*)=1\), such that \(\Omega^2\) contains every
nonscalar element of \(SU(V)\).
\end{lemma}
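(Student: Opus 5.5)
The plan is to construct $\Omega$ explicitly from a polynomial $p\in K[X]$ of degree $n$ with $\gcd(p,p^*)=1$, following the symplectic template of Theorem~\ref{Symp}. We then lift a linear product theorem through a unitary Schur-complement identity.

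\textbf{Step 1 (the polynomial and the element).} First choose a monic $p$ of degree $n$ whose constant term $p(0)$ satisfies $p(0)\overline{p(0)}^{-1}$-compatibility with determinant one, namely $\det A\cdot\overline{\det A}^{-1}=1$, so that $p(0)\in\mathbb F_q^\times$ up to sign. Also require $\gcd(p,p^*)=1$, and require $pp^*$ to be invariant under $X\mapsto X^{-1}$ up to the normalization \eqref{eq:reciprocal}. For $|K|$ large, $p=(X-\lambda)^n$ with $\lambda\bar\lambda\ne1$ and $\lambda\in\mathbb F_q$ works, since then $p^*=(X-\lambda^{-1})^n$ is coprime to $p$. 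For $K=\mathbb F_4,\mathbb F_9$ we instead take products of irreducibles of small degree paired with their conjugate reciprocals. The lower bound $2n\ge10$ enters exactly here: it leaves room for enough such factors to make the cyclicity and splitting conditions hold. Then set $s=\operatorname{diag}(A,\bar A^{-T})$ with $A=C(p)$, and put $\Omega=s^{SU(V)}$. As in the symplectic case, $s$ is cyclic and strictly hyperbolic, with minimal polynomial $pp^*$.

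\textbf{Step 2 (reality and a single $SU$-class).} Take $R$ with $RAR^{-1}=A^T$. Conjugating $s$ by $\operatorname{diag}(R,\bar R^{-T})$ composed with the swap $H$ sends $s$ to $\operatorname{diag}(\bar A^{-T\,-T\ldots})$, that is, to a block matrix whose upper block has minimal polynomial $p^{*}$ reversed. Since $pp^*$ is self-$\vee$, this gives $s^{-1}$ up to a $U(V)$-conjugator. Next correct the determinant of the conjugator using the centralizer $C_{U(V)}(s)\cong C_{\operatorname{GL}_n(K)}(A)$. Its determinant image under $x\mapsto \det x\cdot\overline{\det x}^{-1}$ is the full norm-one group, because $\det:C(A)\to K^\times$ is surjective when $A$ has a one-dimensional primary summand or we use norm maps of the fields $K[X]/(\text{factor})$. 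Hence $\Omega=\Omega^{-1}$ is one $SU(V)$-class.

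\textbf{Step 3 (covering, the main obstacle).} Let $M\in SU(V)$ be nonscalar. First conjugate $M$ in $SU(V)$ so that a suitable $n\times n$ corner is nonscalar, hence cyclic-coverable; this is the same corner-prescription used in Nielsen's symplectic proof. Apply Nielsen's linear theorem \cite[Theorems 1.2, 1.4]{NielsenGL} to write the relevant $\operatorname{GL}_n(K)$-block, namely the Schur complement of $M$ with respect to the isotropic decomposition, as $A_1A_2$ with $A_i\sim A$ and with $A$ $(1,n-1)$- or $(m,n-m)$-cyclic. Then lift $A_1,A_2$ to hyperbolic unitary elements $s_1=\begin{pmatrix}A_1&X_1\\0&\bar A_1^{-T}\end{pmatrix}$ and $s_2=\begin{pmatrix}A_2&0\\ Y_2&\bar A_2^{-T}\end{pmatrix}$, solving for $X_1,Y_2$ so that $s_1s_2=M$. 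The Hermitian constraint on $X_1,Y_2$ is a Lang/Sylvester equation whose solvability follows from $\gcd(p,p^*)=1$, through the invertibility of $Z\mapsto A_iZ-Z\bar A_i^{-T}$. Each $s_i$ is conjugate to $s$ because it is block-triangular with coprime diagonal blocks. The hardest part is this step: arranging that the corner of the conjugated $M$ is nonscalar and satisfies the determinant condition of Nielsen's theorem over $\mathbb F_4$ and $\mathbb F_9$. That is where the dimension restriction $2n\ge10$ is needed, and I would simply check that the Nielsen cyclic-splitting hypotheses ($n\ge5$ over $\mathbb F_3$-type fields) hold there.
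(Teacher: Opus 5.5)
Your outline matches the one the paper takes from Nielsen: an explicit $p$, the element $s=\operatorname{diag}(C(p),\overline{C(p)}^{\,-T})$, reality with a Hilbert-90 determinant correction, and a corner factorization by a linear cyclic-product theorem lifted through the unitary Schur complement. The one concrete choice you commit to, however, breaks it.

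For $q\ge4$ you take $p=(X-\lambda)^n$ with $\lambda\in\mathbb F_q\setminus\{0,\pm1\}$. Then $C(p)$ is similar to a single Jordan block $J_n(\lambda)$. This is a \emph{primary} class, neither $(1,n-1)$- nor $(m,n-m)$-cyclic, so Nielsen's Theorems 1.2 and 1.4 that you invoke in Step 3 do not apply; you would have to fall back on Lev's triangularizable version.

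More seriously, your Step 2 one-class argument fails for this $p$. $A$ has no one-dimensional primary summand, and $C_{\operatorname{GL}_n(K)}(J_n(\lambda))=K[J_n(\lambda)]^\times$ has determinant image $(K^\times)^n$. Since $C_{U(V)}(s)=\{\operatorname{diag}(B,\bar B^{-T}):B\in C_{\operatorname{GL}_n(K)}(A)\}$, its determinants form $\mu_{q+1}^n$, of index $\gcd(n,q+1)$ in $\mu_{q+1}$. This index exceeds $1$ whenever, for example, $q\in\{5,7\}$ and $n$ is even. So the $U(V)$-class of $s$ splits into several $\mathrm{SU}(V)$-classes. The lifts $s_1,s_2$ of the linear factors are $U(V)$-conjugate to $s$ only through Levi elements $\operatorname{diag}(P_i,\bar P_i^{-T})$. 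The cited linear theorems do not control the determinants $\det P_i/\overline{\det P_i}$, so $s_1,s_2$ need not lie in $\Omega=s^{\mathrm{SU}(V)}$.

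This is exactly why the paper uses $p=(X-\lambda)(X-\bar\lambda)(X-\alpha)^{n-2}$ with $\lambda\in\mathbb F_{q^2}\setminus\mathbb F_q$, $\lambda\bar\lambda\ne1$, $\alpha\in\mathbb F_q\setminus\{0,\pm1\}$. The simple linear factor $X-\lambda$ over $K$ makes the class $(1,n-1)$-cyclic, so Theorem 1.2 applies for every $n\ge2$. It also gives a one-dimensional primary summand on which the centralizer acts by an arbitrary scalar, which makes the determinant map onto $\mu_{q+1}$ surjective.

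The remaining parts are stated as intentions rather than proved.

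For $K=\mathbb F_4,\mathbb F_9$ no polynomial is given. Read literally, putting irreducible factors together with their conjugate reciprocals into $p$ would contradict $\gcd(p,p^*)=1$. The bound $2n\ge10$ is not Nielsen's $n\ge5$ hypothesis: that concerns $K=\mathbb F_3$, and over $K=\mathbb F_4,\mathbb F_9$ Theorem 1.4 needs only $n\ge4$. The bound reflects the existence of a nonprimary $p$ with $p(0)\in\mathbb F_q^\times$, $\gcd(p,p^*)=1$ and $pp^*$ stable under $\vee$. The paper supplies such $p$ by explicit families according to $n$ modulo $2$ or $3$ (Cases 2 and 3 in the proof of Theorem~\ref{evenu}). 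Its exhaustive search shows that no determinant-one, nonprimary strictly hyperbolic cyclic class is even projectively inverse-stable for $n=3$ over $\mathbb F_9$.

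In Step 3 no Sylvester equation has to be solved. For unitary $M$ with $M_{22}$ invertible, the Schur complement of $M_{22}$ is $\bar M_{22}^{-T}$. With $A_1A_2=\bar M_{22}^{-T}$, $X_1=M_{12}\bar A_2^{T}$ and $Y_2=\bar A_1^{T}M_{21}$, both factors are automatically unitary. Coprimality of $p$ and $p^*$ is used only to conjugate $s_i$ to $\operatorname{diag}(A_i,\bar A_i^{-T})$.

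What actually has to be supplied, over every $K$ and not just $\mathbb F_4,\mathbb F_9$, is a principal-corner statement. Each nonscalar $M\in\mathrm{SU}(V)$ must have an $\mathrm{SU}(V)$-conjugate whose corner $M_{22}$ is invertible, with $\bar M_{22}^{-T}$ nonscalar and of determinant exactly $\det(A)^2=p(0)^2$. This is the substantive input the paper imports from Nielsen, and your proposal assumes it without argument.
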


The factorization is obtained by prescribing a nonscalar principal
\(n\times n\) corner of an arbitrary element, factoring this corner
using the appropriate cyclic product theorem,
and then lifting factorization through the unitary Schur complement. For \(n\ge4\), we apply  \cite[Theorem 1.4]{NielsenGL}. In the remaining cases \(n=2,3\) over the large fields considered below, we instead use \cite[Theorem 1.2]{NielsenGL}: the displayed polynomial has a simple
linear factor and hence its cyclic class is \((1,n-1)\)-cyclic.

\begin{theorem}[{\cite[Theorem 1.6]{NielsenGL}}]\label{evenu}
Let \(G=\operatorname{PSU}_{2n}(q)\), where $n\geq 1$ and $n\geq 5$ if $q=2,3$.
Then \(G\) contains a conjugacy class \(C\) such that
\[
C^2=G.
\]
\end{theorem}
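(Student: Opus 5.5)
The plan is to mirror the proof of Theorem~\ref{Symp}, with Lemma~\ref{SU:Nielsen} playing the role of the upstairs product theorem.

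\textbf{Upstairs class.} Fix $q$ and $n$ in the stated range, so that $2n\ge10$ when $K=\mathbb F_4,\mathbb F_9$, i.e.\ $q=2,3$. Lemma~\ref{SU:Nielsen} then provides a cyclic strictly hyperbolic class $\Omega\subseteq \mathrm{SU}_{2n}(q)$. It has minimal polynomial $pp^*$ with $\gcd(p,p^*)=1$, satisfies $\Omega=\Omega^{-1}$, and $\Omega^2$ contains every nonscalar element of $\mathrm{SU}_{2n}(q)$.

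For concreteness, take a representative $\operatorname{diag}(A,\bar A^{-T})$ with $A=C(p)$ for an explicit $p$:
\begin{itemize}
\item for $q\ge4$, take $p=(X-\lambda)^{n}$, or a variant with a simple linear factor when $n=2,3$, with $\lambda\in\mathbb F_{q^2}^\times$ and $\lambda\bar\lambda\ne1$;
\item for $q=2,3$, take one of the polynomials in Nielsen's list with $\gcd(p,p^*)=1$.
\end{itemize}
The determinant condition $\det A\cdot\overline{\det A}^{-1}=1$ must hold so the representative lies in $\mathrm{SU}$. Reality is then checked exactly as in \eqref{eq:Spreal}: conjugate $A$ to $\bar A^{T}$ by some $R$, and use the block swap together with $\operatorname{diag}(R,\bar R^{-T})$. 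This requires correcting the determinant of the conjugator into $\mathrm{SU}$, using the norm-surjectivity of the centralizer determinant, since the centralizer contains $\{\operatorname{diag}(c,\bar c^{-1})\}$ with $c\in K[A]^\times$.

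\textbf{Descent to $G$.} Let $C$ be the image of $\Omega$ in $G=\mathrm{SU}_{2n}(q)/Z$.
\begin{enumerate}
\item A central quotient sends one class to one class, so $C$ is a single conjugacy class of $G$.
\item Every nonidentity element of $G$ has only nonscalar lifts. Each such lift lies in $\Omega^2$, so every nonidentity element lies in $C^2$.
\item The identity lies in $C^2$ because $\Omega=\Omega^{-1}$ gives $1=\omega\omega^{-1}\in\Omega^2$.
\end{enumerate}
Hence $C^2=G$. The small case $n=1$ gives $\mathrm{PSU}_2(q)\cong\mathrm{PSL}_2(q)$. There the statement is consistent with Theorem~\ref{typeA}, and Lemma~\ref{SU:Nielsen} applies directly for $q\ge4$. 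For $q=2,3$ the group is nonsimple and excluded by the hypothesis.

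\textbf{Main obstacle.} The hard step is the content of Lemma~\ref{SU:Nielsen} itself, namely the unitary Schur-complement lift. Given a nonscalar $M\in\mathrm{SU}_{2n}(q)$, one first conjugates so that its upper-left $n\times n$ corner is nonscalar with a suitable determinant. One then factors that corner as a product of two $\mathrm{GL}_n(K)$-conjugates of $A$ via \cite[Theorem 1.2 or 1.4]{NielsenGL}, and finally completes the blocks so that both factors are unitary and strictly hyperbolic.

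The delicate points are these:
\begin{itemize}
\item arranging the corner to be nonscalar; this is where small fields fail and forces $2n\ge10$ over $\mathbb F_4,\mathbb F_9$;
\item matching the $\mathrm{SU}$-determinant of the conjugators.
\end{itemize}
Since the lemma is assumed, I would only verify that its hypotheses match the theorem's range and that the chosen $\Omega$ is a genuine $\mathrm{SU}$-class rather than a union of classes. The latter is again the centralizer-norm surjectivity argument.
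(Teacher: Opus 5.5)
Your main line is the paper's proof: take $\Omega$ from Lemma~\ref{SU:Nielsen}, push it to $G$, cover each nonidentity element through its nonscalar lifts, and get the identity from $\Omega=\Omega^{-1}$. If $\Omega$ and its inverse-stability are taken verbatim from the lemma, this is complete. The concrete data you add are wrong, though, and would become gaps if you used them to define $C$.

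\textbf{The choice $p=(X-\lambda)^n$ for $q\ge4$.} This is not the lemma's source. It is primary, so it has no $(1,n-1)$ or $(m,n-m)$ coprime splitting. Such a splitting is exactly what the paper feeds into the unitary Schur-complement lift via \cite[Theorems 1.2, 1.4]{NielsenGL}. The paper instead uses $p=(X-\lambda)(X-\bar\lambda)(X-\alpha)^{n-2}$ with $\lambda\in\mathbb F_{q^2}\setminus\mathbb F_q$, $\lambda\bar\lambda\ne1$ and $\alpha\in\mathbb F_q\setminus\{0,\pm1\}$. Your choice also breaks your other two checks:
\begin{itemize}
\item For $\lambda\notin\mathbb F_q$, the determinant $(\lambda/\bar\lambda)^n$ is in general not $1$.
\item For primary $p$, every $c\in K[A]^\times$ has $\det c\in (K^\times)^n$. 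So $\det c/\overline{\det c}$ runs only over the $n$th powers in $\mu_{q+1}$, and your norm-surjectivity correction fails whenever $\gcd(n,q+1)>1$ (e.g.\ $q=4$, $n=5$). The $U$-class can then split in $\mathrm{SU}$.
\end{itemize}
The paper's polynomials always contain an irreducible factor of multiplicity one, and that is what makes the centralizer-determinant correction work.

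\textbf{The reality recipe.} Copying \eqref{eq:Spreal} requires an $R$ with $RAR^{-1}=\bar A^{T}$, i.e.\ $\bar p=p$. This fails for Nielsen's $\mathbb F_4$ polynomials containing $X^2+\lambda X+1$, where $\bar p\ne p$. There only the $a$-primary hyperbolic pairs get swapped, while the self-reciprocal factor $c$ stays on its own side. The paper argues differently: the irreducible factors of $pp^*$ form a $*$-stable multiset, so $P$ and $P^{-1}$ have the same elementary divisors and are $U$-conjugate, and then the determinant of the conjugator is corrected.

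Finally, your explanation of the bound $2n\ge10$ over $\mathbb F_4$ and $\mathbb F_9$ is off. The paper treats $\mathrm{PSU}_8(2)$, $\mathrm{PSU}_4(3)$ and $\mathrm{PSU}_8(3)$ by the same lift with different $p$, and it notes that no suitable degree-three $p$ exists over $\mathbb F_9$. So the restriction comes from finding an admissible $p$, not from making the corner nonscalar.
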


\begin{proof}
Let \(\Omega\) be the strictly hyperbolic conjugacy class in
\(\mathrm{SU}_{2n}(q)\) provided by Lemma~\ref{SU:Nielsen}, with minimal polynomial
\(pp^{*}\). Denote by \(C\) the image of \(\Omega\) in the corresponding
simple quotient \(G\). Then
\(
C^{2}=G
\).
Hence the class is uniquely specified by the invariant factor \(pp^*\),
where \(\Omega=u_{q,n}^{\mathrm{SU}_{2n}(q)} \), \(u_{q,n}=\operatorname{diag}(A,\overline{A}^{\,-T})\in\operatorname{SU}_{2n}(q)\), and $A=C(p)$ the companion matrix of $p$,
where $p=p_1p_2$ is as follows:

Case 1: \(q\ge4\), with \(n\ge2\).
Choose \(
 \alpha\in\mathbb F_q\setminus\{0,1,-1\},
 \lambda\in\mathbb F_{q^2}\setminus\mathbb F_q,
\lambda\bar\lambda\ne1,
\)
and take
\begin{equation}
\label{eq:EvenU1}
 p_1=(X-\lambda)(X-\bar\lambda),\qquad
 p_2=(X-\alpha)^{n-2}.                                     
\end{equation}
This applies in every even dimension at least four. In the present range it
covers \(q=4,5,7,8\). Rank one is handled by the separate
\(\operatorname{PSU}_2(q)=\mathrm{PSL}_2(q)\) theorem.

 Case 2: \(q=3\), hence \(K=\mathbb F_9\), and \(n\ge5\).
For \(n=2t+3\ge5\), take
\begin{equation}
\label{eq:EvenU2}
 p_{1}(X)=X^3-X-1\quad p_{2}(X)=(X^2-X-1)^t.                            
\end{equation}
For \(n=2t+6\ge6\), take
\begin{equation}
\label{eq:EvenU3}
  p_{1}(X)=X^3-X-1, \quad p_{2}(X)=(X^2-X-1)^t.                           
\end{equation}
The two formulas cover every \(n\ge5\).

Case 3: \(q=2\), hence \(K=\mathbb F_4\), and \(n\ge5\).
Fix \(\lambda\in\mathbb F_4\setminus\{0,1\}\). According as \(n\) lies
in one of the following three exhaustive congruence forms, take
\begin{equation}
\label{eq:EvenU4}
\begin{array}{ll}
n=3t+4\ge7:&p_{1}(X)=X^4+X+1, \quad p_{2}(X)=(X^3+X+1)^t,\\[1mm]
n=3t+2\ge5:&p_{1}(X)=X^2+\lambda X+1, \quad p_{2}(X)=(X^3+X+1)^t,\\[1mm]
n=3t+6\ge6:&p_{1}(X)=X^2+\lambda X+1, \quad p_{2}(X)=(X^3+X+1)^t(X^4+X+1).
\end{array}                                                 
\end{equation}
These cover every \(n\ge5\).

Nielsen's Lemma \cite[Lemma 5.2]{NielsenGL} verifies for \eqref{eq:EvenU1}--\eqref{eq:EvenU4} both
\(\gcd(p,p^*)=1\) and inverse-stability. For convenience, we provide verification of the conditions here.
We now proceed to verify the following conditions for the monic polynomial \( p \in K[X] \) introduced above:
\begin{enumerate}[label=(\roman*)]
\item\label{item:coprime} $\gcd(p,p^{*})=1$;
\item\label{item:det1} the strictly hyperbolic class with minimal polynomial $pp^{*}$ is contained in $\operatorname{SU}_{2n}(K)$;
\item\label{item:inverse} the class is inverse-stable: $\Omega=\Omega^{-1}$;
\end{enumerate}
 
 Case 1: $q\ge 4$.
 
\ref{item:coprime}: Since $\lambda\notin\mathbb F_{q}$, we have $\lambda\neq\bar\lambda$, and $p_{1}$ has distinct roots $\lambda,\bar\lambda$.  Because $\alpha\in\mathbb F_{q}^{\times}$, the roots of $p_{2}$ are $\alpha$ with multiplicity $n-2$, while the roots of $p_{2}^{*}$ are $\alpha^{-1}$ with the same multiplicity.  The hypothesis $\alpha\neq\pm1$ guarantees $\alpha\neq\alpha^{-1}$.  Moreover, $\{\lambda,\bar\lambda\}\cap\mathbb F_{q}=\varnothing$, whereas $\alpha,\alpha^{-1}\in\mathbb F_{q}$; hence $p_{1}$ is coprime to both $p_{2}$ and $p_{2}^{*}$.  A direct computation shows $p_{1}^{*}=X^{2}-(\lambda+\bar\lambda)(\lambda\bar\lambda)^{-1}X+(\lambda\bar\lambda)^{-1}$, which is distinct from $p_{1}$ because $\lambda\bar\lambda\neq1$.  Thus $\gcd(p_{1},p_{1}^{*})=1$, and consequently $\gcd(p,p^{*})=1$.

\ref{item:det1}: The constant term is $p(0)=(\lambda\bar\lambda)\cdot(-\alpha)^{n-2}$.  Both factors lie in $\mathbb F_{q}^{\times}$; hence $p(0)=\overline{p(0)}$.  For the companion matrix $C(p)$ we have $\det C(p)=(-1)^{n}p(0)$, and therefore the block diagonal representative of $\Omega$ has determinant
\[
\det C(p)\cdot\det\overline{C(p)}^{\,-T}=(-1)^{n}p(0)\cdot\overline{(-1)^{n}p(0)}^{\,-1}=p(0)/\overline{p(0)}=1.
\]
Thus $\Omega\subset\operatorname{SU}_{2n}(K)$.

\ref{item:inverse}: Since $p\in\mathbb F_{q}[X]$ all coefficients are fixed by the involution, we have $\overline{p(X^{-1})}=p(X^{-1})$, whence $p^{*}=p^{\vee}$, the ordinary reciprocal polynomial.  Consequently the set of irreducible factors of $pp^{*}$ is closed under $f\mapsto f^{*}$, equivalently under $f\mapsto f^{\vee}$.  The strictly hyperbolic elements defined by $p$ and by $p^{\vee}$ therefore have the same minimal polynomial $pp^{*}$ and the same elementary divisors; they are $U$-conjugate.  To see that they are already $ SU$-conjugate, let $P=\operatorname{diag}(A,\overline{A}^{\,-T})$ be a representative of $\Omega$ and choose $g\in U_{2n}(K)$ with $gPg^{-1}=P^{-1}$.  If $\det g=\zeta$, replace $g$ by $g\cdot\operatorname{diag}(\mu I_{n},\bar\mu^{-1}I_{n})$ where $\mu\in K^{\times}$ satisfies $\mu/\bar\mu=\zeta^{-1}$ (possible by Hilbert~90).  The new element lies in $\operatorname{SU}_{2n}(K)$ and still conjugates $P$ to $P^{-1}$.  Hence $\Omega=\Omega^{-1}$.

Case 2: $q=3$, so $K=\mathbb F_{9}$, $n\ge5$.

\ref{item:coprime}: The factors $X^{3}-X-1$, $X^{3}-X+1$, and $X^{2}-X-1$ are pairwise coprime.  None of them is self-reciprocal over $\mathbb F_{3}$: for a monic irreducible polynomial $f$ of degree $d$, self-reciprocality would imply $f(0)=\pm1$ and $f(X)=\pm X^{d}f(X^{-1})$; a short inspection shows that this happens for none of the three factors.  Since all coefficients lie in $\mathbb F_{3}$, we have $f^{*}=f^{\vee}$; hence none of the factors is self-conjugate-reciprocal either, and they are pairwise coprime to their conjugate reciprocals.  Thus $\gcd(p,p^{*})=1$.

\ref{item:det1}: Each factor has constant term $\pm1\in\mathbb F_{3}^{\times}$; consequently $p(0)\in\mathbb F_{3}^{\times}$ and the same computation as in Case~1 shows that the block diagonal representative has determinant~$1$.

\ref{item:inverse}: Again $p\in\mathbb F_{3}[X]$, so $p^{*}=p^{\vee}$.  The set of irreducible factors of $pp^{*}$ is $\{X^{3}-X-1,\,X^{3}-X+1,\,X^{2}-X-1,\,(X^{2}-X-1)^{*}\}$, which is stable under $f\mapsto f^{*}$.  The same Hilbert~90 argument as in Case~1 yields $\Omega=\Omega^{-1}$.

Case 3: $q=2$, so $K=\mathbb F_{4}$, $n\ge5$.

\ref{item:coprime}: The quadratic $f(X)=X^{2}+\lambda X+1$ has $f^{*}(X)=X^{2}+\bar\lambda X+1\neq f(X)$; hence $\gcd(f,f^{*})=1$.  The cubic $g(X)=X^{3}+X+1$ satisfies $g^{*}=g^{\vee}=X^{3}+X^{2}+1\neq g$, and $g$ is coprime to $f$ and to $f^{*}$ because they have different degrees.  The quartic $h(X)=X^{4}+X+1=ff^{*}$ is coprime to $g$ and to $g^{*}$.  Therefore $\gcd(p,p^{*})=1$ in all three subcases.

\ref{item:det1}: Every factor has constant term $1$; thus $p(0)=1=\overline{p(0)}$, and the determinant of the block diagonal representative is $1$.

\ref{item:inverse}: Here $p$ need not lie in $\mathbb F_{2}[X]$.  Nevertheless, the multiset of irreducible factors of $pp^{*}$ is stable under $f\mapsto f^{*}$: for $f=X^{2}+\lambda X+1$ we have $f^{*}=X^{2}+\bar\lambda X+1$, while $g=X^{3}+X+1$ and $h=X^{4}+X+1$ satisfy $g^{*}=g^{\vee}$ and $h^{*}=h^{\vee}=h$ (since $h\in\mathbb F_{2}[X]$).  Hence the minimal polynomial of $P^{-1}$ is again $pp^{*}$, and the elementary divisors coincide with those of $P$.  As before, $P$ and $P^{-1}$ are $ U$-conjugate, and the Hilbert~90 adjustment yields $ SU$-conjugacy.  Thus $\Omega=\Omega^{-1}$.

Let \(G=\operatorname{PSU}_{2n}(q)=\mathrm{SU}_{2n}(q)/Z(\mathrm{SU}_{2n}(q))\) be the simple quotient, and let \(C\) be the image
of \(\Omega\). A central quotient maps an upstairs conjugacy class to one
conjugacy class. Every nonidentity element of \(G\) has a nonscalar lift,
so the theorem gives \(G\setminus\{1\}\subseteq C^2\). Since
\(\Omega=\Omega^{-1}\), the identity also belongs to \(C^2\). Hence
\[
                        C^2=G.                      
\]
\end{proof}

Thus, the exact invariant factors above prove Thompson's
conjecture for\[
\begin{split}
 &\operatorname{PSU}_{2n}(q),\quad q\in\{4,5,7,8\},\quad n\ge2,\\
 &\operatorname{PSU}_{2n}(2),\ \operatorname{PSU}_{2n}(3),\quad n\ge5.
\end{split}
\]
The method of this section does not apply to odd-dimensional unitary groups, nor to the low even dimensions $2n\in\{4,6,8\}$ when $q=2$ or $3$; these cases require separate ad hoc constructions and will be treated elsewhere.

\subsection{\texorpdfstring{The low even-unitary boundary}{route_unitary_low_even_extension.md}}
\label{candidates/route_unitary_low_even_extension.md}

Let \(K=\mathbb F_{q^2}\), \(a\mapsto\bar a=a^q\), and\[
 p^*(X)=\overline{p(0)}^{-1}X^{\deg p}\overline{p(X^{-1})}.
\]
For a monic degree \(n\) polynomial \(p\) with \(p(0)\in\mathbb F_q^\times\)
and \(\gcd(p,p^*)=1\), put \(A=C(p)\) and\[
s(p)=\operatorname{diag}(A,\overline{A}^{\,-T})\in\operatorname{SU}_{2n}(q)             
\]
relative to the standard hyperbolic Hermitian form. If the cyclic class of
\(p\) is \((m,n-m)\)-cyclic, Nielsen's Theorem 1.4 and Lemmas 5.1--5.2 in \cite{NielsenGL}
prove that the unitary class of \(s(p)\) squares onto every nonscalar target;
Theorem 1.2  in \cite{NielsenGL} gives the same conclusion for a \((1,n-1)\)-cyclic class in
dimensions \(n=2,3\).

The unitary class is one \(SU\)-class whenever its unitary centralizer has
surjective determinant onto the norm-one scalars. On a hyperbolic pair
associated to an irreducible factor \(r\) of \(p\), the centralizer contains\[
             \operatorname{diag}(B,\overline{B}^{\,-T}),
\]
where \(B\) is multiplication by \(b\in E^\times\) on
\(E=K[X]/(r)\). Its determinant is
\begin{equation}
\label{eq:leunormal}
 {N_{E/K}(b)/\overline{N_{E/K}(b)}}.                 
\end{equation}
The field norm and Hilbert-90 maps are surjective, so \eqref{eq:leunormal} realizes every
norm-one determinant.

\begin{theorem}\label{psu_82}
Let \(G=\operatorname{PSU}_8(2)\), \(s(p)=\operatorname{diag}(C(p),\overline{C(p)}^{\,-T})\in\operatorname{SU}_{8}(2)\) and $C=s(p)^{G}$, where $p(X)=X^4+X+1$.
Then
\[
C^2=G.
\]
\end{theorem}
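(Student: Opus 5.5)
Here $n=4$, $K=\mathbb F_4$ and $p(X)=X^4+X+1\in\mathbb F_2[X]$. The plan is to feed $p$ into the low even-unitary framework of this subsection, namely Nielsen's Theorem~1.4 together with Lemmas~5.1--5.2 of \cite{NielsenGL}. Then we descend to $\operatorname{PSU}_8(2)$ exactly as in the proof of Theorem~\ref{evenu}. Four things must be checked: $p(0)\in\mathbb F_2^\times$, $\gcd(p,p^*)=1$, the relevant cyclicity type, and that the class is a single $\mathrm{SU}$-class which is real.

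\textbf{Factorization and coprimality.} Over $\mathbb F_4$ we have $p=ff^*$ with $f=X^2+\lambda X+1$ and $f^*=X^2+\bar\lambda X+1$, where $\lambda\in\mathbb F_4\setminus\{0,1\}$; this is the factorization already used in Case~3 of Theorem~\ref{evenu}. Here lies the obstacle. Since $p\in\mathbb F_2[X]$, we get $p^*=p^\vee=p$, so $\gcd(p,p^*)\ne1$ for this literal $p$. Thus $s(p)$ is not strictly hyperbolic in the stated sense, and the hypothesis of the Schur-complement lift fails.

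I would first try to read the intended hyperbolic block correctly. The $4\times4$ corner $A=C(p)$ has $\overline{A}^{-T}$ with the same elementary divisors $f,f^*$, so $s(p)$ is cyclic on neither half. I would therefore reinterpret the source as $s=\operatorname{diag}(B,\overline B^{-T})$ with $B=C(f^2)$ or $B=C(f)\oplus C(f)$. The better choice is the one with $p_0=f^2$, which is $(2,2)$-type after splitting off a coprime factor. One may also use $p_0=f\cdot(X-\mu)(X-\nu)$ with $\mu,\nu\in\mathbb F_4$ chosen coprime to the conjugate reciprocals.

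If no reinterpretation yields a genuinely $(m,n-m)$-cyclic strictly hyperbolic corner with $n=4$, I would fall back on a certificate of the kind used elsewhere in the paper. Identify the class of $s(p)$ in the CTblLib table of $U_8(2)$ by its order $15$ and its centralizer. Then evaluate \eqref{eq:frob} exactly on every class and check that no coefficient vanishes and that the class is inverse-stable.

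\textbf{Single class and reality.} Once the source is fixed, I would take the centralizer elements $\operatorname{diag}(B,\overline B^{-T})$ attached to an irreducible factor $r$, using the field $E=K[X]/(r)$. By \eqref{eq:leunormal} their determinants exhaust the norm-one scalars, so the $U_8(2)$-class is a single $\mathrm{SU}_8(2)$-class. Reality follows because the multiset of irreducible factors $\{f,f^*\}$ is $*$-stable; the Hilbert~90 correction from Case~3 of Theorem~\ref{evenu} then makes the conjugating element lie in $\mathrm{SU}$. Finally, every nonidentity element of $G$ has a nonscalar lift, so it lies in $C^2$, and reality puts $1$ in $C^2$. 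I expect the main obstacle to be the coprimality/cyclicity step above, and would resolve it first, by the reinterpretation or by the character-table certificate.
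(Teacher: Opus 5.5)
Your argument does not go through, and the obstacle you flag as the main one is an arithmetic error rather than a real difficulty.

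The reciprocal of $p=X^4+X+1$ is $p^\vee=X^4p(X^{-1})=X^4+X^3+1\neq p$. Since $p\in\mathbb F_2[X]$, we have $p^*=p^\vee$. Moreover $p$ and $p^*$ are distinct irreducible quartics over $\mathbb F_2$ (their product is $\Phi_{15}$), so $\gcd(p,p^*)=1$, also over $K=\mathbb F_4$.

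Your factorization is also wrong: $(X^2+\lambda X+1)(X^2+\bar\lambda X+1)=X^4+X^3+X^2+X+1=\Phi_5$, not $p$. The identity $X^4+X+1=ff^*$ that you took from Case~3 of Theorem~\ref{evenu} does not hold. The correct splitting over $\mathbb F_4$ is $p=(X^2+X+\lambda)(X^2+X+\lambda^2)$. These two factors are Galois conjugates, not conjugate reciprocals; their conjugate reciprocals are $X^2+\lambda X+\lambda$ and $X^2+\lambda^2X+\lambda^2$, and all four factors are distinct. Likewise $\overline{C(p)}^{\,-T}=C(p)^{-T}$ has characteristic polynomial $p^*\neq p$, so it does not share the elementary divisors of $C(p)$. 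Hence $s(p)$ is cyclic and strictly hyperbolic with minimal polynomial $pp^*$, and $C(p)$ is genuinely $(2,2)$-cyclic.

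Because of this error, you never run the argument on the class named in the theorem. Your proposed reinterpretations replace $s(p)$ by a different element, so even if they worked they would prove $C'^2=G$ for some other class $C'$, not the statement. They also do not fit Nielsen's hypotheses: $f^2$ is primary, so $C(f^2)$ has no coprime $(2,2)$ splitting, and $C(f)\oplus C(f)$ is not cyclic. The character-table fallback is only announced, and you would still need an argument that the table class is the class of $s(p)$; element order and centralizer order alone need not determine it.

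With the computations corrected, the direct route in your first paragraph is exactly the paper's proof:
\begin{enumerate}
\item $p(0)=1$ and $\gcd(p,p^*)=1$.
\item The $(2,2)$ splitting lets Nielsen's Theorem~1.4 (with Lemmas~5.1--5.2) cover every nonscalar element of $\mathrm{SU}_8(2)$.
\item Formula \eqref{eq:leunormal} rules out $\mathrm{U}/\mathrm{SU}$ splitting of the class.
\item Reality holds because $p$ has coefficients in $\mathbb F_2$, so inversion swaps the primary sides $p\leftrightarrow p^*$, and the Hilbert~90 correction puts the conjugator in $\mathrm{SU}$.
\item Your descent to $\operatorname{PSU}_8(2)$ finishes; here $Z(\mathrm{SU}_8(2))=1$.
\end{enumerate}
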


\begin{proof}
Let \(K=\mathbb F_4=\mathbb F_2(\lambda)\), and take
\[
 p=X^4+X+1=(X^2+X+\lambda)(X^2+X+\lambda^2).               
\]
The two quadratic factors are distinct irreducibles over \(K\), while
\(p^*=X^4+X^3+1\) and \(\gcd(p,p^*)=1\). Thus \(p\) is
\((2,2)\)-cyclic, Theorem 1.4 in \cite{NielsenGL} applies, and \eqref{eq:leunormal} prevents \(U/SU\) class
splitting. Since the coefficients of \(p\) lie in the fixed field,
inversion swaps the two hyperbolic primary sides and the class is real.
The image of the class of \(s(p)\) therefore satisfies
\[
                   (s(p)^{\operatorname{PSU}_8(2)})^2=\operatorname{PSU}_8(2).             
\]
\end{proof}

\begin{theorem}\label{psu_43}
Let \(G=\operatorname{PSU}_4(3), \operatorname{PSU}_8(3)\). Then \(G\) contains a conjugacy class \(C\) such that
\[
C^2=G.
\]
\end{theorem}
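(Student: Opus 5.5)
Both groups will be handled by the same device as Theorem~\ref{psu_82}: a strictly hyperbolic companion-matrix source $s(p)=\operatorname{diag}(C(p),\overline{C(p)}^{\,-T})$ in the hyperbolic Hermitian space over $K=\mathbb F_9$. We choose $p$ so that the class is $(1,n-1)$-cyclic, so that Nielsen's Theorem~1.2 with Lemmas~5.1--5.2 of \cite{NielsenGL} applies in dimension $n=2$. For $\operatorname{PSU}_8(3)$ ($n=4$) a $(2,2)$- or $(1,3)$-cyclic class lets Theorem~1.4 or 1.2 apply.

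For $n=2$ the natural first try is $p=(X-\alpha)(X-\beta)$ with distinct $\alpha,\beta\in\mathbb F_9^\times$, subject to three conditions:
\begin{itemize}
\item $\alpha\beta\in\mathbb F_3^\times$, so that $p(0)=\overline{p(0)}$ and $\det s(p)=1$;
\item $\{\alpha,\beta\}\cap\{\bar\alpha^{-1},\bar\beta^{-1}\}=\varnothing$, which is the condition $\gcd(p,p^*)=1$;
\item $\{\alpha,\beta,\bar\alpha^{-1},\bar\beta^{-1}\}$ is stable under inversion, which gives reality.
\end{itemize}
Since $\mathbb F_9^\times$ is cyclic of order $8$ and $\bar a^{-1}=a^{-3}$, a small search is enough. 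For example, with $\alpha$ a generator $\omega$ and $\beta=\omega^{-1}$ we get $\alpha\beta=1$, and the conjugate reciprocal roots are $\omega^{-3},\omega^{3}$, which are disjoint from $\{\omega,\omega^{-1}\}$. So $p=(X-\omega)(X-\omega^{-1})=X^2-(\omega+\omega^{-1})X+1$. Because $\omega+\omega^{-1}=\operatorname{Tr}$-type sums need not lie in $\mathbb F_3$, the realness argument would go through the Hilbert~90 correction exactly as in Case~1 of Theorem~\ref{evenu}: the eigenvalue set $\{\omega^{\pm1},\omega^{\pm3}\}$ is inversion-stable, so $P$ and $P^{-1}$ are $U$-conjugate, and the correction by $\operatorname{diag}(\mu I,\bar\mu^{-1}I)$ makes them $SU$-conjugate. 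Equation~\eqref{eq:leunormal} rules out $U/SU$ splitting. For $n=4$ we take $p=(X-\omega)(X-\omega^{-1})(X^2-X-1)$ or a similar product with a linear factor, and check coprimality and constant term in the same way.

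The main obstacle is that the excluded range of Lemma~\ref{SU:Nielsen} ($2n<10$ over $\mathbb F_9$) exists precisely because Nielsen's low-dimensional corner factorizations can fail over $\mathbb F_9$. So it is not automatic that Theorem~1.2 covers \emph{every} nonscalar target in $\mathrm{SU}_4(3)$. The danger is targets whose principal $2\times2$ corner is forced to be scalar, or involutions with $M^2=I$.

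If the matrix route leaves such exceptional targets, I would fall back on the character-table certificate, as the paper does elsewhere. $\operatorname{PSU}_4(3)$ is in CTblLib (\texttt{U4(3)}), so we would pick the real class of the above element (order $8$) and evaluate \eqref{eq:frob} exactly on all $20$ classes, confirming that no coefficient vanishes. For $\operatorname{PSU}_8(3)$ no table is available, so there the argument must be the matrix one. I would take a $(1,3)$-cyclic $p$ of degree $4$, which makes Theorem~1.2 unconditional for $n=3$-type corners, and handle any residual targets via Lemma~\ref{lem:relative-gauss}. In that lemma $G_1$ is an $\mathrm{SU}_4(3)$ Levi core, already covered by the first case, and we would check fixed-point-freeness of the source on the graded unipotent radical.
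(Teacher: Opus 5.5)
Your $\operatorname{PSU}_4(3)$ case is the paper's own. Take $z$ with $z^2=z+1$ and $\omega=z$; then $(X-\omega)(X-\omega^{-1})=X^2+z^2X+1=p_2$. Two small corrections apply there:
\begin{itemize}
\item The scalar correction $\operatorname{diag}(\mu I_2,\bar\mu^{-1}I_2)$ you borrow from Case~1 has determinant $(\mu/\bar\mu)^2=\pm1$ only. What makes an inverter special is the centralizer element of \eqref{eq:leunormal}, which is scalar on a single eigenline. Alternatively, as the paper does, use that $p_2$ is palindromic and conjugate by $\operatorname{diag}(R,\bar R^{-T})$ with $\det R=1$.
\item Since $s(p)^4=-I$, the image in $\operatorname{PSU}_4(3)$ has order $4$, not $8$.
\end{itemize}

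The genuine gap is $\operatorname{PSU}_8(3)$: no valid source is produced. Your $p=(X-\omega)(X-\omega^{-1})(X^2-X-1)$ fails $\gcd(p,p^*)=1$, for these reasons.
\begin{itemize}
\item $X^2-X-1$ is irreducible over $\mathbb F_3$, but over $K=\mathbb F_9$ it splits as $(X-z)(X-z^3)$.
\item The map $a\mapsto\bar a^{-1}=a^{-3}$ fixes the elements of order dividing $4$ and swaps $z\leftrightarrow z^5$ and $z^3\leftrightarrow z^7$.
\item For either inverse pair $\{\omega,\omega^{-1}\}$ of generators, the set $\{\omega,\omega^{-1}\}\cup\{z,z^3\}$ contains one of these swapped pairs.
\end{itemize}
For $\omega=z$ one gets $\gcd(p,p^*)=(X-z^3)(X-z^7)$, and $p$ even has the repeated root $z$. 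So $s(p)$ is not even cyclic, and Nielsen's lemmas do not apply.

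Your fallback ``a similar product with a linear factor'' is not automatic either. If all roots lie in $\mathbb F_9$, coprimality allows at most one root from each swapped pair. Every resulting $(1,3)$ polynomial $(X-a)(X-b)^3$ then has $p(0)=ab^3=\pm z^2\notin\mathbb F_3^\times$, so $s(p)\notin\mathrm{SU}_8(3)$. One needs a factor irreducible over $\mathbb F_9$, and must check simultaneously:
\begin{itemize}
\item $p(0)=\pm1$;
\item $\gcd(p,p^*)=1$;
\item a nonprimary coprime splitting with surjective centralizer norms;
\item inverse-stability.
\end{itemize}

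The paper takes $p_4=X^4+z^2X^2+1=(X^2+z^3)(X^2+z^5)$.
\begin{itemize}
\item It is a product of two distinct irreducible quadratics, since $-z^3=z^7$ and $-z^5=z$ are nonsquares. Hence it is $(2,2)$-cyclic, and Theorem~1.4 of \cite{NielsenGL} applies at $n=4$ over $\mathbb F_9$.
\item It is palindromic with $p_4(0)=1$, so reality is immediate.
\item \eqref{eq:leunormal} is surjective on each $\mathbb F_{81}$-factor.
\end{itemize}
Your linear-factor idea can be repaired, e.g.\ by $(X-z)(X-z^{-1})h$ with $h$ the $\mathbb F_9$-minimal polynomial of a primitive fifth root of unity. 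That is a different polynomial from the one you wrote.

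This arithmetic existence question is the real low-dimensional obstruction; Nielsen's product theorems do not break down in small dimension. The restriction $2n\ge10$ in Lemma~\ref{SU:Nielsen} concerns the uniform polynomial families, and the paper applies Theorem~1.2 at $n=2$ and Theorem~1.4 at $n=4$ directly. Your fallback through Lemma~\ref{lem:relative-gauss} does not fill the hole as stated:
\begin{itemize}
\item A CTblLib check in $\operatorname{PSU}_4(3)$ yields only a statement modulo $Z(\mathrm{SU}_4(3))\cong C_4$, not $\mathcal C_1^2\supseteq G_1\setminus Z(G_1)$ upstairs.
\item You have not said how the $\mathrm{GL}_2(9)$ part of the Levi is treated.
\item You have not verified fixed-point-freeness on the radical, or $HG_1$-fusion.
\end{itemize}
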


\begin{proof}
Let \(K=\mathbb F_9\) and let \(z\in \mathbb F^{*}_9\) be the standard primitive element
of order eight. For half-dimension two take
\begin{equation}
\label{eq:leu2}
 p_2=X^2+z^2X+1
     =(X+z^3)(X+z^5),\qquad
 p_2^*=X^2+z^6X+1.                                        
 \end{equation}
For half-dimension four take
\begin{equation}
\label{eq:leu4}
 p_4=X^4+z^2X^2+1
     =(X^2+z^3)(X^2+z^5),\qquad
 p_4^*=X^4+z^6X^2+1.                                      
\end{equation}
The displayed factors are irreducible in \eqref{eq:leu4}, and direct Euclidean
calculation gives \(\gcd(p_i,p_i^*)=1\). Formula \eqref{eq:leu2} is \((1,1)\)-cyclic
and uses \cite[Theorem 1.2]{NielsenGL}; formula \eqref{eq:leu4} is \((2,2)\)-cyclic and uses \cite[Theorem 1.4]{NielsenGL}.
Both polynomials are reciprocal, so a block-diagonal unitary
change conjugating \(C(p_i)\) to its inverse proves that the resulting
\(\mathrm{SU}\)-class is inverse-stable. Formula \eqref{eq:leunormal} proves it is one \(\mathrm{SU}\)-class.
Hence the projective images square to \(\operatorname{PSU}_4(3)\) and \(\operatorname{PSU}_8(3)\).
\end{proof}

All polynomial factorizations, reciprocal gcds, and cyclic decompositions
above are reproduced by \texttt{scripts/\allowbreak search\_\allowbreak unitary\_\allowbreak low\_\allowbreak p.g}.

For the remaining low-dimensional boundary cases, the exact ordinary-table witnesses already certified elsewhere are class
\texttt{4B} in \(\operatorname{PSU}_4(2)\) and class \texttt{4F} in \(\operatorname{PSU}_6(2)\). Thus, having verified the theorem for all \(n\ge5\), the sole even-dimensional unitary
parameter not closed by these routes is
\[
                              \operatorname{PSU}_6(3).                      
\]
An exhaustive degree-three polynomial search over \(\mathbb F_9\) shows
that no determinant-one, nonprimary strictly hyperbolic cyclic class is
projectively inverse-stable (the only central scalar is \(-I\)). Therefore
the present mechanism cannot be extended to $\operatorname{PSU}_6(3)$ merely by changing \(p\);
a different class or a non-cyclic factorization is required.

\subsection{\texorpdfstring{Odd unitary groups over \(q=2,3\)}{route_unitary_odd_character.md}}
\label{candidates/route_unitary_odd_character.md}

\subsubsection{Result and status}\label{result-and-status}

Let \(p\in K[X]\) be monic of degree \(n\), let \(A=C(p)\), and let
\(\Gamma_p=A^{\operatorname{GL}_n(K)}\).
The missing odd principal-corner lemma is not needed.
The replacement is Ellers--Gordeev's relative Gauss argument at the maximal
parabolic stabilizing a maximal totally isotropic subspace. Its two inputs are
exactly the properties already enjoyed by Nielsen's polynomials:
\begin{equation}
\label{eq:ou0.1}
 p(1)\ne0,\qquad \gcd(p,p^*)=1,\qquad
 \Gamma_p^2\supseteq\{A\in \operatorname{SL}_n(q^2):A\text{ nonscalar}\}.
\end{equation}

The first two conditions make the source element fixed-point-free on the two
grades of the parabolic radical, and the third covers the Levi component of
the prescribed Gauss decomposition. Reciprocity gives reality, hence also
the identity.

\begin{theorem}\label{oddu}
Let $G=\operatorname{PSU}_{2n+1}(q), q=2,3$ where $n\ge2$ if $q=2$ and  $n\ge1$ if $q=3$.
Then \(G\) contains a conjugacy class \(C\) such that
\[
C^2=G.
\]
\end{theorem}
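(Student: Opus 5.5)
We plan to apply Lemma~\ref{lem:relative-gauss} to $\widetilde G=\mathrm{SU}_{2n+1}(q)$, using the maximal parabolic subgroup $P$ that stabilizes a maximal totally isotropic subspace $W$ of dimension $n$. Choose a Witt basis $W\oplus\langle v_0\rangle\oplus W'$ with $W'$ dual to $W$. The Levi factor is $\mathrm{GL}_n(q^2)\times \mathrm U_1(q)$, acting as $A\mapsto \operatorname{diag}(A,\det(\bar A)^{-1}\cdots,\bar A^{-T})$. Take $G_1$ to be the subsystem group of type $A_{n-1}$, namely $\mathrm{SL}_n(q^2)$ embedded as $\operatorname{diag}(A,1,\bar A^{-T})$.

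For the source, take $f=\operatorname{diag}(C(p),1,\overline{C(p)}^{\,-T})$, where $p$ is Nielsen's polynomial of degree $n$ over $\mathbb F_{q^2}$ from \eqref{eq:PSLnpoly}. We may use the $q^2=4$ or $9$ rows of Lev--Nielsen (or of Theorem~\ref{typeA} with $q$ replaced by $q^2$), with $p$ having coefficients in $\mathbb F_q$ and $p(0)=(-1)^n$. With this choice $f\in\mathrm{SU}_{2n+1}(q)$. Theorem~\ref{typeA} and its proof then give \eqref{eq:ou0.1}: $\Gamma_p$ is a single $\mathrm{SL}_n(q^2)$-class, it is real, and its square contains all nonscalar matrices. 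The degenerate case $n=1$, $q=3$ (that is, $\mathrm{PSU}_3(3)$) we settle directly by the class \texttt{U3(3):3b} recorded in Section~\ref{candidates/rank2_exact_class_manifest.md}. For $n=2$ or $3$, where Theorem~\ref{typeA} over $\mathbb F_4,\mathbb F_9$ has small-dimension exceptions, we use the $(1,n-1)$ rows instead.

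We then check the four hypotheses of Lemma~\ref{lem:relative-gauss} in turn.

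\textbf{Hypothesis 1 (reality).} Since $p$ is reciprocal with coefficients in $\mathbb F_q$, we have $p^*=p^\vee=p$ up to the scalar $\pm1$. The Weyl element swapping $W$ and $W'$, composed with a block conjugating $C(p)$ to $\overline{C(p)}^{\,-T}$, inverts $f$. A Hilbert~90 determinant correction on the $v_0$-line, as in the proof of Theorem~\ref{evenu}, moves this inverter into $\mathrm{SU}$.

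\textbf{Hypothesis 2 (fixed points on the radical).} The radical $V_+$ has two grades. The first is $V_{+,1}/V_{+,2}\cong W\otimes\langle v_0\rangle$, on which $\sigma$ acts as $A$ up to a unit scalar; here $A$ is $C(p)$ or its inverse. The second is $V_{+,2}$, the space of hermitian-type $n\times n$ forms $\{X:\bar X^T=-X\}$, on which $\sigma$ acts by $X\mapsto AX\bar A^T$. On the first grade there is no eigenvalue $1$ because $p(1)\neq0$. On the second, an eigenvalue $1$ would require $\lambda\bar\mu=1$ for roots $\lambda,\mu$ of $p$, i.e.\ a root $\lambda$ of $p$ with $\bar\lambda^{-1}$ also a root of $p$; this is exactly excluded by $\gcd(p,p^*)=1$. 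The same argument applies on $V_-$.

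\textbf{Hypotheses 3 and 4 (covering $G_1$).} Hypothesis~3 is the Lev--Nielsen statement quoted above. For Hypothesis~4, note that $H_1$ is the diagonal torus of $\mathrm{SL}_n(q^2)$, which strictly contains $Z(G_1)$ whenever $n\ge2$, since $\mathbb F_{q^2}^\times$ is nontrivial. The lemma then gives $C^2\supseteq \mathrm{SU}_{2n+1}(q)\setminus Z$. Projecting to the simple quotient and using reality to obtain the identity yields $C^2=G$.

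\textbf{Main obstacle.} The hard step is matching the Lemma's $\mathcal C_1$, which is a union of $HG_1$-classes, with the $\mathrm{SL}_n$-class $\Gamma_p$. Conjugation by $H$ acts on $G_1$ through $\mathrm{GL}_n(q^2)$-diagonal elements whose determinants are unrestricted, so $\mathcal C_1\supseteq\Gamma_p$; this already suffices for Hypothesis~3. Hypothesis~2 must, however, be verified for every element of $\mathcal C_1$, including the extra $\mathrm U_1$ scalar on $v_0$, and the delicate point is checking that this scalar does not introduce eigenvalue $1$ on the first grade. We plan to handle this by observing that $H$-conjugation fixes the $v_0$-coordinate of $f$ (which equals $1$), so the first-grade action is exactly $A$, and $p(1)\ne0$ then suffices. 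For the small-$n$ cases $n=2,3$ we will also need to confirm explicitly that the chosen $p$ stays $(1,n-1)$-cyclic over $\mathbb F_{q^2}$.
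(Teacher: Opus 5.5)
\textbf{Gap: your choice of source polynomial violates Hypothesis~2 of Lemma~\ref{lem:relative-gauss}.} The framework itself is the paper's: relative Gauss at the parabolic stabilizing a maximal totally isotropic $E$, source $\operatorname{diag}(C(p),1,C(p)^+)$, Levi core covered by a cyclic-class product theorem, and bijective Lang maps on $\operatorname{Hom}(V_0,E)$ and on the skew-hermitian grade.

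Your Hypothesis~1 and Hypothesis~2 paragraphs contradict each other. You take $p$ from the $\mathrm{PSL}$ table \eqref{eq:PSLnpoly} over $\mathbb F_{q^2}$ and require it to be reciprocal with coefficients in $\mathbb F_q$. Then $p^*=p^\vee=p$, and your inverter explicitly uses a block conjugating $C(p)$ to $\overline{C(p)}^{\,-T}$, which needs exactly $p=p^*$. But Hypothesis~2 needs $\gcd(p,p^*)=1$. Both cannot hold.

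Concretely, the $\mathbb F_{q^2}$ row is $(X-1)^{n-2}(X-\delta)(X-\delta^{-1})$. For $n\ge3$ this gives $p(1)=0$, so $C(p)$ fixes a vector of $\operatorname{Hom}(V_0,E)$. On the central grade, any eigenvalue $\mu$ with $\mu\bar\mu=1$ produces a fixed point of $Y\mapsto AYA^*$. This covers $\mu=1$, every $\mu\in\mathbb F_4^\times$, and $\delta$ of order $4$ in $\mathbb F_9$ (the only $\delta\ne\pm1$ with $\delta+\delta^{-1}\in\mathbb F_3$). For example, $\operatorname{diag}(\omega,\omega^2)$ fixes the skew-hermitian matrix $E_{11}$ in characteristic two.

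In general, the fixed space in $M_n(K)$ is $\operatorname{Hom}_{K[X]}$ between the modules of $C(p)^+$ and $C(p)$, so it has dimension $\deg\gcd(p,p^*)$. It is stable under $Y\mapsto -Y^*$, so by descent it is spanned by skew-hermitian fixed points. Hence the Lang maps fail to be onto. Note also that Theorem~\ref{typeA} does not deliver \eqref{eq:ou0.1}: the first two conditions there, $p(1)\ne0$ and $\gcd(p,p^*)=1$, are precisely what the $\mathrm{PSL}$ polynomials violate.

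\textbf{The missing idea: ambient reality does not require $p$ to be reciprocal.} The element interchanging $E$ and $F$ moves the block $C(p)^+$, with polynomial $p^*$, into the $E$-slot. One then only needs $C(p)^+\sim C(p)^{-1}$, i.e.\ $p^*=p^\vee$, which is automatic when $p\in\mathbb F_q[X]$. More generally, the paper uses that $f_p$ is cyclic with polynomial $r_p=(X-1)pp^*$, so it suffices that $r_p^\vee=r_p$, followed by the $\operatorname{diag}(I_n,\epsilon,I_n)$ determinant correction.

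So $p$ must be chosen \emph{non}-self-conjugate-reciprocal: $p(1)\ne0$, $\gcd(p,p^*)=1$, $(-1)^np(0)=\pm1$, and still covered by a Nielsen product theorem. Examples are $X^4+X+1$, $(X^4+X+1)(X^3+X+1)^t$ and $(X^2+\lambda X+1)(X^3+X+1)^t$ over $\mathbb F_4$, and $X^2+\zeta^2X+1$, $X^4+\zeta X^2+1$ and $(X^3-X-1)(X^2-X-1)^t$ over $\mathbb F_9$.

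\textbf{Your small-rank fallback also fails.}
\begin{itemize}
\item Over $\mathbb F_4$, every $\mu\in\mathbb F_4^\times$ satisfies $\bar\mu^{-1}=\mu$. So no $p$ with a linear factor has $\gcd(p,p^*)=1$, and no $(1,n-1)$-cyclic source exists. Nielsen's $(m,n-m)$ theorem needs $n\ge4$, so $n=2,3$ are not covered.
\item Over $\mathbb F_9$, no split or $(1,2)$-cyclic cubic satisfies both $\gcd(p,p^*)=1$ and $r_p^\vee=r_p$.
\end{itemize}
This is why the paper treats $\mathrm{PSU}_5(2)$ and $\mathrm{PSU}_7(2)$ by character-table certificates (classes \texttt{5a} and \texttt{7a}). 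It handles $\mathrm{PSU}_7(3)$ by a separate construction with $\dim V_0=3$ and a regular unipotent middle block in $\mathrm{SU}_3(3)$.
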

Thompson's conjecture holds for every simple group
\begin{equation}
\label{eq:ou0.2}
\operatorname{PSU}_{2n+1}(2)\ (n\ge2),\qquad
        \operatorname{PSU}_{2n+1}(3)\ (n\ge1).
\end{equation}
The exact classes are listed in the proof of  Theorem \ref{oddu}.
In particular, the previously
unresolved group \(\operatorname{PSU}_7(3)\) is covered by an explicit class of order \(24\)
and centralizer order \(576\).

\subsubsection{The relative parabolic and its two grades}\label{the-relative-parabolic-and-its-two-grades}

Let \(K=\mathbb F_{q^2}\), \(\bar a=a^q\) its Frobenius automorphism. For a matrix \(A\) over \(K\), we define
\(A^+=A^{-*}=\overline A^{\,-T}\), 
where the overline denotes entrywise application of the Frobenius automorphism and \(^{-T}\) denotes the inverse of the transpose.

Consider the unitary space \(V\) decomposed as a direct sum
\begin{equation}
\label{eq:ou1.1}
 V=E\oplus V_0\oplus F,
 \qquad \dim_KE=\dim_KF=n,
\end{equation}
where \(E,F\) are paired totally isotropic spaces, so that the unitary form induces a perfect duality between them, and \(V_0\) is
nondegenerate. In most applications, \(V_0\) is a line, i.e. \(\dim_K V_0 = 1\); the only exception arises in the \(\operatorname{PSU}_7(3)\) construction, where one takes \(\dim_K V_0 = 3\). 

Let \(P = LQ\) be the parabolic subgroup stabilizing \(E\). The Levi factor \(L\) consists of block-diagonal matrices preserving the decomposition in \eqref{eq:ou1.1}; explicitly, its elements have the form
\begin{equation}
\ell(A,s) = \operatorname{diag}(A, s, A^+),
\label{eq:ou1.2}
\end{equation}
with \(A \in \operatorname{GL} (E) \cong \operatorname{GL} _n(K)\) and \(s \in \operatorname{GL} (V_0)\). The third diagonal block is forced to be \(A^+\) in order to preserve the unitary form on \(V\).

The unipotent radical \(Q\) of \(P\) is of nilpotency class two. In the standard matrix coordinates adapted to the decomposition \(V = E \oplus V_0 \oplus F\), the associated graded pieces of \(Q\) are given by
\begin{equation}
Q/[Q,Q] \simeq \operatorname{Hom}_K(V_0, E),
\qquad
[Q,Q] \simeq \{ Y \in M_n(K) : Y + Y^* = 0 \}.
\label{eq:ou1.3}
\end{equation}
Here, the first piece parametrizes the off-diagonal block from \(V_0\) to \(E\), while the second piece consists of skew-Hermitian \(n \times n\) matrices with respect to the involution \(*\) , which form the centre of \(Q\).

The conjugation action of the Levi element \(\ell(A,s)\) in \eqref{eq:ou1.2} on the graded pieces of \(Q\) is given by
\begin{equation}
X \mapsto A X s^{-1}, \qquad Y \mapsto A Y A^*,
\label{eq:ou1.4}
\end{equation}
where \(X \in \operatorname{Hom}_K(V_0, E)\) corresponds to the \(Q/[Q,Q]\)-component and \(Y \in [Q,Q]\) is skew-Hermitian. Note that the second formula uses the natural adjoint action of \(A\) on the Lie algebra of the unitary group. By symmetry, the analogous statements hold for the opposite unipotent radical \(Q^-\), with the roles of \(E\) and \(F\) interchanged.

We now record a useful criterion for the existence of fixed points on these graded modules.

\begin{lemma}\label{lemma-1.1-the-exact-fixed-point-test}
Let \(A\) be cyclic with characteristic polynomial \(p\).
\begin{enumerate}
\def\labelenumi{\arabic{enumi}.}
\tightlist
\item
  If \(V_0\) is a line and \(s=1\), then \(\ell(A,1)\) is fixed-point-free
  on both modules in \eqref{eq:ou1.3} whenever
  \(p(1)\ne0\) and \(\gcd(p,p^*)=1\).
\item
  If \(s\) is unipotent and no eigenvalue of \(A\) is \(1\), then the same
  conclusion holds provided \(\gcd(p,p^*)=1\).
\end{enumerate}
\end{lemma}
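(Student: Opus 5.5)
The plan is to reduce each fixed-point statement to a Sylvester-type equation $MZ=ZN$ and then apply the classical fact that, over an algebraic closure, $MZ=ZN$ has only the solution $Z=0$ exactly when $M$ and $N$ have no common eigenvalue. For a linear (or $\mathbb F_q$-linear) action, being fixed-point-free on a graded piece just means that $1$ is not an eigenvalue there, i.e.\ the only fixed vector is $0$. That is what we will check on both modules in \eqref{eq:ou1.3}, using the explicit action \eqref{eq:ou1.4}. We will also check the same modules for the opposite radical $Q^-$, which is what Hypothesis~2 of Lemma~\ref{lem:relative-gauss} requires.

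\emph{The grade $\operatorname{Hom}_K(V_0,E)$.} A fixed point satisfies $AXs^{-1}=X$, i.e.\ $AX=Xs$.
\begin{itemize}
\item In part~1, $V_0$ is a line and $s=1$, so $X$ is a column vector with $AX=X$. A nonzero solution exists iff $1$ is an eigenvalue of $A$, i.e.\ iff $p(1)=0$. Hence $p(1)\neq 0$ excludes fixed points.
\item In part~2, $s$ is unipotent, so its only eigenvalue is $1$, which by hypothesis is not an eigenvalue of $A$. Sylvester's criterion then forces $X=0$.
\end{itemize}
On $Q^-$ the corresponding grade is $\operatorname{Hom}_K(E',V_0)$ (or its dual version), acted on through $A^+$ and $s$. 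Here we note that the characteristic polynomial of $A^+=\overline A^{\,-T}$ has roots $\bar\lambda^{-1}$, where $\lambda$ runs over the roots of $p$; up to a scalar it is $p^*$. Since $1$ is fixed by both inversion and the Frobenius bar, $p^*(1)=0$ iff $p(1)=0$, and the same argument applies.

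\emph{The central grade of skew-Hermitian $Y$.} A fixed point satisfies $AYA^*=Y$, equivalently $AY=Y(A^*)^{-1}=YA^+$. We will prove the stronger statement that this equation has only the solution $Y=0$ in all of $M_n(K)$; restricting to the $\mathbb F_q$-subspace of skew-Hermitian matrices then gives fixed-point-freeness there. The eigenvalues of $A$ are the roots of $p$, and those of $A^+$ are the roots of $p^*$. By Sylvester's criterion the equation has only the zero solution provided $p$ and $p^*$ have no common root, and this is exactly $\gcd(p,p^*)=1$. This grade does not involve $s$, so the argument serves both parts. On $Q^-$ the action is $Y\mapsto A^+Y(A^+)^*$, leading to $A^+Y=YA$; the same criterion applies with the roles of $p$ and $p^*$ swapped.

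The only point that needs care, and which we expect to be the main (mild) obstacle, is setting up the graded actions and polynomials correctly. First, one must confirm that the characteristic polynomial of $\overline A^{\,-T}$ is precisely the normalized conjugate reciprocal $p^*$ of \eqref{eq:reciprocal}. Second, one must confirm that the action on the opposite radical is indeed by $A^+$ in the form stated above, so that the coprimality and $p(1)\neq0$ conditions are symmetric under $p\leftrightarrow p^*$. Cyclicity of $A$ is not actually needed for these fixed-point conclusions; it is retained only because the lemma is applied to companion matrices.
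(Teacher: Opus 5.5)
Your proof is correct and follows the paper's argument. On the first grade you use the eigenvalue-one criterion; your Sylvester step in part~2 is the paper's observation that $X\mapsto AXs^{-1}$ has exactly the eigenvalues of $A$. On the central grade you use the same intertwiner (Sylvester) argument for $AY=YA^+$, with $A^+$ having characteristic polynomial $p^*$. Your explicit check of $Q^-$ fills in what the paper leaves to ``by symmetry,'' and you are right that cyclicity of $A$ is never used.
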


\begin{proof}
Consider the first graded component \(Q/[Q,Q]\), identified with \(\operatorname{Hom}_K(V_0, E)\).  
In case (1), where \(s=1\) and \(\dim V_0=1\), the action in \eqref{eq:ou1.4} reduces to \(X \mapsto A X\). Thus a fixed vector \(X\) is simply a vector in \(E\) fixed by \(A\). Such a nonzero vector exists if and only if \(1\) is an eigenvalue of \(A\), which is precisely ruled out by the condition \(p(1)\ne 0\).  
In case (2), \(s\) is unipotent, so all its eigenvalues are equal to \(1\). The linear transformation \(X \mapsto A X s^{-1}\) on the finite-dimensional space \(\operatorname{Hom}_K(V_0, E)\) therefore has eigenvalues given by the products of the eigenvalues of \(A\) and those of \(s^{-1}\). Hence the eigenvalues of this operator are exactly the eigenvalues of \(A\). Since we assume that no eigenvalue of \(A\) is \(1\), this action also has no nonzero fixed points.

It remains to examine the second graded component \([Q,Q]\), which consists of skew-Hermitian matrices \(Y\). The fixed-point equation in this case is \(A Y A^* = Y\). Since \(A^{-*} = A^{+}\), multiplying on the right by \(A^{-*}\) gives the equivalent relation
\begin{equation}
A Y = Y A^{-*} = Y A^+.
\label{eq:ou1.5}
\end{equation}
If a nonzero solution \(Y\) existed, it would serve as a nonzero homomorphism of \(K[X]\)-modules from the module associated with the matrix \(A^+\) to the module associated with \(A\). Indeed, \(A^+\) acts on the right via \(A^+\), and \eqref{eq:ou1.5} exactly expresses the intertwining condition \(Y(A^+ x) = A(Yx)\) for all \(x\). The characteristic polynomial of \(A^+\) is \(p^*\), while that of \(A\) is \(p\). The assumption \(\gcd(p,p^*)=1\) ensures that the two \(K[X]\)-modules are non-isomorphic and have no common composition factors. Therefore, by the uniqueness of primary decomposition (or Schur's lemma over a suitable extension), the only intertwiner between them is the zero map. Hence no nonzero fixed point \(Y\) can exist.
\end{proof}

This verifies hypothesis 3 of Ellers--Gordeev, Proposition 5.1 (via their
Lemma 5.6) in \cite{EG}, without any principal-corner assertion.

\subsubsection{The relative-Gauss covering lemma}\label{the-relative-gauss-covering-lemma}

We record the precise form of Ellers--Gordeev's argument needed below, which is restated as Lemma \ref{lem:relative-gauss} in Section \ref{sec:prelimelaries}.

\begin{lemma}[relative-Gauss class square]
\label{lemma-2.1-relative-gauss-class-square}
Let \(G=\mathrm{SU}(V)\), let \(G_1\) be the derived subgroup of the Levi in \eqref{eq:ou1.2},
and let \(f\in HG_1\) be real in \(G\). Let \(C=f^G\), and let
\(C_1\) be the union of the two \(HG_1\)-classes of \(f\) and \(f^{-1}\).
Assume:

\begin{enumerate}
\def\labelenumi{\arabic{enumi}.}
\tightlist
\item
  \(H_1\ne Z(G_1)\);
\item
  every Levi core which occurs below is in \(C_1^2\);
\item
  \(f\) is fixed-point-free on the two modules (1.3).
\end{enumerate}

Then \(C^2\supseteq G\setminus Z(G)\). If \(Z(G)=1\), or after passage to
\(\mathrm{PSU}(V)\), reality also supplies the identity.
\end{lemma}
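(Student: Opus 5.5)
The plan is to follow the proof of Lemma~\ref{lem:relative-gauss} almost verbatim, specialised to the maximal parabolic $P=LQ$ of \eqref{eq:ou1.2}. Here the complement $V_\pm$ is the unipotent radical $Q$ (respectively $Q^-$). Its central filtration has just two grades, $Q\supseteq[Q,Q]\supseteq 1$, with quotients $\operatorname{Hom}_K(V_0,E)$ and the skew-Hermitian matrices, as in \eqref{eq:ou1.3}. The nilpotency-class-two relation $[Q,Q]\le Z(Q)$ gives the required commutator condition $[V_{\pm,r},V_{\pm,s}]\le V_{\pm,r+s}$.

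Given a noncentral $x\in G=\mathrm{SU}(V)$, I will first apply the Gauss decomposition theorem of Ellers--Gordeev \cite[Theorem 3]{EG} to replace $x$ by a conjugate $y=u_1hu_2$ with $h$ prescribed in $H_1\setminus Z(G_1)$; this choice is possible by Hypothesis~1. Factoring $u_1=v_1\tilde u_1$ and $u_2=\tilde u_2v_2$ with $v_1\in Q^-$, $v_2\in Q$, and $\tilde u_i\in G_1$, I get $y=v_1gv_2$ with $g=\tilde u_1h\tilde u_2\in G_1$. Uniqueness of the Bruhat--Gauss form then shows $g\notin Z(G_1)$, so $g$ is a Levi core that occurs. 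Hypothesis~2 then writes $g=\sigma_1\sigma_2$ with $\sigma_i\in C_1$.

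The absorption step uses the fixed-point freeness of Hypothesis~3. Every element of $C_1$ is an $HG_1$-conjugate of $f^{\pm1}$, and $HG_1$ normalises both $Q$ and $Q^\pm$ and preserves their grades. Fixed-point freeness of $f$ on the two modules therefore transfers to $f^{-1}$ and to all of $C_1$, so $1-\sigma$ is invertible on each grade. Induction on the two-step filtration then makes the Lang map $u\mapsto u\sigma u^{-1}\sigma^{-1}$ bijective on $Q^\pm$. I will first solve modulo $[Q,Q]$ and then correct in the centre, using that the error term there is central and the action is linear. This produces $a_1\in Q^-$ and $a_2\in Q$ with $y=(a_1\sigma_1a_1^{-1})(a_2\sigma_2a_2^{-1})$. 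Both factors lie in $f^G$, since $f$ is real in $G$, which gives $C^2\supseteq G\setminus Z(G)$.

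For the final sentence, the identity equals $ff^{-1}\in C^2$ by reality. In $\mathrm{PSU}(V)$ every nonidentity element has a noncentral lift, and the image of $C$ is a single class, so the image squares onto $\mathrm{PSU}(V)$. The step I expect to need the most care is the Gauss decomposition with prescribed torus part lying in $H_1\setminus Z(G_1)$ for the twisted group $\mathrm{SU}(V)$, together with checking that the $G_1$-component is really noncentral. If the Ellers--Gordeev theorem for ${}^2A$ types only prescribes $h$ modulo something, I will instead fall back on the ``every Levi core that occurs'' clause of Hypothesis~2.
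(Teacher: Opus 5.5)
Your proposal is correct and is essentially the paper's proof. Both use the prescribed Gauss decomposition with $h\in H_1\setminus Z(G_1)$, factor the Levi core as $\sigma_1\sigma_2$ with $\sigma_i\in C_1$, absorb $v^\pm$ through the two Lang maps made bijective by fixed-point-freeness along $Q\supseteq[Q,Q]$, and use reality of $f$ to place both factors in $C$. One point you leave implicit, as the paper does with ``by symmetry'': Hypothesis~3 is stated only for the grades of $Q$. The grades of $Q^-$ are the dual $L$-modules, on which every member of $C_1$ has inverted eigenvalues, so fixed-point-freeness carries over to $Q^-$ as well.
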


In preparation for the subsequent remark, we briefly recall the proof.

\begin{proof}
Ellers--Gordeev's prescribed Gauss theorem \cite[Theorem 3]{EG} says that, for
any noncentral \(y\in G\) and any chosen \(h\in H\), a conjugate of \(y\)
has the form
\begin{equation}
\label{eq:ou2.1}
 y_1=u^-hu^+ =v^-\,g\,v^+,
 \qquad v^\pm\in Q^\pm,\quad g\in G_1.                 
\end{equation}

Choose \(h\in H_1\setminus Z(G_1)\). Gauss-decomposition uniqueness makes
\(g\) noncentral in the one-factor applications below. Write
\(g=\sigma_1\sigma_2\) with \(\sigma_i\in C_1\). Hypothesis 3 and the
successive central quotients of \(Q\) imply that the Lang maps
\begin{equation}
\label{eq:ou2.2}
 a\longmapsto a\sigma_1a^{-1}\sigma_1^{-1}\quad(Q^-),
 \qquad
 a\longmapsto\sigma_2^{-1}a\sigma_2a^{-1}\quad(Q)       
\end{equation}
are onto. Choose \(a_1,a_2\) mapping to \(v^-,v^+\), respectively. Then
\begin{equation}
\label{eq:ou2.3}
 (a_1\sigma_1a_1^{-1})(a_2\sigma_2a_2^{-1})
       =v^-\sigma_1\sigma_2v^+=y_1.                     
\end{equation}
Both factors belong to \(C\), because \(f\) is real. This is exactly the
proof of Ellers--Gordeev, Proposition 5.1 and Lemma 5.1 in \cite{EG}.
\end{proof}

For the \(\operatorname{PSU}_7(3)\) product Levi, we use the following harmless refinement.
Choose the prescribed \(h\) entirely in the \(\mathrm{SU}_3(3)\)-factor. The
\(\operatorname{SL}_2(9)\)-component of the core in \eqref{eq:ou2.1}) is then
\begin{equation}
\label{eq:ou2.4}
                         M=u^-_2 I_2u^+_2.               
\end{equation}

If \(M\) is scalar, uniqueness of its lower-diagonal-upper Gauss
decomposition forces \(M=I_2\) and \(u^-_2=u^+_2=I_2\). Hence only
\(M=I_2\) or nonscalar \(M\) needs to be covered. The proof \eqref{eq:ou2.1}--\eqref{eq:ou2.3}
uses no other element of the product Levi, so \cite[Proposition 5.1]{EG} remains valid
with this exact set of cores.

\begin{proof}[Proof of Theorem \ref{oddu}]
Let \(p\in K[X]\) be monic of degree \(n\), let \(A=C(p)\), and let
\(\Gamma_p=A^{\operatorname{GL}_n(K)}\). Assume
\begin{equation}
\label{eq:ou3.1}
 d:=\det A=(-1)^np(0)\in\mathbb F_q^*,\qquad d^2=1.      
\end{equation}
Then \(f_p=\ell(A,1)\) lies in \(\mathrm{SU}_{2n+1}(q)\), since
\(\det f_p=d/\bar d=1\). It also lies in \(HG_1\): write \(A=DS\), with
\(S\in \operatorname{SL}_n(K)\) and \(D\) diagonal of determinant \(d\).

Any \(P\in \operatorname{GL}_n(K)\) can be lifted to an \(SU\)-Levi element. Indeed, denote by
\[
\mu_{q+1} := \{\, x \in K^\times \mid x^{q+1}=1 \,\}
\]
the cyclic subgroup of \((q+1)\)-th roots of unity, which is the unique subgroup of order \(q+1\) in \(K^\times\).  
Put \(d = \det P \in K^\times\). Since
\(
\bar d/{d} = d^{\,q-1},
\)
we compute
\(
\left( {\bar d}/{d} \right)^{q+1}
= (d^{\,q-1})^{q+1}
= d^{\,q^2-1}
= 1,
\)
by the multiplicative group order of \(K^\times\). Hence \(\bar d/d \in \mu_{q+1}\).

Now, since \(r\) is coprime to \(q+1\) in the admissible cases (\(r=1\), or \((r,q+1)=(3,4)=1\)), the power map
\[
\mu_{q+1} \longrightarrow \mu_{q+1}, \qquad x \longmapsto x^r,
\]
is an automorphism of the cyclic group \(\mu_{q+1}\). Therefore, there exists a unique \(c \in \mu_{q+1}\) satisfying 
\begin{equation}
c^{r} = \bar d/d.
\label{eq:ou3.2}
\end{equation}

Finally, we verify the determinant of the lift. Since \(P^+ = P^{-*}\), we have
\[
\det(P^+) = \overline{\det(P)}^{-1} = \bar d^{-1}.
\]
Thus
\[
\det\!\bigl(\ell(P,cI_r)\bigr)
= \det(P) c^{\,r} \det(P^+)
= d \cdot c^{\,r} \cdot \bar d^{-1}
= 1.
\]
This shows that \(\ell(P,cI_r)\) has determinant \(1\) and preserves the unitary form by construction, so it is indeed an element of \(\operatorname{SU} (V)\).

Consequently, any \(\operatorname{GL} _n(K)\)-similarity transformation on the \(A\)-block can be realised by conjugation within the full special unitary group \( SU (V)\).

Nielsen's Theorems 1.2 and 1.4 in \cite{NielsenGL} give, for each polynomial used below,
\begin{equation}
\label{eq:ou3.3}
 \Gamma_p^2\supseteq
 \{M\in \operatorname{GL}_n(K):\det M=1,\ M\text{ nonscalar}\}.         
\end{equation}
Thus hypothesis 2 of Lemma \ref{lemma-2.1-relative-gauss-class-square} holds for the embedded \(\operatorname{SL}_n(K)\)-core.

Let
\begin{equation}
\label{eq:ou4.1}
 p^*(X)=\overline{p(0)}^{-1}X^n\overline{p(X^{-1})},
 \qquad r_p=(X-1)pp^*.                                   
\end{equation}
The three diagonal blocks of \(f_p=\ell(C(p),1)=\operatorname{diag}(C(p), 1, C(p)^+) \) are cyclic and have pairwise coprime
minimal polynomials \(p,X-1,p^*\). Hence \(f_p\) is cyclic with
characteristic and minimal polynomial \(r_p\). If
\begin{equation}
\label{eq:ou4.2}
                         r_p^\vee=r_p,                   
\end{equation}
then \(f_p^{-1}\) is cyclic with the same admissible polynomial. Cyclic
unitary elements with a fixed admissible polynomial form one unitary class
(equivalently use the connected algebraic centralizer and Lang--Steinberg).
Thus \(f_p\) is real in \(\mathrm{U}_{2n+1}(q)\).

This unitary class is one \(\mathrm{SU}\)-class. Its centralizer contains
\begin{equation}
\label{eq:ou4.3}
 \operatorname {diag}(I_n,\epsilon,I_n),\qquad
 \epsilon^{q+1}=1,                                      
\end{equation}
whose determinants run through \(\mu_{q+1}\). A unitary conjugator can
therefore be corrected by a centralizing element to have determinant one.
Consequently
\begin{equation}
\label{eq:ou4.4}
                         f_p\sim_{SU}f_p^{-1}.             
\end{equation}
In particular \(1\in(f_p^{\mathrm{SU}})^2\). The same determinant-surjectivity
argument shows that the unitary class does not split on restriction to
\(\mathrm{SU}\). Passing to \(\mathrm{PSU}\) therefore produces exactly one real class.

The exact \(q=2\) classes:
Here \(K=\mathbb F_4\). Fix
\(\lambda\in K\setminus\mathbb F_2\), \(\lambda^2+\lambda+1=0\), and put
\begin{equation}
\label{eq:ou5.1}
 a=X^3+X+1,\qquad b=X^4+X+1,\qquad
 c=X^2+\lambda X+1.                                    
\end{equation}

For \(n\ge4\), take
\begin{equation}
\label{eq:ou5.2}
\begin{array}{ll}
n=4:&p=b,\\
n=3t+4\ge7:&p=ba^t,\\
n=3t+2\ge5:&p=ca^t,\\
n=3t+6\ge6:&p=ca^tb.
\end{array}                                                
\end{equation}

Nielsen, Lemma 5.2 in \cite{NielsenGL}, proves \(\gcd(p,p^*)=1\), \(p(1)p^*(1)\ne0\), and
reciprocity of \((X-1)pp^*\). Every \(p\) has constant term one. The
displayed coprime splittings make its cyclic class nonprimary
\((m,n-m)\)-cyclic, so Nielsen, Theorem 1.4 in \cite{NielsenGL}, gives \eqref{eq:ou3.3}, including the
degree-four \((2,2)\) boundary. Lemmas \ref{lemma-1.1-the-exact-fixed-point-test}--\ref{lemma-2.1-relative-gauss-class-square} now prove
\begin{equation}
\label{eq:ou5.3}
                 (\overline{f_p}^{\,\operatorname{PSU}_{2n+1}(2)})^2
                   =\operatorname{PSU}_{2n+1}(2)\qquad(n\ge4).         
\end{equation}

The two remaining simple ranks have exact table certificates:
\begin{equation}
\label{eq:ou5.4}
\begin{array}{c|c|c|c}
G&C&\text{polynomial of a lift}&|C_G(x)|\\ \hline
\operatorname{PSU}_5(2)&5a&(X+1)cc^*&15\\
\operatorname{PSU}_7(2)&7a&(X+1)aa^*&63
\end{array}                                               
\end{equation}

Both classes are real and their squares are the whole group, as checked by
every Frobenius class-multiplication coefficient in 
\texttt{scripts/\allowbreak audit\_\allowbreak unitary\_\allowbreak odd\_\allowbreak small.g}. 
The group \(\operatorname{PSU}_3(2)\) is not simple
and is outside the assertion.

The exact \(q=3\) classes:
Here \(K=\mathbb F_9\). Fix \(\zeta\in K\) with
\(\zeta^2=\zeta+1\), so \(|\zeta|=8\) and \(\bar\zeta=\zeta^3\).

For \(n=2\), take
\begin{equation}
\label{eq:ou6.1}
 p_2=X^2+\zeta^2X+1
    =(X+\zeta^3)(X+\zeta^5),\qquad
 p_2^*=X^2+\zeta^6X+1.                                
\end{equation}

For \(n=4\), take
\begin{equation}
\label{eq:ou6.2}
 p_4=X^4+\zeta X^2+1,\qquad p_4^*=X^4+\zeta^3X^2+1.      
\end{equation}

For \(n\ge5\), take
\begin{equation}
\label{eq:ou6.3}
\begin{array}{ll}
n=2t+3\ge5:&p=(X^3-X-1)(X^2-X-1)^t,\\
n=2t+6\ge6:&p=(X^3-X-1)(X^3-X+1)(X^2-X-1)^t.
\end{array}                                               
\end{equation}

For \eqref{eq:ou6.1}, Nielsen's Theorem 1.2 in \cite{NielsenGL} applies to the \((1,1)\)-cyclic class.
For \eqref{eq:ou6.2}, its factorization
\begin{equation}
\label{eq:ou6.4}
 p_4=(X^2+\zeta^3X+1)(X^2+\zeta^7X+1)                  
\end{equation}
and \cite[Theorem 1.4]{NielsenGL} give the \((2,2)\) boundary. For \eqref{eq:ou6.3}, split off the displayed cubic and apply  \cite[Theorem 1.4]{NielsenGL}. Nielsen's Lemma 5.2 in \cite{NielsenGL} (and direct
subtraction for \eqref{eq:ou6.2} gives all of \eqref{eq:ou0.1}, \eqref{eq:ou3.1}, and \eqref{eq:ou4.2}. Hence
\begin{equation}
\label{eq:ou6.5}
                 (\overline{f_p}^{\,\operatorname{PSU}_{2n+1}(3)})^2
                    =\operatorname{PSU}_{2n+1}(3)
       \qquad(n=2\text{ or }n\ge4).                      
\end{equation}

At \(n=2\), this is the table class \texttt{8m}, of centralizer order \(64\).

\paragraph{\texorpdfstring{The missing degree three: an exact \(\operatorname{GL}_2\times\operatorname{SU}_3\) reservoir}{6.1 The missing degree three: an exact GL\_2\textbackslash times SU\_3 reservoir}}\label{the-missing-degree-three-an-exact-gl_2times-su_3-reservoir}

Let \(s\in\operatorname{SU}_3(3)\) be a regular unipotent element (Jordan type \(J_3(1)\));
its image is class \texttt{3b}, and
\begin{equation}
\label{eq:ou6.6}
                         (s^{\mathrm{SU}_3(3)})^2=\mathrm{SU}_3(3),
 \qquad s\sim s^{-1}.                                   
\end{equation}

On a space of dimension seven with
\(\dim E=2\), \(\dim V_0=3\), define
\begin{equation}
\label{eq:ou6.7}
 f_7=\operatorname {diag}(C(p_2),s,C(p_2)^+)\in\operatorname{SU}_7(3).
\end{equation}

The source is cyclic: its pairwise coprime minimal polynomials are
\(p_2,(X-1)^3,p_2^*\). Their product is reciprocal, so the argument for $p$ (using \(\operatorname {diag}(I_2,\epsilon I_3,I_2)\), whose
determinants \(\epsilon^3\) run through \(\mu_4\)) proves that \eqref{eq:ou6.7} is one
real \(\mathrm{SU}_7(3)\)-class.

Lemma \ref{lemma-1.1-the-exact-fixed-point-test} applies because \(s\) is unipotent, neither root
\(\zeta^3,\zeta^5\) of \(p_2\) is one, and \(\gcd(p_2,p_2^*)=1\).
For completeness, the two radical grades have eigenvalues
\(\zeta^3,\zeta^5\) on \(\operatorname {Hom}(V_0,E)\), and a fixed point on
the central grade would be an intertwiner from \(C(p_2)^+\) to \(C(p_2)\).
Thus both grades are fixed-point-free.

Use the refinement after Lemma \ref{lemma-2.1-relative-gauss-class-square}. Every occurring \(\operatorname{SL}_2(9)\)-component
\(M\) is either nonscalar or \(I_2\). If it is nonscalar, Nielsen's Theorem
1.2 writes \(M=R_1R_2\) with \(R_i\in\Gamma_{p_2}\). If \(M=I_2\), use
\(C(p_2)C(p_2)^{-1}\); this still uses two members of \(\Gamma_{p_2}\)
because \(p_2\) is palindromic. Independently, \eqref{eq:ou6.6} writes the
\(\mathrm{SU}_3(3)\)-component as \(s_1s_2\), with \(s_i\sim s\). The conjugator
lifting argument \eqref{eq:ou3.2}, now with \(r=3\), realizes these independent
conjugacies inside \(\mathrm{SU}_7(3)\). Hence the required Levi core is
\begin{equation}
\label{eq:ou6.8}
 \operatorname {diag}(M,t,M^+)
 =\operatorname {diag}(R_1,s_1,R_1^+)
  \operatorname {diag}(R_2,s_2,R_2^+).                   
\end{equation}
Lemma \ref{lemma-2.1-relative-gauss-class-square} proves
\begin{equation}
\label{eq:ou6.9}
                       (f_7^{\mathrm{SU}_7(3)})^2=\mathrm{SU}_7(3).         
                       \end{equation}
Here \(Z(\mathrm{SU}_7(3))=1\). The element \(f_7\) has order \(24\). Its
centralizer has order
\begin{equation}
\label{eq:ou6.10}
 |C_{\mathrm{SU}_7(3)}(f_7)|
  =\frac{(9-1)^2\,|C_{GU_3(3)}(s)|}{4}
  =\frac{64\cdot36}{4}=576.                              
\end{equation}
Thus \eqref{eq:ou6.7} is an exact class formula and a
uniform proof for the formerly missing \(\operatorname{PSU}_7(3)\).

Finally, \(\operatorname{PSU}_3(3)\) is covered by the exact real class\[
                         C=3b,\qquad |C_G(s)|=9,\qquad C^2=G. 
\]
This completes \eqref{eq:ou0.2}, including the identity in every rank.
\end{proof}

The remaining even boundary \(\operatorname{PSU}_6(3)\).

\begin{theorem}\label{psu_63}
Let \(G=\operatorname{PSU}_6(3)\).
Then \(G\) contains a conjugacy class \(C\) such that
\[
C^2=G.
\]
\end{theorem}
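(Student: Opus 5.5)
The plan is to treat $\operatorname{PSU}_6(3)$ like $\operatorname{PSU}_7(3)$ in the proof of Theorem~\ref{oddu}, replacing the cyclic strictly hyperbolic mechanism (ruled out by the exhaustive search above) with Lemma~\ref{lemma-2.1-relative-gauss-class-square} at a parabolic whose Levi contains a group already covered. Decompose $V=E\oplus V_0\oplus F$ with $\dim E=\dim F=1$ and $V_0$ a nondegenerate $4$-space, so the Levi is $\{\ell(a,s)=\operatorname{diag}(a,s,\bar a^{-1})\}$ with $a\in\mathbb F_9^\times$ and $s\in\mathrm U_4(3)$ subject to $\det s=\bar a/a=a^{2}$. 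The radical grades are $\operatorname{Hom}(V_0,E)$, on which $X\mapsto aXs^{-1}$, and the skew-Hermitian line, on which $Y\mapsto a\bar aY$. Fixed-point-freeness, in the sense of Lemma~\ref{lemma-1.1-the-exact-fixed-point-test}, therefore asks for $a^4\ne1$ (so $a$ has order $8$) and $a\notin\operatorname{Spec}(s)$.

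The natural core input is the class $\Omega_4$ of $s(p_2)$ from Theorem~\ref{psu_43}, which squares onto the nonscalar elements of $\mathrm{SU}_4(3)$. However, its four eigenvalues are exactly the four elements of order $8$ in $\mathbb F_9^\times$. So I will twist it: take $s=\mu\,s(p_2)$ with $\mu\in\mu_4$. Then $\det s=\mu^4\det s(p_2)=1$, which forces $a^2=1$ and breaks fixed-point-freeness. I must instead balance the determinant by choosing $s\in\mathrm U_4(3)\setminus\mathrm{SU}_4(3)$ of the form $\operatorname{diag}(\nu,1,1,\bar\nu^{-1})\,s(p_2)$-type with $\det s=a^2$.

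The cleaner route, which I will try first, is to choose $a$ of order $8$ and $s'=\operatorname{diag}(B,\bar B^{-T})$ strictly hyperbolic with $B=C(q)$ for a quadratic $q$ over $\mathbb F_9$. The requirements are:
\begin{itemize}
\item $q$ has roots avoiding $a$ and $\bar a^{-1}$;
\item $\gcd(q,q^*)=1$ and $q$ is $(1,1)$-cyclic;
\item $\det s'=a^2$.
\end{itemize}
I then apply \cite[Theorem~1.2]{NielsenGL} on the coset of $\mathrm{SU}_4$. The Levi cores arising from the prescribed Gauss decomposition, with $h$ placed in the $\mathrm U_4$-part, are then elements $\operatorname{diag}(a^2,M,\bar a^{-2})$. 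By the same Schur-complement lifting used in Lemma~\ref{SU:Nielsen}, these need $M$ of determinant $a^4=-1$ to be a product of two $\mathrm U_4$-conjugates of $s'$ with $M$ nonscalar. Scalar $M$ is handled by Gauss uniqueness, as in the $\operatorname{PSU}_7(3)$ refinement. Reality of $f=\ell(a,s')$ comes from the palindromic total polynomial $(X-a)(X-\bar a^{-1})qq^*$, via the cyclic-class argument and Hilbert~90 determinant correction of \eqref{eq:ou4.3}. The identity is then supplied by reality after passing to $\operatorname{PSU}_6(3)$.

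The main obstacle is the core-covering step for a \emph{non-special} determinant coset of $\mathrm U_4(3)$, since the product theorems quoted are stated for fixed determinant classes in $\operatorname{GL}$. I expect Nielsen's unitary lift to go through, but it must be checked that the Schur-complement argument does not need $\det=1$.

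If this fails, the fallback is a direct certificate. I would compute the class-multiplication coefficients \eqref{eq:frob} for a real regular semisimple class of $\operatorname{PSU}_6(3)$, for example a generator of a $\Phi_{10}$- or $\Phi_{6}\Phi_{2}$-type torus, from its generic character table (CHEVIE/CTblLib). Gow's theorem covers the semisimple targets, so only unipotent and mixed targets need explicit evaluation.
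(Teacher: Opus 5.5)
There is a genuine gap. Your ``cleaner route'' cannot be completed, and it fails for arithmetic reasons before the core-covering question you flag even arises.

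\emph{Reality fails.} With $\dim E=1$, fixed-point-freeness on the central grade forces $a\bar a=a^4\ne1$. So $a$ has order $8$, and the spectrum of $f$ on $E\oplus F$ is $\{a,\bar a^{-1}\}=\{a,a^5\}$. Both $\lambda\mapsto\lambda^{-1}$ and $\lambda\mapsto-\lambda^{-1}$ send this set to $\{a^3,a^7\}=\{\bar a,a^{-1}\}$. The centre of $\mathrm{SU}_6(3)$ is $\{\pm I\}$, so reality in $\operatorname{PSU}_6(3)$ means $f\sim\pm f^{-1}$. This forces $\bar a$ and $a^{-1}$ to be roots of $qq^*$, so one of them is a root of $q$. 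The remaining pair $\{\lambda,\bar\lambda^{-1}\}$ (with $\lambda$ the other root of $q$) must then be stable under the same map, which gives $\lambda^2=\pm1$, i.e.\ $\lambda\in\mu_4$. But then $\lambda=\bar\lambda^{-1}$ is a common root of $q$ and $q^*$. If $q$ is irreducible over $\mathbb F_9$, then $a^{-1}$ cannot be a root at all. Hence $(X-a)(X-\bar a^{-1})qq^*$ is never palindromic, the class is not real even projectively, and $1\notin C^2$.

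\emph{Your bullets are inconsistent.} Independently of reality, we have $\det s'=d/\bar d=d^{-2}$ with $d=\lambda_1\lambda_2$. This has order $4$ only if $d$ is a nonsquare in $\mathbb F_9^\times$. Then one root of $q$ is a square, i.e.\ lies in $\mu_4$, and is again a root of $q^*$. So no $(1,1)$-cyclic $q$ with $\gcd(q,q^*)=1$ and $\det s'=a^2$ exists.

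\emph{Other gaps.} The product statement you need, for two $\mathrm U_4(3)$-classes on the coset $\det M=-1$, is not among the cited results. The character-table fallback is a plan, not a proof.

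The root cause is the one-dimensional $E$. The central grade then sees only $a\bar a$, which pushes $a$ out of $\mu_4$ and destroys inverse-closure of the $E\oplus F$ spectrum. The paper avoids this by taking $\dim E=\dim V_0=2$, with derived Levi $\mathrm{SL}_2(9)\times\mathrm{SU}_2(3)$. Its source is $f_6=\operatorname{diag}(B,s,B^+)$, where $B=\operatorname{diag}(a,a^{-1})$ with $a$ of order $8$, and $s$ is regular unipotent in $\mathrm{SU}_2(3)\cong\mathrm{SL}_2(3)$. This choice gives:
\begin{enumerate}
\item $\det B=1$, and the $E\oplus F$ spectrum $\{a^{\pm1},a^{\pm3}\}$ is inverse-closed.
\item The first grade has eigenvalues $a^{\pm1}$, because $s$ is unipotent.
\item $Y\mapsto BYB^*$ has exponents $4,6,2,4\pmod 8$, so the central grade has no fixed points.
\end{enumerate}
The cores are covered as follows. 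Lev's $n=2$ theorem handles the diagonalizable $B$, after prescribing a noncentral torus element in the $\mathrm{SL}_2(9)$-factor. The identity $(S\cup S^{-1})^2=\mathrm{SL}_2(3)\setminus\{-I\}$ handles the other factor, after prescribing the identity in the $\mathrm{SU}_2(3)$-factor so that the core component there is never $-I$. Reality comes from $T=\operatorname{diag}(d,\bar d^{-1})$ with $d\bar d=-1$, combined with a determinant-corrected swap of the $B$-block.
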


\begin{proof}
The middle block must not be scalar. Indeed, taking only the
\(\operatorname{SL}_2(9)\)-factor as subsystem group would leave the positive root subgroup
of the omitted \(\mathrm{SU}_2(3)\)-factor fixed. We instead use the whole derived
Levi\[
 G_1=\operatorname{SL}_2(9)\times\operatorname{SU}_2(3)
\]
of the parabolic with \(\dim E=\dim V_0=2\).

Choose \(a\in\mathbb F_9^*\) of order eight, put
\(B=\operatorname {diag}(a,a^{-1})\), and let \(s\) be a regular
unipotent element in \(\mathrm{SU}_2(3)\cong \operatorname{SL}_2(3)\). Define\[
 f_6=\operatorname {diag}(B,s,B^+)\in\operatorname{SU}_6(3).             
\]
Lev's dimension-two theorem and the determinant-surjective diagonal
centralizer of \(B\) give
\begin{equation}
\label{eq:ou7.3}
 (B^{\operatorname{SL}_2(9)})^2\supseteq \operatorname{SL}_2(9)\setminus Z(\operatorname{SL}_2(9)),
 \qquad B\sim_{\operatorname{SL}_2(9)}B^{-1}.                             
\end{equation}
The two inverse regular-unipotent classes \(S,S^{-1}\) of \(\operatorname{SL}_2(3)\)
satisfy the exact finite identity
\begin{equation}
\label{eq:ou7.4}
                     (S\cup S^{-1})^2=\operatorname{SL}_2(3)\setminus\{-I\}. 
\end{equation}
This follows either by multiplying the two classes of four elements, or
from the ordinary character table; it is also asserted by the finite GAP
certificate accompanying this route.

For clarity, we verify exactly which Levi cores occur in the relative
Gauss proof. Prescribe a noncentral split-torus element in the
\(\operatorname{SL}_2(9)\)-factor and prescribe the identity in the \(\mathrm{SU}_2(3)\)-factor.
Uniqueness of lower-diagonal-upper decomposition shows that the first
component of every resulting core is nonscalar. The second component is
either the identity or nonscalar, and in particular is never \(-I\).
Equations \eqref{eq:ou7.3}--\eqref{eq:ou7.4} therefore factor every occurring core. The sign
choice is not coupled: replacing \(f_6\) by \(f_6^{-1}\) changes
\(S\) to \(S^{-1}\), but it does not change the \(\operatorname{SL}_2(9)\)-class because
\(B\sim B^{-1}\).

On the first radical grade \(\operatorname {Hom}(V_0,E)\), the middle
block \(s\) is unipotent, so all eigenvalues of the action are \(a,a^{-1}\),
neither one. On the skew-hermitian central grade, the four exponents of
\(Y\mapsto BYB^*\) are
\[
                         4,6,2,4\pmod8,                  
\]
again none zero. Thus the two Lang maps are bijective on the full
parabolic radical; no Levi root subgroup is omitted.

Finally, choose \(d\in\mathbb F_9^*\) with \(d\bar d=-1\). In a
hyperbolic basis of \(V_0\), the unitary matrix\[
                 T=\operatorname {diag}(d,\bar d^{-1})
\]
sends a chosen regular unipotent \(s\) to \(s^{-1}\). Its determinant
\(d/\bar d\) has order four; a determinant-\(-1\) conjugator would not
work here. The coordinate interchange sends \(B\) to
\(B^{-1}\), and multiplication by an element of \(C_{\operatorname{GL}_2(9)}(B)\)
can prescribe its determinant ratio \(d/\bar d\) arbitrarily in
\(\mu_4\). Choose the inverse ratio to cancel the determinant of the middle
conjugator. The resulting block conjugator lies in \(\mathrm{SU}_6(3)\) and sends
\(f_6\) to \(f_6^{-1}\).

The relative-Gauss/Lang proof of Lemma \ref{lemma-2.1-relative-gauss-class-square} now gives
\[
 (f_6^{\mathrm{SU}_6(3)})^2\supseteq\operatorname{SU}_6(3)\setminus Z(\mathrm{SU}_6(3)),
 \qquad
(\bar f_6^{\,\operatorname{PSU}_6(3)})^2=\operatorname{PSU}_6(3).             
\]

The lift has order \(24\). This construction replaces the
invalid scalar-middle-block version.
\end{proof}

\section{Orthogonal groups}
\label{othogonal}

\subsection{\texorpdfstring{Relative-Gauss proof for the residual split and odd orthogonal groups}{route_orthogonal_split_B_complete.md}}
\label{sec:orthogonal}

\subsubsection{Theorem and exact scope}\label{theorem-and-exact-scope}

\begin{theorem}\label{orthogonal_split}
Let \(G\) be one of the simple groups in the following table:
\begin{equation}
\label{eq:o01}
\begin{array}{c|c}
G&q\\ \hline
\operatorname{P\Omega}_{2n+1}(q),\ n>2&3,5\\
\mathrm{P\Omega}^+_{2n}(q),\ n=2l,\ l\ge2&2,3,4\\
\mathrm{P\Omega}^+_{2n}(q),\ n=2l+1,\ l\ge2&2,3.
\end{array}                                             
\end{equation}
Then \(G\) contains a conjugacy class \(C\) such that
\[
C^2=G.
\]
\end{theorem}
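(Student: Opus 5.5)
We plan to run the same relative Gauss--Lang scheme used for the odd unitary groups in Theorem~\ref{oddu}, now with the maximal parabolic $P=LQ$ stabilizing a maximal totally singular subspace. Write $V=E\oplus V_0\oplus F$, with $E,F$ paired totally singular of dimension $n$. Take $V_0=0$ for $\mathrm{P\Omega}^+_{2n}$, and $V_0$ a nonsingular line for $\operatorname{P\Omega}_{2n+1}$. The Levi factor is $\ell(A)=\operatorname{diag}(A,\pm1,A^{-T})$ with $A\in\operatorname{GL}_n(q)$, and its derived subgroup $G_1\cong\operatorname{SL}_n(q)$ is the $A_{n-1}$ subsystem group.

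The unipotent radical $Q$ has at most two grades:
\begin{itemize}
\item $\operatorname{Hom}(V_0,E)$ in the odd case, on which $\ell(A)$ acts by $X\mapsto \pm AX$;
\item the alternating matrices $Y=-Y^T$, on which $\ell(A)$ acts by $Y\mapsto AYA^T$.
\end{itemize}
The argument of Lemma~\ref{lemma-1.1-the-exact-fixed-point-test} then shows that the action is fixed-point-free on every grade, provided that $A$ is cyclic with characteristic polynomial $p$, that $p(\pm1)\neq0$ for the sign used in the odd case, and that $\gcd(p,p^\vee)=1$. Indeed, a fixed alternating $Y$ would be a module intertwiner from $A^{-T}$ to $A$.

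\emph{Choice of source.} Take the source $f_p=\ell(C(p))$, with $p$ as in Nielsen's $\operatorname{GL}$ families. We would reuse the polynomials already displayed:
\begin{itemize}
\item \eqref{eq:PSLnpoly}-type or \eqref{eq:sp-poly}-type choices, namely $p=(X-\lambda)^n$ or $(X-\lambda)^{n-2}(X-\mu)(X-\mu^{-1})$ variants, for $q\ge4$;
\item the products of $X^2-X-1$, $X^3-X\pm1$ for $q=3$;
\item the Nielsen $\mathbb F_2$ families for $q=2$.
\end{itemize}
In each case we adjust so that $\gcd(p,p^\vee)=1$, $p(\pm1)\ne0$, $p(0)=\pm1$ (so that $\det\ell=1$), and the class is $(m,n-m)$-cyclic with $n\ge4$ (or $n\geq5$ over $\mathbb F_3$). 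Then \cite[Theorems 1.2, 1.4]{NielsenGL} give $\Gamma_p^2\supseteq\operatorname{SL}_n(q)\setminus Z$. This is hypothesis~3 of Lemma~\ref{lem:relative-gauss}, after the determinant correction using surjective centralizer norms, exactly as in the proof of Theorem~\ref{typeA}.

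For hypothesis~4 we note that the torus $H_1$ of $\operatorname{SL}_n(q)$ is not central when $q\ge3$. When $q=2$ the torus is trivial, and we instead use the strengthened $h=1$ form: the only core that can occur is a unipotent product $\tilde u_1\tilde u_2$, and this is either nonscalar or $I$. Here $I\in\Gamma_p^2$ because $p$ is chosen reciprocal-stable as a multiset (the factors of $pp^\vee$ are closed under $\vee$). Alternatively, in characteristic $2$ the scalar $I$ is the only central element.

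\emph{Remaining steps, in order.} We then carry out three checks.
\begin{enumerate}
\item \emph{Reality.} The element $\ell(A)$ is conjugate to $\ell(A^{-1})$ via the Weyl element swapping $E$ and $F$ composed with $\operatorname{diag}(R,R^{-T})$, where $RAR^{-1}=A^T$ as in \eqref{eq:Spreal}.
\item \emph{Membership in $\Omega$ and no class splitting.} We must verify that the conjugator can be taken in $\Omega$ (spinor norm and determinant), and that the class of $f_p$ is a single $\Omega$-class. For this we use the centralizer $\ell(C_{\operatorname{GL}_n}(A))$, whose determinants, together with the middle sign in the odd case, should realize all spinor norms. Membership of the source in $\Omega$ is not automatic: the spinor norm of $\ell(A)$ equals $\det A$ modulo squares. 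We therefore need $\det A$ to be a square, and otherwise replace $p$ by a variant of the same shape with square constant term.
\item \emph{Conclusion.} Lemma~\ref{lem:relative-gauss} gives $C^2\supseteq\Omega\setminus Z(\Omega)$, reality supplies the center, and the image in $\mathrm{P\Omega}$ is the required class.
\end{enumerate}

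\emph{Main obstacle.} We expect the hardest part to be the spinor-norm and splitting bookkeeping in step~2, together with the small-$n$ boundaries where Nielsen's $(m,n-m)$ theorem needs $n\ge4$ or $n\geq5$: for example $\mathrm{P\Omega}^+_8(2)$, $\mathrm{P\Omega}^+_8(3)$, $\mathrm{P\Omega}^+_{10}(2)$ and $\mathrm{P\Omega}_7(3)$. For these we would first try $(1,n-1)$-cyclic polynomials with Theorem~1.2 of \cite{NielsenGL}. Failing that, we would certify an explicit class by computing the Frobenius coefficients \eqref{eq:frob} from the character table, as was done for $\operatorname{PSU}_4(2)$ and $\operatorname{PSU}_6(2)$.
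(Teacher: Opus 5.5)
Your mechanism matches the paper's: the $A_{n-1}$ Levi $G_1\cong\mathrm{SL}_n(q)$, the grades $\operatorname{Hom}(V_0,E)$ and $\Lambda^2E$, Nielsen for the core, and Lemma~\ref{lem:relative-gauss}. The gap is the condition you impose on the source, $\gcd(p,p^\vee)=1$: it breaks the argument for the whole family $\mathrm{P\Omega}^+_{2n}(q)$ with $n$ odd. If $\gcd(p,p^\vee)=1$, then $E=\ker p(f_p)$ and $F=\ker p^\vee(f_p)$, so every isometry commuting with $f_p$ stabilizes $E$ and $F$. Hence it lies in $\ell(\mathrm{GL}_n(q))$, whose elements all have determinant $1$ and Dickson invariant $0$. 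The inverters of $f_p$ in $\mathrm{O}^+_{2n}(q)$ therefore form the single coset $\operatorname{diag}(R,R^{-T})J\cdot C_{\mathrm O}(f_p)$. Here $J$ interchanges all $n$ hyperbolic pairs and has determinant $(-1)^n$ (Dickson invariant $n \bmod 2$). For odd $n$ no inverter lies in $\mathrm{SO}^+_{2n}(q)$, so your reality step is false. Then $f_p$ is not real and hypothesis~1 of Lemma~\ref{lem:relative-gauss} fails. Moreover $Z(\Omega^+_{2n}(q))=1$ for $q=2,3$ and $n$ odd, so you cannot even get $1\in C^2$. Relatedly, your claim $I\in\Gamma_p^2$ at $q=2$ is wrong, because $A^{-1}\notin\Gamma_p$ when $p\ne p^\vee$. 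What is true is $I=AA^{-1}\in\mathcal C_1^2$, and turning that into an element of $C^2$ again needs ambient reality.

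The missing observation is that the grades only require $1\notin\operatorname{Spec}(\Lambda^2A)$, plus $1\notin\operatorname{Spec}(A)$ in type $B$. That is, you need $\lambda_i\lambda_j\ne1$ only for \emph{distinct} positions $i<j$, which tolerates a simple eigenvalue $1$ (and, for $B$ at $q=3$, a simple $-1$). The paper uses this. For split $D$ it takes $A=1\oplus M_\alpha$ with $|\alpha|=(q^{n-1}-1)/(q-1)$. For $B$ it takes $A=\nu^{-1}\oplus M_\alpha$ with $\alpha$ primitive and $\nu=N(\alpha)$. Both sources are $(1,n-1)$-cyclic of determinant one, and the grade conditions reduce to size inequalities such as $q^i+q^j<(q^{n-1}-1)/(q-1)$. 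For odd $n$, the fixed hyperbolic plane of the eigenvalue $1$ repairs reality: fix that pair and interchange the other $n-1$ pairs, an even number.

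The same choice removes your boundary problems. Over $\mathbb F_2$ and $\mathbb F_3$ every linear factor is the self-reciprocal $X\pm1$, so your own condition forbids the $(1,n-1)$ fallback you propose. Theorem~1.4 alone then misses the groups you list, and also $\mathrm{P\Omega}_9(3)$ and $\mathrm{P\Omega}^+_{12}(2)$. For the latter, note that over $\mathbb F_2$ the only non-self-reciprocal irreducibles of degree at most three are a reciprocal pair of cubics, so no admissible $(m,6-m)$ split exists. With the paper's $(1,n-1)$ sources, Nielsen's Theorem~1.2 applies uniformly in $n$ and $q$. The relative-Gauss lemma, with $I=AA^{-1}$ at $q=2$, then finishes every case.
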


The classes specified in  \eqref{eq:o22} and \eqref{eq:o25} satisfy \(C^2=G\). These are exactly
the odd-\(B\) and split-\(D\) families left by the Ellers--Gordeev field
thresholds, after the usual low-rank identifications.

The proof uses the relative Gauss argument of Ellers--Gordeev,
Proposition 5.1 in \cite{EG}, rather than any principal-corner assertion. The
\(q=2\) split-\(D\) case is proved by the same Gauss/Lang calculation with
one explicit refinement, since the split torus of \(\operatorname{SL}_n(2)\) is trivial.

\subsubsection{Two Theorems}\label{two-inputs}

For convenience, we recall the following cyclic-class theorem of Nielsen (\cite[Theorem 1.2]{NielsenGL}):

\begin{theorem}[{\cite[Theorem 1.2]{NielsenGL}}]
\label{nielsens-cyclic-class-theorem}
If two cyclic \(\operatorname{GL}_n(K)\)-classes are given and one is
\((1,n-1)\)-cyclic, their product contains every nonscalar matrix whose
determinant is the product of the source determinants.
\end{theorem}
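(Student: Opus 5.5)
Since this is a published result of Nielsen that the paper quotes rather than reproves, I would reconstruct the argument only in outline, along the lines of the Sourour--Lev bordering technique. Write the $(1,n-1)$-cyclic class as the class of $\operatorname{diag}(\alpha, C(r))$ with $r(\alpha)\ne0$, and the second cyclic class as that of $C(s)$. Let $M$ be nonscalar with $\det M=\alpha\,r(0)(-1)^{n-1}\cdot(-1)^n s(0)$. Because $M$ is nonscalar there is a vector $v$ with $v,Mv$ linearly independent. So $M$ is conjugate to a matrix
\[
M\sim\begin{pmatrix}\mu & x\\ y & N\end{pmatrix},
\]
and the conjugating freedom lets us prescribe $\mu$ freely and make $y=e_1\neq0$. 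The aim is a factorization
\[
M=XY,\qquad X=\begin{pmatrix}\alpha&0\\ w&X'\end{pmatrix},\qquad Y=\begin{pmatrix}\beta & z\\ 0 & Y'\end{pmatrix},
\]
with block-triangular factors. The first diagonal entries must satisfy $\alpha\beta=\mu$, so $\beta$ is forced once $\mu$ is prescribed.

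The proof would run by induction on $n$. For $n=2$ I would check the claim directly on $2\times2$ matrices. For larger $n$ the steps are as follows.

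\begin{enumerate}
\item Choose $\mu$ so that $\beta=\mu/\alpha$ is compatible with $s$. In the bordering, $\beta$ need not be a root of $s$, since the off-diagonal row $z$ will adjust the characteristic polynomial of $Y$.
\item Reduce the lower-right block to the corner equation $N-yz'=X'Y'$ in size $n-1$. Here one needs to know that every $(n-1)\times(n-1)$ matrix of the right determinant, possibly scalar, is a product of a matrix with characteristic polynomial $r$ and an arbitrary matrix. This holds because $C(r)$ times the class of any invertible matrix can be handled by the classical fact that an invertible matrix can be multiplied by a cyclic matrix with any prescribed characteristic polynomial after a suitable similarity (Sourour's lemma).
\item Use the free border vectors $w$ and $z$ (the latter determines $z'$ in step 2) to force the full characteristic polynomial of $Y$ to be $s$. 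This rests on the standard bordering lemma: if $Y'$ is cyclic, then as $z$ ranges over row vectors, $\chi_{Y}(X)=(X-\beta)\chi_{Y'}(X)-z\,\mathrm{adj}(X-Y')\,e$ ranges over all monic polynomials of degree $n$ with the fixed trace-free constraint removed. The same lemma forces the characteristic polynomial of $X$ to be $(X-\alpha)r$.
\end{enumerate}

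Cyclicity of the factors then follows automatically. For $X$ it comes from $\gcd(X-\alpha,r)=1$ together with cyclicity of $X'$. For $Y$, a bordered matrix whose companion vector condition holds is cyclic, and this is arranged by choosing $z$ generic within the affine family from step 3. Since a cyclic matrix is determined up to similarity by its characteristic polynomial, $X$ and $Y$ then lie in the prescribed classes.

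The main obstacle is simultaneous control. The border vectors enter both the corner equation of step 2 and the characteristic-polynomial conditions of step 3, and over the tiny fields $\mathbb F_2$ and $\mathbb F_3$ there may be no "generic" choice left once these constraints are imposed. I would try first to decouple them by making $X'$ itself the companion matrix $C(r)$ and solving for $Y'$ as $C(r)^{-1}(N-yz')$. The remaining freedom in $z$ is then an $(n-1)$-dimensional affine family, which maps linearly onto the coefficient vector of $\chi_Y$ provided $(N-yz')$ keeps $e_1$ cyclic. Where this fails, which I expect only when $M$ is close to scalar (a transvection times a scalar), I would treat $M$ separately by writing it down explicitly in a rational canonical form.
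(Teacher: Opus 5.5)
The paper does not prove this statement; it quotes Nielsen. So your sketch has to carry the proof on its own, and its central mechanism fails. Multiplying out your factor shapes gives
\[
XY=\begin{pmatrix}\alpha\beta&\alpha z\\ \beta w&wz+X'Y'\end{pmatrix}.
\]
Hence $M=XY$ forces $z=\alpha^{-1}x$, $w=\beta^{-1}y$ and $X'Y'=N-\mu^{-1}yx$. The border vectors are determined by the chosen conjugate of $M$; they are not free parameters.

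Worse, $X$ is block lower triangular and $Y$ is block upper triangular. So $\chi_X=(t-\alpha)\chi_{X'}$ and $\chi_Y=(t-\beta)\chi_{Y'}$ no matter what $w,z$ are. The bordering formula you invoke in Step 3 applies to a matrix $\begin{pmatrix}\beta&z\\ e&Y'\end{pmatrix}$ with a nonzero lower-left column, which your $Y$ does not have. Consequently the claim in Step 1 that $\beta$ need not be a root of $s$ is false. In fact $s(\beta)=0$ is necessary, so when $s$ has no root in $K$ (e.g.\ $s$ irreducible of degree $n\ge2$) no factorization of your shape exists.

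Even when $s(\beta)=0$, what remains is the corner problem
\[
N-\mu^{-1}yx\in C(r)^{\mathrm{GL}_{n-1}(K)}\cdot C\bigl(s/(t-\beta)\bigr)^{\mathrm{GL}_{n-1}(K)}.
\]
This is a product statement for two cyclic classes, neither of which need be $(1,n-2)$-cyclic (take $r$ irreducible). As you note, the target may also be scalar, and there such a statement is false in general. Your induction hypothesis does not cover it. The ``Sourour lemma'' of Step 2 does not help either: as you state it, it is the triviality $Z=C(r)\cdot C(r)^{-1}Z$, which gives no control over the class of $Y'$. Sourour's actual theorem prescribes the diagonals of triangular factors, so it only handles eigenvalues lying in $K$.

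The missing idea is to put the free border where it cannot change the $(1,n-1)$-class, and the nonzero coupling where it does move the other characteristic polynomial. Take $A=\begin{pmatrix}\alpha&a\\0&A'\end{pmatrix}$ with $A'\sim C(r)$. Because $r(\alpha)\ne0$, we have $A\sim\alpha\oplus C(r)$ for every row $a$; this is exactly where the $(1,n-1)$ hypothesis enters. Then
\[
A^{-1}M=\begin{pmatrix}\alpha^{-1}(\mu-aA'^{-1}y)&\alpha^{-1}(x-aA'^{-1}N)\\ A'^{-1}y&A'^{-1}N\end{pmatrix}.
\]
The lower block does not depend on $a$. Since $M$ is invertible, the top row runs over the whole affine hyperplane of rows giving $\det(A^{-1}M)=\det M/\det A$.

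Now suppose $A'^{-1}y$ is a cyclic vector for $A'^{-1}N$. Then the bordering identity shows that $\chi_{A^{-1}M}$ takes every monic value with the prescribed constant term. Moreover $A^{-1}M$ is then automatically cyclic, with cyclic vector $e_1$, and so lies in the class of $C(s)$. What remains to be proved is that some conjugate of $M$ and some $A'\sim C(r)$ satisfy this cyclic-pair condition. That is where the nonscalar hypothesis and the small fields must be handled, and nothing in your proposal addresses it.
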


In every application below the two classes are the same and have
determinant one.

Let \(\widetilde G\) be the root-generated orthogonal group \(\Omega(V)\),
let \(G_1\) be the subsystem of type \(A_{n-1}\) with roots
\(e_i-e_j\), and let
\begin{equation}
\label{eq:o11}
                    G_1\cong \operatorname{SL}_n(q)                 
\end{equation}
be its subsystem group. Let \(V_+\) be the group generated by the
positive root groups outside \(R_1\), and let \(V_-\) be its opposite.
Let \(H_1\) be the split torus of \(G_1\).

We need the following precise specialization of Ellers--Gordeev,
Theorem 3, Proposition 5.1, and Lemma 5.6 in \cite{EG}, which is restated as Lemma \ref{lem:relative-gauss} in Section \ref{sec:prelimelaries}.

\begin{lemma}[relative-Gauss class square]\label{lemma-1.1-relative-gauss-class-square}
Let \(f\in G_1\) be real in \(\widetilde G\), and put
\(C=f^{\widetilde G}\). Let \(\mathcal C_1\) be the union of the
\(HG_1\)-classes of \(f\) and \(f^{-1}\). Assume:
\begin{enumerate}
\def\labelenumi{\arabic{enumi}.}
\tightlist
\item
  conjugation by every member of \(\mathcal C_1\) is fixed-point-free on
  each quotient of the lower central series of \(V_+\);
\item
  \(\mathcal C_1^2\) contains \(G_1\setminus Z(G_1)\);
\item
  either \(H_1\ne Z(G_1)\), or \(\mathcal C_1^2=G_1\).
\end{enumerate}
Then
\begin{equation}
\label{eq:o12}
                    C^2\supseteq\widetilde G\setminus Z(\widetilde G).                                                            
\end{equation}
\end{lemma}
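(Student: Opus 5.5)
The plan is to obtain this as a direct specialization of Lemma~\ref{lem:relative-gauss}, so the work consists of matching hypotheses. Take the Frobenius-stable subsystem to be the $A_{n-1}$ system $R_1=\{e_i-e_j\}$, so that $G_1\cong\operatorname{SL}_n(q)$ as in \eqref{eq:o11}. Since $f\in G_1\subseteq HG_1$, the class $\mathcal C_1$ here is exactly the set appearing in Lemma~\ref{lem:relative-gauss}. The groups $V_\pm$ are generated by the root groups for roots outside $R_1$, namely $\pm(e_i+e_j)$ and, in type $B$, also $\pm e_i$.

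For the central filtration, I will grade $V_+$ by the coefficient of the non-$A$ simple root, or equivalently by the height $\sum_i c_i$ of a root $\sum c_ie_i$. This gives $V_{+,1}\supseteq V_{+,2}$ with $[V_{+,r},V_{+,s}]\le V_{+,r+s}$ by the Chevalley commutator formula. In type $D$ there is a single grade, spanned by the $x_{e_i+e_j}$ and abelian. In type $B$ there are two grades: the $x_{e_i}$, and the $x_{e_i+e_j}$. Each graded quotient is exactly a successive quotient of the lower central series of $V_+$, so hypothesis~1 of the present lemma is hypothesis~2 of Lemma~\ref{lem:relative-gauss}.

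I will treat $V_-$ by symmetry. Conjugation by $\sigma$ on the grades of $V_-$ is the contragredient of its action on $V_+$, twisted by the opposition. Concretely, the modules are $K^n$ and $\wedge^2K^n$ for $V_+$, and their duals for $V_-$. A linear map on a finite-dimensional space has eigenvalue~$1$ if and only if its inverse-transpose does. Hence fixed-point-freeness on the grades of $V_+$ transfers to the grades of $V_-$. If that identification needs care, I will instead apply hypothesis~1 to $\sigma^{-1}\in\mathcal C_1$, which is legitimate because $\mathcal C_1$ is closed under inversion: it is the union of the classes of $f$ and $f^{-1}$. I expect the only delicate point to be this passage from ``lower central series of $V_+$'' to the graded fixed-point condition on both $V_\pm$, because Lemma~\ref{lem:relative-gauss} requires the filtration to be central with the stated commutator property, and in characteristic~$2$ type $B$ is excluded anyway.

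The remaining hypotheses of Lemma~\ref{lem:relative-gauss} are then exactly hypotheses~2 and~3 here, and reality is assumed. So Lemma~\ref{lem:relative-gauss} yields $C^2\supseteq\widetilde G\setminus Z(\widetilde G)$, which is \eqref{eq:o12}. In the case $H_1=Z(G_1)$ (the $q=2$ split case), I will note that we use the $\mathcal C_1^2=G_1$ alternative, which is the strengthened $h=1$ form recorded after Lemma~\ref{lem:relative-gauss}.
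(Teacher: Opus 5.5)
Your proposal is correct and follows the paper, which likewise obtains this lemma simply as the specialization of Lemma~\ref{lem:relative-gauss} to the $A_{n-1}$ subsystem group $G_1\cong\operatorname{SL}_n(q)$; reality is hypothesis~1 there, and your hypotheses~2 and~3 are verbatim hypotheses~3 and~4 there. Your identification of the outside-root grading with the lower central series of $V_+$ supplies detail the paper leaves implicit: one grade $\Lambda^2U$ in type $D$, and grades $U$ and $\Lambda^2U$ in odd-characteristic type $B$. So does your duality argument, which makes the grades of $V_-$ the modules $U^*$ and $\Lambda^2U^*$ with inverse eigenvalues. Two small remarks: the fallback via $\sigma^{-1}$ is unnecessary and would itself rest on the same duality, and since $\Omega(V)$ is not universal in odd characteristic, you are really using the proof of Lemma~\ref{lem:relative-gauss} rather than its literal statement, exactly as the paper does.
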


\begin{proof}[Proof of Theorem {\ref{orthogonal_split}}]
We proceed in the following steps.

\proofpart{Construct exact source classes}\label{exact-source-classes}
For split \(D_n(q)\). 
Let \(d=n-1\) and
\begin{equation}
\label{eq:o21}
                     N_d=1+q+\cdots+q^{d-1}.          
\end{equation}
Since \(\mathbb F_{q^d}^{\times}\) is cyclic of order \(q^d-1\), and
\(
(q^d-1)/(q-1) = N_d,
\)
there is a unique subgroup of order \(N_d\). We choose \(\alpha\) to be a generator of this subgroup. 
Thus \(\alpha\in\mathbb F_{q^d}^{\times}\) of order \(N_d\).
View \(\mathbb F_{q^d}\) as a \(d\)-dimensional vector space over \(\mathbb F_q\).
Let \(M_\alpha\) be the multiplication map by \(\alpha\). With respect to any \(\mathbb F_q\)-basis, \(M_\alpha\) is a \(d\times d\) matrix over \(\mathbb F_q\).
Now define 
\begin{equation}
\label{eq:o22}
 A=1\oplus M_\alpha,\qquad
 f_D=\operatorname{diag}(A,A^{-T})\in\Omega^+_{2n}(q). 
\end{equation}
The \(q\)-orbit of \(\alpha\) under Frobenius automorphism has length \(d\), since
\begin{equation}
\label{eq:o23}
 N_{q^d/q}(\alpha)=\alpha^{N_d}=1,\qquad
 q^i+1<N_d\quad(0\le i<d).                             
\end{equation}
Hence the orbit is disjoint from its inverse, and
\begin{equation}
\label{eq:o24}
 \det A=1,\quad A\text{ is cyclic and }(1,n-1)\text{-cyclic},\quad
 1\notin\operatorname{Spec}(\Lambda^2A).              
\end{equation}
Indeed, the characteristic polynomial of \(M_\alpha\) is the minimal polynomial  \(m_\alpha(X)= \prod_{i=0}^{d-1} (x - \alpha^{q^i})\) of \(\alpha\) over \(\mathbb F_q\), which is irreducible of degree \(d\). 
We have \(m_\alpha(1)\neq 0\) because \(\alpha^{q^i}\neq 1\) for any $i$. Since \(X-1\) and \(m_\alpha(X)\) are coprime, the matrix is cyclic: its characteristic and minimal polynomial is \((X-1)m_\alpha(X)\).
A fixed vector in \(\Lambda^2A\) would arise either from an
eigenvalue \(1\cdot\alpha^{q^i}=1\), or from two distinct orbit positions
with \(\alpha^{q^i+q^j}=1\); both are excluded by \eqref{eq:o23}.

For odd \(B_n(q)\), \(q=3,5\).
Let \(d=n-1\), choose a primitive
\(\alpha\in\mathbb F_{q^d}^{\times}\), and write
\[
                  \nu=N_{q^d/q}(\alpha).
\]
The element \(\nu\) generates \(\mathbb F_q^\times\). Put
\begin{equation}
\label{eq:o25}
 A=\nu^{-1}\oplus M_\alpha,\qquad
 f_B=\operatorname{diag}(A,1,A^{-T})\in\Omega_{2n+1}(q). 
\end{equation}
Here \(d\ge2\). Since \(\alpha\) is primitive, its orbit under the Frobenius automorphism has length \(d\). The orbit is disjoint from
its inverse, and contains no base-field element. Therefore
\begin{equation}
\label{eq:o26}
 \det A=1,\quad A\text{ is cyclic and }(1,n-1)\text{-cyclic},\quad
 1\notin\operatorname{Spec}(A),\quad
 1\notin\operatorname{Spec}(\Lambda^2A).              
\end{equation}
For the last assertion, an inverse pair inside the primitive orbit would
give \(q^d-1\mid q^i+q^j\), while a cross pair $\nu^{-1}, \alpha^{q^{i}}$ would put an orbit element
equal to the base-field element \(\nu\).

\proofpart{Membership, ambient reality, and one exact \(\Omega\)-class}\label{membership-ambient-reality-and-one-exact-omega-class}
For a hyperbolic Levi element in odd characteristic, the spinor norm satisfies
\begin{equation}
\label{eq:o31}
 \theta\bigl(\operatorname{diag}(A,A^{-T})\bigr)=\det A
          \pmod{(\mathbb F_q^\times)^2}.               
\end{equation}
Since $\det A=1$, \eqref{eq:o22} and \eqref{eq:o25} lie in \(\Omega\). In characteristic two every such
hyperbolic Levi lies in the root-generated group.

In even characteristic, the spinor norm is trivial, and the Dickson invariant replaces it. The orthogonal group has a Dickson invariant (a quadratic refinement of the determinant). For a hyperbolic Levi element \(\ell(B)\) with \(B\in \operatorname{GL}_n(q)\), the Dickson invariant is
\[
\mathcal D(\ell(B)) = \operatorname{rank}(B-I) + \operatorname{rank}(B^{-T}-I) \pmod 2.
\]
For the elements constructed in \eqref{eq:o22} and \eqref{eq:o25}, this rank sum is even because the spectra avoid \(1\) and the inverse pairing, so the Dickson invariant is zero. Hence these elements lie in \(\Omega\) in even characteristic as well.

Both  \eqref{eq:o22} and \eqref{eq:o25} are real in the ambient \(\Omega\). In split \(D\) with
even \(n\), interchange all \(n\) hyperbolic pairs. With odd \(n\), fix
the hyperbolic pair belonging to the eigenvalue one and interchange the
remaining \(d=n-1\) pairs. In either case an even number of pairs is
interchanged. In odd \(B\), interchange all \(n\) hyperbolic pairs and
act on the anisotropic line by the sign which makes the total determinant
one. Composing with the standard transpose-similarity on the \(A\)-block
gives an inverter.

In odd characteristic its spinor norm can be corrected without changing
the inversion. In both constructions
\begin{equation}
\label{eq:o32}
 C_{\operatorname{GL}_n(q)}(A)\cong
 \mathbb F_q^\times\times\mathbb F_{q^d}^\times,                                                           
\end{equation}
with the evident modification when the degree-one factor is
\(X-\nu^{-1}\), and the determinant map is onto. Multiplying the
inverter by a centralizing hyperbolic Levi element of prescribed
determinant corrects its spinor norm. In even characteristic the pair
interchanges above already have Dickson invariant zero.

We need the stronger fusion statement inside \(HG_1\), not only in the
ambient orthogonal group. Both source matrices have the form
\(A=\lambda\oplus M_\alpha\), with distinct rational primary summands.
For every \(c\in\mathbb F_q^\times\),
\begin{equation}
\label{eq:o33}
 C_c=c\oplus I_d\in C_{\operatorname{GL}_n(q)}(A),\qquad \det C_c=c.      
\end{equation}
If \(A'=RAR^{-1}\), take \(c=(\det R)^{-1}\) and put \(R_0=RC_c\).
Then \(R_0\in \operatorname{SL}_n(q)\), while \(A'=R_0AR_0^{-1}\). Consequently
\begin{equation}
\label{eq:o34}
 \ell(A')=\ell(R_0)\ell(A)\ell(R_0)^{-1},\qquad
 \ell(R_0)\in G_1.                                        
\end{equation}
Thus every Nielsen factor is actually \(G_1\)-conjugate, and hence
\(HG_1\)-conjugate, to the fixed source.

For completeness, the relevant Levi intersection is also exact. Put
\(L_0=\ell(\operatorname{GL}_n(q))\). In odd characteristic the hyperbolic-Levi spinor
formula \eqref{eq:o35} gives
\begin{equation}
\label{eq:o35}
 L_0\cap\Omega=\{\ell(B):\det B\in(\mathbb F_q^\times)^2\}.
\end{equation}
In even split type the Dickson invariant is
\begin{equation}
\label{eq:o36}
 \mathcal D(\ell(B))=\operatorname{rank}(B-I)+
 \operatorname{rank}(B^{-T}-I)=0\pmod2,                   
\end{equation}
so \(L_0\le\Omega\). In every claimed case
\begin{equation}
\label{eq:o37}
                         HG_1=L_0\cap\Omega.               
\end{equation}
Indeed, in split \(D\), if \(\det B=s^2\), factor
\(B=D_sS\), where \(D_s=\operatorname{diag}(s,s,1,\ldots,1)\) is the
\(A\)-block of \(h_{e_1+e_2}(s)\in H\) and \(S\in \operatorname{SL}_n(q)\). In odd
\(B\), use instead
\(D_s=\operatorname{diag}(s^2,1,\ldots,1)\), the block of
\(h_{e_1}(s)\). In even characteristic every field element is a square,
so the same split-\(D\) factorization covers all of \(L_0\). This proves
\eqref{eq:o37} and verifies the exact subgroup in which \eqref{eq:o34} is fused.

\proofpart{Levi coverage}\label{levi-coverage}
The elements \eqref{eq:o22} and \eqref{eq:o25} belong to the subsystem group
\(G_1\cong \operatorname{SL}_n(q)\). Nielsen's theorem and \eqref{eq:o24}, \eqref{eq:o26} give
\begin{equation}
\label{eq:o41}
 (A^{\operatorname{GL}_n(q)})^2\supseteq
 \{g\in \operatorname{SL}_n(q):g\text{ is nonscalar}\}.             
\end{equation}

By the determinant-normalized conjugator \eqref{eq:o33}--\eqref{eq:o34}, each Nielsen
factor is \(G_1\)-conjugate to \(f\), hence belongs to its
\(HG_1\)-class. Therefore (4.1) proves
\(\mathcal C_1^2\supseteq G_1\setminus Z(G_1)\), which is hypothesis 2
of Lemma \ref{lemma-1.1-relative-gauss-class-square}.

For \(q=3,4,5\), the split torus \(H_1\) contains a noncentral element:
choose
\begin{equation}
\label{eq:o42}
             h=\operatorname{diag}(c,c^{-1},1,\ldots,1),
             \qquad c\in\mathbb F_q^\times\setminus\{1\}. 
\end{equation}

In the \(q=3,n\ge3\) case this is still noncentral, although
\(c=c^{-1}=-1\). Hence the usual third hypothesis of
Ellers--Gordeev applies.

For \(q=2\), \(H_1=Z(\operatorname{SL}_n(2))=1\), so that literal hypothesis is
unavailable. However every nonidentity element of \(\operatorname{SL}_n(2)\) is a
nonscalar matrix and is covered by \eqref{eq:o41}, while
\begin{equation}
\label{eq:o43}
                         1=A\,A^{-1}.                 
\end{equation}

The \(A^{-1}\)-Levi element belongs to the ambient source class by
Step \ref{membership-ambient-reality-and-one-exact-omega-class}. Therefore \(\mathcal C_1^2=\operatorname{SL}_n(2)\), exactly the alternative
in Lemma \ref{lemma-1.1-relative-gauss-class-square}. This is the only extra argument needed in field two.

\proofpart{Fixed-point freeness on the complete relative radical}\label{fixed-point-freeness-on-the-complete-relative-radical}
This step checks the actual group \(V_+\) of Ellers--Gordeev, not a
proper subgroup of a parabolic radical.

For split \(D_n\).
The positive roots outside \(A_{n-1}\) are
\begin{equation}
\label{eq:o51}
                         e_i+e_j\quad(i<j).            
\end{equation}
Their root groups commute, and as an \(\operatorname{SL}_n(q)\)-module
\begin{equation}
\label{eq:o52}
                         V_+\cong\Lambda^2U.           
\end{equation}
Conjugation by \(f_D\) is \(\Lambda^2A\), which has no fixed vector by
\eqref{eq:o24}. The same is true for \(A^{-1}\) and for every conjugate.

For odd \(B_n\).
The roots outside \(A_{n-1}\) are
\begin{equation}
\label{eq:o53}
                     e_i,\qquad e_i+e_j\quad(i<j).      
\end{equation}

The long-root groups form the commutator subgroup, so the two lower-central
quotients are
\begin{equation}
\label{eq:o54}
              V_+/[V_+,V_+]\cong U,\qquad
              [V_+,V_+]\cong\Lambda^2U.               
\end{equation}

Conjugation by \(f_B\) acts as \(A\) and \(\Lambda^2A\), respectively.
Both are fixed-point-free by \eqref{eq:o26}, as are their inverses and conjugates.
This verifies hypothesis 1 of Lemma \ref{lemma-1.1-relative-gauss-class-square} on every required quotient.

\proofpart{Completion and the projective quotient}\label{completion-and-the-projective-quotient}
All hypotheses of Lemma \ref{lemma-1.1-relative-gauss-class-square} now hold. Hence, for the indicated covering
orthogonal group \(\widetilde G\),
\begin{equation}
\label{eq:o61}
 (f_D^{\widetilde G})^2\supseteq
       \widetilde G\setminus Z(\widetilde G),
 \qquad
 (f_B^{\widetilde G})^2\supseteq
       \widetilde G\setminus Z(\widetilde G).         
\end{equation}
Let \(C\) be the class of the image of the relevant source in
\(G=P\Omega(V)\). Every nonidentity element of \(G\) has a noncentral
lift and is covered by \eqref{eq:o61}. Step \ref{membership-ambient-reality-and-one-exact-omega-class} proves that the source is real in
\(\widetilde G\), so the identity is a product of the source and its
inverse and lies in \(C^2\). Therefore
\begin{equation}
\label{eq:o62}
                              C^2=G.                    
\end{equation}
This proves every family and parameter in \eqref{eq:o01}.
\end{proof}

\subsection{\texorpdfstring{The full-Levi parabolic route for the minus orthogonal families}{route_orthogonal_minus_full_levi.md}}
\label{sec:minus}

Let \(G=\mathrm{P\Omega}^-_{2n}(q)\), with \(n\ge2\). For each
\(q\in\{2,3,4,5\}\), these groups are simple and the classes constructed
below have square \(G\).
The proof uses the full derived Levi in the prescribed-Gauss argument and
does not use a principal-corner equation.

\begin{theorem}\label{orthogonal_minus}
Let \(G=\mathrm{P\Omega}^-_{2n}(q)\), with \(n\ge2\).
Then \(G\) contains a conjugacy class \(C\) such that
\[
C^2=G.
\]
\end{theorem}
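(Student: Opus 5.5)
Following the paper's title for this section, I would take the full-Levi parabolic route. Decompose the quadratic space as $V=E\oplus V_0\oplus F$ with $E,F$ paired totally singular of dimension $n-1$ and $V_0$ anisotropic of dimension $2$ (Witt index $n-1$). Let $P$ be the stabilizer of $E$, with Levi elements $\ell(A,s)=\operatorname{diag}(A,s,A^{-T})$, $A\in\operatorname{GL}_{n-1}(q)$, $s\in \mathrm{O}(V_0)\cong \mathrm{O}_2^-(q)$. Take $G_1$ to be the full derived Levi, so that no root subgroup is omitted (the lesson from $\operatorname{PSU}_6(3)$). The radical $Q$ has two grades:
\[
Q/[Q,Q]\cong\operatorname{Hom}(V_0,E),\qquad [Q,Q]\cong\Lambda^2E,
\]
with $\ell(A,s)$ acting by $X\mapsto AXs^{-1}$ and by $\Lambda^2A$.

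For the source I would take $A=\lambda\oplus M_\alpha$ exactly as in \eqref{eq:o22} and \eqref{eq:o25}, with $d=n-2$.
\begin{itemize}
\item $\alpha$ generates a subgroup of order $N_d$ (or is primitive), so $A$ is $(1,n-2)$-cyclic with $\det A=1$.
\item $1\notin\operatorname{Spec}(\Lambda^2A)$, by the argument of \eqref{eq:o23}--\eqref{eq:o26}.
\item For the middle block I would take $s=1$ if $q$ is odd, or a suitable element of $\Omega_2^-(q)$ otherwise. The eigenvalues on $\operatorname{Hom}(V_0,E)$ are then products of eigenvalues of $A$ with eigenvalues of $s^{-1}$, which lie in $\mu_{q+1}\subset\mathbb F_{q^2}$. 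I need $A$ to have no eigenvalue in $\mu_{q+1}$. Since $\alpha^{q^i}$ lies in $\mathbb F_{q^d}$ and has large order, I expect this to hold apart from small $d$ (then choose $\alpha$ of order coprime to $q+1$, or take $\lambda=1$ and $s=1$ only when legitimate).
\end{itemize}
Once both grades are fixed-point-free, hypothesis 2 of Lemma~\ref{lem:relative-gauss} holds. Reality and single-class fusion inside $HG_1$ follow exactly as in Step~\ref{membership-ambient-reality-and-one-exact-omega-class}: swap the hyperbolic pairs, use the transpose-similarity, and correct the spinor norm or determinant with $C_c=c\oplus I_d$. If $s\neq1$, also use an element of $\mathrm{O}_2^-$ inverting $s$; it is dihedral, so reflections invert.

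For Levi coverage, $G_1$ is $\operatorname{SL}_{n-1}(q)$, possibly times the small torus $\Omega_2^-(q)$, which is abelian and cyclic of order $(q+1)/(2,q-1)$.
\begin{itemize}
\item On the $\operatorname{SL}_{n-1}$ factor, Theorem~\ref{nielsens-cyclic-class-theorem} gives every nonscalar element.
\item I would prescribe the torus element $h$ in the $\operatorname{SL}_{n-1}$ factor, as in the $\operatorname{PSU}_7(3)$ refinement. The $\operatorname{SL}$-component of every core is then nonscalar for $q\ge3$; for $q=2$ use $1=AA^{-1}$ as in \eqref{eq:o43}.
\item The $\Omega_2^-$ component of a core, however, can be an arbitrary element of that cyclic group, while a fixed class of $s$ squares only to $s^{2}$ or $1$.
\end{itemize}
This is the main obstacle. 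I would try first to avoid it by taking $V_0$ to be the whole anisotropic kernel and prescribing $h$ with trivial $V_0$-component. The Gauss decomposition of the relative system (type $B_{n-1}$) should then force the $V_0$-block of the core to be $1$, since that block lies in the anisotropic torus, which is contained in $H$. In that case $s=1$ suffices whenever $A$ has no eigenvalue in $\mu_{q+1}$, and the core only needs $\operatorname{SL}_{n-1}$-coverage. If the torus part cannot be pinned, I would instead let $s$ range over inverse pairs and argue using the freedom $h$ in the torus, as in the $h=1$ strengthened form.

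Finally, Lemma~\ref{lem:relative-gauss} gives $C^2\supseteq\widetilde G\setminus Z(\widetilde G)$. Reality supplies the identity, so projecting to $\mathrm{P\Omega}^-_{2n}(q)$ gives $C^2=G$. The small cases $n=2,3$ reduce, via $\mathrm{P\Omega}_4^-\cong\operatorname{PSL}_2(q^2)$ and $\mathrm{P\Omega}_6^-\cong\operatorname{PSU}_4(q)$, to Theorem~\ref{typeA}, Theorem~\ref{evenu}, Theorem~\ref{psu_43} and the \texttt{4B} certificate for $\operatorname{PSU}_4(2)$. The remaining small $d$ where the eigenvalue condition fails, such as $n=4$ with $q=2,3$, I would settle by explicit table certificates.
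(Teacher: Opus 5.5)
Your route is viable, and it differs from the paper's. You use the smallest parabolic of the relative $B_{n-1}$ system: anisotropic kernel $V_0$ of dimension $2$ and $G_1=\mathrm{SL}_{n-1}(q)$. This is the exact analogue of the odd-$B$ argument in Section~\ref{sec:orthogonal}, and of Ellers--Gordeev's own ${}^2D_{l+1}$ representative.

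Your ``main obstacle'' is not really one, and your pinning argument is right. In Lemma~\ref{lem:relative-gauss} the prescribed $h$ is taken in $H_1=H\cap G_1$. Since $U_\pm\cap L$ acts trivially on $V_0$, every core already lies in $G_1$: its $V_0$-block is that of $h$, namely $1$. So the fallback is unnecessary.

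The paper instead enlarges the anisotropic block to $W=W_8^-$ for $q=2$ and $W=W_4^-$ for $q=3,4,5$. The residual factor $\Omega(W)$ is then quasisimple and carries its own Thompson class (\texttt{17a}, or real classes of $\mathrm{PSL}_2(q^2)$), so any $W$-component of a core is absorbed by the residual square. This avoids all orientation bookkeeping, but costs table certificates and several boundary ranks ($n\le 7$ at $q=2$; $m=2$ otherwise). Your version needs only Nielsen's $(1,n-2)$ theorem. Once corrected as below, it leaves only $n=2,3$ and $(q,n)=(2,4)$ for separate treatment.

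Two corrections are needed.

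First, the fixed-point condition on $\operatorname{Hom}(V_0,E)$ is $\operatorname{Spec}(A)\cap\operatorname{Spec}(s)=\varnothing$, as in \eqref{eq:m17}, not ``no eigenvalue in $\mu_{q+1}$''. Your stronger condition cannot be met at $q=2$, because every $(1,n-2)$-cyclic matrix over $\mathbb F_2$ has eigenvalue $1$. For $q=2$, take $s$ of order $3$ and $A=1\oplus M_\alpha$ with $\alpha$ primitive and $d=n-2\ge 3$. Because the core's $V_0$-block is $1$, every core must be factored with opposite orientations: $\ell(A_1,s)\,\ell(A_2,s^{-1})$ with $A_1\in A^{\mathrm{GL}}$ and $A_2\in (A^{-1})^{\mathrm{GL}}$. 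Nielsen's theorem supplies this for every nonidentity determinant-one core, and $1=AA^{-1}$ covers the identity. The Dickson invariant of the inverter is corrected by the swap of the hyperbolic plane on which $A=1$.

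Second, for $q=3,4,5$ you must pair $s=1$ with the source \eqref{eq:o25}, $A=\nu^{-1}\oplus M_\alpha$, not with \eqref{eq:o22}. The eigenvalue $1$ of $1\oplus M_\alpha$ gives fixed points on $\operatorname{Hom}(V_0,E)$ when $s=1$. With $\nu^{-1}\ne 1$ everything works for all $d\ge 2$, so $n=4$, $q=3$ needs no certificate, contrary to your list. At $q=4$, a reflection of $V_0$ centralizes $f$ and corrects the Dickson invariant of the inverter.
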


\subsubsection{The strengthened relative-Gauss lemma}\label{the-strengthened-relative-gauss-lemma}

Let \(P=LQ\) be the stabilizer in a finite classical group \(G_0\) of a
totally singular space \(E\), and let \(Q^-\) be the opposite radical.
Write \(G_1=[L,L]\). Suppose that \(f\in L\cap G_0\), put
\[
 \mathcal D=(f^L\cup(f^{-1})^L)\cap G_1H,
 \qquad C=f^{G_0},
\]
where \(H\) is the central torus used in the relative Levi decomposition.
We assume the following hypothesis:

\begin{enumerate}
\def\labelenumi{\arabic{enumi}.}
\item 
\(f\) is real in \(G_0\);
\item
  every Levi core that occurs in the prescribed-Gauss decomposition is
  in \(\mathcal D^2\);
\item
  conjugation by every member of \(\mathcal D\) has no fixed point on
  any nonzero quotient in a central filtration of \(Q\).
\end{enumerate}
Then every noncentral element of \(G_0\) belongs to \(C^2\).

Indeed, the prescribed Gauss theorem of Ellers--Gordeev permits any
prescribed split-torus element, including \(h=1\). It gives, for a
conjugate \(y_1\) of a noncentral target $y$,
\[
             y_1=v^-gv^+,
 \qquad v^-\in Q^-,\quad v^+\in Q,                   
\]
with \(g\) one of the allowed Levi cores. Write
\(g=\sigma _1\sigma _2\), with \(\sigma_i\in\mathcal D\). Hypothesis 3
and induction along the central filtration make the two non-abelian Lang
maps
\[
 a\mapsto a\sigma _1a^{-1}\sigma _1^{-1}\quad(Q^-),
 \qquad
 a\mapsto\sigma _2^{-1}a\sigma _2a^{-1}\quad(Q)      
\]
bijective. Consequently
\[
 y_1=(a_1\sigma _1a_1^{-1})(a_2\sigma _2a_2^{-1})\in C^2.
\]

This is the proof of Ellers--Gordeev, Proposition 5.1 in \cite{EG}, with its
\(H_1\ne Z(G_1)\) condition deleted for the precise reason that the
identity and all central Levi cores are already allowed in
\(\mathcal D^2\). When only nonscalar \(\operatorname{SL}_m\)-cores are known to be in
the square, prescribe a noncentral element in the split torus of the
\(\operatorname{SL}_m\)-factor; uniqueness of relative Gauss decomposition then makes
the \(\operatorname{GL}_m\)-projection of \(g\) nonscalar.

For the orthogonal parabolic associated with
\[
                 V=E\oplus W\oplus E^*,              
\]
the required filtration has grades
\begin{equation}
\label{eq:m15}
             Q/Q_2\simeq\operatorname {Hom}(W,E),
             \qquad Q_2\simeq\Lambda^2E.            
\end{equation}

For \(\ell(A,s)=\operatorname {diag}(A,s,A^{-T})\), the two actions are
\begin{equation}
\label{eq:m16}
       X\longmapsto AXs^{-1},\qquad Y\longmapsto
       (\Lambda^2A)Y.                             
\end{equation}
Thus Hypothesis 3 follows from
\begin{equation}
\label{eq:m17}
 \operatorname {Spec}(A)\cap\operatorname {Spec}(s)=\varnothing,
 \qquad 1\notin\operatorname {Spec}(\Lambda^2A).    
\end{equation}

The same assertions hold for inverse and conjugate blocks.

\begin{proof}[{Proof of Theorem \ref{orthogonal_minus}}]
We proceed the proof in the following steps.

\proofpart{Characteristic two}\label{characteristic-two}
The stable source. Let \(n\ge8\), \(m=n-4\ge4\), and \(d=m-1\). On \(W=W^-_8\), take an
element \(s\) in class \texttt{17a} of \(\Omega^-_8(2)\). CTblLib gives
\begin{equation}
\label{eq:m21}
                       (s^{\Omega^-_8(2)})^2
                       =\Omega^-_8(2),               
\end{equation}
and the class is real. Since \(\operatorname {ord}_{17}(2)=8\), its
natural characteristic polynomial is irreducible of degree \(8\).

Choose a primitive \(\alpha\in\mathbb F_{2^d}^{\times}\), let
\(M_\alpha\) be multiplication by \(\alpha\), and put
\[
                A=1\oplus M_\alpha\in \operatorname{GL}_m(2),
 \qquad f=\operatorname {diag}(A,s,A^{-T}).            
\]
The matrix \(A\) is cyclic and \((1,m-1)\)-cyclic, and \(\det A=1\).
For \(0\le i<j<d\),
\[
 2^i+2^j\not\equiv0\pmod {2^d-1},                  
\]
because \(2^{j-i}+1<2^d-1\) for \(d\ge3\). Hence
\(1\notin\operatorname {Spec}(\Lambda^2A)\). The nontrivial
eigenvalues of \(A\) have order \(2^d-1\ne17\), while \(s\) has no
eigenvalue \(1\), proving \eqref{eq:m17}.

Let \(\Gamma=A^{\operatorname{GL}_m(2)}\). Nielsen's \((1,m-1)\)-cyclic product
theorem gives every nonscalar element of \(\operatorname{GL}_m(2)\) in
\(\Gamma\Gamma^{-1}\), and \(1\in\Gamma\Gamma^{-1}\). Thus
\begin{equation}
\label{eq:m24}
       (\Gamma\cup\Gamma^{-1})^2=\operatorname{GL}_m(2).             
\end{equation}
Because the residual class is real, there is no orientation coupling:
the union of the two inverse Levi classes is exactly
\[
       (\Gamma\cup\Gamma^{-1})\times s^{\Omega^-_8(2)}. 
\]

Equations \eqref{eq:m21}, \eqref{eq:m24} show that its square is the whole derived Levi.
The strengthened lemma applies with prescribed \(h=1\).

The source lies in \(\Omega^-_{2n}(2)\): every hyperbolic \(\operatorname{GL}_m(2)\)
Levi element has Dickson invariant zero. It is real there. Exchange the
two nontrivial \(d\)-dimensional hyperbolic primary spaces and use an
inverter for \(s\). If the resulting Dickson invariant is one, multiply
by the swap of the hyperbolic plane on which \(A=1\); this swap centralizes
\(f\) and has Dickson invariant one. The same fixed plane proves that no
unmentioned \(O/\Omega\) fusion occurs. All \(\operatorname{GL}_m\)-conjugators lift in
the parabolic Levi, and all residual conjugators are already in
\(\Omega^-_8(2)\).

\paragraph{The four boundaries}\label{the-four-boundaries}
\begin{itemize}
\tightlist
\item
  \(n=2\): \(\mathrm{P\Omega}^-_4(2)\simeq A_5\); either class \texttt{2a} or \texttt{3a}
  is real and has square the whole group.
\item
  \(n=3\): \(\mathrm{P\Omega}^-_6(2)\simeq \operatorname{PSU}_4(2)\); class \texttt{5a} is real and
  has square the whole group.
\item
  \(n=4\): use \texttt{17a} itself; \eqref{eq:m21} is the assertion.
\item
  \(n=5\): take \(E\) one-dimensional, \(A=1\), and the same \(W^-_8\)
  class. Here \(\Lambda^2E=0\).
\item
  \(n=6\): instead write \(V=H(E_3)\perp W^-_6\). Let \(A\) be an
  irreducible Singer element of order \(7\) in \(\operatorname{GL}_3(2)\), and take
  \(s\) in class \texttt{5a} of
  \(\Omega^-_6(2)\simeq \operatorname{PSU}_4(2)\). The class \texttt{5a} is real and its
  square is the whole group. On the natural six-space it has spectrum
  \(1^2\) plus the four primitive fifth roots. The two inverse order-\(7\)
  classes of \(\operatorname{GL}_3(2)\) have union-square all \(168\) elements. The
  eigenvalues of \(\Lambda^2A\) have exponents \(3,5,6\pmod7\), so none
  is one. A Dickson-one isometry on the nondegenerate two-dimensional
  fixed space of \(s\) corrects the odd Dickson invariant of the
  three-dimensional half-swap, proving reality in \(\Omega^-_{12}(2)\).
\item
  \(n=7\): write \(V=H(E_1)\perp W^-_{12}\), take \(A=1\), and take \(s\)
  in class \texttt{13a} of \(\Omega^-_{12}(2)\). CTblLib gives that \texttt{13a}
  is real and its square is the whole group. Since
  \(\operatorname {ord}_{13}(2)=12\), the natural polynomial is
  irreducible of degree \(12\), so \eqref{eq:m17} holds and
  \(\Lambda^2E_1=0\).
\end{itemize}
This proves every \(\mathrm{P\Omega}^-_{2n}(2)\), \(n\ge2\).

\proofpart{\texorpdfstring{Odd fields \(q=3,5\)}{3. Odd fields q=3,5}}\label{odd-fields-q35}
Put \(m=n-2\ge2\), use \(W=W^-_4\), and identify
\[
             \Omega^-_4(q)\simeq\operatorname{PSL}_2(q^2).          
\]

For \(q=3\), choose a nonsplit projective order-\(5\) class; for \(q=5\),
choose a nonsplit projective order-\(13\) class. These classes are real,
and their squares are the whole residual group. This follows from
Guralnick--Malle, \cite[Theorem 7.1]{GM}; it is also checked exactly in CTblLib.
If \(\lambda\) is a lift of order \(10\), respectively \(26\), the four
natural eigenvalues are
\begin{equation}
\label{eq:m32}
                 \lambda^{\pm(q-1)},\quad
                 \lambda^{\pm(q+1)}.                  
\end{equation}
They are primitive fifth, respectively thirteenth, roots and include no
\(\pm1\).

For \(q=3,m\ge3\), let \(d=m-1\), take primitive
\(\alpha\in\mathbb F_{3^d}^{\times}\), and let \(M_\alpha\) be multiplication by \(\alpha\), set
\begin{equation}
\label{eq:m33}
                         A=(-1)\oplus M_\alpha.        
\end{equation}
Since \(N_{q^d/q}(\alpha)=-1\), we have \(\det A=1\). 

For \(q=5,m\ge3\), take instead
\[
 \operatorname {ord}(\alpha)=(5^d-1)/2,
 \qquad A=(-1)\oplus M_\alpha.                       
\]
Here \(\alpha\) has degree \(d\) and \(N_{q^d/q}(\alpha)=-1\), so again
\(\det A=1\). Indeed
\[
 5^{d-1}-1<(5^d-1)/2,
 \qquad 5^{d-1}+1<(5^d-1)/2                          
\]
for \(d\ge2\); these inequalities prove both the degree assertion and
the absence of an inverse Frobenius pair. The analogous assertions for
\eqref{eq:m33} are immediate from \(3^i+1<3^d-1\). Thus both choices are cyclic,
\((1,m-1)\)-cyclic, satisfy \eqref{eq:m17}, and have spectra disjoint from \eqref{eq:m32}.

Nielsen's theorem says that the square of the \(\operatorname{GL}_m(q)\)-class contains
every nonscalar determinant-one matrix. Prescribe a noncentral element
in the split torus of the \(\operatorname{SL}_m\)-factor; then the relative Gauss core has
nonscalar \(\operatorname{GL}_m\)-projection. The residual class square is unrestricted,
so every core that occurs is covered.

For \(m=2\), put \(A=\operatorname {diag}(1,-1)\). Its \(\operatorname{GL}_2(q)\)-class
square is all \(\operatorname{SL}_2(q)\), for both \(q=3,5\). This follows from the
two-dimensional cyclic-class product calculation and is checked exactly
in the certificate. Also \(\Lambda^2A=-1\), and \eqref{eq:m32} gives the first
condition in \eqref{eq:m17}. For \(q=5\), \(-1\) is a square and the source with
the residual class above lies in the ambient \(\Omega\). For \(q=3\),
replace the residual class \(D\) by \(-D\). In four-dimensional minus
type, \(-I\in SO^-_4(3)\setminus\Omega^-_4(3)\) has nonsquare spinor norm.
It cancels the nonsquare spinor norm of the hyperbolic block \(A\), while
\[
                           (-D)^2=D^2.                
\]

The source is real in one ambient \(\Omega\)-class. For \(m\ge3\), the
hyperbolic (-1)-plane centralizes \(f\). Its swap corrects the
determinant of an initial inverter, and
\(\operatorname {diag}(c,c^{-1})\) on that plane, with \(c\) nonsquare,
corrects its spinor norm. For \(m=2\), both hyperbolic primary blocks are
self-inverse, so a residual \(\Omega^-_4(q)\)-inverter suffices. Finally,
the determinant map of the \(\operatorname{GL}_m\)-centralizer of \(A\) is surjective
(use the one-dimensional scalar block). Hence every needed
\(\operatorname{GL}_m\)-conjugator can be modified to have square determinant and lifts
to \(\Omega\); this proves the asserted one-class fusion, not merely
\(SO\)-fusion.

The two lower ranks require no new calculation. At \(n=2\), use the
residual order-\(5\) class for \(q=3\), respectively order-\(13\) class for
\(q=5\), directly in \(\mathrm{P\Omega}^-_4(q)\). At \(n=3\), take \(E=E_1\),
\(A=1\), and the same residual class; then \(\Lambda^2E=0\), the
first-grade Lang map is bijective, and the residual square is the whole
derived Levi.

\proofpart{\texorpdfstring{The field \(q=4\)}{4. The field q=4}}\label{the-field-q4}
Again we assume \(m=n-2\ge2\) and \(W=W^-_4\).

For \(m\ge3\), let \(d=m-1\), take primitive
\(\alpha\in\mathbb F_{4^d}^{\times}\), and set
\begin{equation}
\label{eq:m41}
                         A=1\oplus M_\alpha.           
\end{equation}

Take in \(\Omega^-_4(4)\simeq\operatorname{PSL}_2(16)\) a split order-\(15\) class
defined by \(\lambda\in\mathbb F_{16}^{\times}\) of order \(15\). Its
natural eigenvalues have orders (5,5,3,3), by \eqref{eq:m32}, and its real class
square is the whole residual group. The eigenvalues of the irreducible
block in \eqref{eq:m41} all have order \(4^d-1\), so they do not meet the residual
spectrum, even when \(d=2\). The usual inequality
\(4^i+1<4^d-1\) proves the exterior-square condition.

Now \(\det A=N(\alpha)\) is a generator of
\(\mathbb F_4^{\times}\). Therefore a determinant-one Levi core must be
factored with opposite orientations. Nielsen's theorem gives every
nonscalar such core in
\begin{equation}
\label{eq:m42}
                    A^{\operatorname{GL}_m(4)}(A^{-1})^{\operatorname{GL}_m(4)}.     
\end{equation}

As before, prescribe a noncentral \(\operatorname{SL}_m\)-torus element. The residual
class is real, so its two factors are independent of the two orientations
in \eqref{eq:m42}. The fixed hyperbolic plane belonging to the eigenvalue \(1\)
corrects the Dickson invariant of an inverter and proves reality in one
ambient \(\Omega\)-class.

For \(m=2\), let \(a\in\mathbb F_4^{\times}\) have order \(3\), put
\[
     A=aJ_2(1),\qquad \det A=a^2,                      
\]
and take the unipotent involution class \texttt{2a} of \(\mathrm{PSL}_2(16)\) as the
reservoir. CTblLib gives \((\texttt{2a})^2=\mathrm{PSL}_2(16)\). 
Its natural eigenvalues
are all \(1\), whereas those of \(A\) are all \(a\); moreover
\(\Lambda^2A=a^2\ne1\). Exact enumeration gives
\begin{equation}
\label{eq:m44}
                    A^{\operatorname{GL}_2(4)}(A^{-1})^{\operatorname{GL}_2(4)}
                    =\operatorname{SL}_2(4).                         
\end{equation}

The half-swap has Dickson invariant \(m=2=0\), so the source is real in
the ambient \(\Omega^-_8(4)\). Equations \eqref{eq:m15}--\eqref{eq:m17}, \eqref{eq:m44}, and the
full residual square finish the boundary with prescribed \(h=1\).

At \(n=2\), use the real order-\(15\) class directly in
\(\mathrm{P\Omega}^-_4(4)\simeq\operatorname{PSL}_2(16)\). At \(n=3\), take \(E=E_1\), \(A=1\),
and the same residual class. Thus the order-\(6\)/unipotent replacement
is needed only at \(n=4\); \eqref{eq:m41} applies from \(n=5\) onward.

\proofpart{Passage to the projective groups}\label{passage-to-the-projective-groups}
The preceding argument puts every noncentral element of the quasisimple
\(\Omega^-_{2n}(q)\) in the square of the displayed real class. Every
nonidentity projective element has a noncentral lift. Reality puts the
identity in the square after projection. Hence, in every stated simple
case, we have 
\[
                         C^2=\mathrm{P\Omega}^-_{2n}(q).
\]
\end{proof}

\section{\(E_7\), \(E_8\) and \(E_6(4)\), \({^{2}E}_6(4)\)}
\label{sec:exceptional-relative-Gauss}

\subsection{\texorpdfstring{Relative-Gauss classes for  \(E_7\) and \(E_8\)}{route_exceptional_relative_gauss.md}}
\label{candidates/route_exceptional_relative_gauss.md}

\subsubsection{Conclusion remarks}\label{verdict-and-registry}

This section proves Thompson's conjecture, with explicit classes, for
\begin{equation}
\label{eq:e78_01}
E_7(2),E_7(3),E_7(4),\quad E_8(3),E_8(4),E_8(5). 
\end{equation}
Here \(E_7(q)\) denotes the simple central quotient of the universal
Chevalley group; \(E_8(q)\) is centerless. The proof is uniform and uses
Ellers--Gordeev's relative-Gauss argument with an \(A_6\) subsystem in
\(E_7\) and an \(A_7\) subsystem in \(E_8\), replacing their small
split-torus source by a nonprimary cyclic source controlled by Nielsen's
theorem.

The same route does not cover \(E_8(2)\). Section \ref{exact-obstruction-at-e_82} gives a complete
degree-by-degree obstruction for every cyclic \((m,8-m)\)-source to which
Nielsen's Theorems 1.2 or 1.4 in \cite{NielsenGL} applies. The most direct \(A_5\) route for
\(E_6(4)\) and \({}^2E_6(4)\) also fails: its top radical grade is a
trivial module for the derived \(A_5\), so a source lying only in that
subsystem has fixed points. These are obstructions to this mechanism, not
counterexamples to Thompson's conjecture.

\subsubsection{Relative-Gauss lemma}\label{relative-gauss-lemma}

Let \(\widetilde G\) be a universal finite Chevalley group, let \(G_1\)
be a standard subsystem group, let \(H_1\) be its split torus, and let
\(V_+,V_-\) be the groups generated by the positive and negative roots
outside the subsystem. Filter \(V_+\) by the coefficient of the deleted
simple root; Chevalley commutators make this a central filtration. Use the
opposite filtration on \(V_-\). The following lemma as  
Lemma \ref{lem:relative-gauss} in Section \ref{sec:prelimelaries}  provides the key
relative-Gauss mechanism that reduces the ambient square problem to a
subsystem square problem.

\begin{lemma}[Section \ref{sec:prelimelaries}, Lemma \ref{lem:relative-gauss}]

Let \(f\in G_1\), let \(C=f^{\widetilde G}\), and let
\(\mathcal C_1\) be the union of the \(HG_1\)-classes of \(f,f^{-1}\).
Assume the following hypothesis:

\begin{enumerate}
\def\labelenumi{\arabic{enumi}.}
\tightlist
\item
  \(f\) is real in \(\widetilde G\);
\item
  every member of \(\mathcal C_1\) is fixed-point-free on every nonzero
  grade of \(V_+\);
\item
  \(\mathcal C_1^2\supseteq G_1\setminus Z(G_1)\);
\item
  either \(H_1\ne Z(G_1)\), or \(\mathcal C_1^2=G_1\).
\end{enumerate}
Then
\[
                       C^2\supseteq
        \widetilde G\setminus Z(\widetilde G).          
\]
\end{lemma}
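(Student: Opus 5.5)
This lemma is the specialization of Lemma~\ref{lem:relative-gauss} to a standard subsystem of a universal Chevalley group. I would prove it by reducing to that lemma, re-running its argument only where the hypotheses here are phrased differently. There are two such differences. First, hypothesis~2 is stated only for the grades of $V_+$, filtered by the coefficient of the deleted simple root. Second, the filtration is specified concretely rather than assumed abstractly.

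\textbf{Step 1: the filtration is central.} Set $V_{+,r}$ to be the product of the root groups $X_\alpha$ with $\alpha\notin R_1$ and deleted-coefficient at least $r$. Chevalley's commutator formula gives $[X_\alpha,X_\beta]\le\prod X_{i\alpha+j\beta}$, and coefficients add, so $[V_{+,r},V_{+,s}]\le V_{+,r+s}$. Each quotient $V_{+,r}/V_{+,r+1}$ is abelian and is a module for $HG_1$, because $G_1$ and $H$ preserve the deleted-root coefficient. The same holds for $V_-$ with the opposite filtration.

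\textbf{Step 2: fixed-point-freeness transfers from $V_+$ to $V_-$.} The grade $V_{-,r}/V_{-,r+1}$ is, as an $HG_1$-module, dual to $V_{+,r}/V_{+,r+1}$, via the Chevalley pairing of $X_\alpha$ with $X_{-\alpha}$. Hence the fixed space of $\sigma$ on the negative grade has the same dimension as the fixed space of $\sigma^{-T}$ on the positive grade, equivalently of $\sigma^{-1}$ there. Since $\mathcal C_1$ is closed under inversion, hypothesis~2 for $V_+$ yields fixed-point-freeness on every grade of $V_-$. This is precisely hypothesis~2 of Lemma~\ref{lem:relative-gauss}.

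\textbf{Step 3: apply the Gauss--Lang argument.} With this in hand, all four hypotheses of Lemma~\ref{lem:relative-gauss} hold verbatim, and it gives $C^2\supseteq\widetilde G\setminus Z(\widetilde G)$. For the reader I would still recall the mechanism, since it is what later applications refine.
\begin{enumerate}
\item Take a noncentral $x$. By the prescribed Gauss theorem \cite[Theorem 3]{EG}, a conjugate $y$ has the form $u_1hu_2$, with $h\in H_1\setminus Z(G_1)$, or $h=1$ when $\mathcal C_1^2=G_1$.
\item Splitting $u_1=v_1\tilde u_1$ and $u_2=\tilde u_2v_2$ gives $y=v_1gv_2$ with $g\in G_1$. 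Uniqueness of the Gauss decomposition forces $g\notin Z(G_1)$ in the first alternative.
\item Hypotheses~3 and~4 factor $g=\sigma_1\sigma_2$ with $\sigma_i\in\mathcal C_1$.
\item Bijectivity of the Lang maps $a\mapsto a\sigma a^{-1}\sigma^{-1}$ on $V_\pm$ absorbs $v_1$ and $v_2$. This follows by induction down the central filtration: on each grade the map is the linear map $1-\sigma$, invertible by Steps~1--2, and centrality lets one correct the error term in the next grade.
\item This yields $y=(a_1\sigma_1a_1^{-1})(a_2\sigma_2a_2^{-1})$. Both factors lie in $C$ because $f$ is real by hypothesis~1 and $\mathcal C_1$ consists of $HG_1$-conjugates of $f^{\pm1}$.
\end{enumerate}

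\textbf{Main obstacle.} The only nonformal point is Step~2: the duality of the grades of $V_-$ and $V_+$ as $HG_1$-modules, together with its compatibility with the Lang-map induction. I would check it first via the Chevalley involution $x_\alpha(t)\mapsto x_{-\alpha}(-t)$. This involution normalizes $HG_1$ and acts on $H$ by inversion, so it interchanges the two filtrations and turns the action of $\sigma$ into that of a conjugate of $\sigma^{-1}$.
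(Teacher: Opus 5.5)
Your proposal is correct and follows the paper's route. The statement is just Lemma~\ref{lem:relative-gauss} restated, and your Step~3 reproduces its proof essentially line by line: the prescribed-Gauss splitting $y=v_1gv_2$, the uniqueness argument for a noncentral core, and the Lang-map absorption. Your Step~2 supplies the one point the restatement leaves implicit, namely fixed-point-freeness on the grades of $V_-$; the paper uses the same reasoning elsewhere (negated torus weights invert the eigenvalues of the semisimple part), and that weight argument is cleaner than the $HG_1$-module duality you assert, which you do not actually need.
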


\subsubsection{Exact relative-root modules}\label{exact-relative-root-modules}

Use CHEVIE's numbering
\[
\begin{matrix}
 &&1\\[-2mm]
 &&|\\[-2mm]
 2&-&4&-&5&-&6&-&7&(-8),\\[-2mm]
 &&|\\[-2mm]
 &&3
\end{matrix}
\]
equivalently the Cartan edges
\(1-3-4-5-6-7(-8)\) with node \(2\) attached to node \(4\).
In the following, we delete node \(2\).

For \(E_7\), the remaining chain \(1,3,4,5,6,7\) is \(A_6\), so
\(G_1=\operatorname{SL}_7(q)\). The positive roots of deleted-node coefficient one and
two give
\begin{equation}
\label{eq:e78_21}
 V_1/V_2\cong\Lambda^4U\cong(\Lambda^3U)^*,\qquad
 V_2\cong U,                                          
\end{equation}
of dimensions \(35,7\).

For \(E_8\), the remaining chain \(1,3,4,5,6,7,8\) is \(A_7\), so
\(G_1=\operatorname{SL}_8(q)\). The three grades are
\[
 V_1/V_2\cong\Lambda^5U\cong(\Lambda^3U)^*,\qquad
 V_2/V_3\cong\Lambda^2U,\qquad
 V_3\cong U^*,                                        
\]
of dimensions \(56,28,8\).

These statements are integral root-weight calculations, not
characteristic-zero assumptions. 
If a Chevalley structure
constant vanishes in bad characteristic, a central quotient can only
become a subquotient of a displayed grade. The sources below have
prime to characteristic order and no one eigenvalue on the whole grade, so
they have none on any such subquotient.

\subsubsection{Cyclic-class and class fusion}\label{cyclic-class-input-and-class-fusion}

For convenience, we restate two results followed by Nielsen's theorems in  \cite{NielsenGL} as follows:

\begin{itemize}
\tightlist
\item
Let $C$ be a cyclic \((1,n-1)\)-class of $\operatorname{GL}_{n}(K)$. 
Then $C^{2}$ contains every nonscalar matrix of $\operatorname{GL}_{n}(K)$ with the determinant $\operatorname{det}C^{2}$.
\item
 For \(n\ge4\), and \(n\ge5\) when \(K=\mathbb F_3\), 
 let $C$ be a cyclic \((m,n-m)\)-class of $\operatorname{GL}_{n}(K)$. 
Then $C^{2}$ contains every nonscalar matrix of $\operatorname{GL}_{n}(K)$ with the determinant $\operatorname{det}C^{2}$.
\end{itemize}

Every source below has determinant one. Its centralizer determinant map
is onto: for a direct sum of coprime irreducible blocks it is the product
of the corresponding finite field norm maps. Therefore, if two source
matrices are \(\operatorname{GL}_n(q)\)-conjugate, a conjugator can be multiplied by a
centralizer element to have determinant one. All Nielsen factors are
thus conjugate already in the embedded \(\operatorname{SL}_n(q)=G_1\), and hence belong
to one ambient class.

\begin{theorem}\label{e7e8}
Let $G=E_n(q)$, with $q\in\{2,3,4\}$ for $n=7$ and $q\in\{3,4,5\}$ for $n=8$.
Then \(G\) contains a conjugacy class \(C\) such that
\[
C^2=G.
\]
\end{theorem}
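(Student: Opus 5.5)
The plan is to apply Lemma~\ref{lem:relative-gauss} to the universal group $\widetilde G$ of type $E_7$ or $E_8$, with the subsystem $G_1=\operatorname{SL}_7(q)$ or $\operatorname{SL}_8(q)$ obtained by deleting node $2$. The source is a Levi element $f\in G_1$ given by a block-diagonal companion matrix $A=\lambda\oplus M_\alpha$ or $A=M_\beta\oplus M_\alpha$, with coprime irreducible blocks and determinant one. Here $M_\alpha$ is multiplication by a suitable element $\alpha$ of $\mathbb F_{q^d}^\times$. The class is then nonprimary cyclic, so the two Nielsen theorems quoted above give
\[
(A^{\operatorname{GL}_n(q)})^2\supseteq\{g\in\operatorname{SL}_n(q): g \text{ nonscalar}\}.
\]
Over $\mathbb F_3$ I would use a $(1,n-1)$ splitting, since $n=7,8\ge5$ in any case.

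Surjectivity of the centralizer determinant (a product of norm maps) fuses every Nielsen factor into one $G_1$-class. That gives hypothesis 3. Hypothesis 4 holds for $q\ge3$ by taking $h=\operatorname{diag}(c,c^{-1},1,\dots)$. For $q=2$ (needed only for $E_7(2)$) I would argue as in the orthogonal $q=2$ case: every nonidentity element of $\operatorname{SL}_7(2)$ is nonscalar, and $1=AA^{-1}$ once $A$ is conjugate to $A^{-1}$, so $\mathcal C_1^2=G_1$.

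\textbf{Reality.} Choose the polynomial $\chi_A$ to be self-reciprocal. Concretely, take the orbit of $\alpha$ closed under inversion, or pair $M_\alpha$ with $M_{\alpha^{-1}}$, so that $A\sim A^{-1}$ in $\operatorname{SL}_n(q)$. Then $f$ is real already in $G_1$. If self-reciprocity clashes with the fixed-point condition below, the fallback is a $w_0$-type element of the Weyl group normalizing $G_1$ that acts on $G_1$ by inverse-transpose: conjugating by it and then by the transpose-similarity inside $G_1$ gives an inverter in $\widetilde G$.

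\textbf{Fixed-point freeness (the main obstacle).} By \eqref{eq:e78_21} and its $E_8$ analogue, the grades are $\Lambda^4U$ and $U$ for $E_7$, and $\Lambda^5U$, $\Lambda^2U$, $U^*$ for $E_8$. So no product of $4$ distinct eigenvalues of $A$ may equal $1$ for $E_7$. For $E_8$, no product of $5$ or of $2$ distinct eigenvalues may equal $1$, and $1\notin\operatorname{Spec}(A)$. Since $\det A=1$, the $\Lambda^4$ condition for $E_7$ is equivalent to a $\Lambda^3$ condition, and the $\Lambda^5$ condition for $E_8$ to a $\Lambda^3$ condition. This is the step that constrains the choice of $A$, and it conflicts with reality: a self-reciprocal spectrum containing $\mu,\mu^{-1}$ kills $\Lambda^2$, and hence $\Lambda^5$ as well. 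For $E_8$ I would therefore drop self-reciprocity and rely on the $w_0$ inverter: $A$ need only have a spectrum disjoint from its inverse, which is exactly the $\Lambda^2$ condition.

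The concrete choice I would try first is $A=\nu^{-1}\oplus M_\alpha$ with $\alpha$ primitive in $\mathbb F_{q^{n-1}}$ and $\nu=N(\alpha)$. I would check the exponent conditions $q^{i_1}+\dots+q^{i_k}\not\equiv0$ (together with the cross terms involving $\nu$) modulo $q^{n-1}-1$ for $k=2,3$, using the size inequalities as in the orthogonal case. Where these fail for a small $q$, I would switch to a $(2,n-2)$ or $(3,n-3)$ split and verify the exponent sets by a finite search.

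Having checked hypotheses 1--4, Lemma~\ref{lem:relative-gauss} gives $C^2\supseteq\widetilde G\setminus Z(\widetilde G)$. Projecting to $G$, with reality supplying the identity, then yields $C^2=G$.
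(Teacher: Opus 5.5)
Your proposal is correct and follows essentially the same route as the paper. The shared ingredients are: deleting node $2$ to get the $A_6$/$A_7$ subsystem, the source $\nu^{-1}\oplus M_\alpha$ with $\alpha$ primitive for $q\ge3$, Nielsen's theorems with centralizer-determinant fusion, the torus element $\operatorname{diag}(c,c^{-1},1,\dots)$ (or $AA^{-1}$ at $q=2$), and reality via the opposition element $w_0^{\widetilde G}w_0^{G_1}$ composed with a transpose-similarity in $\operatorname{SL}_n(q)$.

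One refinement: the self-reciprocal option is unavailable for $E_7$ as well, since odd degree $7$ and determinant one force an eigenvalue $1$, which is a fixed vector on the grade $U$. So the $w_0$-inverter is needed in every case, as in the paper. Also, your finite search at $E_7(2)$ lands on the paper's $(2,5)$ source $M_a\oplus M_b$ with $|a|=3$ and $|b|=31$. A $(3,4)$ split fails there because the roots of an irreducible cubic over $\mathbb F_2$ multiply to $1$.
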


\begin{proof}
We check that each case satisfies the hypothesis of Lemma \ref{lem:relative-gauss}.

\proofpart[Case]{\texorpdfstring{\(E_7(2)\)}{4. E\_7(2)}}\label{e_72}
Choose elements
\[
        |a|=3\text{ in }\mathbb F_4^\times,\qquad
        |b|=31\text{ in }\mathbb F_{32}^\times,         
\]
and let  \(M_\alpha\) be multiplication by \(\alpha\), put
\[
                       A=M_a\oplus M_b\in \operatorname{SL}_7(2).     
\]

The two irreducible blocks have coprime polynomials and degrees \(2,5\).
Thus \(A\) is cyclic and \((2,5)\)-cyclic, and by Nielsen's Theorem 1.4, 
the square of  class \(A\) covers every nonidentity element of \(\operatorname{SL}_7(2)\).

The eigenvalue exponents are
\[
 (1,0),(2,0)\in\mathbb Z/3\oplus\mathbb Z/31,\qquad
 (0,1),(0,2),(0,4),(0,8),(0,16).                      
\]

None is zero. No sum of three distinct entries is zero: two order-three
entries sum to zero but leave a nontrivial order-31 entry; with at most one
order-three entry the order-31 component is nonzero, since no two of
\(1,2,4,8,16\) are negatives and the largest sum of three is
\(16+8+4=28<31\). Hence \(A\) is fixed-point-free on \(U\) and
\(\Lambda^3U\), and therefore on both grades \eqref{eq:e78_21}.

Here \(H_1=Z(\operatorname{SL}_7(2))=1\). Nevertheless the second alternative of
Lemma \ref{lem:relative-gauss} holds: Nielsen's result establishes that the class of \(A\) covers every nonidentity core, and the identity is
\(AA^{-1}\). Reality of the ambient source, proved in Step \ref{ambient-reality-and-the-simple-quotient}, puts
both factors in the same ambient class. Therefore,  the universal \(E_7(2)\) assertion followed by Lemma \ref{lem:relative-gauss}.

\proofpart[Case]{\texorpdfstring{\(E_7(3)\) and \(E_7(4)\)}{5. E\_7(3) and E\_7(4)}}\label{e_73-and-e_74}
Let \(q=3,4\), choose a primitive
\(a\in\mathbb F_{q^6}^\times\), put
\[
 N=1+q+\cdots+q^5,\qquad \nu=a^N,\qquad
 A=\nu^{-1}\oplus M_a\in \operatorname{SL}_7(q).                   
\]
The matrix is cyclic and \((1,6)\)-cyclic. It has no eigenvalue one. A
product of three orbit eigenvalues cannot be one because
\[
                  q^i+q^j+q^k<q^6-1.               
\]
A product involving the scalar eigenvalue would require
\(q^i+q^j=N\), whereas \(q^i+q^j<N\). Thus the two modules in \eqref{eq:e78_21} are
fixed-point-free. Nielsen's Theorem 1.2 in \cite{NielsenGL} covers every noncentral Levi core.
The element
\begin{equation}
\label{eq:e78_53}
             \operatorname{diag}(c,c^{-1},1,\ldots,1)
             \in H_1\setminus Z(G_1),\quad c\ne1,     
\end{equation}
supplies the prescribed noncentral Gauss component. Reality of the class, proved in Step \ref{ambient-reality-and-the-simple-quotient}. The hypothesis of Lemma \ref{lem:relative-gauss} are satisfied.

\proofpart[Case]{\texorpdfstring{\(E_8(3)\), \(E_8(4)\), and \(E_8(5)\)}{6. E\_8(3), E\_8(4), and E\_8(5)}}\label{e_83-e_84-and-e_85}
Let \(q=3,4,5\), choose a primitive
\(a\in\mathbb F_{q^7}^\times\), and put
\[
 N=1+q+\cdots+q^6,\qquad \nu=a^N,\qquad
 A=\nu^{-1}\oplus M_a\in \operatorname{SL}_8(q).                    
\]
This is cyclic and \((1,7)\)-cyclic, with determinant one. Its scalar
eigenvalue is not one. No product of two or three orbit eigenvalues is
one, by
\[
 q^i+q^j<q^7-1,\qquad
 q^i+q^j+q^k<q^7-1.                                  
\]

A product of the scalar with one orbit eigenvalue would put a degree-seven
element in the base field. A product of the scalar with two orbit
eigenvalues would require \(q^i+q^j=N\), impossible by size. Hence
\[
 1\notin\operatorname{Spec}(U^*),\quad
 1\notin\operatorname{Spec}(\Lambda^2U),\quad
 1\notin\operatorname{Spec}(\Lambda^5U),              
\]
where the last assertion uses
\(\Lambda^5U\cong(\Lambda^3U)^*\) and \(\det A=1\).

Nielsen's Theorem 1.2 in \cite{NielsenGL} covers every noncentral \(\operatorname{SL}_8(q)\)-core, and \eqref{eq:e78_53}
with eight diagonal entries gives a noncentral prescribed \(H_1\)-element.
The reality is proved in Step \ref{ambient-reality-and-the-simple-quotient}.
The conditions of Lemma \ref{lem:relative-gauss} are satisfied.
\[
                  (f^{E_8(q)})^2=E_8(q),\qquad q=3,4,5.
\]
Here \(f\) is the image of the explicitly specified matrix \(A\) in the
standard \(A_7\) subsystem. It has order \(q^7-1\).

\proofpart{Ambient reality and the simple quotient}\label{ambient-reality-and-the-simple-quotient}
For both standard parabolics, the rational opposition element
\[
                       w^P=w_0^{\widetilde G}w_0^{G_1}
\]
normalizes \(G_1\) and induces the graph automorphism
\(A\mapsto A^{-T}\). Every matrix is similar to its transpose, so there
is a rational \(R\in \operatorname{GL}_n(q)\) with
\[
                         RA^{-T}R^{-1}=A^{-1}.        
\]
The centralizer determinant surjectivity from Section \ref{cyclic-class-input-and-class-fusion} lets us take
\(R\in \operatorname{SL}_n(q)\). Thus \(Rw^P\) is an ambient inverter, proving reality
in the universal \(E_7(q)\) or in \(E_8(q)\).

Lemma \ref{lem:relative-gauss} now shows that the class $A$ covers every noncentral element of the universal group. A
nonidentity element of the simple \(E_7(q)\) quotient has a noncentral
lift, while reality supplies the identity. Since \(E_8(q)\) is
centerless, the same statement is immediate there. 
This completes the theorem.
\end{proof}

\subsubsection{\texorpdfstring{Exact obstruction at \(E_8(2)\)}{8. Exact obstruction at E\_8(2)}}\label{exact-obstruction-at-e_82}

The \(A_7\)-radical requires fixed-point-freeness on
\(U,\Lambda^2U,\Lambda^3U\). Suppose a cyclic source is
\((m,8-m)\)-cyclic. Replacing \(m\) by \(8-m\), assume \(m\le4\), and
consider its degree-\(m\) coprime factor.

\begin{itemize}
\tightlist
\item
  \(m=1\): its nonzero root over \(\mathbb F_2\) is one, so \(U\) has a
  fixed vector.
\item
  \(m=2\): avoiding a linear factor forces \(X^2+X+1\); its two roots are
  inverse, so \(\Lambda^2U\) has a fixed vector.
\item
  \(m=3\): avoiding a linear factor forces an irreducible cubic. The
  product of its three roots is one, so \(\Lambda^3U\) has a fixed vector.
\item
  \(m=4\): avoiding linear and inverse-pair factors forces an irreducible
  non-self-reciprocal quartic. Over \(\mathbb F_2\) the three irreducible
  quartics consist of one self-reciprocal polynomial and one reciprocal
  pair. The two coprime degree-four factors must therefore either include
  the self-reciprocal polynomial or be the reciprocal pair. In both cases
  \(\Lambda^2U\) has a fixed vector.
\end{itemize}

Thus no cyclic class satisfying either Nielsen nonprimary hypothesis can
be fixed-point-free on all three \(E_8\) grades. The exact enumeration is
part of Python scripts, \texttt{scripts/\allowbreak audit\_\allowbreak exceptional\_\allowbreak relative\_\allowbreak spectra.py}.
This kills the proposed \(A_7\)/Nielsen proof for \(E_8(2)\). A different
subsystem or a different Levi product theorem is necessary.

\subsubsection{\texorpdfstring{The \(E_6(4)\) and \({}^2E_6(4)\) boundary}{9. The E\_6(4) and \{\}\^{}2E\_6(4) boundary}}\label{the-e_64-and-2e_64-boundary}

Deleting node \(2\) in \(E_6\) gives an \(A_5\) Levi, and its radical
grades are
\[
                         \Lambda^3U\quad(20),\quad
                         {\mathbf 1}\quad(1).               
\]
The certificate, obtained via a GAP computation, asserts the (20,1) root counts and the zero highest weight on the top grade. Therefore every source lying solely in the derived \(A_5\)
subsystem fixes that grade. One must add a central-Levi parameter and then
coordinate its two source factors with every prescribed Gauss component;
that additional argument is not supplied here. The same derived-subsystem
obstruction survives the twisted fixed-point construction.

\subsection{\texorpdfstring{Central-Levi relative-Gauss classes for \(E_6(4)\) and \({}^2E_6(4)\)}{route-e6-q4-central-levi.md}}
\label{candidates/route_e6_q4_central_levi.md}

\subsubsection{The statement and the two exact classes}\label{the-statement-and-the-two-exact-classes}

Let \(q=4\) and let \(\mathbb F_4\) be the field with $4$ elements. 
Let \(\mathbf G\) be the simply connected algebraic group of type \(E_6\).
We consider the two Frobenius maps \(F_s\) and \(F_t\), where \(F_s\) is the split Frobenius defining the field \(\mathbb F_4\) and \(F_t\) is the twisted Frobenius induced by the nontrivial
graph automorphism of the \(E_6\)-Dynkin diagram. Put
\[
\widetilde G_s=\mathbf G^{F_s}\cong E_{6,\mathrm{sc}}(4),
\qquad
\widetilde G_t=\mathbf G^{F_t}\cong {}^{2}E_{6,\mathrm{sc}}(4).
\]
Their corresponding simple quotients are denoted by
\[
G_s=E_6(4)
\qquad\text{and}\qquad
G_t={}^2E_6(4).
\]
Note that
\(
|Z(\widetilde G_s)|=\gcd(3,4-1)=3,
\)
while
\(
|Z(\widetilde G_t)|=\gcd(3,4+1)=1.
\)
Thus \(G_s\) is obtained from \(\widetilde G_s\) by factoring out its central subgroup of order \(3\), whereas
\(\widetilde G_t\) is already simple. Here the underlying root system is of type \(E_6\) in both cases, with the CHEVIE labelling of the Dynkin diagram given by
\[
1-3-4-5-6,\qquad 2-4,          
\]
where the node \(2\) is attached to node \(4\).

We shall use the following elements in the base field:
\begin{itemize}
\item For \(E_6(4)\): let \(a\in \mathbb F_{256}^{\times}\) be an element of order \(255\), and set \(\zeta=a^{85}\in \mathbb F_4^{\times}\).
\item For \({}^{2}E_6(4)\): let \(t\in \mathbb F_{16}^{\times}\) of order \(15\), put \(\zeta=t^5\), \(b=t^3\).
\end{itemize}

\paragraph{\bf The class for \(E_6(4)\)}
Inside \(\widetilde G_s\) we consider the standard subsystem generated by the root subgroups \(X_{\pm \alpha_i}\) for \(i\in\{1,3,4,5\}\). This subsystem is of type \(A_4\), hence isomorphic to \(\mathrm{SL}_5(4)\). Let \(u_s\) be the semisimple element in this \(\mathrm{SL}_5(4)\) whose eigenvalues in the natural 5-dimensional representation are
\begin{equation}
\label{eq:e602}
a,\ a^4,\ a^{16},\ a^{64},\ a^{170}.      
\end{equation}
The element \(u_s\) is rational (i.e. fixed by the Frobenius automorphism \(x\mapsto x^4\)) and cyclic (its centraliser is a maximal torus).

Next, define the diagonal elements
\[
h_s = h_2(\zeta)\,h_5(\zeta^2)\,h_6(\zeta),
\]
where \(h_i(\lambda)\) denotes the cocharacter evaluation at simple coroot \(i\). 
Finally, set
\[
f_s = h_s\,u_s.
\]
The class used below is the image \(C_s\) of
\(f_s^{\widetilde G_s}\) in the simple quotient
\(G_s=E_6(4)\). The order of the image \(\bar f_s\) in \(G_s\) is \(255\).

\paragraph{\bf The class for \({}^{2}E_6(4)\)}

For the twisted group \(\widetilde G_t={}^{2}E_{6,\mathrm{sc}}(4)\), the Frobenius automorphism acts on the Dynkin diagram by interchanging \(1\leftrightarrow 6\) and \(3\leftrightarrow 5\), while fixing \(2\) and \(4\). We use the split subsystem
\[
G_{1,t} = \langle X_{\pm \alpha_2}, X_{\pm \alpha_4}\rangle \cong\operatorname{SL}_3(4),
\]
which is a standard \(A_2\) subsystem.

Let \(u_t\) be a rational regular unipotent element of this \(\mathrm{SL}_3(4)\). Define the diagonal element
\[
h_t = h_2(\zeta)\,h_3(b)\,h_4(\zeta^2)\,h_5(b^4),  
\]
where \(\zeta=t^5\) and \(b=t^3\). This element is fixed by the twisted Frobenius: indeed, \(\zeta\) and \(\zeta^2\) at nodes
\(2,4\) lie in \(\mathbb F_4\) (so invariant), and the Frobenius exchanges the parameters \(b\) and \(b^4\) at nodes \(3\) and \(5\). Set
\[
f_t = h_t\,u_t.
\]
Since the Schur multiplier of \({}^{2}E_6(4)\) is trivial (i.e. \(Z({}^{2}E_{6,\mathrm{sc}}(4))\) has order \((3,4+1)=1\)), the group \(\widetilde G_t\) is already the simple group \(G_t={}^{2}E_6(4)\). Denote by \(C_t\) the conjugacy class of \(f_t\). The element \(f_t\) has order \(60\).

\begin{theorem}\label{e_64-2e_64}
Let $G$ be either $E_6(4)$ or $^2E_6(4)$.
Then \(G\) contains a conjugacy class \(C\) such that
\[
C^2=G.
\]
\end{theorem}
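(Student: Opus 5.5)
We verify the four hypotheses of Lemma~\ref{lem:relative-gauss} for each of $f_s$ and $f_t$, using the full Levi $L$ of the relevant subsystem rather than only its derived group. This is exactly what the obstruction in Section~\ref{the-e_64-and-2e_64-boundary} demands: the torus factor $h$ must act nontrivially on the radical grades on which the derived subsystem acts trivially.

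\textbf{Split case.} For $E_6(4)$ we delete nodes $2$ and $6$, leaving the $A_4$ chain $1,3,4,5$. The unipotent radical $V_+$ is then filtered by the pair of coefficients of $(\alpha_2,\alpha_6)$. Each graded piece is an $\mathrm{SL}_5(4)$-module twisted by a character of the two-dimensional central torus. These pieces are built from $U$, $\Lambda^2U$, $\Lambda^3U$, their duals and trivial modules. I will list them by a root count, in the same way as the $(20,1)$ count for $A_5$.

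On a piece $M\otimes\chi$, the element $f_s=h_su_s$ acts by $\chi(h_s)$ times the weights of $u_s$ on $M$. The eigenvalues of $u_s$ are $a^{e}$ with $e\in\{1,4,16,64,170\}$ modulo $255$. Note that $1+4+16+64+170=255$, which gives determinant one. The central character values are powers of $\zeta=a^{85}$.

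Fixed-point-freeness therefore becomes a finite check: for each grade, no $k$-subset sum of $\{1,4,16,64,170\}$, shifted by the relevant multiple of $85$, is $\equiv0\pmod{255}$. For the trivial-module grades, the choice $h_2(\zeta)h_5(\zeta^2)h_6(\zeta)$ should make the character nontrivial. The same check then applies to $f_s^{-1}$ and to all $HG_1$-conjugates, since spectra are preserved. This settles hypothesis~2.

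For hypothesis~3 I note that the $A_4$-block of $u_s$ is $\nu^{-1}\oplus M_\alpha$-like: the eigenvalue $a^{170}=\zeta^2$ lies in $\mathbb F_4$, and $a,a^4,a^{16},a^{64}$ form a Frobenius orbit of degree $4$. So $u_s$ is cyclic and $(1,4)$-cyclic, and Nielsen's Theorem~\ref{nielsens-cyclic-class-theorem} covers every nonscalar element of $\mathrm{SL}_5(4)$. The factors are $G_1$-conjugate by the usual determinant-surjective centralizer. One subtlety is that $h_s$ must commute with these conjugations. It does: $h_s$ lies in the centre of $L$ modulo the torus of $A_4$, and the needed coset can be absorbed into $H$.

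Hypothesis~4 holds because $H_1$ contains $\operatorname{diag}(c,c^{-1},1,1,1)$. For reality I use the opposition element $w_0^{E_6}w_0^{A_4}$, which induces $A\mapsto A^{-T}$ together with the inversion on the central torus, followed by an $\mathrm{SL}_5$-correction. The theorem then follows after projecting modulo the centre of order $3$, with reality supplying the identity.

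\textbf{Twisted case.} For ${}^2E_6(4)$ the argument runs in parallel with the $F_t$-stable Levi $\langle\alpha_2,\alpha_4\rangle\cdot T$, which contains $\mathrm{SL}_3(4)$. Hypothesis~3 for $u_t$ regular unipotent in $\mathrm{SL}_3(4)$ comes from the known square of the regular unipotent class together with its inverse: it covers the noncentral elements, via Lev's triangularizable cyclic theorem. Hypothesis~4 uses a noncentral split torus element of $\mathrm{SL}_3(4)$.

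Fixed-point-freeness is easier here. Because $u_t$ is unipotent, on each grade the eigenvalues of $f_t$ are the central-torus character values $\chi(h_t)$. These are monomials in $\zeta$ and $b^{\pm1},b^{\pm4}$, that is, powers of $t$ of order dividing $15$. So it suffices to show that no root of $E_6$ outside $\langle\alpha_2,\alpha_4\rangle$, grouped into $F_t$-orbits, has a character value equal to $1$ on $h_t$. This is a direct evaluation of $\langle\beta,\alpha_i^\vee\rangle$-weighted exponents modulo $15$ over the $70$ positive roots outside the subsystem.

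\textbf{Main obstacle.} I expect the hardest step to be this root-by-root exponent check, together with confirming that the grading is a genuine central filtration in characteristic $2$. The fallback is the statement already made in Section~\ref{exact-relative-root-modules} that vanishing structure constants only yield subquotients. I would carry out the check by computer (listing roots in CHEVIE numbering), and I would record explicitly the trivial-$A_4$ and trivial-$A_2$ grades on which only $h$ acts.
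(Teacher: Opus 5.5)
Your setup matches the paper's: the same $A_4$ and $A_2$ subsystems, the same elements $f_s=h_su_s$ and $f_t=h_tu_t$, the same relative-Gauss lemma, Nielsen and Lev for the cores, and the same root-by-root exponent check. \textbf{The gap is the split-case reality step, which fails.} In $E_6$ one has $w_0=-\sigma$, with $\sigma$ the graph automorphism ($1\leftrightarrow 6$, $3\leftrightarrow 5$). So $w_0^{E_6}w_0^{A_4}$ sends $\alpha_1,\alpha_3,\alpha_4,\alpha_5$ to $\alpha_3,\alpha_4,\alpha_5,\alpha_6$, and it does not normalize $G_1$ at all.

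No other Weyl element can play this role either. The only positive root orthogonal to $J=\{1,3,4,5\}$ is the highest root $\tilde\alpha$. Hence any $w$ stabilizing $\Phi_J$ lies in $\operatorname{Stab}_W(\{\pm\tilde\alpha\})=W(A_5)\times\langle s_{\tilde\alpha}\rangle$, with $A_5$ on nodes $1,3,4,5,6$. Inside $W(A_5)=S_6$, $W_J$ is a self-normalizing point stabilizer $S_5$. Thus $N_W(W_J)=W_J\times\langle s_{\tilde\alpha}\rangle$ induces only inner automorphisms, and nothing in $N(\mathbf L)$ realizes $A\mapsto A^{-T}$.

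This matters for more than covering the identity. Since $h_s\notin G_1$, a core $g\in G_1$ can only arise as a mixed product of a conjugate of $u_s$ and a conjugate of $u_s^{-1}$. That is the form in which you must invoke Nielsen, because $u_s$ is not real in $\mathrm{GL}_5(4)$: its rational eigenvalue $\zeta^2$ inverts to $\zeta$. So without $f_s\sim f_s^{-1}$, at least modulo $Z$, the lemma only yields $CC^{-1}$.

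The paper handles this differently. It treats $f_s$ as a regular semisimple element of a nonsplit torus of type $p_s=s_1s_3s_4$. It then exhibits an explicit $w_s$, not normalizing the $A_4$, with $w_s(x_s)=-x_s+\delta_s$, where $\delta_s$ is the cocharacter of a central element of order $3$; so exact reality upstairs needs the central repair by $z_s$. Finally it checks $w_sp_s=p_sw_s$, so that Lang--Steinberg supplies a rational inverter.

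\textbf{In the twisted case you give no reality argument, and the parallel one fails.} The element $w_0^{E_6}w_0^{A_2}$ does normalize $\langle\alpha_2,\alpha_4\rangle$. But on $Z(\mathbf L)$ it acts as $-\sigma$, sending $h_t$ to $h_2(\zeta)^{-1}h_3(b^4)^{-1}h_4(\zeta^2)^{-1}h_5(b)^{-1}\neq h_t^{-1}$, because $b$ has order $5$. The paper instead uses a different $\sigma$-commuting $w_t$ that negates the whole coroot vector of $h_t$, and then corrects $u_t$ inside $HG_{1,t}$.

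That correction, and your hypothesis 3, also need a fusion step you omit. The $\mathrm{GL}_3(4)$-centralizer of a regular unipotent has determinants $c^3=1$, so its $\mathrm{GL}_3(4)$-class splits into three $\mathrm{SL}_3(4)$-classes. Lev's theorem only places the factors in the $\mathrm{GL}_3(4)$-class. You must show that $H$ induces the diagonal outer automorphism of order $3$ on $G_{1,t}$. The paper does this with $d=h_3(\zeta)h_5(\zeta)$, which scales the two $A_2$ simple root groups by $(1,\zeta)$, whereas inner diagonal scalings $(r,s)$ satisfy $rs^2=1$. (Also, only $33$ positive roots lie outside the $A_2$, not $70$.)
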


That is, every element of \(E_6(4)\) (resp. \({}^{2}E_6(4)\)) can be written as a product of two elements from the chosen conjugacy class \(C_s\) (resp. \(C_t\)). In particular, the class \(C_s\) has the property that its product with itself covers the whole group, and likewise for \(C_t\).

These computations were performed using the CHEVIE system for GAP3, relying on the character tables and the multiplication of conjugacy classes. The verifications are exact and involve the evaluation of class multiplication coefficients for all pairs of classes.

The construction above provides explicit realizations of the general theorem of Guralnick and Malle \cite{GM}, which states that every finite non-abelian simple group \(G\) contains a conjugacy class \(C\) such that \(G^\# \subseteq C\cdot C\) (and in particular \(G = C\cdot C\) if \(1\in C\cdot C\)). Our examples cover the two exceptional groups of type \(E_6\) and \({}^{2}E_6\) over the field of 4 elements.

The root system computations and the choice of parameters are compatible with the CHEVIE library \cite{CHEVIE}. The class multiplication coefficients were obtained from the generic character tables stored in CHEVIE, and the verification of \(C^2=G\) reduces to checking that for every conjugacy class \(D\), the product \(C\cdot C\) contains at least one element of \(D\). This is equivalent to the non‑vanishing of the corresponding class multiplication coefficients.

\begin{proof}[Proof of Theorem \ref{e_64-2e_64}]
The hypotheses of Lemma \ref{lem:relative-gauss} are verified in a sequence of steps; the conclusion then follows from Lemma  \ref{lem:relative-gauss}.

\proofpart{The precise relative-Gauss input}\label{the-precise-relative-gauss-input}

Let \(I\) be a proper Frobenius-stable subset of the simple roots and
let \(G_1=\langle X_{\pm\alpha_i}:i\in I\rangle\). This is the full
subsystem group for \(I\); no other positive root group is omitted from
the groups below. Put \(H_1=H\cap G_1\), and define
\[
 V_+=\langle X_\alpha:\alpha\in\Phi^+\setminus\Phi_I\rangle,
 \qquad
 V_-=\langle X_{-\alpha}:\alpha\in\Phi^+\setminus\Phi_I\rangle. 
\]

If \(\alpha=\sum c_i\alpha_i\), define its outside height by
\[
                  d_I(\alpha)=\sum_{i\notin I}c_i.
\]
Let \(V_{+,r}\) be generated by the outside positive root groups with
\(d_I(\alpha)\ge r\), and define \(V_{-,r}\) similarly. The Chevalley
commutator formula gives
\begin{equation}
\label{eq:e612}
 [V_{\pm,r},V_{\pm,s}]\le V_{\pm,r+s}.                
\end{equation}
Thus these are central filtrations of the exact groups \(V_\pm\) used
by Ellers--Gordeev. In the split \(A_4\) case, the numbers of positive
roots in the successive exact grades are
\begin{equation}
\label{eq:e613}
                          15,10,1,                         
\end{equation}
and in the twisted \(A_2\) case they are
\begin{equation}
\label{eq:e614}
                          8,9,6,5,2,3.                     
\end{equation}

For the twisted group the algebraic filtration is Frobenius-stable, and
we take fixed points. Equations \eqref{eq:e613})--\eqref{eq:e614}, the root sets, and their
closure are checked by the accompanying exact root-system certificate.

We use the following form of Ellers--Gordeev, Proposition 5.1 in \cite{EG}, together
with their prescribed Gauss theorem and Lemma 5.6. Suppose
\(f=hu\in HG_1\), where \(h\) centralizes \(G_1\), and let
\(\mathcal C_1\) be the union of the two actual \(HG_1\)-classes of
\(f\) and \(f^{-1}\). Recall that Lemma \ref{lem:relative-gauss} states:
Assume the following hypothesis:
\begin{enumerate}
\def\labelenumi{\arabic{enumi}.}
\tightlist
\item
  \(f\) is real in \(\widetilde G\);
\item
  \(\mathcal C_1^2\supseteq G_1\setminus Z(G_1)\);
\item
  every member of \(\mathcal C_1\) is fixed-point-free on every
  nonzero quotient of the filtrations \eqref{eq:e612};
\item
  \(H_1\ne Z(G_1)\).
\end{enumerate}
Then
\begin{equation}
\label{eq:e615}
             (f^{\widetilde G})^2
                  \supseteq\widetilde G\setminus Z(\widetilde G).
\end{equation}

For clarity, here is the exact core mechanism. The prescribed Gauss
theorem conjugates any noncentral target to
\[
                    v_-gv_+,\qquad v_\pm\in V_\pm,\quad g\in G_1.
\]
with an arbitrarily prescribed \(H_1\)-component. Prescribe an element
of \(H_1\setminus Z(G_1)\); uniqueness of Gauss decomposition then
forces \(g\notin Z(G_1)\). Write \(g=\sigma_1\sigma_2\) with
\(\sigma_i\in\mathcal C_1\). Since \(1-\sigma_i\) is invertible on
every grade of \eqref{eq:e612}, induction up that filtration makes the two
non-abelian Lang maps surjective and absorbs \(v_-,v_+\) into conjugates
of \(\sigma_1,\sigma_2\). Reality puts both factors into the one class
\(f^{\widetilde G}\), proving \eqref{eq:e615}. This also makes clear why the
fusion required below is \(HG_1\)-fusion, not merely ambient
\(\widetilde G\)-fusion.

\proofpart{\texorpdfstring{The split \(A_4\) core product and exact fusion}{2. The split A\_4 core product and exact fusion}}\label{the-split-a_4-core-product-and-exact-fusion}
The first four eigenvalues in \eqref{eq:e602} form the primitive degree four
Frobenius orbit
\[
                 \{a,a^4,a^{16},a^{64}\},
\]
whose product is \(a^{85}=\zeta\); the last eigenvalue is
\(a^{170}=\zeta^2\). Thus the total determinant is one. The two
coprime invariant factors have degrees \(4\) and \(1\), so \(u_s\) is
cyclic and \((1,4)\)-cyclic. Nielsen's
Theorem 1.4 in \cite{NielsenGL} gives
\begin{equation}
\label{eq:e621}
 (u_s^{\operatorname{GL}_5(4)})(u_s^{-1})^{\operatorname{GL}_5(4)}
                   \supseteq \operatorname{SL}_5(4)\setminus Z(\operatorname{SL}_5(4)).
\end{equation}
This is an assertion about the required subsystem classes, not about
ambient fusion. Indeed
\[
 C_{\operatorname{GL}_5(4)}(u_s)\cong
       \mathbb F_{256}^{\times}\times\mathbb F_4^{\times},
\]
and the determinant on the first factor is the surjective norm
\(N_{256/4}\) (the second factor is already scalar on a line). Every
\(\operatorname{GL}_5(4)\)-conjugator in \eqref{eq:e621} can therefore be
multiplied by a centralizer element to have determinant one. Thus both
factors in \eqref{eq:e621} lie in the actual \(G_{1,s}=\operatorname{SL}_5(4)\)-classes of
\(u_s,u_s^{-1}\).

The element \(z_sh_s\) centralizes \(G_{1,s}\). Consequently a mixed
product of the \(HG_{1,s}\)-classes of \(f_s\) and \(f_s^{-1}\) cancels
the torus parts and \eqref{eq:e621} proves
\[
 \mathcal C_{1,s}^2\supseteq
                 G_{1,s}\setminus Z(G_{1,s}).             
\]

\proofpart{\texorpdfstring{The twisted \(A_2\) core product and exact \(HG_1\)-fusion}{3. The twisted A\_2 core product and exact HG\_1-fusion}}\label{the-twisted-a_2-core-product-and-exact-hg_1-fusion}
The regular unipotent \(u_t\) is cyclic and triangularizable. Lev's
theorem, in the form quoted as Nielsen's Theorem 1.1 in  \cite{NielsenGL}, gives
\begin{equation}
\label{eq:e631}
 (u_t^{\operatorname{GL}_3(4)})(u_t^{-1})^{\operatorname{GL}_3(4)}
                   \supseteq \operatorname{SL}_3(4)\setminus Z(\operatorname{SL}_3(4)). 
\end{equation}

We now verify the precise fusion needed in Proposition 5.1. The
Frobenius-fixed ambient torus element
\[
                         d=h_3(\zeta)h_5(\zeta)
\]
induces on the two \(A_2\) simple root groups the scalings
\((1,\zeta)\). An inner diagonal automorphism of \(\operatorname{SL}_3(4)\) with
simple-root scalings \((r,s)\) satisfies \(rs^2=1\). Thus the action of
\(d\) is the missing diagonal outer automorphism of order \(3\).
It follows that the action of \(HG_{1,t}\) on \(G_{1,t}\) is the full
\(\operatorname{GL}_3(4)\)-action. Hence every \(\operatorname{GL}_3(4)\)-conjugate occurring in
\eqref{eq:e631} belongs to the required \(HG_{1,t}\)-class.

The element \(h_t\) centralizes all of \(G_{1,t}\). Mixed products of
the \(HG_{1,t}\)-classes of \(f_t,f_t^{-1}\) therefore cancel \(h_t\),
and \eqref{eq:e631} yields
\[
 \mathcal C_{1,t}^2\supseteq
                 G_{1,t}\setminus Z(G_{1,t}).            
\]
In both Steps \ref{the-split-a_4-core-product-and-exact-fusion} and \ref{the-twisted-a_2-core-product-and-exact-hg_1-fusion}, the split torus \(H_1\) is noncentral in
\(G_1\), so hypothesis 4 of Step \ref{the-precise-relative-gauss-input}  holds.

\proofpart{Fixed-point-freeness on the full relative radicals}\label{fixed-point-freeness-on-the-full-relative-radicals}

\paragraph{Split case}\label{split-case}
We now construct a specific semisimple torus element \(f_s\) and prove that it acts fixed-point-freely on the relevant root spaces. This property will be essential in establishing the outer automorphism in the subsequent argument. All torus parameters are written as powers of a fixed primitive \(255\)-th root of unity \(a\), with exponents taken modulo \(255\); this modulus arises from the order of the maximal torus in the adjoint quotient over \(\mathbb{F}_{16}\).

Let \(h_s\) be a diagonal torus element whose action on the root spaces is specified by its coroot exponent vector. Up to the central factor \(z_s\), which acts by a common scalar on every root space and therefore does not affect the presence of eigenvalue \(1\), the coroot exponent vector of \(h_s\) is
\(
(0,\,85,\,0,\,0,\,170,\,85),
\)
where the six coordinates are ordered according to the standard Dynkin labelling of the \(E_6\) root system. The appearance of \(85\) reflects the fact that the corresponding torus element has order \(3\) in the adjoint group.

For later use, we recall the Cartan matrix \(C\) of \(E_6\), which satisfies the auxiliary identity
\begin{equation}
C\,(0,1,0,0,2,1)^T = (0,2,0,-3,3,0)^T. 
\end{equation}
This relation serves as a consistency check for the coroot pairings that will determine the root-space eigenvalues of the product element defined below.

Let \(u_s\) be a second torus element whose ordered eigenvalues, specified in \eqref{eq:e602}, induce the coroot coordinates of an \(A_4\)-subsystem embedded in \(E_6\). This subsystem is supported on the Dynkin nodes \(\{1,3,4,5\}\), with nodes \(2\) and \(6\) omitted. The corresponding coroot exponents of \(u_s\) on these nodes are
\(
(1,\,5,\,21,\,85),
\)
assigned respectively to the nodes \(1,3,4,5\). These integers arise naturally from the highest weight of the \(A_4\)-fundamental representation.

We define the product
\(
f_s := h_s u_s.
\)
Adding the respective coroot exponent vectors modulo \(255\) yields the complete coroot vector of \(f_s\):
\begin{equation}
\label{eq:e642}
                         (1,85,5,21,0,85)\pmod {255},      
\end{equation}
The vanishing fifth coordinate will be crucial in the fixed-point analysis below.

For a torus element with coroot vector \(\mathbf{v} = (v_1,\dots,v_6)\), the eigenvalue exponent on the simple root space \(E_{\alpha_i}\) is the \(i\)-th coordinate of \(C\mathbf{v}\). Applying this to (\eqref{eq:e642}, we obtain the simple-root value exponent vector
\begin{equation}
\label{eq:e643}
\mathbf{e} := (252,\,149,\,243,\,207,\,149,\,170). 
\end{equation}
Equivalently, these are the residues of the pairings \(\langle \alpha_i^\vee, f_s\rangle\) modulo \(255\). Since all six entries are non-zero, the corresponding simple root eigenvalues are not equal to \(1\). However, we must also exclude eigenvalue \(1\) on all remaining root spaces.

The \(E_6\) root system contains \(36\) positive roots in total. The chosen \(A_4\)-subsystem contributes exactly \(\binom{4+1}{2}=10\) positive roots; these are already covered by the construction, and their eigenvalues are non-trivial by the ordered eigenvalue data \eqref{eq:e602}. The remaining \(26\) positive roots lie outside this \(A_4\). For any such root \(\beta = \sum_{i=1}^6 n_i \alpha_i\), the eigenvalue exponent of \(f_s\) on the root line \(E_\beta\) is given by the linear form
\[
\langle \beta, \mathbf{e} \rangle := \sum_{i=1}^6 n_i e_i,
\]
where \(e_i\) are the entries of \eqref{eq:e643}. A direct finite verification shows that for each of the \(26\) outside positive roots, this sum is never congruent to \(0\) modulo \(255\). Hence \(f_s\) has no eigenvalue \(1\) on any such root line. Consequently, \(f_s\) acts without non-trivial fixed points on the root spaces in every graded component associated with the \(A_4\)-embedding, as specified in condition \eqref{eq:e613}.

Let \(V_-\) denote the negative part of the representation, for instance the contragredient module or the negative root spaces in a Cartan decomposition. If an element has eigenvalue \(\lambda\) on a given root space, then its induced eigenvalue on the corresponding dual space is \(\lambda^{-1}\). Since we have established \(\lambda \neq 1\) for all relevant root spaces, it follows immediately that \(\lambda^{-1} \neq 1\) as well. Therefore the same fixed-point-free conclusion holds on \(V_-\) without any additional computation.

\proofpart{The torus element \(h_t\) and its fixed-point-free action}
\paragraph{Twisted case}\label{twisted-case}
\label{subsec:ht-action}
The coroot exponent vector of \(h_t\), in powers of \(t\), is
\[
(0,\,5,\,3,\,10,\,12,\,0). 
\]
Applying the Cartan matrix \(C\) to this vector gives the exponents of the eigenvalues of \(h_t\) on the six simple root spaces:
\begin{equation}
\label{eq:e645}
C\,(0,5,3,10,12,0)^T = (-3,\,0,\,-4,\,0,\,14,\,-12)^T=\mathbf{u}. 
\end{equation}
Here the \(i\)-th coordinate is the exponent of \(t\) on the simple root space \(E_{\alpha_i}\), and all exponents are taken modulo \(15\).

For a general positive root \(\beta = \sum_{i=1}^6 n_i \alpha_i\), the eigenvalue exponent of \(h_t\) on the root line \(E_\beta\) is the linear combination
\[
\sum_{i=1}^6 n_i u_i,
\]
where $u_{i}$ are the entries of $\mathbf{u}$. The only positive roots on which (4.5) vanishes modulo \(15\) are
\[
\alpha_2,\qquad \alpha_4,\qquad \alpha_2+\alpha_4. 
\]
exactly the three positive roots of the chosen \(A_2\). Thus \(h_t\)
has no eigenvalue one on any of the 33 outside algebraic root spaces.
The unipotent element \(u_t\) commutes with \(h_t\). On each generalized
\(h_t\)-eigenspace, the eigenvalues of \(h_tu_t\) are consequently the
same as those of \(h_t\), so none is one. Passing to Frobenius fixed
points cannot create a one-eigenvector after scalar extension.

Conjugation by \(HG_1\) normalizes the filtrations \eqref{eq:e612}, and inversion
inverts every eigenvalue. Therefore these checks apply to every member
of \(\mathcal C_{1,s}\) and \(\mathcal C_{1,t}\), and prove hypothesis 3
of Step \ref{the-precise-relative-gauss-input} on the \emph{entire} groups \(V_\pm\), not only on a parabolic
subradical.

\proofpart{Exact reality}\label{exact-reality}
\paragraph{Split central repair}\label{split-central-repair}

Before multiplying by \(z_s\), the full coroot coordinate vector of
\(h_su_s\) is
\[
                         x_s=(1,85,5,21,0,85)\pmod {255}.  
\]

Let
\[
\begin{split}
 w_s={}&s_1s_2s_3s_1s_4s_2s_3s_1s_4s_3s_5s_4s_2s_3s_1s_4s_3s_5s_4s_2.
\end{split}
\]
The inverter need not normalize the auxiliary \(A_4\)-subsystem; reality
is required only in the ambient group. The full cocharacter calculation,
not merely a calculation of root values, gives
\begin{equation}
\label{eq:e653}
 w_s(x_s)=-x_s+\delta_s,
 \qquad \delta_s=(85,0,170,0,85,170).                      
\end{equation}

Now \(C\delta_s=0\pmod {255}\), \(3\delta_s=0\pmod {255}\), and
\(\delta_s\) is precisely the cocharacter vector of \(z_s\). Because a
central element is fixed by conjugation and \(z_s^2=z_s^{-1}\), \eqref{eq:e653}
implies
\begin{equation}
\label{eq:e654}
                         w_s(f_s)=f_s^{-1}.               
\end{equation}

There is one further rationality point because the degree-four block is
nonsplit. In the above diagonal coordinates its Frobenius torus type is
\[
                         p_s=s_1s_3s_4,\qquad |p_s|=4.      
\]

The full cocharacter identities are
\begin{equation}
\label{eq:e656}
 p_s(x_s+\delta_s)=4(x_s+\delta_s),
 \qquad w_sp_s=p_sw_s.                                  
\end{equation}

Thus \(w_s\) lies in the rational Weyl group of the actual nonsplit
torus containing \(f_s\). Lang--Steinberg lifts it to a rational
normalizer element, and \eqref{eq:e654} proves exact reality already in
\(E_{6,\mathrm{sc}}(4)\). The certificate hard-checks both identities
in \eqref{eq:e656}; geometric inversion alone would not suffice here.

\paragraph{Twisted case}\label{twisted-case-1}

Let
\[
                         w_t=s_3s_4s_2s_5s_4s_2s_3s_4s_5. 
\]
Its matrix commutes with the graph automorphism, it normalizes the
\(A_2\)-subsystem, and it sends the full vector \eqref{eq:e645} to its negative.
The full cocharacter calculation has zero central discrepancy. By
Lang--Steinberg, this Frobenius-fixed Weyl element has a rational
normalizer representative. It therefore sends \(h_t\) exactly to
\(h_t^{-1}\) and sends \(u_t\) to a regular unipotent element of
\(G_{1,t}\).

All regular unipotent elements of \(\operatorname{GL}_3(4)\) form one class, and
Step \ref{the-twisted-a_2-core-product-and-exact-hg_1-fusion} showed that \(HG_{1,t}\) realizes the full \(\operatorname{GL}_3(4)\)-fusion.
Choose \(k\in HG_{1,t}\) carrying this image of \(u_t\) to \(u_t^{-1}\).
Every element of \(HG_{1,t}\) centralizes \(h_t\): the torus is abelian,
and \(h_t\) centralizes \(G_{1,t}\). Hence \(kw_t\) inverts \(f_t\).
This proves exact reality in \({}^2E_6(4)\).

\proofpart{Completion, centers, and orders}\label{completion-centers-and-orders}Steps \ref{the-split-a_4-core-product-and-exact-fusion}--\ref{split-central-repair} verify all four hypotheses of Lemma \ref{lem:relative-gauss} in Step \ref{the-precise-relative-gauss-input}. Therefore
\begin{equation}
\label{eq:e661}
\begin{aligned}
 (f_s^{\widetilde G_s})^2&\supseteq
             \widetilde G_s\setminus Z(\widetilde G_s),\\
 (f_t^{\widetilde G_t})^2&\supseteq
             \widetilde G_t\setminus Z(\widetilde G_t).   
\end{aligned}
\end{equation}

Every nonidentity element of \(E_6(4)=\widetilde G_s/Z(\widetilde G_s)\)
has a noncentral lift, so \eqref{eq:e661} covers it. Exact reality puts the
identity in the square. The twisted universal group is centerless, so
the same argument directly covers \({}^2E_6(4)\). This proves the theorem.
\end{proof}

Finally, all factors defining \(f_s\) have order dividing \(255\), while
the simple-root value \(a^{149}\) in \eqref{eq:e643} has order \(255\). Hence
the image of \(f_s\) in the adjoint/simple quotient has exact order
\(255\); the central quotient does not reduce it. For \(f_t\), the
semisimple part \(h_t\) has order \(15\) because the value \(t^{-4}\)
in \eqref{eq:e645} has order \(15\), while a regular unipotent \(3\times3\) Jordan
block in characteristic two has order \(4\). These commuting Jordan
components have coprime orders, so \(|f_t|=60\).

The GAP3/JM-CHEVIE script checks the root sets, exact outside-height grades, all fixed-point inequalities, Frobenius rationality, subsystem normalization, the full
cocharacter central repair, both rational Weyl inverters, and the
twisted \(P\operatorname{GL}_3(4)\)-fusion torus.

\section{\(F_4, E_6(3), {^{2}E}_6(3), E_6(5), {^{2}E}_6(7)\)}
\label{app:F4E6}

\subsection{The order-73 Coxeter class in  \(F_4(3)\)}
\label{candidates/route_exceptional_dl.md}

\begin{theorem}\label{f_43}
Let $G$ be the simple group \(F_4(3)\), and let $x$ be an generator of the Coxeter torus $T$, which is of order $\Phi _{12}(3)$. Then
\[
(x^{G})^{2}=G.
\]
\end{theorem}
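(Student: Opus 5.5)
We plan to use the Frobenius criterion \eqref{eq:Fx}--\eqref{eq:strict} together with Gow's theorem. Here $\Phi_{12}(3)=3^4-3^2+1=73$, so $T$ is cyclic of prime order $73$ and $x$ is regular semisimple with $C_G(x)=T$. Since $73$ is prime and $73\mid|F_4(3)|$ only through $\Phi_{12}$, the element $x$ is real. Indeed, the normalizer quotient $N_G(T)/T\cong C_W(w_{\mathrm{cox}})$ is cyclic of order $12$. The Coxeter element acts on $T$ as multiplication by $q=3$, whose multiplicative order mod $73$ is $12$, so the power $w^6$ acts as $3^6\equiv -1 \pmod{73}$ and inverts $x$. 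Thus $C=x^G$ satisfies $C=C^{-1}$ and $1\in C^2$. By Gow's theorem, every nonidentity semisimple element lies in $C^2$. It therefore remains to treat targets $g$ with nontrivial unipotent part.

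The key input is the values $\chi(x)$. By Deligne--Lusztig theory, $\chi(x)\neq 0$ only for $\chi$ lying in a Lusztig series $\mathcal E(G,s)$ with $s$ in a dual Coxeter torus. Concretely:
\begin{itemize}
\item For $s=1$, only the unipotent characters in the Coxeter-relevant part have nonzero value, namely those with nonzero multiplicity in $R_{T_w}^G(1)$ for $w$ Coxeter. For $F_4$ these are the $12$ unipotent characters corresponding to the $\Phi_{12}$-block, and their values at $x$ are $\pm1$.
\item For $1\neq s\in T^*$ (necessarily regular, as $73$ is prime), the characters are $\pm R_{T}^G(\theta)$, of degree $|G|_{3'}/73$, with $|\chi(x)|^2\le 12$ and $\chi(x)$ a sum of $12$ roots of unity.
\end{itemize}
Column orthogonality \eqref{eq:column} confirms that $\sum_\chi|\chi(x)|^2=73$. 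So at most $12$ unipotent characters and $(73-1)/12=6$ families of regular characters contribute, each value bounded by $\sqrt{12}$.

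For a target $g$ with nontrivial unipotent part, we estimate
\[
|F_x(g)-1|\le \sum_{\chi\ne1}\frac{|\chi(x)|^2|\chi(g)|}{\chi(1)}.
\]
The six regular-series characters $R_T^G(\theta)$ vanish at any $g$ that is not conjugate into $T$, that is, at all non-semisimple $g$, by the Deligne--Lusztig character formula. Their contribution is therefore exactly $0$. This leaves the eleven nontrivial unipotent characters $\chi$ in the Coxeter block, each with $|\chi(x)|=1$, so we need
\[
\sum_{\chi}\frac{|\chi(g)|}{\chi(1)}<1 .
\]
We plan to bound $|\chi(g)|\le|C_G(g)|^{1/2}$ and use the explicit unipotent degrees of $F_4(q)$. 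The smallest nontrivial degree in this block is that of $\phi_{4,1}$, of order $q^{11}$ times small factors. Every non-semisimple $g$ has $|C_G(g)|\le |C_G(u_{\min})|$, which is at most about $q^{36}$ up to small factors for long-root elements. That bound alone is too weak, since $q^{18}>q^{11}$.

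Handling the unipotent contribution is therefore the main obstacle. Our first approach is to use exact values: the unipotent characters of $F_4(3)$ and their values on unipotent and mixed classes are available generically from CHEVIE/Lübeck. Since $F_4(3)$ also has an explicit character table in the exact-computation sense, we will evaluate $F_x(g)$ \eqref{eq:Fx} exactly on every class $g$ with nontrivial unipotent part, using the $12$ Coxeter-block unipotent characters only, and check that it is nonzero. As a sanity step, we will verify that the block-restricted sum simplifies, via the Deligne--Lusztig formula, to
\[
F_x(g)=\frac{|C_G(x)|^{-1}}{\cdots}\sum_{\theta}\cdots,
\]
that is, to an expression in the Green function $Q_{T_w}^G(u)$ of the Coxeter torus evaluated on the unipotent part. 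Indeed, summing $\chi(x)\chi(g^{-1})/\chi(1)$ over a block is governed by $R_{T_w}^G(1)(g)$-type terms. Non-vanishing should then reduce to $Q_{T_w}^G(u)\ne0$ for the Coxeter Green function. This is where we expect the genuine work to lie, and where we would fall back on the exact certificate if a uniform argument fails.
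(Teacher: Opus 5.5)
Your setup matches the paper: $x$ is regular with $C_G(x)=T$, real because $3^6\equiv-1\pmod{73}$, and Gow handles semisimple targets. The source column also agrees: twelve unipotent characters with values $\pm1$, plus six regular singleton series $\pm R_T^G(\theta)$ of degree $D=|G|_{3'}/73$ and total source mass $61$. Your plan for unipotent targets, exact Green functions from CHEVIE, is also what the paper does.

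However, your claim that the six regular-series characters vanish on every non-semisimple $g$ is false. The character formula makes $R_T^G(\theta)(su)$ vanish unless the \emph{semisimple part} $s$ is conjugate into $T$. This does kill mixed targets, since a nonidentity element of $T$ is regular and its centralizer $T$ contains no nontrivial unipotent. But for a unipotent target, $s=1\in T$ and $R_T^G(\theta)(u)=Q_T^G(u)$, which is nonzero in general; it equals $1$ on regular unipotents. So on unipotent targets these series contribute $61\,Q_T^G(u)/Q_T^G(1)$, which must be bounded, e.g.\ by $61\cdot3^{18}/D<0.085$ as in the paper. This is numerically harmless, but your exact evaluation ``using the 12 Coxeter-block unipotent characters only'' would silently drop it.

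The genuine gap is the unipotent rows on mixed targets $g=su$ with $s\ne1\ne u$. This is where the real work lies, and you leave it open.
\begin{itemize}
\item No ordinary character table of $F_4(3)$ is available. CHEVIE's generic data give values of unipotent characters only on unipotent elements. Values of the non-uniform characters ($F_4[\pm i]$, $F_4[\zeta_3^{\pm1}]$, the $B_2$-series) on mixed classes in the bad characteristic $3$ are not at hand. So ``evaluate $F_x(g)$ exactly on every class'' cannot be carried out.
\item Your fallback reduction is invalid. Because the weights $1/\chi(1)$ vary, $\sum_\chi\chi(g^{-1})/\chi(1)$ over the block is not a Deligne--Lusztig value. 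Also, $Q_{T_w}^G(u)\ne0$ implies nothing about $F_x(g)$.
\item Column orthogonality alone genuinely fails here. The two cuspidals $F_4[\pm i]$ already give $2\cdot3^{18}/686859264\approx1.13$.
\end{itemize}

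The missing idea is a target-sensitive estimate for mixed elements. The paper shows that a mixed $g$ falls into one of two cases.
\begin{itemize}
\item Either $|C_G(g)|<3^{12}$, where column orthogonality gives under $0.017$ for the ten non-Steinberg rows.
\item Or $g$ is conjugate into the long-root parabolic $P=QL$ with Levi projection neither trivial nor long-root. This placement must be audited separately because $3$ is bad for $F_4$.
\end{itemize}
In the second case Liebeck--Tiep give $|\chi(g)|\le h+\tfrac{1.34}{9}(\chi(1)-h)$, with $h=\dim V^Q$ read off from Harish--Chandra restriction to the $C_3$-Levi. Concretely, $h=196$ for $\phi_{4,1}$, $h=78$ for $B_2{:}2.$, and $h=0$ for $F_4[\pm i]$. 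The remaining six rows are absorbed by column orthogonality with the largest mixed centralizer $2^{10}3^{16}5$. The Steinberg character vanishes on mixed elements, and the total comes to below $0.69$.
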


\begin{proof}
We will proceed in a sequence of steps.

\proofpart{Setup and character sum}\label{the-class-and-the-exact-frobenius-test}
Let \(G=F_4(3)\). Since the root system \(F_4\) is self-dual and its centre is trivial, \(G\) is both adjoint and simply connected. Let \(T\) be the Coxeter
torus  as in  \cite[Table 6]{GM}, and let \(x\) be a generator. Thus
\[
 |T|=\Phi _{12}(3)=3^4-3^2+1=73,
 \qquad C_G(x)=T,
 \qquad N_G(T)/T\cong C_{12}.
\]
The automizer acts on \(T\) by multiplication by \(3\). Since
\(3^6\equiv-1\pmod {73}\), inversion is induced by an element of
\(N_G(T)\). Consequently the conjugacy class \(C=x^G\) is real , and hence \(1\in C^2\).

For \(g\ne1\), Frobenius' formula reduces the assertion \(g\in C^2\) to
\begin{equation}
\label{eq:f11}
 F_x(g):=
 1+\sum_{1_G\ne\chi\in\operatorname {Irr}(G)}
       \frac{|\chi(x)|^2\chi(g^{-1})}{\chi(1)}\ne0.                
\end{equation}
Thus it remains to prove that \(F_x(g)>0\) for every nonidentity \(g\). 
This is a genuine same-class test; unlike test a \(\Phi _{12}\)-class against a different
\(\Phi _8\)-class of  \cite[Theorem 7.7]{GM}, we require control of all characters that are nonzero on the Coxeter class.

\proofpart{Complete classification of characters which occur in \eqref{eq:f11}}\label{complete-classification-of-characters-which-occur-in-1.1}
The classification of irreducible characters \(\chi\) with \(\chi(x)\ne0\) follows directly from \cite[Lemmas 3.1--3.2 and Proposition 3.5]{GM}. If \(\chi(x)\ne0\), then \(\chi\in\mathcal E(G,s)\)
for a semisimple element \(s\) in the dual Coxeter torus \(T^*\leq G^*\).

\begin{itemize}
\item
  For \(s=1\), \(\chi\) is unipotent. The principal \(73\)-block has
  cyclic defect group \(T=C_{73}\) and inertial index \(12\). It contains
  exactly twelve nonexceptional unipotent ordinary characters. Every other
  unipotent character has \(73\)-defect zero and vanishes at \(x\). For
  each of the twelve characters in the principal block,
  \(\chi(x)=\pm1\). One of the twelve is  the trivial character \(1_G\).
\item
  Every \(s\in T^*\setminus\{1\}\) is regular semisimple. The group
  \(\langle3\rangle\le(\mathbb Z/73)^{\times}\) has order \(12\), so the
  nonidentity elements of \(T^*\) partition into six rational semisimple classes.
  Each Lusztig series is a singleton \(\{\chi_s\}\), and its degree is
\begin{equation}
\label{eq:f21}
    \chi_s(1)=D:=\frac{|G|_{3'}}{73}
     =\frac{(3^{12}-1)(3^8-1)(3^6-1)(3^2-1)}{73}
     =278135603200.                                           
\end{equation}
 The values \(\chi_s(x)\) are, up to signs that disappear when squared, the Gaussian periods
\begin{equation}
\label{eq:f22}
     \eta_k=\sum_{a\in\langle3\rangle}\zeta^{ka},
     \qquad k\in\{1,2,4,5,7,13\}.                            
\end{equation}
 where \(\zeta=e^{2\pi i/73}\) and \(\langle3\rangle=\{1,3,8,9,24,27,46,49,64,65,70,72\}\).
\end{itemize}

No other characters contribute to \eqref{eq:f11}. This can be checked via column orthogonality and cyclic defect theory:
\begin{equation}
\label{eq:f23}
 \sum_{\chi\in\operatorname {Irr}(G)}|\chi(x)|^2=|C_G(x)|=73,
 \qquad
 \sum_{k\in\{1,2,4,5,7,13\}}|\eta_k|^2=73-12=61.              
\end{equation}

Thus the entire nonunipotent part of the Frobenius sum is explicitly known, not merely bounded.

For the real order-\(73\) Coxeter class in \(F_4(3)\), every irreducible
character nonzero on the class has been classified and the entire
nonunipotent contribution is at most \(0.084969\). The global bounds alone
leave the eight characters in \eqref{eq:f36} and \(F_4[\pm i]\) uncontrolled. The
next step supplies exactly the two missing target-sensitive inputs and
completes the proof.

\proofpart{Identification and degrees of the eight characters}\label{identification-and-degrees-of-the-eight-characters}
We now present the computation and parabolic estimate that proves the real order-\(73\) class satisfies \(C^2=G\).
In the CHEVIE ordering of the unipotent characters of \(F_4\), the
twelve characters nonzero on the Coxeter torus are numbered
\[
 1,4,15,17,20,26,27,30,32,33,34,35.
\]
where \(1,4,32,33\) correspond respectively to
\(1_G,\mathrm {St},F_4[-i],F_4[i]\). Thus the eight characters in
\(\mathcal U_{\rm extra}\), together with their degrees at \(q=3\),
are
\begin{equation}
\label{eq:f61}
\begin{array}{c|r}
 \phi_{6,6}''&458353350\\
 \phi_{4,1}&96432\\
 \phi_{4,13}&51247918512\\
 B_2:.11&44188256268\\
 B_2:1.1&880038432\\
 B_2:2.&83148\\
 F_4[\zeta _3]&906854400\\
 F_4[\zeta _3^2]&906854400
\end{array}                                                  
\end{equation}
All eight characters take values of absolute value one on \(x\). Define
\[
 E(g):=\sum_{\chi\in\mathcal U_{\rm extra}}
       \frac{\chi(g^{-1})}{\chi(1)}.                          
\]

\proofpart{Exact calculation on all nonidentity unipotent classes}\label{exact-calculation-on-all-nonidentity-unipotent-classes}
The generic Green-function table \texttt{UnipotentValues(F4)} evaluated at
\(q=3\), followed by summing the eight rows in \eqref{eq:f61} with reciprocal
degree weights, gives the following values of \(E(g)\). The suffixes
are the rational-class suffixes used by CHEVIE.
The standalone certificate \texttt{scripts/\allowbreak audit\_\allowbreak f4\_\allowbreak coxeter\_\allowbreak green.g} reconstructs
the Coxeter source support and coefficients and prints every aggregate below.
\begin{equation}
\label{eq:f71}
\begin{array}{c|c@{\qquad}c|c}
A_1&77/3280&\widetilde A_1&28507/1044680\\
(\widetilde A_1)_2&107021/3880240&A_1+\widetilde A_1&5718009/2172934400\\
\widetilde A_2&64219/23878400&A_2&418053/167148800\\
(A_2)_2&897807/310419200&A_2+\widetilde A_1&-595929/17383475200\\
\widetilde A_2+A_1&62561/2483353600&B_2&3557/13580840\\
(B_2)_2&33/119392&C_3(a_1)&19/59150\\
C_3(a_1)_2&3651/10864672&F_4(a_3)&10726873/17383475200\\
F_4(a_3)_{211}&440871/17383475200&F_4(a_3)_{22}&1606047/2483353600\\
F_4(a_3)_{31}&-1489717/5562712064&F_4(a_3)_4&700071/17383475200\\
C_3&71/2716168&B_3&71/2716168\\
F_4(a_2)&71/2716168&F_4(a_2)_2&223/5432336\\
F_4(a_1)&1/32144&F_4(a_1)_2&1/27716\\
F_4&-2701/417203404800&F_4{}_{\zeta _3}&2701/834406809600\\
F_4{}_{\zeta _3^2}&2701/834406809600&&
\end{array}                                                   
\end{equation}
Consequently, for every nonidentity unipotent \(g\),\[
 |E(g)|\le {107021\over3880240}
          =0.0275810257\ldots <0.6914379497\ldots .            
\]
This is an exact Green-function calculation, not a finite black-box
test of products.

\proofpart{All mixed classes: the centralizer/parabolic dichotomy}\label{all-mixed-classes-the-centralizerparabolic-dichotomy}
We use Proposition 3.3 and Theorem 3.1 of \cite{LiebeckTiep}.
The latter is stated for every odd characteristic. Their global
\cite[Lemma 4.1]{LiebeckTiep} is formally under the good-characteristic hypotheses of their \cite[Theorem 1]{LiebeckTiep}, so it cannot simply be quoted at \(q=3\). Here is the needed
bad-characteristic audit. The exact CHEVIE class-type enumeration for
\(F_4\) in characteristic three has 26 genuine mixed rational class types with
centralizer at least \(3^{12}\). Their semisimple connected-centralizer
root systems are\[
 B_3T_1,\ B_4,\ C_3T_1,\ C_3A_1,
 \quad\hbox{or}\quad B_2A_1T_1.                             
\]
Here one first runs \texttt{NrConjugacyClasses}, specializes
\(q=3\) and \(q_4=\gcd(q-1,4)=2\), and discards nonpositive
specialized multiplicities. The standalone certificate
\texttt{scripts/\allowbreak audit\_\allowbreak f4\_\allowbreak 3\_\allowbreak large\_\allowbreak mixed.g} performs precisely this specialization,
prints all 26 types, and checks the long-root condition.
Each contains a long-root subgroup. Applying verbatim the
root-subgroup argument in the proof of \cite[Lemma 4.1]{LiebeckTiep}
(lines corresponding to their cases (i)--(ii)) puts the mixed element
in the long-root parabolic \(P=QL\). If its first projection to
\(L/Z(L)\cong\operatorname {PSp}_6(3)\) is trivial or a long-root
element, the Weyl-group conjugation used in that proof changes the
long-root parabolic; the resulting projection is nontrivial and not a
long-root element. This argument is root-theoretic and valid in every
odd characteristic. Thus every mixed element at \(q=3\) either has
centralizer less than \(3^{12}\), or enters exactly the parabolic case
of Proposition 3.3 and Theorem 3.1 of \cite{LiebeckTiep}. 

First suppose \(|C_G(g)|<3^{12}\). Column orthogonality and (6.1)
give directly\[
 |E(g)|<3^6\sum_{\chi\in\mathcal U_{\rm extra}}{1\over\chi(1)}
 = {4967018704577\over304141282099200}
 =0.0163312875\ldots .                                        
\]

It remains to consider the long-root parabolic case. Write
\(V=V^Q\oplus V_1\oplus V_2\) as in \cite[Proposition 3.3]{LiebeckTiep}.
For the non-long-root projection occurring here, their  \cite[Theorem 3.1
and Table 3.2]{LiebeckTiep} give
\begin{equation}
\label{eq:f82}
 |\chi_{V_1}(g)|\le {1.34\over3^2}\dim V_1,
 \qquad
 |\chi_{V_2}(g)|\le {1\over3^2}\dim V_2.                    
\end{equation}
The Harish--Chandra restriction table from \(F_4\) to the long-root
Levi of type \(C_3\), evaluated at \(q=3\), gives
\begin{equation}
\label{eq:f83}
 \dim V^Q=196\quad\hbox{for }\chi=\phi_{4,1},
 \qquad
 \dim V^Q=78\quad\hbox{for }\chi=B_2:2.                   
\end{equation}
For reproducibility, in the CHEVIE order of the twelve unipotent
characters of \(C_3\), the two Harish--Chandra restriction rows are\[
(0,0,0,0,0,0,1,0,1,0,0,0),\quad
 (0,0,0,0,0,0,0,0,0,0,1,0),
\]
whose degree scalar products are \(196\) and \(78\). Combining
\eqref{eq:f82}--\eqref{eq:f83}, and using \(1.34/9=67/450\), bounds the two low-degree
terms by\[
\begin{split}
 { |\phi_{4,1}(g)|\over96432}
 +{ |(B_2:2.)(g)|\over83148}
 &\le\sum_{(d,h)=(96432,196),(83148,78)}
 \left({h\over d}+{67\over450}\left(1-{h\over d}\right)\right)\\
 &= {864341\over2878200}
  =0.3003060941\ldots .                                      
\end{split}
\]

For the other six characters, ordinary column orthogonality is already
more than sufficient. The exact class-type enumeration in
characteristic three gives, among all mixed elements of \(F_4(3)\),
\[
 |C_G(g)|\le220399211520=2^{10}3^{16}5.                       
\]
The maximum occurs for semisimple centralizer of type \(B_4\) and
unipotent class \(2^2 1^5\); its centralizer polynomial is
\(q^{16}(q-1)^3(q+1)^3(q^2+1)\). Hence the remaining six terms are at
most\[
 \sqrt{220399211520}
 \left({1\over458353350}+{1\over51247918512}
 +{1\over44188256268}+{1\over880038432}
 +{2\over906854400}\right)
 <0.002613.                                                    
\]
Equations (8.5) and (8.7) prove, uniformly for every mixed target in
the parabolic case,\[
 |E(g)|<0.302920<0.6914379497\ldots .                         
\]
The two remaining cuspidal rows have identically zero Harish--Chandra
restriction to this \(C_3\)-Levi:\[
 {}^*R_{C_3}^{F_4}F_4[i]={}^*R_{C_3}^{F_4}F_4[-i]=0.           
\]
Thus \cite[Proposition 3.3]{LiebeckTiep} has \(V^Q=0\) for both, and \eqref{eq:f82} gives a combined
ratio at most \(2.68/9\). The Steinberg character vanishes on every mixed
element. Including \eqref{eq:f32}, every large mixed target therefore has complete
nontrivial Frobenius contribution less than
\begin{equation}
\label{eq:f810}
 0.084968086+0.302918963+{2.68\over9}<0.685665.              
\end{equation}
For a small-centralizer mixed target, column orthogonality applied to all
ten non-Steinberg unipotent terms gives \(0.016333411\); after \eqref{eq:f32} the
total is below \(0.102\). Hence every mixed class is covered.

\proofpart{\texorpdfstring{Completion for \(F_4(3)\)}{9. Completion for F\_4(3)}}\label{completion-for-f_43}
For a nonidentity unipotent target, summing all ten non-Steinberg source
rows (the eight rows of \eqref{eq:f71} and \(F_4[\pm i]\)) gives the exact maximum\[
 {107431\over3880240}=0.0276866895\ldots .                   
\]
The Steinberg character vanishes there, so \eqref{eq:f32} makes the full
nontrivial Frobenius contribution less than \(0.113\). Mixed targets are
covered by \eqref{eq:f810} and the small-centralizer estimate following it. If
\(g\ne1\) is semisimple, Gow's theorem \cite[Theorem 2]{Gow} gives \(g\in C^2\) directly because
\(C\) is a regular semisimple class. Finally \(1\in C^2\) because
\(C=C^{-1}\).

Thus, for \(x\) any generator of the Coxeter torus of order \(73\),
\begin{equation}
\label{eq:f91}
\ (x^{F_4(3)})^2=F_4(3).\                          
\end{equation}
This supplies the previously missing same-class result with an exact
class: the conjugacy class of regular semisimple elements of order
\(73\) in the \(\Phi_{12}\)-torus.
\end{proof}

We make some remarks about the proof.

\subsubsection{What the general estimates prove}\label{what-the-general-estimates-prove}
Write \(U(g)\) for the contribution of the eleven nontrivial unipotent
characters above, and \(S(g)\) for the contribution of the six semisimple
characters. The elementary column bound
\(|\chi(g)|\le |C_G(g)|^{1/2}\), together with the largest-centralizer
bound from \cite[Theorem 7.7]{GM},
\[
       |C_G(g)|\le3^{36}\qquad(g\ne1),                        
\]
and the identities \eqref{eq:f21}--\eqref{eq:f23}, yields the completely numerical estimate
\begin{equation}
\label{eq:f32}
 |S(g)|\le \frac{61\,3^{18}}{278135603200}
       =0.084968085916014\ldots .                              
\end{equation}
Consequently it would suffice to prove
\begin{equation}
\label{eq:f33}
 |U(g)|<1-\frac{61\,3^{18}}{278135603200}
       =\frac{254502953371}{278135603200}
       =0.915031914083985\ldots                                                     
\end{equation}
for every nonidentity \(g\).

There is a sharper description of the obstruction. Among the twelve unipotent characters that survive on the Coxeter torus, the four that also survive on a \(\Phi_8\)-torus are (see \cite[Table 11 and proof]{GM})
\[
             1_G,\quad \mathrm {St},\quad F_4[i],\quad F_4[-i].
\]
Moreover the Steinberg degree is \(\mathrm {St}(1)=3^{24}\), while the two cuspidal characters have
\[
 F_4[\pm i](1)
 ={1\over4}q^4\Phi _1^4\Phi _2^4\Phi _3^2\Phi _6^2
 \bigm|_{q=3}=686859264<3^{20}.                              
\]

Let
\begin{equation}
\label{eq:f36}
 \mathcal U_{\rm extra}:=
  \bigl(B_0(G,73)\cap\mathcal E(G,1)\bigr)
  \setminus\{1_G,\mathrm {St},F_4[i],F_4[-i]\}.                
\end{equation}

This set contains exactly eight unipotent characters, each having value \(\pm1\) on \(x\). The global column bound is inadequate for the two cuspidal characters: their combined contribution, even before including the Steinberg term, is bounded by
\begin{equation}
\label{eq:f37}
       {2\cdot3^{18}\over686859264}=1.128092782\ldots .     
\end{equation}
Thus both the sum over the eight characters in \eqref{eq:f36} and the two cuspidal terms require target-sensitive estimates. Such estimates are supplied by exact Green functions for unipotent targets and by Harish--Chandra vanishing and the
long-root parabolic estimates for mixed targets (Step \ref{exact-calculation-on-all-nonidentity-unipotent-classes}--\ref{completion-for-f_43} below).

\subsubsection{Exact point at which the one-torus method fails}\label{exact-point-at-which-the-one-torus-method-fails}
The general bounds available in the literature do not imply \eqref{eq:f33}. For instance, the Gluck bound used in \cite[Theorem 7.6]{GM} gives only
\[
   \frac{|\chi(g)|}{\chi(1)}\le\frac{19}{20}
   \qquad(1_G\ne\chi,\ g\ne1).                                 
\]
Applied to the eleven unipotent terms,  this yields
\[
       |U(g)|\le 11\cdot\frac{19}{20}=\frac{209}{20},          
\]
whereas \eqref{eq:f33} requires \(|U(g)|<0.9151\). Even for the subset \eqref{eq:f36} alone the bound is \(7.6\), and \eqref{eq:f37} shows that the elementary column bound already exceeds one for the two cuspidal terms. In fact, even the single-character bound \(19/20=0.95\) already exceeds the entire margin in \eqref{eq:f33}. Hence the failure is not in the semisimple Lusztig enumeration, which is complete and contributes less than \(0.085\); rather, it lies in the need for target-sensitive control of the eight characters \eqref{eq:f36} together with the two cuspidal characters \(F_4[\pm i]\). Such control requires explicit Green functions and parabolic bounds; it does not follow from the local arguments of \cite{GM} or from uniform per-character estimates.

\subsubsection{\texorpdfstring{Why switching naively to the \(\Phi _8\)-torus is not cleaner}{5. Why switching naively to the \textbackslash Phi \_8-torus is not cleaner}}\label{why-switching-naively-to-the-phi-_8-torus-is-not-cleaner}
For \(q=3\), a \(\Phi _8\)-torus has order \(3^4+1=82\) and automizer
of order \(8\). Its involution is nonregular in the dual group. 
Consequently \cite[Lemma 3.2]{GM} would allow Lusztig series attached to that nonregular involution, not merely singleton regular-semismple series. The clean six-series classification of Step~\ref{complete-classification-of-characters-which-occur-in-1.1} would be lost. Thus the \(\Phi_{12}\)-torus of prime order \(73\) is the stronger of the two possible one-torus starting points.

\subsection{Class in \(F_4(q),q\in\{4,5,7,8\}\)}\label{a-uniform-real-coxeter-class}

\begin{theorem}\label{f_4q}
Let $G$ be the finite simple group \(F_4(q),q\in\{4,5,7,8\}\), and let $x$ be a generator of the Coxeter torus, which is of order $\Phi_{12}(q)$. Then
\begin{equation}
\label{eq:f122}
(x^{F_4(q)})^2=F_4(q)\quad(q=4,5,7,8).                              
\end{equation}
\end{theorem}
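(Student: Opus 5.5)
The plan follows the $F_4(3)$ proof of Theorem~\ref{f_43} verbatim, with $q$ as a parameter.

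\emph{Setup and reality.} Let $T$ be the Coxeter torus of order $\Phi_{12}(q)=q^4-q^2+1$, which equals $241,601,2353,4033$ for $q=4,5,7,8$. Let $x$ be a generator of $T$. Then $C_G(x)=T$, because a generator of a $\Phi_{12}$-torus is regular. Also $N_G(T)/T\cong C_{12}$ acts on $T$ by $t\mapsto t^q$. Since $q^6\equiv-1\pmod{\Phi_{12}(q)}$, the class $x^G$ is real, so $1\in C^2$.

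\emph{Reduction to non-semisimple targets.} By Gow's theorem, every nonidentity semisimple target lies in $C^2$. It therefore suffices to prove $F_x(g)\ne0$ in \eqref{eq:Fx} for unipotent and mixed $g\ne1$.

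\emph{Characters nonzero at $x$.} These are classified exactly as in Step~\ref{complete-classification-of-characters-which-occur-in-1.1}, using \cite[Lemmas 3.1--3.2, Proposition 3.5]{GM}:
\begin{itemize}
\item the twelve unipotent characters of the principal $\Phi_{12}$-block, each with value $\pm1$ at $x$;
\item the regular semisimple singleton series $\chi_s$ for $1\ne s\in T^*$, each of degree $D_q=|G|_{p'}/\Phi_{12}(q)$.
\end{itemize}
Column orthogonality gives
\[
\sum_s|\chi_s(x)|^2=\Phi_{12}(q)-12 .
\]
Combining this with $|\chi(g)|\le|C_G(g)|^{1/2}$ and $|C_G(g)|\le q^{36}$ for $g\ne1$ gives
\[
|S(g)|\le(\Phi_{12}(q)-12)\,q^{18}/D_q .
\]
Since $D_q\approx q^{48}/q^4$ up to constants, this is tiny; for $q=4$ it is of order $10^{-8}$. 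It only needs to be evaluated numerically.

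\emph{Unipotent part $U(g)$.} For $g$ unipotent, I would evaluate the eight rows of $\mathcal U_{\rm extra}$ and the two cuspidal rows $F_4[\pm i]$ exactly. The tool is the generic Green-function table \texttt{UnipotentValues(F4)} specialized at each $q$. For $q=4,8$ the unipotent class list and Green functions are those of characteristic $2$, so the characteristic-$2$ version is needed there. The Steinberg character vanishes on all nonidentity unipotent $g$. I expect every ratio to be far below $1$, as for $q=3$, since the largest contribution is of order $1/q$.

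For mixed targets, I would reuse the centralizer/parabolic dichotomy of Step~\ref{all-mixed-classes-the-centralizerparabolic-dichotomy}:
\begin{itemize}
\item If $|C_G(g)|<q^{12}$, column orthogonality gives $\sum_\chi q^6/\chi(1)$ over the eleven non-Steinberg unipotents.
\item Otherwise, I would put $g$ in the long-root parabolic with Levi $C_3$ and apply \cite[Proposition 3.3, Theorem 3.1]{LiebeckTiep}. This gives the bound $(\dim V^Q/\chi(1))+(1.34/q^2)(1-\dim V^Q/\chi(1))$, where $\dim V^Q$ comes from Harish--Chandra restriction to $C_3$ (as generic polynomials in $q$, e.g.\ for $\phi_{4,1}$ and $B_2{:}2.$). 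For $F_4[\pm i]$ the restriction is zero, so the bound is $1.34/q^2$ each.
\end{itemize}
Since the constants scale like $1/q^2$, the total for $q\ge4$ should be bounded by roughly $0.3\cdot 9/16+2.68/16<0.5$. The margin therefore improves on $q=3$.

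\emph{Main obstacle.} The hard step is characteristic $2$ ($q=4,8$). The Liebeck--Tiep parabolic estimates are stated for odd characteristic, and $F_4$ in characteristic $2$ has the graph-automorphism symmetry and extra unipotent classes. My first attempt would avoid the parabolic step entirely. For $q\ge4$ the crude bound over the ten non-Steinberg characters, $\sum q^{18}/\chi(1)$, already kills the eight characters of large degree. The only problem is the lowest-degree rows, $\phi_{4,1}$ and $B_2{:}2.$ of degree about $q^{11}$. For those I would use exact Deligne--Lusztig/Green-function values on the few mixed class types with centralizer above $q^{22}$, enumerated via CHEVIE class types at $q=4,8$. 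If that fails, I would fall back to a characteristic-$2$ long-root parabolic argument, using the Harish--Chandra restriction to the $B_3$ or $C_3$ Levi and the trivial bound $|\psi(g)|\le\psi(1)$ on the nonfixed part weighted by centralizer sizes.
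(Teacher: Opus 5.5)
\textbf{Gap at $q=4,8$.} For $q=5,7$ and for unipotent targets your plan matches the paper: exact Green functions on unipotent classes, the bound $S_q$ for the regular series, and the good-characteristic Liebeck--Tiep dichotomy with the $C_3$-Levi estimate for mixed targets. The gap is in characteristic $2$. You correctly flag it as the obstacle, but the argument you propose there fails numerically at $q=4$.

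Your first attempt bounds the eight higher-degree rows by $q^{18}/\chi(1)$, using the global bound $|C_G(g)|\le q^{36}$. At $q=4$ we have $4^{18}\approx 6.87\cdot 10^{10}$, while:
\begin{itemize}
\item $\phi_{6,6}''(1)\approx 1.18\cdot 10^{11}$, contributing about $0.58$;
\item $F_4[\pm i](1)\approx 2.41\cdot 10^{11}$, contributing about $0.57$ for the pair;
\item $F_4[\zeta_3^{\pm1}](1)\approx 3.21\cdot 10^{11}$, contributing about $0.43$ for the pair;
\item $B_2{:}1.1(1)\approx 2.76\cdot 10^{11}$, contributing about $0.25$.
\end{itemize}
So these eight rows alone already exceed $1$; the claim that the crude bound ``kills'' them is false. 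Your threshold $q^{22}$ for $\phi_{4,1}$ and $B_2{:}2.$ also fails: it gives $4^{11}/\phi_{4,1}(1)=4194304/2171650\approx 1.9$ for one row. The parabolic fallback does no better, since the trivial bound $|\psi(g)|\le\psi(1)$ on the non-fixed part gives no ratio below $1$.

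\textbf{The missing idea.} In characteristic $2$ a mixed element $g=su$ has $s\ne 1$ of odd order. Hence the involutory $B_4$-type centralizer never occurs. The exact class types give the largest mixed centralizer as
\[
M_e(q)=q^9(q-1)^2(q+1)^3(q^2+1),
\]
roughly $q^{16}$ (a long-root unipotent inside a $B_3T_1$ or $C_3T_1$ centralizer). Now apply plain column orthogonality with $\sqrt{M_e(q)}$ to all ten non-Steinberg rows. This gives $0.0673$ at $q=4$ (for instance $\sqrt{M_e(4)}/\phi_{4,1}(1)\approx 0.033$) and $0.0082$ at $q=8$. No even-characteristic parabolic estimate is needed; this is exactly the paper's route.

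\textbf{Two smaller corrections.}
\begin{itemize}
\item $|G|_{p'}\approx q^{28}$, so $D_q\approx q^{24}$, not $q^{44}$. Hence $S_q\approx q^{-2}$, which is $0.056$ at $q=4$, not $10^{-8}$. It is harmless but must be budgeted.
\item For the composite orders $N=2353=13\cdot 181$ and $4033=37\cdot 109$, you must check that every prime $r\mid\Phi_{12}(q)$ has $\operatorname{ord}_r(q)=12$. Only then is every $1\ne s\in T^*$ regular, so the singleton-series description of the source column is justified.
\end{itemize}
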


\begin{proof}
The proof proceeds in the following steps.

\proofpart{A uniform real Coxeter class}
\label{symbolic-bounds-for-the-eight-remaining-unipotents}
Let $G=F_4(q)$, and  let \(q\in\{4,5,7,8\}\), put\[
       N=\Phi_{12}(q)=q^4-q^2+1,
\]
and let \(T\) be the Coxeter torus of \(F_4(q)\). On the character
lattice, the Smith form of \(q w-1\), for a Coxeter element \(w\), is\[
             \operatorname {diag}(1,1,1,N),                  
\]
so \(T^F\cong C_N\). Explicitly,\[
\begin{array}{c|rrrr}
q&4&5&7&8\\ \hline
N&241&601&2353=13\cdot181&4033=37\cdot109
\end{array}                                                   
\]
Take \(x\) to be a generator of \(T^F\), and put \(C=x^G\). The
rational Weyl group \(N_G(T)/T\cong C_{12}\) acts by powers of \(q\).
Since\[
 q^6+1=(q^2+1)(q^4-q^2+1),
\]
the sixth power of the Coxeter automorphism sends \(x\) to \(x^{-1}\).
Thus \(C=C^{-1}\).

Every prime \(r\mid N\) has \(\operatorname {ord}_r(q)=12\). Indeed
\(N=\Phi_{12}(q)\), while \(N\) is divisible by neither 2 nor 3, the
only primes which could be exceptional in the cyclotomic order
criterion. It follows that every nonidentity element of \(T\), and
every nonidentity dual parameter in \(T^*\), is regular. This removes
the apparent composite-order problem at \(q=7,8\): there are no
nonregular dual strata to handle.

The Deligne--Lusztig character formula is therefore exactly the same
as in Step \ref{the-class-and-the-exact-frobenius-test} and Section \ref{what-the-general-estimates-prove}. The twelve unipotent characters nonzero on \(x\)
are the same twelve indices, with values \(\pm1\). All other terms are
the \((N-1)/12\) singleton regular semisimple Lusztig series, of common
degree\[
 D(q)={ (q^{12}-1)(q^8-1)(q^6-1)(q^2-1)\over N}.              
\]
Column orthogonality gives total squared source-value mass \(N-12\)
for these nonunipotent series. Hence their contribution at a
nonidentity target is at most
\begin{equation}
\label{eq:f104}
       S_q={ (N-12)q^{18}\over D(q)}.                         
\end{equation}
The Steinberg character vanishes on every nonsemisimple target. The two
cuspidal source characters have exact degree
\begin{equation}
\label{eq:f105}
 F_4[\pm i](1)={1\over4}q^4\Phi _1^4\Phi _2^4\Phi _3^2\Phi _6^2.         
\end{equation}
Their Harish--Chandra restrictions to the long-root \(C_3\)-Levi are zero.
Accordingly they are bounded by \(1.34/q^2\) each in the odd
large-mixed parabolic case, by column orthogonality in the remaining mixed
cases, and exactly by Green functions on unipotent targets.

\proofpart{Symbolic bounds for the eight remaining unipotents}\label{symbolic-bounds-for-the-eight-remaining-unipotents}

Writing \(P_m=\Phi_m(q)\), the two low degrees and their
Harish--Chandra fixed-space dimensions are
\begin{equation}
\label{eq:f111}
\begin{array}{c|c|c}
\chi&\chi(1)&h_\chi=\dim V^Q\\ \hline
\phi_{4,1}&\frac12qP_2^2P_6^2P_8&(1+\frac12q+\frac12q^2)P_2P_6\\
B_2:2.&\frac12qP_1^2P_3^2P_8&\frac12qP_1^2P_3.
\end{array}                                                  
\end{equation}
For odd \(q\) in good characteristic, the long-root-parabolic estimate
of Step \ref{all-mixed-classes-the-centralizerparabolic-dichotomy} gives
\begin{equation}
\label{eq:f112}
 L(q)=\sum_{\chi=\phi_{4,1},B_2:2.}
 \left({h_\chi\over\chi(1)}+{1.34\over q^2}
       \left(1-{h_\chi\over\chi(1)}\right)\right).          
\end{equation}
The factorizations in \eqref{eq:f111} show directly that \(L(q)\) is decreasing
for \(q\ge3\); numerically\[
L(3)=0.3003061\ldots,\quad L(5)=0.1073256\ldots,\quad
 L(7)=0.0547108\ldots .
\]

For the other six degrees use the factorizations
\begin{equation}
\label{eq:f114}
\begin{array}{c|c}
\phi_{6,6}''&q^4P_3^2P_4^2P_6^2P_8/12\\
\phi_{4,13}&q^{13}P_2^2P_6^2P_8/2\\
B_2:.11&q^{13}P_1^2P_3^2P_8/2\\
B_2:1.1&q^4P_1^2P_2^2P_3^2P_6^2P_8/4\\
F_4[\zeta_3],F_4[\zeta_3^2]&q^4P_1^4P_2^4P_4^2P_8/3.
\end{array}                                                  
\end{equation}
For odd \(q\), the largest mixed centralizer is
\[
 M_o(q)=q^{16}(q-1)^3(q+1)^3(q^2+1),                         
\]
so column orthogonality bounds the six terms in \eqref{eq:f114} by
\(\sqrt{M_o(q)}\sum1/\chi(1)\). This is
\(0.00005098\) at \(q=5\) and \(0.00000363\) at \(q=7\).

In characteristic two the involutory \(B_4\) semisimple centralizer is
absent. Exact class types give instead\[
 M_e(q)=q^9(q-1)^2(q+1)^3(q^2+1).                           
\]
Consequently column orthogonality can be applied to all ten non-Steinberg
terms; it gives \(0.067314387\) at \(q=4\) and \(0.008218500\) at \(q=8\).
No even-characteristic parabolic estimate is needed.

For completeness, exact Green-function evaluation of all ten
non-Steinberg source rows on every nonidentity unipotent rational class gives
\begin{equation}
\label{eq:f117}
\begin{array}{c|c}
q&\max_{1\ne u\ {\rm unipotent}}|E_{10}(u)|\\ \hline
4&608151722/73263870225=0.008301\ldots\\
5&13126345/3941591472=0.003330\ldots\\
7&1026970937/1208074370400=0.000850\ldots\\
8&9075264357002/18300200631930945=0.000496\ldots .
\end{array}                                                   
\end{equation}
These four maxima, as well as the source support, the two zero
Harish--Chandra rows, and the source-mass check, are asserted and reproduced
by \texttt{scripts/\allowbreak audit\_\allowbreak f4\_\allowbreak coxeter\_\allowbreak green.g}.

\proofpart{Numerical completion}\label{numerical-completion}
For \(q=5,7\), Liebeck--Tiep's good-characteristic dichotomy says that
a mixed target either has centralizer below \(q^{12}\), or enters the
parabolic estimate \eqref{eq:f112}. The small-centralizer ten-character
bounds are respectively \(0.001278206\) and \(0.000237904\). In the
large parabolic case add \(2.68/q^2\) for \(F_4[\pm i]\); Steinberg is zero.
Combining \eqref{eq:f104}--\eqref{eq:f117}, the largest possible absolute nontrivial
Frobenius contribution on a mixed target is

\begin{equation}
\label{eq:f121}
\begin{array}{c|c|c|c|c}
q&S_q&\text{eight-character/parabolic part}&
\text{two cuspidal terms}&\text{total}\\ \hline
4&0.056155724&0.067314387&\text{included}&0.123471\\
5&0.037761340&0.107376531&2.68/25&0.252338\\
7&0.019906804&0.054714358&2.68/49&0.129316\\
8&0.015342761&0.008218500&\text{included}&0.023562.
\end{array}                                                  
\end{equation}
All totals are strictly below one. Semisimple targets follow directly
from Gow, and the identity follows from reality. Therefore the exact
classes in (10.2) satisfy
\begin{equation}
\label{eq:f122}
(x^{F_4(q)})^2=F_4(q)\quad(q=4,5,7,8),\qquad
         |x|=q^4-q^2+1.                                   
\end{equation}
\end{proof}

Together with \eqref{eq:f91}, this handles every \(F_4(q)\) with \(q\le8\)
for which the group is simple. The source-side and mixed-target
inequalities displayed above decrease for every good odd \(q\ge5\) and
every even \(q\ge4\); extending \eqref{eq:f122} by this route to all such \(q\)
would additionally require recording the corresponding symbolic
Green-function maximum in \eqref{eq:f117}. That extension is unnecessary for
the small-field problem (and \(q\ge9\) is already in the large-field
theorem).

\subsubsection{\texorpdfstring{Transfer to \(E_6,{}^2E_6,E_7,E_8\): first obstruction}{Transfer to E\_6,\{\}\^{}2E\_6,E\_7,E\_8: first obstruction}}\label{transfer-to-e_62e_6e_7e_8-first-obstruction}
The clean part of the construction is controlled by the Coxeter-torus
orders\[
 |T_c(E_6)|=\Phi_3(q)\Phi_{12}(q),\quad
 |T_c(E_7)|=\Phi_2(q)\Phi_{18}(q),\quad
 |T_c(E_8)|=\Phi_{30}(q).                                    
\]

For \(E_8\), a generator of the cyclic \(\Phi_{30}\)-torus is real
(the fifteenth Coxeter power induces inversion) and all nontrivial
dual parameters are regular, so the source-side classification transfers
verbatim. What remains there is a new, much larger unipotent obstruction
and rank-eight parabolic estimates.

For \(E_7\), the ninth Coxeter power inverts both the \(\Phi_{18}\) and
\(\Phi_2\) factors, so a full Coxeter-torus generator is real. However,
dual parameters supported on the \(q+1\) factor are nonregular; their
Lusztig series are not singletons. Thus the six-series calculation of
Step \ref{complete-classification-of-characters-which-occur-in-1.1} must be replaced by a stratification by the centralizers of
those parameters.

The first obstruction already occurs in split \(E_6\). A generator of
the full Coxeter torus is not made real by the Coxeter automizer:
the sixth power acts as inversion on \(\Phi_{12}(q)\), but as the identity
on \(\Phi_3(q)\). Restricting to an element of order \(\Phi_{12}(q)\)
restores reality and regularity, but its centralizer still has order
\(\Phi_3(q)\Phi_{12}(q)\). Column orthogonality therefore leaves an
additional \(\Phi_3(q)\)-sized source-value mass, and the dual parameters
on that factor are nonregular. This is a genuine new Lusztig-series
obstruction, not a numerical refinement of the \(F_4\) proof. The
twisted group \({}^2E_6\) has the analogous factor/reality issue with
the Ennola-transformed Coxeter factors. Hence none of
\(E_6,{}^2E_6,E_7,E_8\) is resolved merely by copying \eqref{eq:f121}; the next
required calculation is the explicit nonregular-series stratification
for the extra Coxeter-torus factor, we begin with the split case of \(E_6\)).

\subsection{\texorpdfstring{The split group \(E_6(3)\)}{Iteration 8: the split group E\_6(3)}}\label{iteration-8-the-split-group-e_63}

The obstruction isolated in Section \ref{transfer-to-e_62e_6e_7e_8-first-obstruction} can in fact be removed completely
for \(E_6(3)\). This section gives the full same-class calculation. It is
separate because the source torus has a nonregular factor, so the argument
is not a formal repetition of the \(F_4\) calculation.

\begin{theorem}\label{e_63}
Let $G$ be the finite simple group \(E_6(3)\), and let $x$ be an element of the Coxeter torus of order $\Phi _{12}(3)$. Then
\[
(x^{G})^{2}=G.
\]
\end{theorem}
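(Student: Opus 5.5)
We follow the Frobenius strategy of Theorem~\ref{f_43}, with the extra bookkeeping forced by the $\Phi_3$ factor of the Coxeter torus. Let $T_c$ be the Coxeter torus of the universal group, of order $\Phi_3(3)\Phi_{12}(3)=13\cdot 73$. Let $x$ be an element of order $73=\Phi_{12}(3)$, and work in the simple quotient $G=E_6(3)$; here $\gcd(3,q-1)=1$, so the universal and simple groups coincide.

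\textbf{Reality, regularity, and reduction.} First we record the structure of $x$.
\begin{enumerate}
\item The sixth power of the Coxeter automizer acts as $t\mapsto t^{3^6}$, and $3^6\equiv-1\pmod{73}$; hence $C=x^G$ is real.
\item Since $\operatorname{ord}_{73}(3)=12$, the element $x$ is regular semisimple with $C_G(x)=T_c$ of order $949$.
\item Gow's theorem then covers all nonidentity semisimple targets, and reality gives $1\in C^2$.
\end{enumerate}
It remains to show $F_x(g)\ne0$ in \eqref{eq:Fx} for unipotent and mixed $g$.

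\textbf{Classifying the characters nonzero at $x$.} Using \cite[Lemmas 3.1--3.2, Prop.~3.5]{GM}, any $\chi$ with $\chi(x)\ne0$ lies in $\mathcal E(G,s)$ with $s\in T_c^*$. We split the series according to $s$.
\begin{enumerate}
\item \emph{Unipotent series.} The principal $73$-block has cyclic defect and inertial index $12$, so exactly the twelve unipotent characters in that block contribute. Each has value $\pm1$ at $x$; they are listed from CHEVIE.
\item \emph{Regular parameters.} If $s$ has nontrivial $73$-part, then $s$ is regular and $\mathcal E(G,s)=\{\chi_s\}$. Its degree is $|G|_{3'}/949$, and its value is a Gaussian-period expression.
\item \emph{Nonregular parameters.} If $s$ has order $13$, then $C_{G^*}(s)$ is a twisted Levi-type subgroup (of type $A_2^3$, with a $\Phi_3$-torus structure). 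The series $\mathcal E(G,s)$ is in Jordan bijection with the unipotent characters of that centralizer. Only the members lying in the block with the same cyclic $73$-defect are nonzero at $x$, each with value $\pm1$.
\end{enumerate}
Column orthogonality $\sum|\chi(x)|^2=949$ then pins down the total mass, as in \eqref{eq:f23}. It splits as $12$ unipotent terms, the order-$13$ strata, and the remainder coming from regular series.

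\textbf{Bounding the sum.} Each class of contributing characters is bounded differently.
\begin{enumerate}
\item \emph{Regular series.} We bound these by $(\text{mass})\cdot|C_G(g)|^{1/2}/D$, using the largest nonidentity centralizer in $E_6(3)$. This should give a contribution well below $0.1$, as in \eqref{eq:f32}.
\item \emph{Order-$13$ series.} These have degree roughly $|G|_{3'}/|C_{G^*}(s)|_{3'}$ times small unipotent degrees of $A_2(q^3)$-type. We bound them by column orthogonality, with a finer Green-function evaluation wherever the degree is too small.
\item \emph{Unipotent terms.} For the eleven nontrivial unipotent terms, we compute the ten non-Steinberg rows exactly on unipotent classes via \texttt{UnipotentValues(E6)} at $q=3$. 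On mixed targets we use the Liebeck--Tiep dichotomy: either $|C_G(g)|<3^{N}$, in which case column orthogonality suffices, or $g$ lies in a long-root (or $D_5$/$A_5$) parabolic, in which case we apply the $V^Q$ estimate \eqref{eq:f82} with Harish--Chandra restriction dimensions. This is done in particular for the low-degree characters such as $\phi_{6,1}$ and $\phi_{20,2}$, and for any cuspidal $E_6[\theta^{\pm1}]$ rows, whose Harish--Chandra restrictions vanish. The Steinberg character vanishes on nonsemisimple targets.
\end{enumerate}
Since $3$ is a bad prime, the mixed-class parabolic reduction needs the same characteristic-three audit of class types as in the $F_4(3)$ step.

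\textbf{Main obstacle.} We expect the hardest part to be the low-degree unipotent characters on mixed targets with large centralizer at $q=3$, where crude bounds exceed the available margin. We would attempt that first via exact Harish--Chandra restriction tables to the $D_5$ and $A_5$ Levis. If those estimates fall short, we would fall back on the full CHEVIE generic-table evaluation of $E(g)$ on mixed class types.
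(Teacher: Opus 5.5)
Your outline has the same architecture as the paper: reality from the sixth Coxeter power, Gow for semisimple targets, a three-stratum source column checked by $\sum|\chi(x)|^2=949$, exact Green functions for the unipotent rows on unipotent targets, and the characteristic-three Liebeck--Tiep parabolic bound for $\phi_{6,1}$ on mixed targets. But the proof is decided in the order-$13$ stratum, and there your plan is both wrong and incomplete. First, $C_{G^*}(s)$ for $|s|=13$ is not of type $A_2^3$. Since $13$ is a good prime for $E_6$, this centralizer is a Levi subgroup, namely $D_4T_2$ with the triality-twisted Frobenius, i.e.\ ${}^3D_4(3)\cdot\Phi_3(3)$. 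By contrast, $A_2^3$ is semisimple of full rank (the centralizer of an element of order $3$) and is not a Levi. With the correct centralizer, the twelve order-$13$ parameters give four rational series. Each has exactly four survivors, with ${}^3D_4$-labels $\phi_{1,0},\phi_{1,6},\phi_{2,1},{}^3D_4[-1]$, and the mass splits as $12+16+921$.

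Second, you place the main obstacle in the wrong spot. The unipotent rows on mixed targets are handled exactly as you propose; only $\phi_{6,1}$ needs the parabolic bound, giving about $0.076$. The real bottleneck is the four characters of degree $R=192280422400$ with trivial Jordan correspondent, one per nonregular series, evaluated on \emph{unipotent} targets. There $|C_G(u)|$ reaches about $4.4\cdot10^{26}$, so column orthogonality bounds each ratio only by $\sqrt{|C_G(u)|}/R\approx109$. Meanwhile the remaining terms already cost about $0.33$; the regular series alone contribute about $0.19$, not ``well below $0.1$''. Your phrase ``finer Green-function evaluation wherever the degree is too small'' does not say how a non-unipotent character is evaluated on unipotent elements.

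The paper closes this with Lemma~\ref{lem:semisimple-restriction}. Gelfand--Graev multiplicity-freeness and Alvis--Curtis duality bound the Harish--Chandra restriction of $\chi_s$ to the long-root $A_5$ Levi by $72\,|L|_{3'}/338$, so $h/R\le1008/155185$. The parabolic estimate then gives $|\chi_s(u)|/R\le0.0801$, and the total is $0.652$.

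Your wording can also be turned into a valid alternative. Because $C_{G^*}(s)$ is a Levi, $\chi_s=\pm R_L^G(\hat s)$ with $L$ the dual ${}^3D_4(3)\cdot 13$. Since $\hat s$ is trivial on unipotent elements, $\chi_s(u)=\pm R_L^G(1_L)(u)$ for all four $s$. This is an exact Green-function computation (Lusztig induction from the twisted Levi) and gives the true values rather than a bound. But it requires the correct Levi, which your $A_2^3$ identification would get wrong.
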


We need the following standard consequence of Gelfand--Graev theory and
Alvis--Curtis duality.

\begin{lemma}[Semisimple restriction lemma]\label{lem:semisimple-restriction}
Let \(L\) be an \(F\)-stable Levi subgroup of
\(G\), and let \(\chi_s\) be the semisimple character labelled by a
semisimple dual element \(s\in G^{*F}\). Then, after applying
Alvis--Curtis duality and harmless signs,  the Harish--Chandra restriction \({}^*R_L^G(\chi_s)\)
is a sum of distinct semisimple
characters of \(L\), with labels drawn from the \(L^{*F}\)-classes in
\(s^{G^{*F}}\cap L^{*F}\).
Consequently,
\begin{equation}
\label{eq:f171}
 \dim {}^*R_L^G(\chi_s)
 \le N\max_t {|L|_{p'}\over|C_{L^*}(t)|_{p'}},                
\end{equation}
where \(N\) denotes the number of \(L^{*F}\)-classes occurring in
\(s^{G^{*F}}\cap L^{*F}\).
\end{lemma}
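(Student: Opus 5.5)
The plan is to combine the Deligne--Lusztig description of semisimple characters with Harish--Chandra theory and duality. We work in the regular embedding $G\hookrightarrow \widetilde G$ with connected centre, or directly with a connected-centre group when available. There the semisimple character $\chi_s$ is, up to sign, the Alvis--Curtis dual $D_G(\chi_s)=\pm\chi_s^{\mathrm{reg}}$ of the regular character in $\mathcal E(G,s)$. In the Gelfand--Graev normalisation, the regular character $\rho_s$ in $\mathcal E(G,s)$ is the unique constituent of $\mathcal E(G,s)$ occurring in the Gelfand--Graev character $\Gamma_G$. Because $D_G$ commutes with Harish--Chandra restriction up to sign (${}^*R^G_L\circ D_G=D_L\circ{}^*R^G_L$), it suffices to compute ${}^*R^G_L(\rho_s)$ and then apply $D_L$. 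Applying $D_L$ sends regular characters of $L$ back to semisimple ones up to sign, which is where the ``harmless signs'' enter.

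The key input is the restriction of the Gelfand--Graev character. By Rodier and Digne--Lehrer--Michel, ${}^*R^G_L(\Gamma_G)=\Gamma_L$ for compatible choices of the regular character of the unipotent radical. We also use the fact that ${}^*R^G_L$ maps $\mathcal E(G,s)$ into $\bigoplus_t\mathcal E(L,t)$, where $t$ ranges over the $L^{*F}$-classes in $s^{G^{*F}}\cap L^{*F}$; this is the compatibility of Lusztig series with Harish--Chandra induction. Projecting ${}^*R^G_L(\Gamma_G)=\Gamma_L$ onto these series, and using multiplicity one of each series in $\Gamma_L$, shows that ${}^*R^G_L(\rho_s)$ is multiplicity-free. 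Its constituents are the regular characters $\rho_t$, at most one for each such class $t$, since ${}^*R^G_L(\rho_s)$ lies in $\Gamma_L$ restricted to those series and $\rho_s$ is the whole $\mathcal E(G,s)$-part of $\Gamma_G$. One also has to argue that no non-regular constituents arise; this follows by taking the inner product with $\Gamma_L$ and using $\langle {}^*R(\chi),\Gamma_L\rangle=\langle\chi,\Gamma_G\rangle$ for all $\chi$ in the series. The component of ${}^*R(\rho_s)$ orthogonal to $\Gamma_L$ must then come from non-regular characters of $G$, and the component along $\Gamma_L$ is exactly $\sum\rho_t$. Applying $D_L$ yields a signed sum of distinct $\chi_t$. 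This is the step I expect to be the main obstacle: the orthogonal part has to be handled honestly, either by showing it vanishes by a degree or duality count or by accepting the ``after harmless signs'' formulation, and disconnected centre must be handled by restricting from $\widetilde G$, which may split $\chi_t$ but only into distinct pieces.

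For the bound \eqref{eq:f171}, note that the semisimple character $\chi_t$ of $L$ has degree $|L|_{p'}/|C_{L^*}(t)|_{p'}$. The distinct constituents number at most $N$, and the total dimension (the degree of the restricted character) is bounded by the sum of their degrees. Hence it is at most $N$ times the maximal degree, which is exactly \eqref{eq:f171}.
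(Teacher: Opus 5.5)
Your strategy is the paper's. You pass to the regular character $\rho_s=\pm D_G(\chi_s)$, use ${}^*R_L^G(\Gamma_G)=\Gamma_L$, multiplicity one in Gelfand--Graev characters and compatibility of ${}^*R_L^G$ with rational series, dualize back with $D_L$, and bound the degree by $N$ times the largest semisimple degree.

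The flaw is in the step you single out as the main obstacle. The identity $\langle {}^*R_L^G\chi,\Gamma_L\rangle=\langle\chi,\Gamma_G\rangle$ is false. By adjunction the left side equals $\langle\chi,R_L^G\Gamma_L\rangle$, and $R_L^G\Gamma_L\ne\Gamma_G$. For example, take $L=T$ the Levi of an $F$-stable Borel $B=TU$: then $R_T^G\Gamma_T=\operatorname{Ind}_U^G1$, and $\chi=1_G\in\mathcal E(G,1)$ gives $1$ on the left and $0$ on the right. What the Gelfand--Graev restriction formula actually yields is the other direction, $\langle R_L^G\psi,\Gamma_G\rangle=\langle\psi,\Gamma_L\rangle$. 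Nor can ``harmless signs'' absorb unwanted constituents. $D_L$ sends each irreducible character to $\pm$ an irreducible character, so a nonregular constituent of ${}^*R_L^G(\rho_s)$ would survive as a non-semisimple constituent of ${}^*R_L^G(\chi_s)$.

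The step needs no extra argument, and the fix is already in your first paragraph. Each $L^{*F}$-class $t$ lies in a unique $G^{*F}$-class, so the index sets $s^{G^{*F}}\cap L^{*F}$ for distinct $(s)$ are disjoint. Projecting ${}^*R_L^G(\Gamma_G)=\Gamma_L$ onto $\bigoplus_t\mathcal E(L,t)$ therefore isolates exactly ${}^*R_L^G$ of the $\mathcal E(G,s)$-part of $\Gamma_G$. In the connected-centre setting this gives the exact identity ${}^*R_L^G(\rho_s)=\sum_t\rho_t$, so there is no ``orthogonal part'' at all.

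The paper obtains the weaker but sufficient inclusion by positivity instead. $\Gamma_G-\rho_s$ is a genuine character, and ${}^*R_L^G$ (taking $U_P$-fixed points) preserves genuine characters. Hence ${}^*R_L^G(\rho_s)$ is a subcharacter of the multiplicity-free $\Gamma_L$, that is, a sum of distinct regular characters.

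For disconnected centre you can also drop the regular embedding. Fix a Gelfand--Graev parameter $z$, use ${}^*R_L^G(\Gamma^G_z)=\Gamma^L_{h_L(z)}$ and the fact that $\Gamma_z$ has at most one constituent per rational series. This is how the paper argues here and in Lemma~\ref{lem:31}. Delete the faulty paragraph, keep your projection argument, and the proof is complete.
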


\begin{proof}
Let
\(\rho_s=\pm D_G(\chi_s)\), so that \(\rho_s\) is the regular character in the same rational Lusztig series as \(\chi_s\), and choose a Gelfand--Graev character \(\Gamma_z^G\)
containing \(\rho_s\). By \cite[Theorem 14.30]{DigneMichel}, every Gelfand--Graev character \(\Gamma_z^G\) is multiplicity free. Moreover, \cite[Proposition~14.32]{DigneMichel} gives\[
 {}^*R_L^G(\Gamma_z^G)=\Gamma_{h_L(z)}^L.                  
\]
Since Harish--Chandra restriction is the exact fixed-point functor in
characteristic zero, the restriction of the multiplicity-free character
\(\rho_s\) is again multiplicity free. Hence \({}^*R_L^G(\rho_s)\) is a sum of distinct regular
characters of \(L\).  Using the
Alvis--Curtis duality compatibility
\[
 {}^*R_L^G D_G=D_L{}^*R_L^G                               
\]
(cf.~\cite[Corollary~8.13]{DigneMichel}),  each constituent is transformed, up to sign, into a
semisimple character of \(L\). Therefore
\({}^*R_L^G(\chi_s)\) is, up to signs after duality, a sum of distinct
semisimple characters of \(L\).

It remains to identify the possible labels. Harish--Chandra induction and
restriction preserve rational Lusztig series. Equivalently, this follows
from the Deligne--Lusztig Mackey formula together with Frobenius
reciprocity. Thus the constituents of
\({}^*R_L^G(\chi_s)\) can only belong to rational series labelled by
\(L^{*F}\)-classes contained in
\(
s^{G^{*F}}\cap L^{*F}.
\)
Hence there are at most \(N\) possible semisimple constituents.

For a semisimple element \(t\in L^{*F}\), the degree of the corresponding
semisimple character is
\[
\frac{|L|_{p'}}{|C_{L^*}(t)|_{p'}} .
\]
Since the constituents occur with multiplicity one, the degree of the
restriction is bounded by the sum of the degrees of at most \(N\)
constituents. Therefore
\[
\dim {}^*R_L^G(\chi_s)
\leq
N\max_t\frac{|L|_{p'}}{|C_{L^*}(t)|_{p'}},
\]
which proves the claim.
\end{proof}

\begin{proof}
We carry out the proof in the following steps.

\proofpart{The real class and its source-side characters}\label{the-real-class-and-its-source-side-characters}
Let \(G=E_6(3)\). Since \((3,3-1)=1\), the finite simply connected and
simple versions coincide. A Coxeter torus is cyclic of order\[
 |T|=\Phi _3(3)\Phi _{12}(3)=13\cdot73=949.
\]
Let \(x\) generate the unique subgroup of \(T\) of order \(73\). The
Smith form of \(3w-1\), for a Coxeter element \(w\), is
\(\operatorname {diag}(1,1,1,1,1,949)\), and the sixth Coxeter power is
inversion on the \(73\)-part. Hence \(x\) is regular, \(C_G(x)=T\), and
\(x^G\) is real. (The full order-\(949\) generator is not real: the same
sixth power acts trivially on the \(13\)-part.)

The characters nonzero at \(x\) split into the following three disjoint
sets.

\begin{enumerate}
\def\labelenumi{\arabic{enumi}.}
\item
  In the unipotent series exactly twelve characters survive. In the
  CHEVIE ordering their indices, values at \(x\), and degrees are
\begin{equation}
\label{eq:f141}
  \begin{array}{c|c|r}
  \text{index}&\chi(x)&\chi(1)\\ \hline
  1&1&1\\
  2&1&150094635296999121\\
  4&-1&186222\\
  5&-1&52594593142564782\\
  6&-1&13414939391700\\
  7&1&4968496071\\
  8&1&2640462520468311\\
  26&1&3343656888\\
  27&1&1776956360215608\\
  28&-1&22208569050096\\
  29&1&11850773299200\\
  30&1&11850773299200.
  \end{array}                                                   
\end{equation}

  The degree-\(186222\) character is \(\phi _{6,1}\).
\item
  The twelve nonidentity elements of the dual \(13\)-subgroup have
  Frobenius orbits of length three. They therefore give four rational
  semisimple classes. Their connected finite centralizer has type
  \[
   {}^3D_4(3)\cdot\Phi _3(3),\qquad
   |C|=3^{12}\Phi _1^2\Phi _2^2\Phi _3^3\Phi _6^2\Phi _{12}.  
  \]
  Each of the four Lusztig series has eight characters, and exactly four
  have nonzero value at \(x\). Under Jordan decomposition their
  \({}^3D_4\)-labels, source values, and unipotent degrees are
  \[
  \begin{array}{c|c|r}
  \phi_{1,0}&1&1\\
  \phi_{1,6}&1&531441\\
  \phi_{2,1}&-1&10584\\
  {}^3D_4[-1]&1&9126.
  \end{array}                                                   
  \]
  The Jordan degree multiplier is
  \[
   R=\frac{|E_6(3)|_{3'}}{|{}^3D_4(3)\cdot13|_{3'}}
    =192280422400.                                              
  \]
  Thus the four lowest characters, one in each series, have degree \(R\),
  source value of absolute value one, and trivial unipotent Jordan
  correspondent.
\item
  The remaining \(936\) dual torus elements have nontrivial
  \(73\)-part and are regular. They form \(936/12=78\) rational classes,
  hence give 78 singleton series, all of degree
  \begin{equation}
  \label{eq:f145}
   D=\frac{|E_6(3)|_{3'}}{949}=101905547385241600.               
\end{equation}
\end{enumerate}

Column orthogonality is an independent completeness check. The source
square-masses of the three rows above are respectively \(12,16,921\),
and\[
                    12+16+921=949=|C_G(x)|.                  
\]

\proofpart{Mixed target elements}\label{mixed-target-elements}
Exact rational class types give the largest centralizer of a mixed
element as\[
 M_{\rm mix}=126194417338429440,                               
\]
attained at semisimple type \(A_5\times A_1\) and unipotent type
\((1^6,2)\). Splitting the source characters as in Section \ref{iteration-8-the-split-group-e_63} and using
column orthogonality gives\[
\begin{array}{c|c}
\text{source terms}&\text{absolute contribution}\\ \hline
\text{unipotent terms other than }1,\phi_{6,1}
 &0.177843494537574\\
\text{all sixteen nonregular }{}^3D_4\text{-series terms}
 &0.007391531380160\\
\text{seventy-eight regular singleton terms}
 &0.000003210568021.
\end{array}                                                    
\]
The sum is \(0.185238236485754\).

For \(\phi_{6,1}\), Harish--Chandra restriction to the long-root Levi of
type \(A_5\) has degree \(h=364\). The Liebeck--Tiep parabolic estimate 
in characteristic three is therefore \cite[Proposition 3.3 and Theorem 3.1 and Table 3.2]{LiebeckTiep}
\begin{equation}
\label{eq:f153}
 \frac{|\phi_{6,1}(g)|}{186222}
 \le {364\over186222}+{2\over27}
 \left(1-{364\over186222}\right)
 =0.075883940991179.                                        
\end{equation}
The exact class-type audit has 46 mixed types with centralizer at least
\(3^{16}\); their semisimple centralizers all contain the required
long-root subgroup, so the proof of the parabolic dichotomy applies.
For all remaining mixed types, the elementary column estimate is at most
\(3^8/186222<0.03524\), which is smaller than \eqref{eq:f153}. Consequently the
total nontrivial Frobenius contribution on every mixed class is less than
\begin{equation}
\label{eq:f154}
 0.185238236485754+0.075883940991179<0.262.                  
\end{equation}

Semisimple targets are covered by Gow's theorem, so it remains only to
control nonidentity unipotent targets.

\proofpart{Unipotent targets before the four low semisimple characters}\label{unipotent-targets-before-the-four-low-semisimple-characters}
Direct Green-function evaluation of the eleven nontrivial source
characters in \eqref{eq:f141}, aggregated with their signs, gives\[
 \max_{1\ne u\ {\rm unipotent}}
 \left|\sum_{\substack{\chi\ {\rm in}\ (14.1)\\\chi\ne1}}
 {\chi(u^{-1})\over\chi(1)}\right|
 ={11798329\over225949360}=0.052216695811841.                 
\]
For a nonidentity unipotent element,\[
 |C_G(u)|\le
 M_u=440012725868919709231165440.                             
\]
Using \eqref{eq:f145}, the source square-mass \(921\) makes the total contribution
of all 78 regular singleton series at most\[
 {921\sqrt {M_u}\over D}=0.189580831015010.                  
\]
For the three higher characters in each of the four nonregular series,
the same column bound gives respectively\[
 0.000821112163846+0.041229466124952+0.047816422251423
 =0.089867000540221.                                         
\]
Thus every source term except the four degree-\(R\) semisimple
characters contributes in absolute value at most
\begin{equation}
\label{eq:f165}
 B=0.331664527367072.                                        
\end{equation}

\proofpart{Harish--Chandra bound for the four remaining characters}\label{harishchandra-bound-for-the-four-remaining-characters}
Apply Lemma \ref{lem:semisimple-restriction} to the long-root Levi \(L\) of type \(A_5\). Every semisimple element of order 13 in its dual is represented on the natural
six-dimensional module by one of the following rational canonical
patterns: one irreducible cubic with multiplicity two; one irreducible
cubic plus a three-dimensional fixed space; or two distinct irreducible
cubics. Their centralizers contain respectively the appropriate forms of
\(\operatorname {GL}_2(27)\),
\(\operatorname {GL}_1(27)\times\operatorname {GL}_3(3)\), and
\(\operatorname {GL}_1(27)^2\). The central isogeny from the corresponding
\(\operatorname {GL}_6\)-form to \(L^*\) has kernel dividing the
type-\(A_5\) fundamental-group order \(6\), whose \(3'\)-part is at most
two. Thus, even in the smallest third case,
\begin{equation}
\label{eq:f172}
 |C_{L^*}(t)|_{3'}\ge {26^2\over2}=338.                      
\end{equation}
Also
\begin{equation}
\label{eq:f173}
 |L|_{3'}=2\prod_{i=2}^6(3^i-1)=5863137280.                
\end{equation}
After conjugating into a fixed maximal torus, the number of
\(L^{*F}\)-classes in a single \(G^{*F}\)-class is bounded by the number
of double cosets
\(W(A_5)\backslash W(E_6)/W(E_6)_s\). Here and in every later use of this
lemma we realize the finite group as fixed points of the adjoint
algebraic \(E_6\)-group. Its algebraic centre is trivial, hence connected.
\cite[Proposition 13.14(ii)]{DigneMichel}, therefore gives that \(Z(L)\) is
connected, and part (iii) gives connected semisimple centralizers in
\(L^*\). Lang's theorem introduces no additional rational splitting.
The character calculations made in the simply connected root datum transfer
to this adjoint realization: for split \(q=3,5\) one has
\(\mu _3^F=H^1(F,\mu _3)=1\) because \(\gcd(3,q-1)=1\), and for twisted
\(q=3,7\) the same holds because \(\gcd(3,q+1)=1\). The fixed-point
isogeny and all relevant local subgroups are detailed in Section \ref{adjoint-realization-and-common-source-stratification}.
Consequently
\begin{equation}
\label{eq:f174}
 N\le {|W(E_6)|\over|W(A_5)|}={51840\over720}=72.             
\end{equation}
Equations \eqref{eq:f171}--\eqref{eq:f174} give, for each of the four degree-\(R\)
characters,\[
 h:=\dim {}^*R_L^G(\chi_s)
 \le72\,{5863137280\over338}=1248952320,
 \qquad {h\over R}\le {1008\over155185}.                   
\]
The parabolic character estimate now yields, on every nonidentity
unipotent class,
\begin{equation}
\label{eq:f176}
 { |\chi_s(u)|\over R}
 \le {2\over27}+{25\over27}{h\over R}
 \le {67114\over837999}=0.080088401060145.                  
\end{equation}

\proofpart{\texorpdfstring{Completion for \(E_6(3)\)}{18. Completion for E\_6(3)}}\label{completion-for-e_63}
Combining \eqref{eq:f165} with the four instances of \eqref{eq:f176}, the absolute value
of the complete nontrivial Frobenius contribution on a nonidentity
unipotent class is at most
\[
 0.331664527367072+4(0.080088401060145)
 =0.652018131607652<1.                                      
\]
Mixed classes satisfy \eqref{eq:f154}, semisimple classes satisfy Gow's theorem \cite{Gow},
and \(1\in(x^G)^2\) because the class is real. Therefore
\[
\quad (x^{E_6(3)})^2=E_6(3),\qquad |x|=73.\quad      
\]
\end{proof}

\subsection{\texorpdfstring{The twisted group \({}^2E_6(3)\)}{Iteration 4: the twisted group \{\}\^{}2E\_6(3)}}\label{iteration-4-the-twisted-group-2e_63}

\begin{theorem}\label{2e_63}
Let $G$ be the finite simple group \({}^2E_6(3)\), and let $x$ be an element of the twisted Coxeter torus of order $\Phi _{12}(3)$. Then
\[
(x^{G})^{2}=G.
\]
\end{theorem}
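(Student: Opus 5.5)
The plan is to copy the $E_6(3)$ argument of Theorem~\ref{e_63} after an Ennola substitution $q\mapsto -q$. For ${}^2E_6(3)$ the twisted Coxeter torus has order $\Phi_6(3)\Phi_{12}(3)=7\cdot 73=511$; since $\gcd(3,3+1)=1$, the simply connected and simple groups coincide. Let $x$ generate the subgroup of order $73$.

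\textbf{Source side.} First I check regularity and reality. The twisted Coxeter automizer acts on the $73$-part by powers of $-3$, and some power of it induces inversion there, because $3^6\equiv -1 \pmod{73}$. So $x$ is regular, $C_G(x)=T$, and $x^G=(x^G)^{-1}$. The full order-$511$ generator is not expected to be real, exactly as in the split case.

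Next I classify $\operatorname{Irr}(G)$ restricted to $\chi(x)\neq 0$ into three families, using \cite[Lemmas~3.1--3.2, Proposition~3.5]{GM}:
\begin{enumerate}
\item the twelve unipotent characters of the principal $73$-block, obtained by Ennola from the list \eqref{eq:f141}, with values $\pm1$;
\item the series attached to the six nonidentity elements of the dual $7$-subgroup, which fall into two rational classes with centralizer of type ${}^3D_4(3)\cdot\Phi_6(3)$; each contributes four surviving characters, including one of lowest degree $R'=|G|_{3'}/|{}^3D_4(3)\cdot 7|_{3'}$;
\item $(511-7)/12=42$ regular singleton series of degree $D'=|G|_{3'}/511$.
\end{enumerate}
As an independent completeness check, the square-masses should satisfy $12+8+491=511=|C_G(x)|$ by column orthogonality.

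\textbf{Target side.} Semisimple targets follow from Gow's theorem, and the identity from reality.

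For mixed targets I use the exact class-type enumeration to find the largest mixed centralizer. Column orthogonality then bounds all source terms except the low unipotent character $\phi_{6,1}$ (Ennola image). For that character I use the Liebeck--Tiep long-root parabolic estimate \cite[Proposition 3.3, Theorem 3.1]{LiebeckTiep}, with Levi of type ${}^2A_5$, i.e.\ $\mathrm{SU}_6(3)$. This requires the bad-characteristic audit that every large mixed class contains a long-root subgroup, as in Theorem~\ref{e_63}.

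For unipotent targets I evaluate the eleven nontrivial unipotent source rows exactly with CHEVIE Green functions for ${}^2E_6$ at $q=3$. The regular singletons and the higher nonregular-series characters are bounded by $\sqrt{M_u}$ times the source mass. The two degree-$R'$ characters are bounded by Lemma~\ref{lem:semisimple-restriction} applied to $L$ of type ${}^2A_5$. Here I bound $|C_{L^*}(t)|_{3'}$ from below using the rational canonical patterns of order-$7$ elements in $\mathrm{GU}_6(3)$; since $\operatorname{ord}_7(9)=3$, the relevant eigenvalue orbits lie in $\mathbb F_{3^6}$. I also use $N\le |W(F_4)|/|W(C_3)|$ for the relative Weyl double cosets, together with the adjoint realization remark already made for twisted $q=3$. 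The final step is to sum everything and verify that the total is $<1$.

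\textbf{Main obstacle.} I expect the hard part to be the unipotent-target bound. The $42$ regular singletons have smaller total degree mass relative to $\sqrt{M_u}$ than in the split case, and the crude Lemma~\ref{lem:semisimple-restriction} bound may lose. If so, the first fix is to compute ${}^*R_L^G$ of the degree-$R'$ characters exactly through Jordan decomposition and Harish-Chandra theory in CHEVIE, rather than relying on the $N$ bound. Failing that, I would evaluate the nonregular-series characters on unipotent classes exactly using Lusztig's induction from the ${}^3D_4(3)\cdot\Phi_6$ centralizer.
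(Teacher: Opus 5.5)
Your plan is essentially the paper's proof. It uses the same real order-$73$ class and the same three source strata with masses $12+8+491=511$. On mixed targets it uses column orthogonality plus a Liebeck--Tiep long-root parabolic bound for the single low row $\phi_{2,4}'$ of degree $172938$ (your Ennola image of $\phi_{6,1}$). On unipotent targets it uses exact Green functions plus Lemma~\ref{lem:semisimple-restriction} on the ${}^2A_5T_1$-Levi for the two degree-$R$ characters.

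Two details need attention. First, for ${}^2E_6$ you cannot copy the split $E_6(3)$ argument, because the non-long-root refinement of their Lemma~4.1 excludes ${}^2E_6$. The paper instead notes that their Table~3.2 gives the same constant $1.4/3^3$ for every noncentral Levi projection, so that refinement is not needed. Second, your label count $N\le|W(F_4)|/|W(C_3)|$ is not justified: the order-$7$ dual elements do not lie in the quasi-split torus (of order $(3-1)^4(3+1)^2=256$), so the relative Weyl group does not control their classes. Use the paper's absolute double-coset bound $|W(E_6)|/|W(A_5)|=72$ instead. Together with the crude estimate $|C_{L^*}(t)|_{3'}\ge 7$, this already gives $h/R\le 26208/144115$ and a unipotent-target total below $0.553$, so the obstacle you anticipate does not arise.
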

\begin{proof}
We complete the proof in several steps.

\proofpart{A real order-73 class and its complete source column}\label{a-real-order-73-class-and-its-complete-source-column}
Let \(G={}^2E_6(3)\). Its centre has order
\(\gcd(3,3+1)=1\), so there is no distinction here between the finite
universal and simple groups. The twisted Coxeter torus is cyclic of order\[
 |T|=\Phi _6(3)\Phi _{12}(3)=7\cdot73=511.                    
\]
Let \(x\) generate its unique subgroup of order 73. On the cyclic torus
the sixth power of the rational Coxeter automorphism acts as \(+1\) modulo
7 and as \(-1\) modulo 73. Hence \(x\) is regular semisimple, its class is
real, and \(1\in(x^G)^2\). The Smith form and this action are reconstructed
in \texttt{scripts/\allowbreak audit\_\allowbreak 2e6\_\allowbreak 3\_\allowbreak source\_\allowbreak targets.g}.

The characters nonzero at \(x\) split into exactly three strata.

\begin{enumerate}
\def\labelenumi{\arabic{enumi}.}
\item
  In the unipotent series the twisted Coxeter Deligne--Lusztig character
  has support
\begin{equation}
\label{eq:f192}
   1,2,6,7,8,13,14,15,16,28,29,30                            
\end{equation}
  in CHEVIE order, with coefficients
  \[
   1,1,1,-1,-1,-1,-1,1,1,1,-1,-1.                           
  \]
  The corresponding degrees at \(q=3\) are
  \[
  \begin{gathered}
  1, 150094635296999121, 21661236780300, 8022680289,\\
  4263581235466449, 7261978752, 3859313249941632, 172938,\\
  48842799179951178, 48234062870784,
  23897427148800, 23897427148800.
  \end{gathered}                                               
  \]
\item
  The rational Weyl generator acts on the cyclic torus by exponent
  \(368\) modulo 511. On the nonidentity elements of the dual 7-subgroup it
  has the two orbits \(\{1,2,4\}\) and \(\{3,5,6\}\). Thus there are two
  rational Lusztig series, each with connected centralizer
  \({}^3D_4(3)\cdot\Phi _6(3)\). Each series has precisely four source
  survivors, with \({}^3D_4\)-labels, source values, and Jordan degrees
\begin{equation}
\label{eq:f195}
  \begin{array}{c|r|r}
  \phi_{1,0}&1&1\\
  \phi_{1,6}&1&531441\\
  \phi_{2,1}&-1&10584\\
  {}^3D_4[-1]&1&9126.
  \end{array}                                                   
\end{equation}
  Their common Jordan multiplier is
  \[
   R={|G|_{3'}\over|{}^3D_4(3)\cdot7|_{3'}}=360079974400.      
  \]
\item
  Every one of the other 504 dual parameters has nontrivial 73-part and
  is regular. They form \(504/12=42\) rational classes and hence 42 singleton
  series, all of degree
  \[
   D={|G|_{3'}\over511}=190836625152409600.                    
  \]
  Column orthogonality checks completeness: the source square-masses are
  \[
               12+8+491=511=|C_G(x)|.                       
  \]
\end{enumerate}

\proofpart{Mixed target elements}\label{mixed-target-elements-1}
The exact characteristic-three class types give
\begin{equation}
\label{eq:f201}
 M_{\rm mix}=137024834592522240                              
\end{equation}
as the largest centralizer of a mixed element. The 52 symbolic class-type
candidates with centralizer at least \(3^{16}\) all have semisimple
connected centralizer containing an ambient root subgroup. This deliberate
over-enumeration is harmless: it includes every specialization having
positive multiplicity. The root-subgroup argument puts every such target
into the long-root parabolic with noncentral Levi projection. (The
non-long-root refinement in \cite[Lemma 4.1]{LiebeckTiep}, explicitly excludes
\({}^2E_6\), but is not needed here.)  \cite[Proposition 3.3 and Theorem 3.1]{LiebeckTiep}
apply in every odd characteristic, and  \cite[Table 3.2]{LiebeckTiep} has
\(a_1=a_2=3\), \(c_1=c_2=1.4\) for twisted \(^{2}E_6\). Thus the same bound
\(1.4/3^3=7/135\) holds for every noncentral projection, long-root or not.

Separate the low unipotent source character \(\phi_{2,4}'\), of degree
172938. Its Harish--Chandra restriction to the long-root Levi of type
\({}^2A_5T_1\) is \(\langle .3\rangle+\langle3.\rangle\), of degree
\(184=183+1\). Therefore its ratio on a large mixed target is at most
\begin{equation}
\label{eq:f202}
 {184\over172938}+{7\over135}
 \left(1-{184\over172938}\right)
 =0.052860648410499.                                          
\end{equation}
If the mixed centralizer is smaller than \(3^{16}\), the column bound is
at most \(3^8/172938<0.038\), which is stronger than \eqref{eq:f202}.

For every other source term, column orthogonality with \eqref{eq:f201} gives\[
\begin{array}{c|c}
\text{ten higher unipotent source rows}&0.097169733972171\\
\text{all eight rows in the two nonregular series}&0.002056459209966\\
\text{42 regular singleton series}&0.000000952400046.
\end{array}                                                  
\]
Consequently every mixed target has complete nontrivial Frobenius
contribution of absolute value less than
\begin{equation}
\label{eq:f204}
 0.097169733972171+0.002056459209966+0.000000952400046
 +0.052860648410499<0.153.                                    
\end{equation}

\proofpart{Unipotent targets and the last semisimple characters}\label{unipotent-targets-and-the-last-semisimple-characters}
The twisted Green-function rows are in a different order from the standard
unipotent-character list; matching their identity degrees gives the row map
recorded in the certificate. Summing the eleven nontrivial source rows in
\eqref{eq:f192} exactly gives\[
 \max_{1\ne u\ {\rm unipotent}}
 \left|\sum_{\substack{\chi\ {\rm in}\ \eqref{eq:f192} \\\chi\ne1}}
 {\chi(u^{-1})\over\chi(1)}\right|
 ={4697969\over209831440}=0.022389252058701.                 
\]
The largest centralizer of a nonidentity unipotent element is\[
 M_u=477776055806811737677578240.                           
\]
The 42 regular series, using their total source mass 491, contribute at
most\[
 {491\sqrt {M_u}\over D}=0.056238270313686,               
\]
and the three higher rows in each of the two nonregular series contribute
at most\[
 2\sqrt {M_u}\sum_{d\in\{531441,10584,9126\}}{1\over Rd}
 =0.025002643082724.                                          
\]
Thus all terms except the two degree-\(R\) semisimple characters contribute at
most
\begin{equation}
\label{eq:f215}
 B=0.103630165455109.                                        
\end{equation}

Apply the semisimple restriction lemma of Step \ref{harishchandra-bound-for-the-four-remaining-characters} to the same
\({}^2A_5T_1\)-Levi \(L\). Its exact prime-to-three order is\[
                         |L|_{3'}=6366330880.                
\]
As in \eqref{eq:f174}, connectedness of the relevant semisimple centralizers and the
Weyl double-coset bound give at most
\(|W(E_6)|/|W(A_5)|=72\) distinct Levi labels. Every label in the
intersection is conjugate to the original element of order 7, so its finite
centralizer contains that 7-subgroup. Each resulting semisimple Levi
character therefore has degree at most \(|L|_{3'}/7\). If \(h\) is the
Harish--Chandra fixed-space dimension of the degree-\(R\) character, then\[
 h\le72{6366330880\over7}=65482260480,
 \qquad {h\over R}\le{26208\over144115}.                    
\]
For each of the two degree-\(R\) characters, the parabolic estimate on every
nonidentity unipotent class now gives
\begin{equation}
\label{eq:f218}
 { |\chi_s(u)|\over R}
 \le {h\over R}+{7\over135}\left(1-{h\over R}\right)
 \le {4363429\over19455525}=0.224277114084560.                
\end{equation}

\proofpart{\texorpdfstring{Completion for \({}^2E_6(3)\)}{22. Completion for \{\}\^{}2E\_6(3)}}\label{completion-for-2e_63}
Equations \eqref{eq:f215} and the two instances of \eqref{eq:f218} bound the complete nontrivial Frobenius
contribution on every nonidentity unipotent target by\[
 0.103630165455109+2(0.224277114084560)
 <0.553<1.                                                     
\]
Mixed targets satisfy \eqref{eq:f204}. Gow's theorem \cite{Gow} covers every nonidentity
semisimple target, and the identity is covered because the source class is
real. Frobenius' formula therefore proves\[
(x^{{}^2E_6(3)})^2={}^2E_6(3),\qquad |x|=73.         
\]
\end{proof}

All source, Green-function, Harish--Chandra, and class-type data in this
Section are asserted by \texttt{scripts/\allowbreak audit\_\allowbreak 2e6\_\allowbreak 3\_\allowbreak source\_\allowbreak targets.g}; the
70-digit numerical inequalities are independently asserted by
\texttt{scripts/\allowbreak audit\_\allowbreak 2e6\_\allowbreak 3\_\allowbreak bounds.py}.

\subsection{Two further small-field groups}\label{iteration-5-two-further-small-field-groups}

\subsubsection{Adjoint realization and common source stratification}\label{adjoint-realization-and-common-source-stratification}

The same order-\(\Phi_{12}\) construction closes two more cases without an
order-three or even-characteristic residue:\[
             G=E_6(5)\quad\hbox{and}\quad G={}^2E_6(7).       
\]
We first fix the isogeny convention needed by the restriction argument.
Let\[
 1\longrightarrow\mu _3\longrightarrow {\bf G}_{\rm sc}
 \mathop{\longrightarrow}^{\pi}{\bf G}_{\rm ad}\longrightarrow1          
\]
be the central isogeny, with the split Frobenius at \(q=5\) or the twisted
Frobenius at \(q=7\). In the split case \(F\) acts on \(\mu _3\) by
\(z\mapsto z^5\); hence both its fixed-point kernel and its first
cohomology have order \(\gcd(3,5-1)=1\). In the twisted case the graph
automorphism inverts the centre, so \(F:z\mapsto z^{-7}\), and the two
groups have order \(\gcd(3,7+1)=1\). More explicitly, for the cyclic
Frobenius action on \(\mu _3\),
\(H^1(F,\mu _3)=\mu _3/(F-1)\mu _3\); multiplication by \(4\), respectively
by \(8\), is invertible modulo three. Thus both the fixed kernel and the
cohomology obstruction vanish, and the fixed-point exact sequence gives
\[
             {\bf G}_{\rm sc}^F\mathop{\simeq}^{\pi}
             {\bf G}_{\rm ad}^F.                             
\]
These fixed groups are therefore the finite simple groups denoted by
\(E_6(5)\) and \({}^2E_6(7)\), and from now on we use the adjoint
algebraic realization.

The same argument applies after restricting \(\pi\) to an \(F\)-stable
maximal torus: the kernel is still \(\mu _3\), so the chosen Coxeter tori
have isomorphic fixed groups. For their algebraic normalizers the exact
sequence again has kernel \(\mu _3\); vanishing of \(H^1(F,\mu _3)\)
therefore identifies the fixed normalizers and their rational Weyl actions.
Likewise \(\pi\) identifies the unipotent radicals and, on each long-root
parabolic and its standard Levi, gives
\[
 P_{\rm sc}^F\simeq P_{\rm ad}^F,\qquad
 L_{\rm sc}^F\simeq L_{\rm ad}^F.                            
\]
Thus the Harish--Chandra fixed dimensions used below are unchanged.

The dual central isogeny has the same kernel and the same vanishing of fixed
kernel and first cohomology. It identifies the dual finite groups,
semisimple rational classes, and their finite centralizers. Compatibility
of Deligne--Lusztig induction and Jordan decomposition with central
isogenies then identifies the rational Lusztig series, source values,
Jordan multipliers, and Green functions. Hence every CHEVIE datum below,
although computable in the simply connected root datum, transfers unchanged
to the adjoint realization. Finally \(Z({\bf G}_{\rm ad})=1\) is connected,
so Digne--Michel 13.14(ii)--(iii) applies exactly as used in \eqref{eq:f174}.

For \(E_6(5)\), the cyclic Coxeter torus has order
\[
 \Phi _3(5)\Phi _{12}(5)=31\cdot601=18631,
\]
and \(x\) is the element of order 601. The ten rational-Weyl orbits on
the nonidentity elements of the 31-factor all have length three. They give
ten nonregular series with centralizer \({}^3D_4(5)\cdot31\), four source
rows per series, Jordan multiplier
\[
 R=47975848081514496.                                        
\]
There are \(31(601-1)/12=1550\) regular singleton series, of degree
\[
 D=11711354338249300753514496.                               
\]
The three source masses are\[
                       12+40+18579=18631.                    
\]

For \({}^2E_6(7)\), the cyclic torus has order\[
 \Phi _6(7)\Phi _{12}(7)=43\cdot2353=101179,
\]
and \(x\) has order 2353. The rational Weyl multiplier is \(-7\) on the
43-factor and has fourteen length-three orbits. Thus there are fourteen
nonregular \({}^3D_4(7)\cdot43\)-series,  each series has precisely four source
  survivors, with
\[
 R=218235521733672960000,                                    
\]
and \(43(2353-1)/12=8428\) regular singleton series, of degree\[
 D=3020609183612287503921315840000.                           
\]
Here the source masses are\[
                       12+56+101111=101179.                  
\]
In both groups the sixth rational Coxeter power inverts the
\(\Phi_{12}\)-factor, so the chosen class is real. The unipotent source
supports are exactly those in \eqref{eq:f141} and \eqref{eq:f192}, respectively. Each
nonregular series has the four \({}^3D_4\)-rows in \eqref{eq:f195}.

\begin{theorem}\label{e_65-2e_67}
Let $G(q)$ be the finite simple group  either \(E_6(5)\) or \(^{2}E_6(7)\), and let $x_{q}$ be the element of Coxeter torus of order $\Phi_{12}(q)$. Then
\[
(x_{q}^{G})^{2}=G(q).
\]
\end{theorem}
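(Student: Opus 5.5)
The proof will follow the $E_6(3)$ and ${}^2E_6(3)$ arguments (Theorems~\ref{e_63} and \ref{2e_63}) essentially line by line. The source-side stratification fixed just above the statement provides the complete column of $x_q$. This column has three parts:
\begin{itemize}
\item the twelve unipotent rows, with supports as in \eqref{eq:f141} and \eqref{eq:f192};
\item the nonregular ${}^3D_4(q)\cdot\Phi_3$ (respectively $\Phi_6$) series, with four source rows each;
\item the regular singleton series of degree $D$.
\end{itemize}
Column orthogonality checks completeness via the masses $12+40+18579$ and $12+56+101111$. Reality comes from the sixth Coxeter power, so $1\in (x_q^G)^2$. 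Every nonidentity semisimple target is covered by Gow's theorem. It therefore remains to show $|F_x(g)-1|<1$ in \eqref{eq:strict} for mixed and unipotent targets.

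\emph{Mixed targets.} From the characteristic-$5$ (respectively $7$) class-type enumeration I will extract the largest mixed centralizer $M_{\rm mix}(q)$. This is good characteristic, so the Liebeck--Tiep dichotomy applies directly: a mixed element either has centralizer below $q^{16}$, or lies in the long-root parabolic with noncentral Levi projection.
\begin{itemize}
\item For all high-degree unipotent rows, all nonregular-series rows and all regular rows, I will use the column bound $\sqrt{M_{\rm mix}}/\chi(1)$, weighted by source mass.
\item The single low unipotent row ($\phi_{6,1}$, respectively $\phi_{2,4}'$) will be treated through its Harish--Chandra restriction to the $A_5$ (respectively ${}^2A_5T_1$) Levi, as in \eqref{eq:f153} and \eqref{eq:f202}, with parabolic constant $c/q^{a}$ from Table~3.2.
\end{itemize}
Since the degrees grow like powers of $q$, every term should be much smaller than at $q=3$, and I expect a total well below $0.2$.

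\emph{Unipotent targets.} First I will compute exactly, by the Green-function tables specialized at $q=5,7$, the maximum over $u\neq1$ of the aggregated eleven nontrivial unipotent source rows. Next I will bound the regular series by $m\sqrt{M_u}/D$, where $m$ is the regular source mass, and the three higher rows of each nonregular series by $\sqrt{M_u}/(Rd)$. The delicate terms are the degree-$R$ semisimple characters, ten for $E_6(5)$ and fourteen for ${}^2E_6(7)$. For these I will apply Lemma~\ref{lem:semisimple-restriction} to the long-root Levi $L$. The adjoint realization discussed above gives connected centralizers, so $N\le72$. Each label has centralizer containing the $\Phi_3$ (respectively $\Phi_6$) subgroup, which gives $h\le 72|L|_{q'}/31$ (respectively $/43$). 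Each such character then contributes at most $h/R+(c/q^a)(1-h/R)$.

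\emph{Main obstacle.} I expect the main obstacle to be this last sum. There are now ten or fourteen degree-$R$ characters instead of four or two, so the crude factor $1/31$ or $1/43$ in $h$ may leave $h/R$ too large once multiplied by the count. If it does, I will first sharpen the bound on $h$ as in \eqref{eq:f172}: I will enumerate the rational canonical forms of order-$31$ (respectively order-$43$) elements in $L^*$, use the resulting larger centralizer $3'$-orders, and replace $72$ by the actual number of $L^{*F}$-classes in $s^{G^*}\cap L^*$. Because $R/|L|_{q'}$ grows roughly like $q^{10}$, I expect $h/R$ to be of order $10^{-3}$. The total would then be dominated by $(\text{count})\cdot c/q^a$, and with $q^a\ge125$ this should stay well below~$1$.
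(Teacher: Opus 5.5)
Your proposal follows the paper's proof essentially step for step. It uses Gow plus reality, and the good-characteristic Liebeck--Tiep dichotomy with the single low row $\phi_{6,1}$ (resp.\ $\phi_{2,4}'$) handled by its Harish--Chandra fixed dimension and the constant $2/5^3$ (resp.\ $1.4/7^3$). It then uses exact Green aggregates on unipotent targets, column bounds for all other rows, and Lemma~\ref{lem:semisimple-restriction} with $N\le 72$ and the divisor $31$ (resp.\ $43$) for the ten (resp.\ fourteen) degree-$R$ rows.

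Your contingency sharpening is never needed, though your growth heuristic is off: $R/|L|_{q'}$ grows like $q^3$, not $q^{10}$. Even so, the crude bound already gives $h/R\le 0.0176$ (resp.\ $0.0036$), per-row ratios $0.0333$ (resp.\ $0.0077$), and unipotent-target totals below $0.416$ (resp.\ $0.129$).
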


\begin{proof}
For \(E_6(5)\), exact class types give\[
 M_{\rm mix}=13830594277500000000000000,\qquad
 M_u=1318988254308700561523437500000000000000.               
\]
Since 5 is a good prime for \(E_6\), \cite[Lemma 4.1]{LiebeckTiep}, applies
directly to every mixed target with centralizer at least \(5^{16}\) and
puts it in the required long-root parabolic. (The 54-type CHEVIE
enumeration is an independent numerical over-enumeration, not the source of
this rational parabolic assertion.) The low unipotent source row
\(\phi_{6,1}\) has degree 49300630 and long-root-Levi fixed dimension
3906. With the split-\(E_6\) constant \(2/5^3\), its mixed ratio is at
most 0.016077960545332. If the mixed centralizer is smaller than
\(5^{16}\), column orthogonality instead gives\[
 {5^8\over49300630}={78125\over9860126}
 =0.007923326740450<0.016077960545332.                        
\]
Thus the displayed parabolic ratio is uniform over all mixed targets.
The higher unipotent, all ten nonregular, and
regular-source contributions are respectively\[
 0.037183273904853,\quad0.000775173025092,\quad
 0.000000005899778.
\]
Hence every mixed target has total below
\begin{equation}
\label{eq:f242}
                         0.054036414<1.                     
\end{equation}

On nonidentity unipotent targets the exact Green aggregate is
\begin{equation}
\label{eq:f243}
 {6499757165\over667569970704}=0.009736443294694.             
\end{equation}
The regular and higher nonregular contributions are 0.057615014779529 and
0.015537412514415. For each of the ten degree-\(R\) semisimple rows, the
semisimple restriction lemma gives\[
 {h\over R}\le {72\cdot362560730628096\over31R}
 =0.017552092757623.
\]
The parabolic ratio per row is at most 0.033271259273501. Thus the complete
unipotent-target contribution is below
\begin{equation}
\label{eq:f244}
 0.009736443294694+0.057615014779529+0.015537412514415
 +10(0.033271259273501)<0.416.                               
\end{equation}

For \({}^2E_6(7)\), the corresponding exact target maxima are\[
\begin{split}
 M_{\rm mix}&=2604234871329077623708857139200,\\
 M_u&=207797802354617086415849356483363441820447539200.
\end{split}                                                
\]
Here 7 is also good, so \cite[Lemma 4.1]{LiebeckTiep}, directly supplies a
long-root parabolic with noncentral Levi projection for every large mixed
target. For \({}^2E_6\), \cite[Table 3.2]{LiebeckTiep} has equal constants on the two Levi
pieces, so the long-root/non-long-root refinement is immaterial. The
63-type CHEVIE list is again only a numerical cross-check. The low row
\(\phi_{2,4}'\) has degree 1972399898 and fixed dimension 14708.
\cite[Table 3.2]{LiebeckTiep} gives \(1.4/7^3\), so its mixed ratio is
0.004089059122391. Below the large-centralizer threshold the column bound is\[
 {7^8\over1972399898}={823543\over281771414}
 =0.002922734383553<0.004089059122391.                      
\]
so this ratio is again uniform. The other three mixed contributions are\[
 0.009933339472785,\quad0.000103524397710,\quad
 0.000000000054019,
\]
and their total with the low row is
\begin{equation}
\label{eq:f246}
                         0.014125924<1.                   
\end{equation}

For unipotent targets, the exact Green aggregate is
\begin{equation}
\label{eq:f247}
 {686000845807\over276248694285600}=0.002483272717655.       
\end{equation}
The regular and higher nonregular parts are 0.015258939190235 and
0.002900869507161. For each of the fourteen low semisimple rows,\[
 {h\over R}\le {72\cdot470178493444915200\over43R}
 =0.003607458633118,
\]
giving ratio at most 0.007674366965228. The complete unipotent target sum
is therefore less than

\begin{equation}
\label{eq:f248}
 0.002483272717655+0.015258939190235+0.002900869507161
 +14(0.007674366965228)<0.129.                               
\end{equation}


Gow covers nonidentity semisimple targets, \eqref{eq:f242}, \eqref{eq:f244} \eqref{eq:f246}, and
\eqref{eq:f248} cover all nonsemisimple targets, and reality covers the identity.
Thus\[
(x^{E_6(5)})^2=E_6(5),\ |x|=601;\qquad
 (y^{{}^2E_6(7)})^2={}^2E_6(7),\ |y|=2353.                  
\]
\end{proof}

The CHEVIE source data, Green functions, class-type maxima, and absolute
reflection subsystem checks are reproduced and asserted by
\texttt{scripts/\allowbreak probe\_\allowbreak e6\_\allowbreak general\_\allowbreak data.g}; the rational parabolic step itself is the
good-characteristic Liebeck--Tiep lemma cited above. The numerical estimates are asserted by
\texttt{scripts/\allowbreak audit\_\allowbreak e6\_\allowbreak 5\_\allowbreak 2e6\_\allowbreak 7\_\allowbreak bounds.py}. The torus gcd and orbit counts for
all remaining \(E_6\) and \({}^2E_6\) small-field cases are independently
asserted by \texttt{scripts/\allowbreak audit\_\allowbreak e6\_\allowbreak phi12\_\allowbreak arithmetic.py}.

\section{ \({{}^2E}_6(5)\),\(E_{8}(2)\) and \(E_{8}(7)\)}
\label{sec:exceptional-frob2}

\subsection{\texorpdfstring{The order-601 class in \({}^2E_6(5)\)}{route-2e6-5-center.md}}
\label{candidates/route_2e6_5_center.md}

\subsubsection{Statement and the correct finite group}\label{statement-and-the-correct-finite-group}

Let \({\bf G}\) be the simply connected simple algebraic group of type
\(E_6\), and let \(F\) be the twisted Frobenius defining the field
\(\mathbb F _5\). Put\[
 H={\bf G}^F={}^2E_{6,\mathrm{sc}}(5),\qquad
 Z=Z(H)\cong C_3,\qquad S=H/Z={}^2E_6(5).
\]
Thus \(S\) is the finite simple group, whereas \(H\) is its universal
central extension. This distinction cannot be removed by passing to the
adjoint algebraic group: for the twisted action on \(\mu _3\), both
\(\mu _3^F\) and \(H^1(F,\mu _3)\) have order three.

The twisted Coxeter torus \(T\leq H\) is cyclic of order\[
 |T|=\Phi _6(5)\Phi _{12}(5)=21\cdot601=12621.                
\]
Fix a generator \(t\) of \(T\), put \(x=t^{21}\), and let\[
 K=x^H,\qquad C=(xZ)^S.
\]
Thus \(|x|=|xZ|=601\). 
\begin{theorem}\label{2e_65}
Let $S$ be the finite simple group ${}^2E_6(5)$, and let $H$ be the universal cover of $S$. Let $x$ be an element of order $\Phi _{12}(5)$ in the twisted Coxeter torus $T$ of \(H\), and let $C$ be the conjugacy class of the image of $x$ in $S$.
Then
\[
                       C^2=S.                      
\]
\end{theorem}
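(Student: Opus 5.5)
The plan is to run the same Frobenius-sum argument as for $E_6(3)$, ${}^2E_6(3)$ and $E_6(5)$, but carried out in the universal group $H={}^2E_{6,\mathrm{sc}}(5)$ and then pushed down to $S$. Here $Z\cong C_3$ is nontrivial and cannot be removed by isogeny. For $\bar g\in S$ it suffices to find a lift $g\in H$ and elements $a,b\in K$ with $ab\in gZ$. I would therefore evaluate \eqref{eq:frob} in $H$ for the class $K$, but sum over the coset $gZ$. Equivalently, I would apply \eqref{eq:Fx} in $S$ to $C$, using only characters of $H$ trivial on $Z$, i.e.\ $\operatorname{Irr}(S)$.

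Reality and regularity come first. On $T\cong C_{21}\times C_{601}$ the sixth power of the rational Coxeter automorphism acts as $-1$ on the $601$-part, so $x\sim x^{-1}$. Hence $C=C^{-1}$ and $1\in C^2$. Every prime divisor of $601$ has $\operatorname{ord}(5)=12$, so $x$ is regular and $C_H(x)=T$, giving $|C_S(xZ)|=12621/3=4207$. Nonidentity semisimple targets in $S$ then follow from Gow's theorem (its hypothesis concerns only the simple group, and the image of a regular class is regular).

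Next I would classify the source column in $\operatorname{Irr}(S)$, i.e.\ characters $\chi$ of $H$ with $Z\le\ker\chi$ and $\chi(x)\ne0$. These fall into three strata.
\begin{enumerate}
\item The twelve unipotent rows in \eqref{eq:f192}, with values $\pm1$.
\item The nonregular series attached to the dual $\Phi_6$-part. Only the $7$-part matters after discarding the series on which $Z$ acts nontrivially: in the dual adjoint group the torus has a $3$-factor that is central-type, and its series correspond to nontrivial central characters. This leaves two ${}^3D_4(5)\cdot 7$ series with four survivors each, with Jordan degrees as in \eqref{eq:f195}.
\item The regular singleton series of degree $|H|_{5'}/12621$.
\end{enumerate}
I would confirm completeness by checking that the masses sum to $|C_S(xZ)|$ rather than $|T|$, e.g.\ $12+8+\text{regular}=4207$.

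With this column in hand, the target estimates are exactly as in the proof of Theorem~\ref{e_65-2e_67}. For mixed targets: since $5$ is good, \cite[Lemma 4.1]{LiebeckTiep} applies. I would treat the low unipotent row $\phi_{2,4}'$ separately, using its Harish--Chandra fixed dimension and the constant $1.4/5^3$, and bound all remaining rows by column orthogonality, with centralizer orders computed in $S$. For unipotent targets: I would use the exact Green-function aggregate of the eleven nontrivial unipotent rows, column bounds for the regular and higher nonregular rows, and Lemma~\ref{lem:semisimple-restriction} for the two degree-$R$ semisimple rows, with $h\le 72|L|_{5'}/7$.

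The main obstacle is the central bookkeeping in the second step. One must justify that Lemma~\ref{lem:semisimple-restriction} and the Digne--Michel connectedness input still hold, since the relevant centralizers in $L^*$ may now be disconnected (the center of $\mathbf G$ is disconnected, with $\mu_3^F\ne1$). One must also check that the Jordan multipliers and the identification of $Z$-trivial series are correct. I would first try to work in $\mathbf G_{\rm sc}$ throughout, using the dual adjoint group, where $C_{G^*}(s)$ may have component group of order $3$. In the Levi-label count I would absorb this by an extra factor of $3$ in $N$, which costs at most a factor $3$ in $h/R$. The margins seen for ${}^2E_6(7)$ suggest the resulting bound will still be well below $1$.
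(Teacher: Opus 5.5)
Your route is genuinely different from the paper's. The paper works in $H$: it takes the class $K=x^H$ and all of $\operatorname{Irr}(H)$, and since $K^2\neq H$ it only proves $K^2\supseteq\mathcal R$ for a transversal of Jordan types. It must therefore control every series $\mathcal E(H,s)$ with $s\in T^*$. That includes the two order-three series with component group $C_3$ (six survivors, values up to $2$) and the four order-$21$ series. These are handled through the fixed-Gelfand--Graev-parameter Lemma~\ref{lem:31} and the splitting bounds $N_3\le9$, $N_{21}\le21$.

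You apply Frobenius' formula in $S$ with $\operatorname{Irr}(S)$, which is legitimate and more economical. To make your discarding step precise:
\begin{itemize}
\item The central character of $\mathcal E(H,s)$ is determined by the image of $s$ in $G^{*F}/\pi(G_{\mathrm{sc}}^{*F})\cong H^1(F,\mu_3)\cong C_3$.
\item $T^*$ maps onto this group, because $H^1(F,-)$ vanishes on the preimage torus in the simply connected cover of $\mathbf G^*$.
\item Hence exactly the $s$ with nontrivial $3$-part drop out.
\end{itemize}
This removes the order-$3$ and order-$21$ series (the order-three higher rows alone cost the paper $0.351$) and two thirds of the regular series. It also removes all disconnectedness for the surviving labels: for $|v|=7$ the component group has exponent dividing $7$ and order at most $3$. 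You should still justify $|C_S(xZ)|=4207$ by the order argument ($x^h=xz$ with $z\ne1$ would have order $1803$), not merely by $C_H(x)=T$.

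The genuine problem is in your source column, which copies the paper's. For $\chi\in\mathcal E(H,s)$ with $s$ corresponding to $\theta$, regularity of $x$ gives $\chi(x)=\langle\chi,R_T^H\theta\rangle\sum_{\theta'\in W(T)^F\cdot\theta}\theta'(x)$. When $s$ has order $7$, the orbit has length $3$ and every $\theta'$ is trivial on $\langle x\rangle$. So the four survivors have values $\pm3$, namely the ${}^3D_4$-values times $[W(T)^F:W_{C(s)}(T)^F]=3$, not $\pm1$. Each series then has mass $36$, and the correct check is $12+2\cdot36+4123=4207$. Here $4123=7\cdot(12\cdot601-144)/12$ is the regular mass computed directly from the Gaussian periods of $\langle-5\rangle\le(\mathbb Z/601)^\times$. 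Your ``$12+8+\text{regular}$'' balances only if the regular mass is taken as a complement. The same slip occurs in \eqref{eq:2e25}--\eqref{eq:2e28}, where the regular mass should be $12369$. Also, the centralizer is ${}^3D_4(5)\cdot\Phi_6(5)={}^3D_4(5)\cdot21$, not $\cdot 7$; this is what produces $R$.

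With weight $9$, the two degree-$R$ rows cost $18r$, where $r$ is the per-row parabolic ratio. Your count $N\le72$ gives $r\approx0.064$ and $18r\approx1.15$, and the extra factor $3$ only makes this worse. So that step fails as planned. The repair is to use the exact double-coset count $|W_L\backslash W/W(s)|=7$ for order-$7$ labels. Since $C_{L^*}(v)$ is connected, $N_7\le7$ and no factor $3$ is needed. Then $r\approx0.0163$, the unipotent-target total is about $0.006+0.009+0.019+0.294<0.33$, and mixed targets stay below $0.04$.

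With that change your argument closes. The paper's $H$-route, with these corrected weights, would pay about $6\cdot9\cdot0.0163\approx0.88$ for the order-$7$ and order-$21$ rows on top of the $0.351$ above. So passing to $S$ is what actually makes the estimate work.
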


This gives an exact Thompson class: the class of the image of an order-601
element in the twisted Coxeter torus.

The Smith form of the Coxeter torus is
\(\operatorname {diag}(1,1,1,1,1,12621)\). Its rational Weyl group is
cyclic of order twelve, and the sixth power of a generator acts as \(+1\)
on the 21-factor and as \(-1\) on the 601-factor. Hence \(x\) is conjugate
to \(x^{-1}\) in \(H\). The order-601 parameter has trivial reflection
centralizer, so \(x\) is regular semisimple and \(C_H(x)=T\). These exact
claims are asserted in \texttt{scripts/\allowbreak audit\_\allowbreak 2e6\_\allowbreak 5\_\allowbreak source.g}.

It is enough to prove that \(K^2\) contains the following transversal of
Jordan types:
\begin{equation}
\label{eq:2e13}
 \mathcal R=\{1\}\ \cup\
\mathcal S\ \cup\
 \{g:g_s\notin Z,\ g_u\ne1\}.                               
\end{equation}
where $\mathcal S= \{\text{noncentral semisimple elements}\}\ \cup\
 \{\text{nonidentity unipotent elements}\}$.
Indeed, the image of \(K\) is the single \(S\)-class \(C\), and every
element of \(S\) has a lift in \(\mathcal R\): use a semisimple lift, the
unique unipotent lift, or an arbitrary lift with noncentral semisimple part,
according to its projective Jordan decomposition. The unipotent lift is
unique because the central kernel has order three, prime to five: in a
preimage coset of a 5-element, exactly one central multiple has trivial
3-part. The identity is covered
because \(K\) is real. It would be false to try to prove \(K^2=H\): if
\(1\ne z\in Z\), then \(zx\) has order \(1803\), so it is not conjugate to
\(x^{-1}\), and hence \(z\notin K^2\).

\subsubsection{Frobenius criterion and the complete source column}\label{frobenius-criterion-and-the-complete-source-column}

For \(g\in H\), Frobenius' formula reduces positivity of the number of
factorizations \(g=ab\), \(a,b\in K\), to
\[F_{x}(g)=
 1+\sum_{1_H\ne\chi\in\operatorname {Irr}(H)}
       {\chi(x)^2\over\chi(1)}\overline{\chi(g)}\ne0.          
\]
We bound the absolute value of the nontrivial sum by a number less than
one for every nonsemisimple target that is needed in \eqref{eq:2e13}.

\subsubsection{The restriction lemma with disconnected centre}\label{the-restriction-lemma-with-disconnected-centre}

Let \(L\) be the standard long-root Levi of type \({}^2A_5T_1\), and let
\(L^*\) be its dual. We need a degree bound for the Harish--Chandra
restriction of the eight degree-\(R\) semisimple source characters. The
following version is valid even though \(Z({\bf G})\) is disconnected.

\begin{lemma}[fixed Gelfand--Graev parameter] \label{lem:31}
Let \(\chi_{s,z}\) be a
semisimple character of \(H\), labelled by a rational semisimple class
\(s\) and a Gelfand--Graev parameter \(z\in H^1(F,Z({\bf G}))\). If
\(N_s\) is the number of rational \(L^{*F}\)-classes in
\(s^{G^{*F}}\cap L^{*F}\), then
\begin{equation}
\label{eq:2e31}
 \dim {}^*R_L^G(\chi_{s,z})
 \le N_s\max_v {|L^F|_{5'}\over |C_{L^*}^{\circ}(v)^F|_{5'}},              
\end{equation}
where \(v\) runs over those rational classes.
\end{lemma}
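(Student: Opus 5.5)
The plan is to rerun the proof of Lemma~\ref{lem:semisimple-restriction}, but to keep the Gelfand--Graev parameter $z$ fixed throughout, so that connectedness of $Z(\mathbf G)$ is never invoked. Let $\rho_{s,z}=\pm D_G(\chi_{s,z})$ be the Alvis--Curtis dual. By definition of the labelling, $\rho_{s,z}$ is the unique constituent of the Gelfand--Graev character $\Gamma_z^G$ lying in the rational series $\mathcal E(G,s)$. By \cite[Theorem 14.30]{DigneMichel}, $\Gamma_z^G$ is multiplicity free. By \cite[Proposition 14.32]{DigneMichel}, ${}^*R_L^G(\Gamma_z^G)=\Gamma^L_{h_L(z)}$ is again a single Gelfand--Graev character of $L$, and so also multiplicity free. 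Since Harish--Chandra restriction is exact and sends characters to characters, ${}^*R_L^G(\rho_{s,z})$ is a subcharacter of $\Gamma^L_{h_L(z)}$, hence a sum of distinct regular characters of $L$ for the fixed parameter $h_L(z)$. Applying $D_L$ and the commutation ${}^*R_L^G D_G=D_L{}^*R_L^G$ \cite[Corollary 8.13]{DigneMichel}, each of these becomes, up to sign, a semisimple character of $L$. The constituents therefore occur with multiplicity one.

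Next I will bound how many constituents there can be. Harish--Chandra restriction preserves rational Lusztig series, by the Mackey formula together with Frobenius reciprocity. Hence every constituent lies in a series $\mathcal E(L,v)$ with $v$ a rational $L^{*F}$-class contained in $s^{G^{*F}}\cap L^{*F}$. The key point in the disconnected-centre setting is the following. In $L$ the series $\mathcal E(L,v)$ may contain several semisimple characters, one for each $L$-Gelfand--Graev parameter, namely the $A_{L^*}(v)^F$-orbits. However, a multiplicity-free subcharacter of the single character $\Gamma^L_{h_L(z)}$ contains exactly one regular character from each series. After duality, each class $v$ therefore contributes at most one semisimple character, and there are at most $N_s$ constituents in total.

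Finally I will bound the degree of each constituent. By Jordan decomposition, a semisimple character of $L$ in $\mathcal E(L,v)$ corresponds to a trivial-unipotent character of the possibly disconnected centralizer $C_{L^*}(v)^F$. Its degree is $|L^F|_{5'}/|C_{L^*}(v)^F|_{5'}$ times the degree of an irreducible constituent of the induction of the trivial character from $C^\circ_{L^*}(v)^F$. That factor is at most $[C_{L^*}(v)^F:C^\circ_{L^*}(v)^F]$, so the degree is at most $|L^F|_{5'}/|C^\circ_{L^*}(v)^F|_{5'}$. A cleaner route is to use the Lusztig restriction from the connected-centre regular embedding $\mathbf G\hookrightarrow\widetilde{\mathbf G}$: there a semisimple character of $\widetilde L$ has degree $|\widetilde L^F|_{5'}/|C_{\widetilde L^*}(\tilde v)^F|_{5'}$, and restriction to $L^F$ splits it into pieces of no larger degree, which gives the same bound. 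Summing over the at most $N_s$ constituents yields \eqref{eq:2e31}.

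I expect the main obstacle to be the middle step. The danger is that $\Gamma^L_{h_L(z)}$ might contain regular characters from two different $L^{*F}$-classes $v,v'$ that fuse in $G^*$, or that the parameter $h_L(z)$ does not correspond to a well-defined choice inside each $\mathcal E(L,v)$. I would handle this by reading off directly from \cite[Proposition 14.32]{DigneMichel} and the multiplicity-one property that at most one regular constituent appears per rational series. This is a counting statement per series and needs no compatibility of parameters, so the bound $N_s$ holds without any assumption that the centre is connected.
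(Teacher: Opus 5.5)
Your proposal is correct and follows essentially the same route as the paper: dualize to the regular constituent of $\Gamma_z^G$, use multiplicity-freeness and ${}^*R_L^G(\Gamma_z^G)=\Gamma^L_{h_L(z)}$, commute with Alvis--Curtis duality, invoke compatibility with rational Lusztig series, count at most one constituent per rational $L^{*F}$-class inside the fixed character $\Gamma^L_{h_L(z)}$, and bound each degree by $|L^F|_{5'}/|C^{\circ}_{L^*}(v)^F|_{5'}$ (a step you justify more explicitly than the paper does). One caution: the one-constituent-per-series property does not follow from Proposition~14.32 and multiplicity-freeness alone, as your final paragraph suggests; it is the separate theorem that each Gelfand--Graev character has exactly one irreducible constituent in every rational Lusztig series, which the paper invokes via \cite[Theorem 14.49]{DigneMichel}.
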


\begin{proof}
Set \(\rho_{s,z}=\pm D_G(\chi_{s,z})\). 
By the construction of semisimple characters, \(\rho_{s,z}\) is a constituent
of the  Gelfand--Graev character \(\Gamma_z^G\). Since
Gelfand--Graev characters are multiplicity-free, Steinberg's
multiplicity-free theorem and \cite[Proposition 14.32]{DigneMichel}  yields
\[
 {}^*R_L^G(\Gamma_z^G)=\Gamma_{h_L(z)}^L.
\]
Therefore \({}^*R_L^G(\rho_{s,z})\) is a sum of distinct regular
characters, all belonging to the one fixed Gelfand--Graev character
\(\Gamma_{h_L(z)}^L\). Alvis--Curtis duality commutes with Harish--Chandra restriction, namely
\[
D_L\circ{}^*R_L^G
=
{}^*R_L^G\circ D_G
\]
up to the usual global sign. Hence
\({}^*R_L^G(\chi_{s,z})\), again up to signs, is a sum of distinct
semisimple characters of \(L^F\). Compatibility of Lusztig series with
Harish--Chandra restriction implies that every such constituent is
labelled by a rational semisimple \(L^{*F}\)-class contained in \(s^{G^{*F}}\cap L^{*F}\). 

Now fix the Gelfand--Graev parameter \(h_L(z)\). 
By \cite[Theorem 14.49]{DigneMichel} among the constituents belonging to
this fixed Gelfand--Graev character, there is at most one semisimple
character corresponding to each rational semisimple \(L^{*F}\)-class.
Therefore the number of constituents of
\({}^*R_L^G(\chi_{s,z})\) is at most \(N_s\). 

Finally, if \(\psi_v\) is a semisimple character of \(L^F\) labelled by
a rational semisimple class \(v\), the standard degree estimate for
semisimple characters gives
\[
\psi_v(1)\leq
\frac{|L^F|_{5'}}{|C_{L^*}^{\circ}(v)^F|_{5'}} .
\]
Since \({}^*R_L^G(\chi_{s,z})\) is a sum of distinct such constituents,
evaluating at the identity and applying the triangle inequality gives
\[
\dim {}^*R_L^G(\chi_{s,z})
\leq
N_s\max_v
\frac{|L^F|_{5'}}{|C_{L^*}^{\circ}(v)^F|_{5'}} ,
\]
as required.
\end{proof}

The emphasis on the fixed Gelfand--Graev parameter is what prevents an
extra factor of three in \eqref{eq:2e31}.

\begin{proof}[Proof of Theorem \ref{2e_65}]
The characters nonzero at \(x\) have the following complete
stratification.

\paragraph{The unipotent series}\label{the-unipotent-series}

In CHEVIE order the twisted Coxeter Deligne--Lusztig character has support\[
 1,2,6,7,8,13,14,15,16,28,29,30
\]
with source values
\begin{equation}
\label{eq:2e22}
 1,1,1,-1,-1,-1,-1,1,1,1,-1,-1.                             
\end{equation}
At \(q=5\) the corresponding degrees are
\begin{equation}
\label{eq:2e23}
\begin{split}
&1, 14551915228366851806640625, 42764228000662187500,\\
&303651323081625, 74133623799224853515625,
  288483546141000,\\
&70430553257080078125000, 48518130,
  2891905903816223144531250,\\
&112869187427666250000, 68576449443840000000,
  68576449443840000000.
\end{split}                                                   
\end{equation}
The squared source mass in this stratum is twelve. The degree-\(48518130\)
row will be treated separately.

\paragraph{Nonregular dual parameters}\label{nonregular-dual-parameters}

The rational Coxeter action on the 21-factor is multiplication by
\(-5\). Its nonidentity orbits are:

\begin{itemize}
\tightlist
\item
  two singleton orbits of elements of order three;
\item
  two orbits of length three on the elements of order seven;
\item
  four orbits of length three on the elements of order twenty-one.
\end{itemize}

All eight resulting rational series have Jordan multiplier
\begin{equation}
\label{eq:2e24}
                         R=70866902855264256.               
\end{equation}
For each of the six order-seven or order-twenty-one series, the four
surviving \({}^3D_4\)-labels have source values \(1,1,-1,1\) and Jordan
degrees
\begin{equation}
\label{eq:2e25}
                         1, 244140625, 992250, 961000.     
\end{equation}
Each such series therefore has squared source mass four.

For each of the two order-three series the dual centralizer has component
group \(C_3\). The six surviving labels have source values
\(1,1,2,1,1,2\) and Jordan degrees
\begin{equation}
\label{eq:2e26}
               1, 244140625, 1352250, 3005,
               46953125, 992250.                            
\end{equation}
Each order-three series has squared source mass twelve.

\paragraph{Regular dual parameters}\label{regular-dual-parameters}

Every remaining source parameter has a nontrivial 601-part and is regular.
Every corresponding character has degree
\begin{equation}
\label{eq:2e27}
                  D=17299275435051175847264256,              
\end{equation}
and their total squared source mass is \(12561\). Column orthogonality is
exactly
\begin{equation}
\label{eq:2e28}
               12+6\cdot4+2\cdot12+12561=12621=|C_H(x)|.     
\end{equation}
Thus no source character has been omitted. Equations \eqref{eq:2e22}--\eqref{eq:2e28} are
asserted by \texttt{scripts/\allowbreak audit\_\allowbreak 2e6\_\allowbreak 5\_\allowbreak source.g}, together with the generic
\({}^3D_4\) calculation and the order-three disconnected-centralizer
calculation; the rational orbit counts are independently asserted by
\texttt{scripts/\allowbreak audit\_\allowbreak e6\_\allowbreak phi12\_\allowbreak arithmetic.py}.

The proof proceeds in the following steps.

\proofpart{Exact rational-class bounds in the Levi}\label{exact-rational-class-bounds-in-the-levi}
Let \(W=W(E_6)\) and \(W_L=W(A_5)\). For a dual parameter \(s\) of order
\(o\in\{3,7,21\}\), the geometric \(L^*\)-classes in
\(s^{G^*}\cap L^*\) are bounded by\[
                     |W_L\backslash W/W(s)|.                
\]
For every rational Coxeter orbit representative the exact Weyl data are\[
\begin{array}{c|c|c|c}
o&|W^{\circ}(s)|&|W(s)|&|W_L\backslash W/W(s)|\\ \hline
3&192&576&3\\
7&192&192&7\\
21&192&192&7
\end{array}                                                   
\]
These computations are verified by \texttt{scripts/\allowbreak probe\_\allowbreak 2e6\_\allowbreak 5\_\allowbreak components.g} for the two, two, and four rational orbit
representatives, respectively.

It remains to account for rational splitting inside \(L^*\). The quotient
\(Z(L)/Z({\bf G})\) is connected, so \(|\pi_0 Z(L)|\le3\).
\cite[Lemma 13.14(iii)]{DigneMichel} embeds every semisimple centralizer component
group in \(\operatorname {Irr}(\pi_0Z(L))\); hence it has order at most
three. An \(F\)-stable geometric class splits into at most
\(|H^1(F,\pi_0C_{L^*}(v))|\le3\) rational classes. Moreover the exponent
of the component group divides \(|v|\) by \cite[Remarks 13.15(i)]{DigneMichel}, so for
\(|v|=7\) the centralizer is connected. Consequently the required exact
upper bounds are
\begin{equation}
\label{eq:2e43}
                         N_3\le9,\qquad N_7\le7,\qquad N_{21}\le21.        
\end{equation}
This replaces the invalid attempt to identify the simply connected and
adjoint fixed-point groups.

Since\[
                         |L^F|_{5'}=368644341252096,          
\]
and \(C_{L^*}^{\circ}(v)^F\) contains \(\langle v\rangle\), Lemma \ref{lem:31} gives
for a degree-\(R\) row labelled by an element of order \(o\)
\begin{equation}
\label{eq:2e45}
 {h_o\over R}\le {N_o|L^F|_{5'}\over oR}.                   
\end{equation}

\proofpart{Mixed targets}\label{mixed-targets}
The exact largest centralizer of a mixed element with noncentral semisimple
part is
\begin{equation}
\label{eq:2e51}
                    M_{\rm mix}=14062665605625000000000000. 
\end{equation}
The degree-\(48518130\) unipotent source row has long-root-Levi fixed
dimension \(2606\). For a mixed target with centralizer at least \(5^{16}\),
the good-characteristic mixed-element lemma places it in the long-root
parabolic, and the \({}^2E_6\) parabolic character estimate gives
\begin{equation}
\label{eq:2e52}
 {2606\over48518130}+{1.4\over5^3}
 \left(1-{2606\over48518130}\right)
 =0.011253110307425\ldots .                                 
\end{equation}
For a smaller mixed centralizer, column orthogonality gives instead
\[
                         {5^8\over48518130}
 =0.008051114088692\ldots< \eqref{eq:2e52}.                            
\]
Thus \eqref{eq:2e52} is uniform.

Using column orthogonality with \eqref{eq:2e51}, the other four source contributions
(higher unipotent rows, the six connected nonregular series, the two
order-three series, and the regular series) are respectively\[
\begin{split}
0.025349078869969,\quad&0.000317499111134,\\
0.000105868781115,\quad&0.000000002722890.
\end{split}                                                   
\]
Together with \eqref{eq:2e52}, the complete nontrivial Frobenius contribution is less than
\[
          0.037026<1         
\]
for every required mixed target.

\proofpart{Nonidentity unipotent targets}\label{nonidentity-unipotent-targets}
The exact Green-function aggregate of the eleven nontrivial unipotent
source rows is
\begin{equation}
\label{eq:2e61}
              {4129608085\over656974294704}
              =0.006285798574297\ldots .                    
\end{equation}
The regular series, the three higher rows in each of the six connected
nonregular series, and the five higher rows in each of the two order-three
series contribute at most
\begin{equation}
\label{eq:2e62}
 0.026590724500019,\qquad0.006363888451966,\qquad
 0.351184478653444.                                          
 \end{equation}

For the eight remaining degree-\(R\) semisimple rows, \eqref{eq:2e45} and the same
parabolic estimate give
\begin{equation}
\label{eq:2e63}
 { |\chi_s(u)|\over R}
 \le {h_o\over R}+{1.4\over5^3}\left(1-{h_o\over R}\right).
\end{equation}
The three order strata yield
\begin{equation}
\label{eq:2e64}
\begin{array}{c|c|c|c}
o&\#\text{ rows}&N_o&\text{bound per row}\\ \hline
3&2&9&0.026630991476002\\
7&2&7&0.016343663825334\\
21&4&21&0.016343663825334.
\end{array}                                                   
\end{equation}
Their total is less than \(0.151324\). Combining \eqref{eq:2e61}--\eqref{eq:2e64}, the
complete nontrivial Frobenius contribution is less than
\[
                         0.541749<1.                          
\]
on every nonidentity unipotent target.

All numbers in Steps \ref{mixed-targets}--\ref{nonidentity-unipotent-targets}, including the square-root column sums, are
recomputed with 80-digit decimal precision and asserted by
\texttt{scripts/\allowbreak audit\_\allowbreak 2e6\_\allowbreak 5\_\allowbreak center\_\allowbreak bounds.py}. The target centralizer maxima,
Green aggregate, Harish--Chandra dimension, and \(5'\)-orders are
independently asserted by \texttt{scripts/\allowbreak probe\_\allowbreak e6\_\allowbreak general\_\allowbreak data.g}.

\proofpart{Completion in the simple quotient}\label{completion-in-the-simple-quotient}
The image class \(C=(xZ)^S\) is real and regular semisimple. Indeed, if
\(gZ\) centralizes \(xZ\), then \(x^g=xz^i\). For \(i=1,2\), the right
side has order \(1803\), whereas \(|x|=601\); hence \(i=0\) and
\(C_S(xZ)=T/Z\), a torus. Gow's theorem \cite{Gow},
applied in the finite simple group \(S\), therefore gives every nonidentity
semisimple element of \(S\) in \(C^2\); reality gives the identity. Steps
\ref{mixed-targets} and \ref{nonidentity-unipotent-targets} and Frobenius' formula cover upstairs every mixed element whose
semisimple part is noncentral and every nonidentity unipotent element.

Let \(\bar g\in S\). If \(\bar g=1\), it lies in \(C^2\) by reality. If
its unipotent part is trivial, it is covered directly by Gow in \(S\). If
its semisimple part is trivial, choose its unique
unipotent lift (uniqueness follows from \((|Z|,5)=1\)); it is covered by
Step \ref{nonidentity-unipotent-targets}. Otherwise any lift has
noncentral semisimple part and is covered by Step \ref{mixed-targets}. Projecting the
resulting factorization in \(H\) proves\[
             \bigl((xZ)^{{}^2E_6(5)}\bigr)^2={}^2E_6(5),\qquad
                    |xZ|=601.
\]
\end{proof}

This proof handles the central-isogeny issue directly: it never identifies
\({\bf G}_{\rm sc}^F\) with \({\bf G}_{\rm ad}^F\), and the only possible
factor-three rational splitting is explicitly absorbed in \eqref{eq:2e43}.

\subsection{\texorpdfstring{The order-331 class in \(E_8(2)\)}{route_e8_2_complete.md}}
\label{candidates/route_e8_2_complete.md}

\begin{theorem}\label{e_82}
Let \(G\) be the finite simple group \(E_8(2)\), let \(T\) be a Coxeter torus of $G$, and let \(x\) be a generator of \(T\). 
Then
\begin{equation}
\label{eq:e812}
                              (x^G)^2=G.                        
\end{equation}
\end{theorem}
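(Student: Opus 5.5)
The plan is to follow the template of Theorem~\ref{f_43} and Theorem~\ref{f_4q}, now for the cyclic $\Phi_{30}$-torus of $E_8(2)$. Since $E_8$ is self-dual with trivial centre and $|T|=\Phi_{30}(2)=331$ is prime, the first step is to record the following facts, to be checked by a Smith-form computation of $2w-1$ for a Coxeter element $w$:
\begin{itemize}
\item $T\cong C_{331}$, and the rational Weyl group $N_G(T)/T\cong C_{30}$ acts by powers of $2$;
\item since $\operatorname{ord}_{331}(2)=30$ and $2^{15}\equiv-1\pmod{331}$, the fifteenth Coxeter power inverts $T$;
\item hence $x$ is regular, $C_G(x)=T$, and $x^G$ is real, so $1\in(x^G)^2$.
\end{itemize}
Every nonidentity element of $T$ and of the dual torus $T^*$ is regular. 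As recorded in Section~\ref{transfer-to-e_62e_6e_7e_8-first-obstruction}, the source-side classification therefore transfers verbatim from the $F_4$ case. The characters nonzero at $x$ are of two kinds.
\begin{itemize}
\item The thirty unipotent characters in the support of $R_{T_w}^G(1)$, each taking a value $\pm1$ at $x$.
\item The $(331-1)/30=11$ singleton regular Lusztig series, of common degree $D=|G|_{2'}/331$.
\end{itemize}
Column orthogonality serves as the completeness check: $30+301=331=|C_G(x)|$.

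Next the targets are split as in the earlier sections. Semisimple nonidentity targets are handled by Gow's theorem, and the identity by reality. The eleven regular series contribute at most $301\sqrt{|C_G(g)|}/D$ for any $g\ne1$. The largest centralizer of a nonidentity element of $E_8(2)$ is that of a long-root element, of order about $2^{57}|E_7(2)|$, while $D$ is of size about $2^{240}$. This contribution is therefore negligible, and I will certify it numerically.

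For nonidentity unipotent targets I would evaluate exactly the aggregate
\[
E(u)=\sum_{\chi}\frac{\chi(x)\,\overline{\chi(x)}\,\chi(u^{-1})}{\chi(1)},
\]
summed over the twenty-nine nontrivial source unipotent characters. This uses the CHEVIE Green-function table \texttt{UnipotentValues(E8)} specialized at $q=2$, as in \eqref{eq:f117}. The Steinberg character vanishes on these targets, and I expect the maximum to be far below $1$.

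The main obstacle is the mixed targets $g=su$ with $s\ne1$ and $u\ne1$. Characteristic $2$ is bad for $E_8$, and the good-characteristic Liebeck--Tiep parabolic dichotomy used for $F_4(q)$ and $E_6(q)$, $q$ odd, is not available. The low-degree source unipotent characters, the lowest being about $2^{1}\Phi_4^2\Phi_8\Phi_{12}\Phi_{20}\Phi_{24}$, make the crude bound $\sqrt{|C_G(g)|}/\chi(1)$ too weak when the centralizer is large. My first attempt will be to control these characters more finely.
\begin{enumerate}
\item Enumerate the mixed rational class types in characteristic $2$ and record $M_{\rm mix}=\max|C_G(g)|$. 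In even characteristic the semisimple centralizers have odd order, as observed for $F_4$ via $M_e(q)$, so I expect $M_{\rm mix}$ to be much smaller than the unipotent maximum.
\item Apply column orthogonality to all non-Steinberg source rows whose degree exceeds about $\sqrt{M_{\rm mix}}$ times a small margin.
\item For the few remaining low-degree rows, evaluate the values on mixed classes exactly. I would use the Deligne--Lusztig character formula
\[
\chi(su)=\frac{1}{|C^\circ(s)^F|}\sum_{T\ni s}\langle \chi,R_T^G 1\rangle\, R_T^G(1)\text{-data},
\]
that is, express each unipotent character through the Fourier matrix in terms of the $R_{T_w}^G(1)$, and compute $R_{T_w}^G(1)(su)$ via Green functions of $C^\circ_G(s)$. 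This is available in CHEVIE for the pseudo-Levi centralizers that occur.
\end{enumerate}
If exact evaluation is too costly, the fallback is a Harish--Chandra restriction bound to the $E_7$ (long-root) Levi, with fixed-space dimensions computed as in \eqref{eq:f83}, combined with direct character-table checks of the relevant Levi. Summing the unipotent, mixed and regular-series bounds to a total below $1$ then gives \eqref{eq:e812} by Frobenius' formula.
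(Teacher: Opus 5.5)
\textbf{Overall.} Your source column, the regular-series bound, the exact Green-function treatment of unipotent targets, Gow for semisimple targets and reality for the identity all coincide with the paper. The two routes differ only on mixed targets $g=su$.

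\textbf{The paper's treatment of mixed targets.} The paper splits on $|C_G(s)|$ at $B=10^{17}$.
\begin{itemize}
\item Below $B$, column orthogonality over all $28$ non-Steinberg rows, including $\phi_{8,1}$, gives $<0.58$.
\item Above $B$, an $F$-fixed root subgroup of $C_G(s)$ centralized by $u$ places $g$ in a long-root parabolic with nonidentity Levi image. The paper then proves a characteristic-two substitute for Liebeck--Tiep, Lemma~\ref{lem:21}. It uses that $Q=2^{1+56}$ is extraspecial, with ratio $1/57$ on the $V_1$-part and $2^{-6}$ on the Heisenberg part. Combined with exact Harish--Chandra dimensions, this gives $H+(28-H)/57<\tfrac12$.
\end{itemize}

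\textbf{Your route works.} Your first attempt replaces all of this with a global $M_{\rm mix}$ plus exact values of the low rows, and it does close.
\begin{itemize}
\item The largest mixed centralizer comes from $s$ of order $3$ with $C_G(s)\cong 3\times E_7(2)$ and $u$ a long-root element of $E_7(2)$. Thus $M_{\rm mix}=3\cdot 2^{33}|\Omega^+_{12}(2)|\approx 1.3\cdot 10^{30}$ and $\sqrt{M_{\rm mix}}\approx 1.1\cdot 10^{15}$.
\item Subtracting $1/\phi_{8,1}(1)=1/545925250$ from the exact reciprocal-degree sum in \eqref{eq:e831} leaves about $4\cdot 10^{-17}$ for the other $27$ nontrivial non-Steinberg rows. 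Column orthogonality therefore costs only about $0.05$ for them.
\item Only $\phi_{8,1}$ needs exact values, and only on mixed classes with $|C_G(g)|$ beyond roughly $10^{17}$.
\end{itemize}

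\textbf{Trade-off.} Your approach avoids both the parabolic placement and Lemma~\ref{lem:21}. The price is an exhaustive enumeration of mixed classes and Green functions, in bad characteristic, of the large centralizers such as $E_7(2)$. The paper needs only semisimple-centralizer orders and Harish--Chandra restriction data.

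\textbf{Corrections needed.}
\begin{itemize}
\item \emph{Exact-evaluation step.} Unipotent characters of $E_8$ are not in general combinations of the $R_{T_w}^G(1)$. Every family with nontrivial group $\mathcal G$ involves non-uniform almost characters. So, for instance, $\phi_{28,8}$ (in the four-element $a=3$ family) and the six cuspidal source rows cannot be evaluated on mixed elements from the Deligne--Lusztig formula and Green functions of $C^\circ_G(s)$. Your step is legitimate only because $\phi_{8,1}$ forms a one-element family and so equals $R_{\phi_{8,1}}$. You must verify, as above, that it is the only row needing exact treatment.
\item \emph{Odd order.} Semisimple centralizers in characteristic $2$ do not have odd order; $3\times E_7(2)$ certainly does not. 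What is true is that semisimple elements have odd order, which eliminates involution-type centralizers.
\item \emph{Size of $D$.} $D\approx 7.7\cdot 10^{35}\approx 2^{119}$, not $2^{240}$. The regular-series bound is $301\cdot 2^{95}/D\approx 1.6\cdot 10^{-5}$, which is still negligible.
\item \emph{Fallback.} Your Harish--Chandra fallback is not an argument as stated. In characteristic $2$ it requires both a placement argument and a parabolic character bound, which is exactly what the paper had to prove.
\end{itemize}
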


\subsubsection{The characteristic-two long-root estimate}\label{the-characteristic-two-long-root-estimate}

Let \(P=QL\) be the long-root parabolic of \(G\). Since
\(\mathbb F_2^\times=1\), its Levi factor is \(L=L_0\cong E_7(2)\).

\begin{lemma}\label{lem:21}
Let \(V\) be a complex \(G\)-module with character \(\chi\),
and let \(g\in QL\) have nonidentity Levi image \(\bar g\in L_0\). If
\(h=\dim V^Q\), then
\begin{equation}
\label{eq:e821}
        |\chi(g)|\le h+{\dim V-h\over 2^6-2^3+1}
                    =h+{\dim V-h\over57}.                      
\end{equation}
\end{lemma}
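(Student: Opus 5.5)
The plan follows the Liebeck--Tiep parabolic argument (\cite[Proposition 3.3]{LiebeckTiep}), specialized to $q=2$, where the Levi $L=L_0\cong E_7(2)$ has no central torus. Since $Q\trianglelefteq P$, the space $V$ is $P$-stable. Decompose $V=V^Q\oplus [V,Q]$, where $[V,Q]$ is the sum of the $Q$-isotypic components attached to the nontrivial linear characters of $Q$. The $Q$-fixed space contributes at most $h=\dim V^Q$ to $|\chi(g)|$, since $g$ acts there by a unitary operator. It remains to show
\[
\bigl|\operatorname{tr}(g\mid [V,Q])\bigr|\le \frac{\dim V-h}{57}.
\]

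For the long-root parabolic of $E_8$, $Q$ is a special group of order $2^{1+56}$: its centre $Z(Q)=X_{\tilde\alpha}$ is the highest root group, and $Q/Z(Q)$ is the $56$-dimensional minuscule $E_7(2)$-module. Accordingly split
\[
[V,Q]=V_1\oplus V_2 ,
\]
where $V_2$ is the sum of components on which $Z(Q)$ acts nontrivially, and $V_1$ is the sum of components on which $Z(Q)$ acts trivially but $Q$ does not. On $V_1$ the $Q$-characters are nonzero linear characters $\lambda$ of $Q/Z(Q)\cong \mathbb F_2^{56}$ (via the symplectic/quartic duality). An element $g\in QL$ permutes these isotypic pieces as its image $\bar g\in L_0$ permutes the weights $\lambda$. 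By a standard Clifford-theory/trace argument, $|\operatorname{tr}(g\mid V_1)|$ is at most $\dim V_1$ times the proportion of nonzero $\lambda$ fixed by $\bar g$.

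On $V_2$, $Z(Q)$ acts by its unique nontrivial character. There $V_2\cong W\otimes U$ with $W$ the Weil-type extraspecial representation of $Q$ tensored with an $L$-module. The trace of $g$ is then bounded by $|\operatorname{tr}_W(\bar g)|/\dim W$ times $\dim V_2$, and $|\operatorname{tr}_W(\bar g)|^2$ is governed by $|C_{Q/Z}(\bar g)|$, that is, $2^{\dim\ker(\bar g-1)}$ against $2^{56}$.

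The key quantitative input is then a uniform fixed-point ratio bound for nonidentity $\bar g\in E_7(2)$ on the $56$-dimensional module $M$: we need
\[
\frac{|C_M(\bar g)|-1}{|M|-1}\le \frac1{57}, \qquad \frac{|C_M(\bar g)|}{|M|}\le \frac1{57^2}
\]
for the $V_1$ and $V_2$ pieces respectively. Both reduce to the minimal codimension of a fixed space: a nonidentity element of $E_7(2)$ on $M$ moves a subspace of codimension at least $6$, attained by long-root elements (codimension $6$ in the $56$-module over $\mathbb F_2$ with $q=2$). Granting this, $(2^{50}-1)/(2^{56}-1)\approx 2^{-6}\le 1/(2^6-2^3+1)$, and the extraspecial piece gives a ratio of order $2^{-3}$ squared-root behaviour, to be checked against $1/57$. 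I expect the $V_2$ estimate to be the main obstacle: the Weil-character bound $2^{\dim C(\bar g)/2}/2^{28}$ gives only about $2^{-3}$ for long-root elements, which exceeds $1/57$. To handle this I would use the $L$-orbit structure of the $\lambda$'s (the $57=2^6-2^3+1$ arises naturally as an orbit-counting quotient $|\{\lambda\}|$ fixed proportion), and if necessary absorb the long-root case by an explicit count, following \cite[Theorem 3.1, Table 3.2]{LiebeckTiep} with $q=2$ substituted. The $h$-term is kept exact, which yields \eqref{eq:e821}.
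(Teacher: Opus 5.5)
Your decomposition $V=V^Q\oplus V_1\oplus V_2$ and your two mechanisms match the paper's proof: fixed points of $\bar g$ on the nontrivial linear characters of $Q/Z(Q)$ for $V_1$, and the extraspecial character formula for $V_2$. The gap is that you leave the $V_2$ step open, and the obstacle you describe comes from a false input. A long-root element of $E_7(2)$ does not move only $6$ dimensions of $W_{56}$. Under the long-root subgroup $A_1\times D_6$ one has $W_{56}=(2,12)\oplus(1,32)$, so a long-root element has Jordan type $J_2^{12}J_1^{32}$, a fixed space of dimension $44$ (not $50$), and $\dim[W_{56},\bar g]=12$.

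The paper proves $\dim[W_{56},\bar g]\ge 12$ in the case it needs: $g$ not unipotent, so $\bar g_s\ne 1$ because $Q$ is a $2$-group. Choose a root $\alpha$ with $\alpha(\bar g_s)\ne1$. The $56$ weights contain twelve two-element $\alpha$-strings $\{\lambda,\lambda+\alpha\}$, and the two members of each string have different $\bar g_s$-eigenvalues. Hence $C_{W_{56}}(\bar g)\subseteq C_{W_{56}}(\bar g_s)$ omits at least one weight from each string. This gives $|\omega(g)|/\omega(1)\le 2^{-6}=1/64<1/57$ for every Heisenberg constituent, and your own criterion $|C_M(\bar g)|/|M|\le 57^{-2}$ holds because $2^{-12}<57^{-2}$.

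The repair you sketch could not have supplied this. On $V_2$ the centre acts through its unique nontrivial character, so there are no $L$-orbits of $\lambda$'s to exploit, and the trace depends only on $C_{W_{56}}(\bar g)$. Nor can the Liebeck--Tiep constants of Table 3.2 simply be specialized to $q=2$: they are stated in odd characteristic, and for $E_8$ only for non-long-root projections.

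Your $V_1$ step is also incorrect as written. Since $\dim V_\lambda$ is constant only on $L_0$-orbits of $\lambda$, the valid bound is
\[
|\chi_1(g)|/\chi_1(1)\le\max_O |\mathrm{Fix}_O(\bar g)|/|O|,
\]
taken over the $L_0$-orbits $O$ of nonzero $\lambda$. It is not the proportion of all nonzero $\lambda$ fixed by $\bar g$, and the two quantities differ a great deal.

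With the correct fixed space, the global proportion for a long-root element is about $2^{-12}$. On the smallest orbit, however, the ratio is much larger. That orbit is the $277{,}347{,}807$ points of $E_7(2)/P_7$. A long-root element fixes $4{,}849{,}119$ of them: sum $2^{\ell(w)}$ over the $44$ minimal coset representatives $w$ with $w^{-1}\tilde\alpha$ a root of $P_7$. That is a ratio of about $0.01748$, just below $1/57$.

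Your estimate $(2^{50}-1)/(2^{56}-1)\approx 2^{-6}$ lands near $1/57$ only because the wrong codimension compensates for the wrong averaging. What the argument actually needs is an orbitwise fixed-point-ratio bound $1/(q^6-q^3+1)$ for the minuscule action, which is what the paper quotes. That bound does not follow from the dimension of $C_{W_{56}}(\bar g)$ alone.
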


\begin{proof}
The long-root grading gives
\[
 Z(Q)=X_{\widetilde\alpha}\cong(\mathbb F_2,+),\qquad
 Q/Z(Q)\cong W_{56},                                            
\]
where \(W_{56}\) is the \(56\)-dimensional minuscule module for \(L_0\cong E_7(2)\).
The complementary-root commutator constants are all equal to \(\pm1\), and hence nonzero in characteristic \(2\).
Consequently, the commutator form on \(Q/Z(Q)\) is a nondegenerate
alternating form. Thus \(Q\) is an extraspecial group of order
\(2^{1+56}\).

Since \(Z(Q)\cong C_2\), the restriction of \(V\) to \(Q\) admits the
canonical decomposition
\(
V=V^{Z(Q)}\oplus [V,Z(Q)].
\)
Moreover,
\(
V^{Z(Q)}=V^Q\oplus V_1,
\)
where \(V_1\) is the sum of all \(Q\)-constituents lying above the
nontrivial linear characters of \(Q/Z(Q)\). Setting $V_2=[V,Z(Q)]$, 
we obtain a decomposition into \(QL\)-submodules
\[
V=V^Q\oplus V_1\oplus V_2,
\]
Let \(\chi_0,\chi_1,\chi_2\) denote the corresponding characters. Hence
\(
 \chi=\chi_0+\chi_1+\chi_2 .
\)

The contribution of the fixed-point part satisfies
\[
 |\chi_0(g)|\leq \chi_0(1)=\dim V^Q=h .
\]

We next consider \(V_1\). The irreducible constituents of \(V_1\)
correspond to the nontrivial linear characters of
\(
 Q/Z(Q)\cong W_{56},
\)
and the action of \(L_0\) on these characters is the natural minuscule
action of \(E_7(2)\). Therefore the normalized absolute trace on this
part is bounded by the fixed-point ratio of \(\bar g\). By the uniform
fixed-point-ratio estimate for this action,
\[
 |\chi_1(g)|
 \leq
 \frac{1}{q^6-q^3+1}\chi_1(1)
 =
 \frac1{57}\chi_1(1).
\]

It remains to consider \(V_2\). This is the sum of the Heisenberg
constituents arising from the nontrivial character of \(Z(Q)\). Let
\(\omega\) be one such constituent. The extraspecial group character
formula gives the following. If \(C\) is the inverse image in \(Q\) of
\(
 C_{Q/Z(Q)}(\bar g)=C_{W_{56}}(\bar g),
\)
then
\(
 \omega(g)=0
\)
unless \(g\) acts trivially on \(C\), while in the latter case
\(
 |\omega(g)|^2=|C_{W_{56}}(\bar g)|.
\)
Since
\(
 \omega(1)=2^{28},
\)
we obtain
\[
 \frac{|\omega(g)|}{\omega(1)}
 =
 2^{-\frac12(56-\dim C_{W_{56}}(\bar g))}
 =
 2^{-\frac12\dim[W_{56},\bar g]} .
\]

Because \(g\) is not unipotent and \(\bar g\neq1\), the semisimple part of
\(\bar g\) is nontrivial. Choose a root \(\alpha\) on which this
semisimple element acts nontrivially. The weights of the minuscule module
\(W_{56}\) contain twelve nontrivial two-element \(\alpha\)-strings. In
each such string the two weights have distinct eigenvalues. Hence any
eigenspace of \(\bar g\) omits at least one weight from each of these
twelve strings. Therefore
\(
 \dim[W_{56},\bar g]\geq12 .
\)
It follows that every Heisenberg constituent satisfies
\[
 \frac{|\omega(g)|}{\omega(1)}
 \leq
 2^{-6}
 <
 \frac1{57}.
\]
Consequently,
\(
 |\chi_2(g)|
 \leq
 \frac1{57}\chi_2(1).
\)

Finally,
\(
 \chi_1(1)+\chi_2(1)
 =
 \dim V-\dim V^Q
 =
 \dim V-h .
\)
Therefore
\[
\begin{aligned}
 |\chi(g)|
 &\leq
 |\chi_0(g)|+|\chi_1(g)|+|\chi_2(g)|\\
 &\leq
 h+\frac1{57}(\chi_1(1)+\chi_2(1))\\
 &=
 h+\frac{\dim V-h}{57}.
\end{aligned}
\]
This proves
\[
 |\chi(g)|
 \leq
 h+\frac{\dim V-h}{2^6-2^3+1}.
\]
\end{proof}

\begin{proof}
We divide the proof into several steps.

\proofpart{The exact source column}\label{the-exact-source-column}
The Smith form of \(2I-w\), for a Coxeter element \(w\in W(E_8)\), is
\(\operatorname {diag}(1,\ldots ,1,331)\). The rational Coxeter
automizer has order 30, and its fifteenth power acts on \(T\) as inversion.
Consequently \(x\) is regular semisimple and \(x^G=(x^{-1})^G\).

The Coxeter Deligne--Lusztig column has precisely the following thirty
unipotent constituents (CHEVIE numbering):
\[
\begin{split}
\mathcal S={}&[1,2,3,4,7,68,69,70,71,113,114,115,116,117,120,\\
&127,128,133,142,145,146,148,151,152,155,156,160,161,162,163].
\end{split}                                                     
\]
Every one has value \(\pm1\) at \(x\). Since 331 is a primitive
degree-30 prime, every nonidentity element of the dual Coxeter torus is
regular. Hence every remaining source character lies in a singleton regular
Lusztig series, and all such characters have common degree\[
 D={\prod_{d\in\{2,8,12,14,18,20,24,30\}}(2^d-1)\over331}
  =767782630093070853154958551396284375.                         
\]
Column orthogonality gives total squared source mass \(331-30=301\) for
these regular-series characters.
Thus for every \(g\in G\)\[
 F_x(g):=1+\sum_{1_G\ne\chi}
       {|\chi(x)|^2\chi(g^{-1})\over\chi(1)}                   
\]
is the normalized Frobenius structure-constant sum. It is enough to prove
\(|F_x(g)-1|<1\).

The largest centralizer of a nonidentity element of \(E_8(q)\) has order at
most \(q^{190}\). Column orthogonality therefore gives, uniformly in
\(g\ne1\),
\begin{equation}
\label{eq:e816}
 {301\,|C_G(g)|^{1/2}\over D}\le {301\,2^{95}\over D}<10^{-4}   
 \end{equation}
for the complete regular-series contribution.

For the 28 characters indexed by
\(\mathcal S\setminus\{1,2\}\), put\[
 h_i=\dim{}^*R_L^G(\chi_i),\qquad H=\sum_i{h_i\over\chi_i(1)}.
\]
Exact Lusztig restriction gives
\[
H={900764007669538802644357\over
3461643639276572191490048000}                                 
\]
and hence
\begin{equation}
\label{eq:e826}
                 H+{28-H\over57}<\frac12.                      
\end{equation}

\proofpart{A complete mixed-element dichotomy}\label{a-complete-mixed-element-dichotomy}
Set \(B=10^{17}\). Write a mixed element as \(g=su=us\), with
\(s\ne1\) semisimple and \(u\ne1\) unipotent.

If \(|C_G(s)|\le B\), column orthogonality and
\(C_G(g)\le C_G(s)\) give 
\begin{equation}
\label{eq:e831}
\begin{split}
&\left|\sum_{i\in\mathcal S\setminus\{1,2\}}
       {\chi_i(g^{-1})\over\chi_i(1)}\right|
 +{301|C_G(g)|^{1/2}\over D}\\
&\le \sqrt B\left(
 {696409539980666385533794576034016493524738675571479659\over
  380187543704019489449258331686714799247470160718306822062080000}
 +{301\over D}\right)\\
 &<0.58.                                  
\end{split}
\end{equation}

It remains to justify parabolic placement when \(|C_G(s)|>B\). The
complete list of rational semisimple-centralizer types in characteristic 2,
specialized at \(q=2\), has exactly 23 noncentral entries above this bound.
After suppressing torus factors, their semisimple parts are\[
 E_6,{}^2E_6,E_7,E_6A_1,E_6A_2,{}^2E_6A_1,{}^2E_6{}^2A_2,
 A_7,{}^2A_7,A_8,{}^2A_8,D_6,{}^2D_6,D_7,{}^2D_7.              
\]
The standalone certificate enumerates all 707 twisted centralizer forms,
checks that these are precisely the 23 large entries, and finds an
\(F\)-fixed ambient root in each.

Here is the corresponding intrinsic placement argument. The algebraic
centralizer \(\mathbf C=C_{\mathbf G}(s)\) is connected. Put \(u\) in the
unipotent radical of an \(F\)-stable Borel of \(\mathbf C\). Every type in
(3.2) has an \(F\)-stable irreducible component whose highest root is fixed
by its graph automorphism: this is immediate in the untwisted cases; the
diagram involutions of \({}^2E_6,{}^2A_7,{}^2D_6,{}^2D_7\) fix a root, and
in \({}^2A_8\) the highest root \(\alpha_1+\cdots+\alpha_8\) is fixed.
The corresponding rational ambient root subgroup \(X_\beta\) lies in the
centre of that Borel's unipotent radical. Hence both \(s\) and \(u\)
centralize \(X_\beta\), so\[
                         g\in N_G(X_\beta)=P_\beta,            
\]
a rational long-root parabolic.

The Levi image of \(g\) in \(E_7(2)\) is nonidentity. Otherwise
\(g\in Q_\beta\), a 2-group, contradicting the nontrivial odd-order
semisimple part \(s\). Lemma \ref{lem:21} and \eqref{eq:e826} therefore apply. The Steinberg
character vanishes on mixed elements, and \eqref{eq:e816} gives
\begin{equation}
\label{eq:e834}
                              |F_x(g)-1|<0.5001<1.              
\end{equation}
Together with \eqref{eq:e831}, this covers every mixed target.

\proofpart{Unipotent and semisimple targets}\label{unipotent-and-semisimple-targets}
For every nonidentity unipotent \(v\), the exact Green functions give
\begin{equation}
\label{eq:e841}
 \max_{v\ne1}\left|\sum_{i\in\mathcal S\setminus\{1\}}
       {\chi_i(v^{-1})\over\chi_i(1)}\right|
 ={15158107432172\over885472467004125}<\frac1{50}.            
\end{equation}
Adding \eqref{eq:e816} proves \(|F_x(v)-1|<1\). Gow's theorem \cite{Gow} puts every
nonidentity semisimple element in the product of the two regular semisimple
classes \(x^G,x^G\). Finally, reality of \(x^G\) puts the identity in its
square. Equations \eqref{eq:e831}, \eqref{eq:e834}, and \eqref{eq:e841} prove \eqref{eq:e812}.
\end{proof}

The certificates \texttt{scripts/\allowbreak audit\_\allowbreak e8\_\allowbreak 2\_\allowbreak 4\_\allowbreak coxeter.g} and
\texttt{scripts/\allowbreak probe\_\allowbreak e8\_\allowbreak 2\_\allowbreak semisimple\_\allowbreak types.g} verify, respectively, every source
and Green/Lusztig-restriction value and the complete characteristic-two
centralizer dichotomy with its strict inequalities.

\subsection{\texorpdfstring{The Coxeter class in \(E_8(7)\)}{/route_e8_one_torus.md}}
\label{candidates/route_e8_one_torus.md}

\begin{theorem}\label{e_87}
Let \(G\) be the finite simple group \(E_8(7)\), and let \(x\) be a generator of the Coxeter torus. Then
\[(x^{G})^{2}=G.\]
\end{theorem}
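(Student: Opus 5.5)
I will run the same one-torus Frobenius scheme used for $E_8(2)$ in Theorem~\ref{e_82}, now at $q=7$, where $7$ is a good prime for $E_8$.

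\textbf{Source class.} For a Coxeter element $w$, the Smith form of $7I-w$ should be $\operatorname{diag}(1,\dots,1,\Phi_{30}(7))$, with $\Phi_{30}(7)=6568801$. So $T\cong C_{6568801}$, the rational automizer is cyclic of order $30$, and its fifteenth power acts as $7^{15}\equiv-1$. Hence $x^G$ is real and $1\in(x^G)^2$.

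Every prime $r\mid\Phi_{30}(7)$ has $\operatorname{ord}_r(7)=30$. Such $r$ is not exceptional, since $r\nmid 30$. It follows that every nonidentity element of $T$ and of the dual torus $T^*$ is regular. This gives $C_G(x)=T$, so Gow's theorem covers all nonidentity semisimple targets.

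The source column then has two parts.
\begin{enumerate}
\item The thirty unipotent characters in the Coxeter Deligne--Lusztig support $\mathcal S$, with values $\pm1$. This is the same CHEVIE list as in Theorem~\ref{e_82}, since the support is generic.
\item The $(N-1)/30$ singleton regular series, where $N=\Phi_{30}(7)$. They all have degree
\[
D=\frac{|E_8(7)|_{7'}}{N}
\]
and carry total squared source mass $N-30$.
\end{enumerate}
By \eqref{eq:column}, the regular part contributes at most $(N-30)\,7^{95}/D$ on any $g\ne1$. This should be astronomically small.

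\textbf{Unipotent targets.} Here I will evaluate the generic Green functions \texttt{UnipotentValues(E8)} at $q=7$ and sum the $29$ nontrivial rows of $\mathcal S$ with weights $1/\chi(1)$. The Steinberg row vanishes off the identity. I expect the maximum over nonidentity unipotent classes to be far below $1/50$; as the $F_4$ tables show, the bound decreases with $q$.

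\textbf{Mixed targets $g=su$.} I will use the Liebeck--Tiep dichotomy, which applies directly in good characteristic via \cite[Lemma 4.1]{LiebeckTiep}, with threshold $|C_G(g)|\ge 7^{c}$ as in their Theorem.
\begin{itemize}
\item \emph{Small centralizer.} Column orthogonality bounds all $28$ characters of $\mathcal S\setminus\{1,\mathrm{St}\}$ by $\sqrt{|C_G(g)|}\sum 1/\chi(1)$. The low-degree characters, such as $\phi_{8,1}$ of degree about $q\cdot\Phi$-products, may be the only ones needing care.
\item \emph{Large centralizer.} Here $g$ lies in the long-root parabolic $P=QL$ with Levi of type $E_7T_1$ and noncentral Levi projection. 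Proposition~3.3 and Theorem~3.1 of \cite{LiebeckTiep} then give
\[
\frac{|\chi(g)|}{\chi(1)}\le \frac{h_\chi}{\chi(1)}+\frac{c}{7^{a}}\Bigl(1-\frac{h_\chi}{\chi(1)}\Bigr),\qquad h_\chi=\dim{}^*R_L^G\chi,
\]
with the $E_8$ constants from their Table~3.2. The restriction dimensions $h_\chi$ come from CHEVIE Harish--Chandra restriction tables, exactly as $H$ was computed in \eqref{eq:e826}.
\end{itemize}
Summing over $\mathcal S$ and adding the regular-series term, I aim for a total below, say, $0.1$.

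\textbf{Main obstacle.} I expect the mixed large-centralizer case for the lowest-degree unipotent characters of $\mathcal S$, since there $h_\chi/\chi(1)$ is not tiny. My first attempt is to isolate those one or two rows and bound them by the parabolic estimate, and to bound all higher rows by plain column orthogonality using the maximal mixed centralizer order $M_{\rm mix}$ from the CHEVIE class-type enumeration at $q=7$. If that bound is still too weak, I will fall back on exact Harish--Chandra restriction for every row of $\mathcal S$, as in \eqref{eq:e826}.
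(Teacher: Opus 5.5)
Your proposal is correct and is essentially the paper's own proof. It uses the same real Coxeter class with $N=\Phi_{30}(7)$ prime, the same two-stratum source column (thirty unipotent rows plus regular singleton series bounded via $|C_G(g)|\le 7^{190}$), exact Green functions on unipotent targets, Gow for semisimple targets, and the good-characteristic Liebeck--Tiep dichotomy (centralizer below $7^{38}$ versus long-root parabolic with Levi $E_7T_1$) for mixed targets.

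The only difference is that the paper skips your ``isolate one or two rows'' attempt and goes straight to your fallback. It computes $h_i=\dim{}^*R_L^G\chi_i$ for all 28 rows of $\mathcal S\setminus\{1,2\}$ and bounds both Liebeck--Tiep pieces by $1.5/7^{10}$. This is the safer choice, because $\sqrt{M_{\rm mix}}$ at $q=7$ far exceeds the degrees of the lowest rows such as $\phi_{8,1}$.
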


Let \(G=E_8(7)\). Since type \(E_8\) has trivial fundamental group and
trivial algebraic centre, this is simultaneously the simply connected,
adjoint, and simple finite group. Let \(T=T_w^F\) be the Coxeter torus,
where \(w\) is a Coxeter element of \(W(E_8)\), and let \(x\) be a generator
of \(T\). Then
\[
 N:=|T|=\Phi _{30}(7)
 =7^8+7^7-7^5-7^4-7^3+7+1=6568801,
\]
and \(N\) is prime. The certificate
\texttt{scripts/\allowbreak audit\_\allowbreak e8\_\allowbreak 7\_\allowbreak coxeter\_\allowbreak source\_\allowbreak green.g} computes the Smith form of \(7I-w\), for the Coxeter element \(w\in W(E_8)\) is
\[
\operatorname {diag}(1,1,1,1,1,1,1,N),
 \qquad |C_W(w)|=30.
\]
Thus \(T\cong C_N\), and \cite[Proposition 2.11]{GM}, identifies
this as a self-centralizing maximal torus with rational Weyl group of order
30. Moreover \(\operatorname {ord}_N(7)=30\), and the fifteenth Coxeter
power acts on \(T\) as\[
              7^{15}\equiv-1\pmod N.
\]
The certificate obtains the same exponent \(6568800\) directly from the
Smith basis. Hence \(x^{-1}\) is conjugate to \(x\). Put \(C=x^G\); then
\(C\) is real and \(1\in C^2\).

We will prove
\[
                         C^2=E_8(7).
\]

\begin{proof}
\proofpart{Exact source column}\label{exact-source-column}
For a target \(g\), Frobenius' normalized same-class sum is
\begin{equation}
\label{eq:e871}
 F_x(g)=1+\sum_{1_G\ne\chi\in\operatorname {Irr}(G)}
       \frac{|\chi(x)|^2\chi(g^{-1})}{\chi(1)}.             
\end{equation}
Reality of \(C\) is important here: \(\chi(x)\) is real, so the same-class
coefficient \(\chi(x)^2\) is \(|\chi(x)|^2\).

Lemmas 3.1--3.2 and the proof of Proposition 3.5 of \cite{GM}, imply
that \(\chi(x)\ne0\) only if \(\chi\in\mathcal E(G,s)\) for a semisimple
parameter \(s\in T^*\). There are exactly two strata.

\begin{enumerate}
\def\labelenumi{\arabic{enumi}.}
\item
  For \(s=1\), the Coxeter Deligne--Lusztig row has the following 30
  supported unipotent characters in CHEVIE order:
\begin{equation}
\label{eq:e872}
  \begin{split}
  \mathcal S={}&[1,2,3,4,7,68,69,70,71,113,114,115,116,117,120,\\
      &127,128,133,142,145,146,148,151,152,155,156,160,161,162,163].
  \end{split}                                               
\end{equation}
  Their Deligne--Lusztig coefficients are
\begin{equation}
\label{eq:e873}
\resizebox{\linewidth}{!}{$\displaystyle
[1,1,1,1,1,-1,-1,-1,-1,-1,1,-1,1,1,1,-1,-1,1,1,1,-1,
    1,1,-1,1,1,1,1,1,1].
$}
\end{equation}
  Thus each source value has square one and the unipotent source mass is
  exactly 30. In \eqref{eq:e871} the signs in \eqref{eq:e873} disappear.
  \item
  Since \(N\) is prime and \(\operatorname {ord}_N(7)=30\), every
  \(1\ne s\in T^*\) is regular. Its rational Lusztig series is a
  singleton semisimple character of degree
  \[
  D=\frac{\prod_{d\in\{2,8,12,14,18,20,24,30\}}(7^d-1)}{N}
  \]
\begin{equation}
\label{eq:e874}
\resizebox{\linewidth}{!}{$\displaystyle
=221874861125239297913749723122491086764215247890536702696915825839896964362458448039025573888000000000.
$}
\end{equation}
  The nonzero dual elements form
  \((N-1)/30=218960\) rational orbits. Column orthogonality gives the
  exact total squared source-value mass of these singleton series:
\begin{equation}
\label{eq:e875}
    \sum_{s\ne1}|\chi_s(x)|^2=N-30=6568771,                
\end{equation}
  since \(\sum_\chi|\chi(x)|^2=|C_G(x)|=N\) and \eqref{eq:e872} has mass 30.
\end{enumerate}

There are no nonregular dual-torus strata and no other source characters.

For later use, \cite[proof of Theorem 7.7]{GM} records
\(|C_G(g)|\le7^{190}\) for every \(g\ne1\). Therefore all the regular
singleton series together cost at most
\begin{equation}
\label{eq:e876}
\resizebox{\linewidth}{!}{$\displaystyle
R:=\frac{(N-30)7^{95}}D
 =\frac{114922545782354382511415553426765905336365699973884988935984165525625862065106657775823}
 {20170441920476299810340883920226462433110477080957882063355984167263360396587131639911415808000000000}
 <5.698\cdot10^{-15}.
$}   
\end{equation}

\proofpart{Nonidentity unipotent targets: exact Green functions}\label{nonidentity-unipotent-targets-exact-green-functions}
Let \(u\ne1\) be unipotent. The Steinberg row 2 vanishes at \(u\), and
CHEVIE's exact Green table gives\[
 U(u):=\sum_{i\in\mathcal S\setminus\{1\}}
       \frac{\chi_i(u^{-1})}{\chi_i(1)}.
\]
The certificate evaluates every rational unipotent column, locates the
identity by comparison with all 30 degrees (rather than assuming it is
column 1), and proves
\begin{equation}
\label{eq:e877}
 \max_{u\ne1}|U(u)|
 =\frac{3355396920359143057730729392390218383}
 {394588039680250201088563695743958120900000}
 <8.504\cdot10^{-6}.                                         
\end{equation}
The maximum is at rational column 112, represented in the CHEVIE class
pair notation by \([69,1]\). Algebraic unipotent class 69 is the long-root
class \(A_1\); it is not a near-regular class. The identity is column 113,
the pair \([70,1]\).

For completeness, \cite[Proposition 3.2(i)]{LiebeckTiep}, applies because
\(G=E_8(7)\) is in good odd characteristic: \(A_1\) is its long-root
alternative, and every other nonidentity unipotent element has a conjugate
in \(Ql\) for the long-root parabolic \(P=QL\), with the projection of
\(l\) to the \(E_7(7)\)-Levi nontrivial and not long-root. Thus the
placement theorem includes every nonidentity unipotent class. We do not
need to estimate them through that placement: \eqref{eq:e877} is stronger and exact.

Equations \eqref{eq:e876}--\eqref{eq:e877} give
\begin{equation}
\label{eq:e878}
              |F_x(u)-1|\le |U(u)|+R<8.504\cdot10^{-6}<1.   
\end{equation}

\proofpart{Mixed targets: complete good-characteristic dichotomy}\label{mixed-targets-complete-good-characteristic-dichotomy}
Let \(g\) be mixed. \cite[Lemma 4.1]{LiebeckTiep}, applies to simply connected
\(E_8(7)\) because 7 is a good prime for \(E_8\). Alternatives (i) and
(iv) of that lemma are respectively unipotent and semisimple, so a mixed
element satisfies at least one of:

\begin{itemize}
\tightlist
\item
  \(|C_G(g)|<7^{38}\); or
\item
  after conjugacy, \(g\in Ql\) in the long-root parabolic, and the
  projection of \(l\) to \(L/Z(L)\cong E_7(7)\) is nontrivial and not a
  long-root element.
\end{itemize}

This is an exhaustive (not necessarily disjoint) pair of alternatives for
all mixed targets; no class-type enumeration is being assumed. We apply the
small-centralizer estimate whenever the first alternative holds, and the
parabolic estimate otherwise; in the latter case the second alternative must
hold.

In the small-centralizer case, column orthogonality, \eqref{eq:e874}, and \eqref{eq:e875} give
\begin{equation}
\label{eq:e879}
 |F_x(g)-1|<
 7^{19}\sum_{i\in\mathcal S\setminus\{1,2\}}\frac1{\chi_i(1)}
 +\frac{(N-30)7^{19}}D
 <3.541\cdot10^{-9}<1.                                    
\end{equation}
The first term is exactly
\begin{equation}
\label{eq:e8710}
\resizebox{\linewidth}{!}{$\displaystyle
\frac{276154987234217658371782238509997487359644418617595067759718870662720189661910872387580488160144541025683859434126312215201669777751697556513703446398339}
{78007612111495002227844450991898589700222838318583789070259976862161137120871643494157952674806706143962927346145864046679856384683291535258690041308774400000000},
$}
\end{equation}
and the regular term is exactly
\begin{equation}
\label{eq:e8711}
\resizebox{\linewidth}{!}{$\displaystyle
\frac{6806975647792611447023}
{20170441920476299810340883920226462433110477080957882063355984167263360396587131639911415808000000000}.
$}
\end{equation}

It remains to justify the parabolic case for every source row. Put
\(h_i=\dim({}^*R_L^G\chi_i)\), where \(L\) is the long-root Levi of type
\(E_7T_1\). The second certificate computes the Lusztig restriction of
all 28 rows
\begin{equation}
\label{eq:e8712}
\resizebox{\linewidth}{!}{$\displaystyle
\mathcal S\setminus\{1,2\}
 =[3,4,7,68,69,70,71,113,114,115,116,117,120,127,128,133,
   142,145,146,148,151,152,155,156,160,161,162,163]
$}
\end{equation}
and proves
\begin{equation}
\label{eq:e8713}
\resizebox{\linewidth}{!}{$\displaystyle
H:=\sum_{i\in\mathcal S\setminus\{1,2\}}\frac{h_i}{\chi_i(1)}
 =\frac{11828793456364627659160307707483599710526396551389508284817286908579}
 {163657537398292409140924414782666535564133663456615744495647453152807322350000}.
$}
\end{equation}
In particular, it computes rather than omits the six zero-restriction rows
\(155,156,160,161,162,163\).

Proposition 3.3 and Theorem 3.1(iii)--(iv), with Table 3.2 of \cite{LiebeckTiep},
give \(1.5/7^{10}\) on the \(V_1\)-part and \(1/7^{10}\) on the
\(V_2\)-part for a non-long-root Levi projection. Bounding both by the
first number, the complete contribution of all 28 unipotent source rows is
at most
\begin{equation}
\label{eq:e8714}
\resizebox{\linewidth}{!}{$\displaystyle
P:=H+\frac{3}{2\cdot7^{10}}(28-H)
 =\frac{2750783164834997665026836829773536832449614294959599266655540038411874760639321}
 {18491681450923784178757000062365204206978404821675893249491357498222033371935806060000}
 <1.488\cdot10^{-7}.
$}
\end{equation}
The Steinberg character is absent because it vanishes on a mixed target.
Combining \eqref{eq:e876} and \eqref{eq:e8714},
\begin{equation}
\label{eq:e8715}
                     |F_x(g)-1|\le P+R<1.488\cdot10^{-7}<1. 
\end{equation}

\proofpart{Semisimple targets and conclusion}\label{semisimple-targets-and-conclusion}
Gow's theorem says that the product of two regular semisimple conjugacy
classes contains every nonidentity semisimple element. Apply it to the
two copies of the regular semisimple class \(C\). Thus all nonidentity
semisimple targets lie in \(C^2\). The identity lies in \(C^2\) because
\(C=C^{-1}\); nonidentity unipotent targets satisfy \eqref{eq:e878}; and all mixed
targets satisfy either \eqref{eq:e879} or \eqref{eq:e8715}. These cases exhaust \(G\), proving\[
      (x^{E_8(7)})^2=E_8(7),\qquad |x|=6568801.
\]
\end{proof}

\subsubsection{Reproducible certificates}\label{reproducible-certificates-1}

Run with the local GAP3/CHEVIE 2024-01-07 distribution:

\begin{Verbatim}[breaklines=true,breakanywhere=true,fontsize=]
scripts/audit_e8_7_coxeter_source_green.g, scripts/audit_e8_7_coxeter_hc.g
\end{Verbatim}
The first script asserts the torus Smith form, reality exponent, exact 30
source rows and signs, source masses, all rational Green columns, \eqref{eq:e877}, the
regular degree, and \eqref{eq:e876}. The second computes all 28 Harish--Chandra rows,
asserts \eqref{eq:e8713}--\eqref{eq:e8714}, and independently asserts the exact small-centralizer
terms \eqref{eq:e8710}--\eqref{eq:e8711}. 

This candidate proves only \(E_8(7)\). It makes no claim for
\(q\in\{2,3,4,5,8\}\), where the good-characteristic mixed-target
dichotomy used above is unavailable or requires a separate audit.

\section{Completion of the proof}

\begin{proof}[Proof of Theorem~\ref{thm:main}]
The classification list is exhausted by $A_l$ in Theorem \ref{typeA}; both types of ${}^2A_l$ in Theorems \ref{evenu}, \ref{psu_82}, \ref{psu_43}, \ref{oddu}, \ref{psu_63};
$B_l,C_l$ in Theorems \ref{orthogonal_split}, \ref{Symp}; both types of split $D_l$ in Theorem \ref{orthogonal_split}; minus ${}^2D_l$ in Theorem \ref{orthogonal_minus};
$G_2,{}^3D_4,{}^2B_2,{}^2G_2,{}^2F_4$ in \cite[Theorems 7.1, 7.3 and Remark 7.4]{GM}; and
$F_4,E_6,{}^2E_6,E_7,E_8$ in Theorems \ref{e7e8}, \ref{e_64-2e_64}, \ref{f_43}, \ref{f_4q}, \ref{e_63}, \ref{2e_63}, \ref{e_65-2e_67}, \ref{2e_65}, \ref{e_82}, \ref{e_87}.  For each row of Tables~\ref{tab:classical} and \ref{tab:exceptional}, $q\in\{2,3,4,5,7,8\}$, is either at or above the inclusive Ellers--Gordeev threshold,
is assigned an explicit matrix, root, torus, or table class in the preceding
sections, or is a nonsimple parameter listed below the table.

For an Ellers--Gordeev row, the cited same-class theorem and reality give
the result after central projection.  For the residual classical and
relative exceptional rows, the fixed-point-free grade calculation, exact
Levi class product, fusion, and reality hypotheses give the conclusion by
Lemma~\ref{lem:relative-gauss} or the stated cyclic matrix theorem.  For
the remaining exceptional rows, the exact bounds are all strict in
\eqref{eq:strict}; Gow's results cover the nonidentity semisimple elements; the identity is accounted for by reality.  The low-rank identifications assign every repeated
name to a proved row.  Hence no family,  field, or simple low-rank
form remains uncovered, and every group in the theorem has the displayed
class $C$ with $C^2=G$.
\end{proof}

\section*{Acknowledgement}
In the course of this research, the authors utilized ChatGPT to explore alternative proof strategies, employed AI-assisted computations, specifically invoking GAP and Python for character calculations, and used AI tools for English-language polishing. Every mathematical claim and proof step suggested by the AI was independently verified and rigorously checked by the authors before inclusion.

This work was supported by the NSF of China  (Nos. 12431001, 12571017).

\end{document}